\newif\csname ifGin@setpagesize\endcsname 
\numberwithin{equation}{section}
\numberwithin{figure}{section}
\theoremstyle{plain}
\newtheorem{thm}{Theorem}[section]
  \crefname{thm}{Theorem}{Theorems}
  \newtheorem{lem}[thm]{Lemma}
  \crefname{lem}{Lemma}{Lemmas}
  \newtheorem{prop}[thm]{Proposition}
  \crefname{prop}{Proposition}{Propositions}
  \newtheorem{cor}[thm]{Corollary}
	\crefname{cor}{Corollary}{Corollaries}
  \newtheorem*{theor1}{Theorem 1}
  \newtheorem*{theor2}{Theorem 2}
  \newtheorem*{theor3}{Theorem 3}
  \newtheorem*{cor4}{Corollary 4}
  \newtheorem*{ack}{Acknowledgments}
\theoremstyle{definition}
  \newtheorem{defi}[thm]{Definition}
  \crefname{defi}{Definition}{Definitions}
  \theoremstyle{remark}
  \crefname{ntn}{Notation}{Notations}
	 \theoremstyle{remark}
  \newtheorem{rem}[thm]{Remark}
  \crefname{rem}{Remark}{Remarks}
  \newtheorem{ex}[thm]{Example}
  \crefname{ex}{Example}{Examples}
\def\r{\mathbb{R}}
\def\c{\mathbb{C}}
\def\q{\mathbb{Q}}
\def\z{\mathbb{Z}}
    \newcommand*{\qrr@gobblenexttocentry}[5]{}
    \newcommand*{\qrr@gobblenexttocentry}[4]{}
\newcommand*{\addsubsection}{%
    \addtocontents{toc}{\protect\qrr@gobblenexttocentry}%
    \subsection}
\begin{document}
\title[Schubert calculus from polyhedral parametrizations of Demazure crystals]{Schubert calculus from polyhedral parametrizations of Demazure crystals}

\author{Naoki FUJITA}

\address[Naoki FUJITA]{Graduate School of Mathematical Sciences, The University of Tokyo, 3-8-1 Komaba, Meguro-ku, Tokyo 153-8914, Japan.}

\email{nfujita@ms.u-tokyo.ac.jp}

\subjclass[2010]{Primary 05E10; Secondary 14M15, 14M25, 14N15, 52B20}

\keywords{Schubert calculus, string polytope, Demazure crystal, reduced Kogan face, semi-toric degeneration}

\thanks{The work was partially supported by Grant-in-Aid for JSPS Fellows (No.\ 19J00123) and by Grant-in-Aid for Early-Career Scientists (No.\ 20K14281).}

\date{}

\begin{abstract}
One approach to Schubert calculus is to realize Schubert classes as concrete combinatorial objects such as Schubert polynomials.
Through an identification of the cohomology ring of the type A full flag variety with the polytope ring of the Gelfand--Tsetlin polytopes, Kiritchenko--Smirnov--Timorin realized each Schubert class as a sum of reduced (dual) Kogan faces.  
In this paper, we explicitly describe string parametrizations of opposite Demazure crystals, which give a natural generalization of reduced dual Kogan faces. 
We also relate reduced Kogan faces with Demazure crystals using the theory of mitosis operators, and apply these observations to develop the theory of Schubert calculus on symplectic Gelfand--Tsetlin polytopes. 
\end{abstract}
\maketitle
\ytableausetup{smalltableaux}
\tableofcontents 
\section{Introduction}\label{s:section}

\emph{Schubert calculus} was named after Hermann Schubert who developed a remarkable symbolic calculus in enumerative geometry of Grassmann varieties. 
One goal of Schubert calculus is to compute the structure constants of the cohomology ring of a (partial) flag variety with respect to the basis consisting of Schubert classes; see \cite{KST, KnM, Man} and references therein for the history of Schubert calculus.
One approach to such computation is to realize Schubert classes as concrete combinatorial objects such as Schubert polynomials. 
Through an identification of the cohomology ring of the type $A$ full flag variety with the polytope ring of the Gelfand--Tsetlin polytopes, Kiritchenko--Smirnov--Timorin \cite{KST} realized each Schubert class as a sum of reduced (dual) Kogan faces.  
In this paper, we generalize this result to the type $C$ full flag variety through the theory of Kashiwara crystal bases \cite{Kas2, Kas4}. 

To be more precise, let $G$ be a connected, simply-connected semisimple algebraic group over $\mathbb{C}$, $B \subseteq G$ a Borel subgroup, $G/B$ the full flag variety, and $W$ the Weyl group.
For $w \in W$, denote by $\ell(w)$ the length of $w$, by $R(w)$ the set of reduced words for $w$, and by $X_w \subseteq G/B$ (resp., $X^w \subseteq G/B$) the Schubert variety (resp., the opposite Schubert variety) with $\dim_\c(X_w) = \dim_\c(G/B) - \dim_\c (X^w) = \ell(w)$. 
Then the sets $\{[X_w] \mid w \in W\}$ and $\{[X^w] \mid w \in W\}$ of cohomology classes, called Schubert classes, form $\z$-bases of the cohomology ring of $G/B$:
\[H^\ast(G/B; \z) = \sum_{w \in W} \z [X_w] = \sum_{w \in W} \z [X^w].\]
If we write $[X^u] \cdot [X^v] = \sum_{w \in W} c_{u, v}^w [X^w]$ for some $c_{u, v}^w \in \z$, then we see by Kleiman's transversality theorem that $c_{u, v}^w \geq 0$ (see \cite[Sect.\ 1.3]{Bri}).
One goal of Schubert calculus is to compute this nonnegative integer $c_{u, v}^w$ explicitly. 
To proceed in such direction, the \emph{Borel description} is useful, which states that the (rational) cohomology ring $H^\ast(G/B; \q)$ is isomorphic to the coinvariant algebra of $W$ (see \cite{Man}).
This description allows us to represent each Schubert class as a polynomial. 
Such polynomials were studied by Bernstein--Gelfand--Gelfand \cite{BGG} using divided difference operators (see also \cite{Dem}).
In the case $G = SL_{n+1}(\c)$, Lascoux--Sch\"{u}tzenberger \cite{LS} gave a specific choice $\{\mathfrak{S}_w \mid w \in W\}$ of representatives, called Schubert polynomials, which have good combinatorial properties. 
Schubert polynomials for other classical groups were introduced by Billey--Haiman \cite{BH} and by Fomin--Kirillov \cite{FK_typeB}. 

Denote by $P_+$ the set of dominant integral weights, by $P_{++}$ the set of regular dominant integral weights, and by $V(\lambda)$ the irreducible highest weight $G$-module over $\c$ with highest weight $\lambda \in P_+$. 
This $G$-module $V(\lambda)$ has a remarkable combinatorial skeleton $\mathcal{B}(\lambda)$, called the Kashiwara crystal basis (see \cite{Kas2, Kas5}).
For $w \in W$ and $\lambda \in P_+$, there exists a subset $\mathcal{B}_w(\lambda) \subseteq \mathcal{B}(\lambda)$ (resp., $\mathcal{B}^w(\lambda) \subseteq \mathcal{B}(\lambda)$), called a Demazure crystal (resp., an opposite Demazure crystal), which naturally corresponds to $X_w$ (resp., $X^w$). 
Our approach to Schubert calculus is to use a polyhedral parametrization of an opposite Demazure crystal $\mathcal{B}^w(\lambda)$ as a combinatorial model of $[X^w]$. 
Let $w_0 \in W$ denote the longest element, and set $N \coloneqq \ell(w_0) = \dim_\c(G/B)$.
We associate to each ${\bm i} \in R(w_0)$ a parametrization $\Phi_{\bm i} \colon \mathcal{B}(\lambda) \hookrightarrow \z^N$ and a rational convex polytope $\Delta_{\bm i}(\lambda)$, called a string parametrization and a string polytope, respectively.
For $\lambda \in P_{++}$, Caldero \cite{Cal} constructed a flat degeneration of $G/B$ to the normal toric variety corresponding to $\Delta_{\bm i}(\lambda)$. 
This degeneration was geometrically interpreted by Kaveh \cite{Kav2} from the theory of Newton--Okounkov bodies. 
Using this degeneration, Morier-Genoud \cite{Mor} gave a semi-toric degeneration of a Richardson variety $X_v \cap X^w$ for $v, w \in W$ such that $X_v \cap X^w \neq \emptyset$.
More precisely, she showed that the image $\Phi_{\bm i}(\mathcal{B}_v(\lambda) \cap \mathcal{B}^w(\lambda))$ is given as the set of lattice points in a union of faces of $\Delta_{\bm i}(\lambda)$, and that the Richardson variety $X_v \cap X^w$ degenerates into the union of irreducible normal toric varieties corresponding to these faces.
For ${\bm i} \in R(w_0)$ and $w \in W$, we define $R({\bm i}, w)$ by \eqref{eq:reduced_word_compatible_subset} in Sect.\ \ref{s:string_opposite}.
Then the following theorem explicitly describes the faces which give the image $\Phi_{\bm i}(\mathcal{B}^w(\lambda))$; see Sect.\ \ref{s:string_opposite} for the definition of $F_{\bm k} (\Delta_{\bm i}(\lambda))$.

\begin{theor1}[{see \cref{c:string_parametrization_opposite_Demazure}}]
For ${\bm i} \in R(w_0)$, $w \in W$, and $\lambda \in P_+$, it holds that
\begin{align*}
&\Phi_{\bm i}(\mathcal{B}^w(\lambda)) = \bigcup_{{\bm k} \in R({\bm i}, w)} F_{\bm k} (\Delta_{\bm i}(\lambda)) \cap \z^N.
\end{align*}
\end{theor1}

Theorem 1 can also be seen as an explicit description of a semi-toric degeneration of $X^w$. 
Hence the set $R({\bm i}, w)$ gives a natural generalization of reduced pipe dreams as suggested in \cite[Introduction]{KoM}.
Such generalization was also given by Fomin--Kirillov \cite{FK_typeB} for type $BCD$ in the context of nilCoxeter algebras.
Kaveh \cite{Kav1} observed that the Borel description can be interpreted as an isomorphism between the cohomology ring $H^\ast(G/B; \q)$ and the polytope ring of $\{\Delta_{\bm i}(\lambda)\}_{\lambda \in P_+}$. 
If the family $\{\Delta_{\bm i}(\lambda)\}_{\lambda \in P_+}$ of string polytopes has a good combinatorial property, then the family $\{F_{\bm k} (\Delta_{\bm i}(\lambda))\}_{{\bm k} \in R({\bm i}, w)}$ gives the Schubert class $[X^w]$ under this isomorphism as we see below. 

Let us consider the case $G = SL_{n+1}(\c)$. 
We denote by $GT(\lambda)$ the Gelfand--Tsetlin polytope for $\lambda \in P_+$, and by $\mathfrak{S}_w({\bm x})$ the Schubert polynomial for $w \in W$.
Billey--Jockusch--Stanley \cite{BJS} and Fomin--Stanley \cite{FS} gave an explicit combinatorial formula 
\begin{equation}\label{eq:Schubert_polynomial_PD}
\begin{aligned}
\mathfrak{S}_w({\bm x}) = \sum_{D \in RP(w)} {\bm x}^D, 
\end{aligned}
\end{equation}
where we used the notation in \cite[Corollary 2.1.3]{KnM}. 
A diagrammatic interpretation of the index set $RP(w)$ was invented by Fomin--Kirillov \cite{FK} and developed by Bergeron--Billey \cite{BB} and by Knutson--Miller \cite{KnM, Mil}.
Under the diagrammatic interpretation, an element of $RP(w)$ is called an rc-graph or a reduced pipe dream.
By definition, each reduced pipe dream $D \in RP(w)$ corresponds to specific faces $F_D^\vee$ and $F_D$ of $GT(\lambda)$, called a reduced Kogan face and a reduced dual Kogan face, respectively, such that $w(F_D^\vee)^{-1} = w(F_D) = w$ in the notation of \cite[Sects.\ 3.3 and 4.3]{KST} (see \cite[Sect.\ 2.2.1]{Kog} and \cite[Sect.\ 4]{KoM}). 
Knutson--Miller \cite{KnM} gave a geometric proof of \eqref{eq:Schubert_polynomial_PD} using a flat degeneration of the (opposite) matrix Schubert variety $\overline{X}^{w_0 w w_0}$ whose limit has irreducible components naturally parametrized by $RP(w)$. 
Using this degeneration, Kogan--Miller \cite{KoM} proved that the opposite Schubert variety $X^{w_0 w w_0}$ degenerates into the union of irreducible normal toric varieties corresponding to the reduced dual Kogan faces $F_D$, $D \in RP(w)$.
Relations between Schubert classes and reduced (dual) Kogan faces were studied in \cite{Kog, Kir_IMRN, KST}. 
Using an isomorphism between $H^0(SL_{n+1}(\c)/B; \z)$ and the polytope ring $R_{GT}$ of the Gelfand--Tsetlin polytopes, Kiritchenko--Smirnov--Timorin \cite{KST} formulated the following equalities: 
\[[X^w] = \sum_{D \in RP(w^{-1})} [F_D^\vee] = \sum_{D \in RP(w_0 w w_0)} [F_D].\]
Note that there is a bijective correspondence between $RP(w^{-1})$ and $RP(w)$.
Even though $[X^w]$ is represented by $\mathfrak{S}_w({\bm x})$, the class $[F_D^\vee]$ does not correspond to ${\bm x}^D$ in \eqref{eq:Schubert_polynomial_PD}; the class $[F_D^\vee]$ is not necessarily an element of $R_{GT}$ as explained in \cite[Remark 2.5]{KST}. 
It is defined in a $\z$-module $M_{\widetilde{GT}, GT}$ extending $R_{GT}$, which is obtained through a resolution of $GT(\lambda)$ for $\lambda \in P_{++}$. 
We define ${\bm i}_A \in R(w_0)$ by \eqref{eq:reduced_word_type_A} in Sect.\ \ref{s:type_A}.
Then Littelmann \cite{Lit} gave a unimodular affine transformation from the string polytope $\Delta_{{\bm i}_A}(\lambda)$ to $GT(\lambda)$. 
Under this transformation, the family $\{F_{\bm k} (\Delta_{{\bm i}_A}(\lambda))\}_{{\bm k} \in R({\bm i}_A, w)}$ corresponds to the family $\{F_D\}_{D \in RP(w_0 w w_0)}$ of reduced dual Kogan faces. 
Hence Morier-Genoud's semi-toric degeneration \cite{Mor} can be seen as a generalization of Kogan--Miller's semi-toric degeneration \cite{KoM} by Theorem 1.
In order to relate reduced Kogan faces with Demazure crystals, we use the notions of ladder moves and mitosis operators introduced by Bergeron--Billey \cite{BB} and by Knutson--Miller \cite{KnM}, respectively. 
Bergeron--Billey \cite{BB} proved that $RP(w)$ is stable under ladder moves and is obtained from a specific element $D_{\rm bot}(w) \in RP(w)$ by applying sequences of ladder moves.
Knutson--Miller \cite{KnM, Mil} showed that $RP(w^{-1} w_0)$ is obtained from $RP(w_0)$ by applying a sequence of (transposed) mitosis operators ${\rm mitosis}_i^\top$ along a reduced word for $w^{-1}$. 
In this paper, we use another operator $M_i$ for $1 \leq i \leq n$, which is similar to ${\rm mitosis}_i^\top$ and is naturally related to the operator $\bigcup_{k \in \z_{\geq 0}} \tilde{f}_i^k$ on $\mathcal{B}(\infty)$ studied in \cite{Kas4}.
By applying to $RP(w_0)$ a sequence of operators $M_i$ along a specific reduced word for $w^{-1}$, we define a set $\mathscr{M}(w)$ of pipe dreams (see Sect.\ \ref{s:type_A} for the precise definition), which coincides with $RP(w^{-1} w_0)$ by properties of ladder moves and mitosis operators above.
For $D \in \mathscr{M}(w) = RP(w^{-1} w_0)$, let $F_{{\bm k}_D}^\vee (\Delta_{{\bm i}_A}(\lambda))$ denote the face of $\Delta_{{\bm i}_A}(\lambda)$ corresponding to $F_D^\vee$. 

\begin{theor2}[{see \cref{c:Kogan_face_string_polytope}}]
Let $G = SL_{n+1}(\c)$.
For $w \in W$ and $\lambda \in P_+$, it holds that 
\begin{align*}
&\Phi_{{\bm i}_A}(\mathcal{B}_w(\lambda)) = \bigcup_{D \in \mathscr{M}(w)} F_{{\bm k}_D}^\vee (\Delta_{{\bm i}_A}(\lambda)) \cap \z^N.
\end{align*}
\end{theor2}

Such relation between reduced Kogan faces and the character of $\mathcal{B}_w(\lambda)$ was previously studied in \cite{BF, KST, PS}. 
Theorem 2 gives their relation in the level of Demazure crystals.

Let us consider the case $G = Sp_{2n}(\c)$. 
We define ${\bm i}_C \in R(w_0)$ by \eqref{eq:reduced_word_type_C} in Sect.\ \ref{s:type_C}.
Then Littelmann \cite{Lit} gave a unimodular affine transformation from $\Delta_{{\bm i}_C}(\lambda)$ to the symplectic Gelfand--Tsetlin polytope $SGT(\lambda)$. 
The faces of $\Delta_{{\bm i}_C}(\lambda)$ containing the origin are naturally parametrized by skew pipe dreams (see Sect.\ \ref{s:type_C} and \cite{Kir}). 
In this paper, we generalize the notion of ladder moves to skew pipe dreams, and use it to define a set $\mathscr{M}(w)$ of skew pipe dreams for $w \in W$ as in the case of type $A_n$.
For $D \in \mathscr{M}(w)$, let $F_{{\bm k}_D}^\vee (\Delta_{{\bm i}_C}(\lambda))$ denote the corresponding face of $\Delta_{{\bm i}_C}(\lambda)$.

\begin{theor3}[{see \cref{t:main_result_string_Demazure_type_C}}]
Let $G = Sp_{2n}(\c)$.
For $w \in W$ and $\lambda \in P_+$, it holds that
\begin{align*}
&\Phi_{{\bm i}_C}(\mathcal{B}_w(\lambda)) = \bigcup_{D \in \mathscr{M}(w)} F_{{\bm k}_D}^\vee (\Delta_{{\bm i}_C}(\lambda)) \cap \z^N.
\end{align*}
\end{theor3}

Let $R_{SGT}$ denote the polytope ring of the symplectic Gelfand--Tsetlin polytopes.
By Theorems 1 and 3, we can generalize results in \cite{KST} to the case of type $C_n$ as follows.

\begin{cor4}[{see Corollaries \ref{c:description_of_Schubert_class_symplectic_Kogan} and \ref{c:description_of_Schubert_class_dual_symplectic_Kogan}}]
For each $w \in W$, the cohomology class $[X^w] \in H^\ast(Sp_{2n}(\c)/B; \z) \simeq R_{SGT}$ is described as follows:
\begin{align*}
[X^w] = \sum_{{\bm k} \in R({\bm i}_C, w)} [F_{\bm k} (\Delta_{{\bm i}_C}(\lambda))] = \sum_{D \in \mathscr{M}(w_0 w)} [F_{{\bm k}_D}^\vee (\Delta_{{\bm i}_C}(\lambda))].
\end{align*}
\end{cor4}

Such description of $[X^w] \in H^\ast(Sp_{2n}(\c)/B; \z)$ was previously given when $n = 2$, and there was a partial result when $n = 3$ (see \cite{Kir, KP}).

\begin{ack}\normalfont
The author is deeply grateful to Valentina Kiritchenko for useful explanations on models in \cite{KST}.
The author would also like to thank Hironori Oya for helpful comments and fruitful discussions.  
\end{ack}

\section{Kashiwara crystal bases}\label{s:crystal_bases}

In this section, we review some basic facts on crystal bases and string polytopes, following \cite{BZ, Kas2, Kas4, Kav1, Lit, NZ}. Let $G$ be a connected, simply-connected semisimple algebraic group over $\mathbb{C}$, $\mathfrak{g}$ its Lie algebra, and $H \subseteq G$ a maximal torus. Denote by $\mathfrak{h} \subseteq \mathfrak{g}$ the Lie algebra of $H$, by $\mathfrak{h}^\ast \coloneqq {\rm Hom}_\c (\mathfrak{h}, \c)$ its dual space, and by $\langle \cdot, \cdot \rangle \colon \mathfrak{h}^\ast \times \mathfrak{h} \rightarrow \c$ the canonical pairing. Let $P \subseteq \mathfrak{h}^\ast$ be the weight lattice, $\{\alpha_i \mid i \in I\} \subseteq P$ the set of simple roots, and $\{h_i \mid i \in I\} \subseteq \mathfrak{h}$ the set of simple coroots, where $I = \{1, \ldots, n\}$ is an index set for the vertices of the Dynkin diagram.

\begin{defi}[{\cite[Definition 1.2.1]{Kas4}}]
A \emph{crystal} is a set $\mathcal{B}$ equipped with maps 
\begin{itemize}
\item ${\rm wt} \colon \mathcal{B} \rightarrow P$, 
\item $\varepsilon_i \colon \mathcal{B} \rightarrow \z \cup \{-\infty\}$ and $\varphi_i \colon \mathcal{B} \rightarrow \z \cup \{-\infty\}$ for $i \in I$,
\item $\tilde{e}_i \colon \mathcal{B} \rightarrow \mathcal{B} \cup \{0\}$ and $\tilde{f}_i \colon \mathcal{B} \rightarrow \mathcal{B} \cup \{0\}$ for $i \in I$,
\end{itemize}
where $-\infty$ and $0$ are additional elements that are not contained in $\z$ and $\mathcal{B}$, respectively, such that the following holds: for $i \in I$ and $b \in \mathcal{B}$,
\begin{enumerate}
\item[(i)] $\varphi_i(b) = \varepsilon_i(b) + \langle{\rm wt}(b), h_i\rangle$,
\item[(ii)] ${\rm wt}(\tilde{e}_i b) = {\rm wt}(b) + \alpha_i$, $\varepsilon_i(\tilde{e}_i b) = \varepsilon_i(b) -1$, and $\varphi_i(\tilde{e}_i b) = \varphi_i(b) +1$ if $\tilde{e}_i b \in \mathcal{B}$,
\item[(iii)] ${\rm wt}(\tilde{f}_i b) = {\rm wt}(b) - \alpha_i$, $\varepsilon_i(\tilde{f}_i b) = \varepsilon_i(b) +1$, and $\varphi_i(\tilde{f}_i b) = \varphi_i(b) -1$ if $\tilde{f}_i b \in \mathcal{B}$,
\item[(iv)] $b^\prime = \tilde{e}_i b$ if and only if $b = \tilde{f}_i b^\prime$ for $b^\prime \in \mathcal{B}$,
\item[(v)] $\tilde{e}_i b = \tilde{f}_i b = 0$ if $\varphi_i(b) = -\infty$.
\end{enumerate}
\end{defi}

\begin{defi}[{see \cite[Sect.\ 1.2]{Kas4}}]
For crystals $\mathcal{B}_1$ and $\mathcal{B}_2$, a map $\psi \colon \mathcal{B}_1 \cup \{0\} \rightarrow \mathcal{B}_2 \cup \{0\}$ is called a \emph{strict morphism} of crystals from $\mathcal{B}_1$ to $\mathcal{B}_2$ if the following conditions hold:
\begin{enumerate}
\item[(i)] $\psi(0) = 0$,
\item[(ii)] ${\rm wt}(\psi(b)) = {\rm wt}(b)$, $\varepsilon_i(\psi(b)) = \varepsilon_i(b)$, and $\varphi_i(\psi(b)) = \varphi_i(b)$ for $i \in I$ and $b \in \mathcal{B}_1$ such that $\psi(b) \in \mathcal{B}_2$,
\item[(iii)] $\tilde{e}_i \psi(b) = \psi(\tilde{e}_i b)$ and $\tilde{f}_i \psi(b) = \psi(\tilde{f}_i b)$ for $i \in I$ and $b \in \mathcal{B}_1$,
\end{enumerate}
where we set $\tilde{e}_i \psi(b) = \tilde{f}_i \psi(b) = 0$ if $\psi(b) = 0$. If $\psi$ is injective in addition, then we call it a \emph{strict embedding} of crystals. 
\end{defi}

Let $B \subseteq G$ be a Borel subgroup containing $H$, $P_+ \subseteq P$ the set of dominant integral weights, and $c_{i, j} \coloneqq \langle \alpha_j, h_i \rangle$ the Cartan integer for $i, j \in I$.
The quotient space $G/B$ is called the \emph{full flag variety}, which is a nonsingular projective variety.  
Let $N_G(H)$ denote the normalizer of $H$ in $G$, and $W \coloneqq N_G(H)/H$ the Weyl group of $\mathfrak{g}$, which is generated by the set $\{s_i \mid i \in I\}$ of simple reflections. 
For $w \in W$, we denote by $\ell(w)$ the length of $w$, by $R(w)$ the set of reduced words for $w$, and by $\widetilde{w} \in N_G(H)$ a lift for $w$.
Let $w_0 \in W$ be the longest element, $B^- \coloneqq \widetilde{w_0} B \widetilde{w_0}^{-1}$ the Borel subgroup opposite to $B$, and $U^-$ the unipotent radical of $B^-$.
For $\lambda \in P_+$, denote by $V(\lambda)$ the irreducible highest weight $G$-module over $\c$ with highest weight $\lambda$ and with highest weight vector $v_{\lambda}$. 
For $\lambda \in P_+$, we define a line bundle $\mathcal{L}_\lambda$ on $G/B$ by
\[
\mathcal{L}_\lambda \coloneqq (G \times \mathbb{C})/B,
\] 
where the right $B$-action on $G \times \mathbb{C}$ is given by $(g, c) \cdot b \coloneqq (g b, \lambda(b) c)$ for $g \in G$, $c \in \mathbb{C}$, and $b \in B$. 
For a closed subvariety $Z \subseteq G/B$, the restriction of $\mathcal{L}_\lambda$ to $Z$ is also denoted by the same symbol $\mathcal{L}_\lambda$. 
By the Borel--Weil theorem, the space $H^0(G/B, \mathcal{L}_\lambda)$ of global sections is a $G$-module isomorphic to the dual module $V(\lambda)^\ast \coloneqq {\rm Hom}_\mathbb{C}(V(\lambda), \mathbb{C})$. 
Lusztig \cite{Lus_can, Lus_quivers, Lus1} and Kashiwara \cite{Kas1,Kas2,Kas3} constructed specific $\c$-bases of $\c[U^-]$ and $H^0(G/B, \mathcal{L}_\lambda)$ via the quantized enveloping algebra associated with $\mathfrak{g}$. 
These are called (the specialization at $q = 1$ of) the \emph{upper global bases} ($=$ the \emph{dual canonical bases}), and denoted by $\{G^{\rm up}(b) \mid b \in \mathcal{B}(\infty)\} \subseteq \c[U^-]$ and $\{G_{\lambda}^{\rm up}(b) \mid b \in \mathcal{B}(\lambda)\} \subseteq H^0(G/B, \mathcal{L}_\lambda)$, respectively. 
The index sets $\mathcal{B}(\infty)$ and $\mathcal{B}(\lambda)$ are typical examples of crystals, called the \emph{crystal bases}.
See \cite{Kas5} for a survey on crystal bases. 
We define an element $b_\infty \in \mathcal{B}(\infty)$ (resp., $b_\lambda \in \mathcal{B}(\lambda)$) by the condition that $G^{\rm up}(b_\infty) \in \c[U^-]$ is a constant function on $U^-$ (resp., $G_{\lambda}^{\rm up}(b_\lambda)$ is a lowest weight vector in $H^0(G/B, \mathcal{L}_\lambda)$).

\begin{thm}[{see \cite[Theorem 5]{Kas2}}]\label{p:relations_crystals}
For $\lambda \in P_+$, there exists a unique surjective map 
\[\pi_\lambda \colon \mathcal{B} (\infty) \twoheadrightarrow \mathcal{B} (\lambda) \cup \{0\}\]
such that $\pi_\lambda (b_\infty) = b_\lambda$ and $\tilde{f}_i \pi_\lambda(b) = \pi_\lambda (\tilde{f}_i b)$ for all $i \in I$ and $b \in \mathcal{B}(\infty)$.
In addition, for 
\[\widetilde{\mathcal{B}} (\lambda) \coloneqq \{b \in \mathcal{B}(\infty) \mid \pi_\lambda(b) \neq 0\},\] 
the restriction map $\pi_\lambda \colon \widetilde{\mathcal{B}} (\lambda) \rightarrow \mathcal{B} (\lambda)$ is bijective. 
\end{thm}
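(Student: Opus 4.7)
The plan is to construct $\pi_\lambda$ by forced recursion, to establish well-definedness by descending from the quantum-group origin of the two crystals, and to derive bijectivity on $\widetilde{\mathcal{B}}(\lambda)$ from the same picture.

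Since $b_\infty$ is the unique highest weight element of $\mathcal{B}(\infty)$ and the whole crystal is connected to it through the lowering operators, every $b \in \mathcal{B}(\infty)$ admits a factorization $b = \tilde{f}_{i_1} \cdots \tilde{f}_{i_k} b_\infty$. The requirements $\pi_\lambda(b_\infty) = b_\lambda$ and $\pi_\lambda \circ \tilde{f}_i = \tilde{f}_i \circ \pi_\lambda$ then force the prescription
\[
\pi_\lambda(b) := \tilde{f}_{i_1} \cdots \tilde{f}_{i_k} b_\lambda,
\]
with the convention that the right-hand side is read as $0$ once some factor annihilates the running element. Uniqueness of $\pi_\lambda$ is immediate by induction on the minimal such $k$: the two conditions in the statement prescribe every value of $\pi_\lambda$ step by step from $b_\infty$.

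The main obstacle is independence of the chosen factorization, which I would settle at the quantum-group level. The crystal $\mathcal{B}(\infty)$ arises from a crystal lattice $\mathcal{L}(\infty) \subset U_q^-(\mathfrak{g})$ and $\mathcal{B}(\lambda)$ from a crystal lattice $\mathcal{L}(\lambda) \subset V(\lambda)$, and the $U_q^-(\mathfrak{g})$-module surjection $\varpi_\lambda \colon U_q^-(\mathfrak{g}) \twoheadrightarrow V(\lambda)$ determined by an appropriate extremal weight vector sends $\mathcal{L}(\infty)$ onto $\mathcal{L}(\lambda)$ and commutes with the Kashiwara operators. It therefore descends to a well-defined map
\[
\mathcal{L}(\infty)/q\mathcal{L}(\infty) \longrightarrow \mathcal{L}(\lambda)/q\mathcal{L}(\lambda) \cup \{0\}
\]
intertwining the $\tilde{f}_i$'s, which is exactly $\pi_\lambda$. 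The delicate technical input is this lattice-level compatibility, equivalently that the relations $f_i^{\langle \lambda, h_i \rangle + 1} v_\lambda = 0$ generating $\ker \varpi_\lambda$ interact well with $\mathcal{L}(\infty)$; this is precisely the place where Kashiwara's grand-loop induction and the polarization on $V(\lambda)$ do their work, and is the step I expect to be hardest to implement from scratch.

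Surjectivity follows from the parallel generation property in $\mathcal{B}(\lambda)$: any $c \in \mathcal{B}(\lambda)$ factors as $c = \tilde{f}_{j_1} \cdots \tilde{f}_{j_\ell} b_\lambda$, hence equals $\pi_\lambda(\tilde{f}_{j_1} \cdots \tilde{f}_{j_\ell} b_\infty)$. For the bijectivity of $\pi_\lambda|_{\widetilde{\mathcal{B}}(\lambda)}$, injectivity comes from the same quantum-group picture: via the upper global basis $\{G^{\rm up}(b) \mid b \in \mathcal{B}(\infty)\}$, distinct elements of $\widetilde{\mathcal{B}}(\lambda)$ map under $\varpi_\lambda$ to distinct nonzero vectors $\varpi_\lambda(G^{\rm up}(b))$ in $V(\lambda)$ parametrized (up to sign) by the upper global basis $\{G_\lambda^{\rm up}(c) \mid c \in \mathcal{B}(\lambda)\}$; thus $\pi_\lambda$ restricts to a bijection between $\widetilde{\mathcal{B}}(\lambda)$ and $\mathcal{B}(\lambda)$.
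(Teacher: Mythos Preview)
The paper does not supply its own proof of this theorem; it is quoted as a foundational result with the citation \cite[Theorem 5]{Kas2} and used as a black box thereafter. So there is nothing in the paper to compare your sketch against.

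That said, your outline is essentially the standard route through Kashiwara's original argument: the forced recursion gives uniqueness, and well-definedness is inherited from the $U_q^-(\mathfrak{g})$-module surjection $U_q^-(\mathfrak{g}) \twoheadrightarrow V(\lambda)$, $u \mapsto u \cdot v_\lambda$, together with the compatibility of this map with the crystal lattices and Kashiwara operators established in the grand-loop induction. Two small remarks. First, in your injectivity step you invoke the \emph{upper} global basis $G^{\rm up}(b)$, but those elements live in $\c[U^-]$ (or its quantum analogue), not in $U_q^-(\mathfrak{g})$; the map $\varpi_\lambda$ you wrote down acts on the \emph{lower} global basis $G^{\rm low}(b) \in U_q^-(\mathfrak{g})$, and it is that basis which restricts to a basis of $V(\lambda)$ indexed by $\widetilde{\mathcal{B}}(\lambda)$. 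Second, in Kashiwara's logical development the bijectivity of $\pi_\lambda|_{\widetilde{\mathcal{B}}(\lambda)}$ is proved as part of the grand-loop argument itself, prior to and independently of the construction of global bases; citing global bases here is not wrong once everything is in place, but it obscures the actual order of dependence.
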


For $w \in W$, we set 
\[X_w \coloneqq \overline{B \widetilde{w} B/B} \subseteq G/B\quad ({\rm resp.}, X^w \coloneqq \overline{B^- \widetilde{w} B/B} \subseteq G/B),\]
called the \emph{Schubert variety} (resp., the \emph{opposite Schubert variety}) associated with $w$. 
These varieties $X_w$ and $X^w$ are normal projective varieties, and we have 
\[\widetilde{w_0} X_w = \overline{\widetilde{w_0} B \widetilde{w} B/B} = \overline{B^- \widetilde{w_0} \widetilde{w} B/B} = X^{w_0 w}.\]
Let $\leq$ denote the Bruhat order on $W$.
For $v, w \in W$ such that $v \leq w$, the scheme-theoretic intersection 
\[X_w^v \coloneqq X_w \cap X^v \subseteq G/B\]
is reduced and irreducible (see, for instance, \cite[Proposition 1.3.2]{Bri}), which is called a \emph{Richardson variety}. 
Note that both $X_{w_0}$ and $X^e$ coincide with the full flag variety $G/B$, where $e \in W$ is the identity element.
Hence we have $X_{w_0}^w = X^w$ and $X_w^e = X_w$ for all $w \in W$.
In addition, it holds that $\dim_\c (X_w^v) = \ell(w) - \ell(v)$; see, for instance, \cite[Sect.\ 1.3]{Bri}. 
For $w \in W$ and $\lambda \in P_+$, we define a $B$-submodule $V_w(\lambda) \subseteq V(\lambda)$ (resp., a $B^-$-submodule $V^w(\lambda) \subseteq V(\lambda)$) as follows:
\begin{align*}
V_w(\lambda) \coloneqq \sum_{b \in B} \c b \widetilde{w} v_{\lambda}\quad({\rm resp.},\ V^w(\lambda) \coloneqq \sum_{b \in B^-} \c b \widetilde{w} v_{\lambda}),
\end{align*}
which is called a \emph{Demazure module} (resp., an \emph{opposite Demazure module}).  
Then we have
\begin{equation}\label{eq:longest_Demazure}
\begin{aligned}
\widetilde{w_0} V_w(\lambda) = \sum_{b \in B} \c \widetilde{w_0} b \widetilde{w_0}^{-1} \widetilde{w_0} \widetilde{w} v_{\lambda} = V^{w_0 w}(\lambda).
\end{aligned}
\end{equation}
By the Borel--Weil type theorem (see, for instance, \cite[Corollary 8.1.26]{Kum}), the space of global sections $H^0(X_w, \mathcal{L}_\lambda)$ is a $B$-module isomorphic to the dual module $V_w (\lambda)^\ast \coloneqq {\rm Hom}_\mathbb{C}(V_w(\lambda), \mathbb{C})$.
From this, we deduce that 
\begin{align*}
H^0(X^w, \mathcal{L}_\lambda) = H^0(\widetilde{w_0} X_{w_0 w}, \mathcal{L}_\lambda) \simeq (\widetilde{w_0} V_{w_0 w}(\lambda))^\ast = V^w (\lambda)^\ast
\end{align*}
as $B^-$-modules. 
Let $\pi_w \colon H^0(G/B, \mathcal{L}_\lambda) \rightarrow H^0(X_w, \mathcal{L}_\lambda)$ (resp., $\pi^w \colon H^0(G/B, \mathcal{L}_\lambda) \rightarrow H^0(X^w, \mathcal{L}_\lambda)$) denote the restriction map.

\begin{prop}[{\cite[Proposition 3.2.3 (i) and equation (4.1)]{Kas4}}] 
For $w \in W$ and $\lambda \in P_+$, there uniquely exists a subset $\mathcal{B}_w(\lambda)$ of $\mathcal{B}(\lambda)$ such that $\{\pi_w (G^{\rm up} _\lambda(b)) \mid b \in \mathcal{B}_w(\lambda)\}$ forms a $\c$-basis of $H^0(X_w, \mathcal{L}_\lambda)$ and such that $\pi_w (G^{\rm up} _\lambda(b)) = 0$ for all $b \in \mathcal{B}(\lambda) \setminus \mathcal{B}_w(\lambda)$.
Similarly, there uniquely exists a subset $\mathcal{B}^w(\lambda)$ of $\mathcal{B}(\lambda)$ such that it satisfies analogous conditions for $\pi^w$.
\end{prop}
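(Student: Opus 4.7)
The plan is to deduce this statement from Kashiwara's compatibility theorem relating the global crystal basis of $V(\lambda)$ to Demazure modules. The key input is the \emph{lower} global basis $\{G_\lambda^{\rm low}(b) \mid b \in \mathcal{B}(\lambda)\}$ of $V(\lambda)$, which is dual to the upper global basis $\{G_\lambda^{\rm up}(b) \mid b \in \mathcal{B}(\lambda)\}$ of $H^0(G/B, \mathcal{L}_\lambda) \simeq V(\lambda)^\ast$ under the canonical pairing. Kashiwara proves (by induction on $\ell(w)$ along a reduced word, using the behavior of the global basis under the operators $\bigcup_{k \in \z_{\geq 0}} \tilde{f}_i^k$) that there is a subset $\mathcal{B}_w(\lambda) \subseteq \mathcal{B}(\lambda)$ such that $\{G_\lambda^{\rm low}(b) \mid b \in \mathcal{B}_w(\lambda)\}$ is a $\c$-basis of the Demazure module $V_w(\lambda)$.

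First I would quote this compatibility theorem and then simply dualize it. The inclusion $V_w(\lambda) \hookrightarrow V(\lambda)$ dualizes to the restriction map $\pi_w \colon H^0(G/B, \mathcal{L}_\lambda) \twoheadrightarrow H^0(X_w, \mathcal{L}_\lambda) \simeq V_w(\lambda)^\ast$. By the duality between upper and lower global bases, the image $\pi_w(G_\lambda^{\rm up}(b))$ equals the dual basis vector of $G_\lambda^{\rm low}(b)|_{V_w(\lambda)}$ inside $V_w(\lambda)^\ast$ when $b \in \mathcal{B}_w(\lambda)$, and vanishes otherwise. This yields simultaneously the basis property of $\{\pi_w(G^{\rm up}_\lambda(b)) \mid b \in \mathcal{B}_w(\lambda)\}$ and the vanishing $\pi_w(G^{\rm up}_\lambda(b)) = 0$ for $b \notin \mathcal{B}_w(\lambda)$. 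Uniqueness of $\mathcal{B}_w(\lambda)$ is then automatic: it is characterized intrinsically as $\{b \in \mathcal{B}(\lambda) \mid \pi_w(G_\lambda^{\rm up}(b)) \neq 0\}$.

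For the opposite Demazure case, I would use the identity $\widetilde{w_0} V_w(\lambda) = V^{w_0 w}(\lambda)$ from \eqref{eq:longest_Demazure} together with a compatible $\widetilde{w_0}$-action on the crystal basis (the Lusztig--Sch\"utzenberger involution on $\mathcal{B}(\lambda)$), defining $\mathcal{B}^w(\lambda)$ as the image of $\mathcal{B}_{w_0 w}(\lambda)$ under this involution and transporting the previous argument through the isomorphism. Alternatively, the whole first part can be rerun with $B$ replaced by $B^-$ and the highest weight vector $v_\lambda$ replaced by the lowest weight vector $\widetilde{w_0} v_\lambda$, producing $\mathcal{B}^w(\lambda)$ directly. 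The main obstacle is the input from Step~1: Kashiwara's compatibility theorem itself, whose proof rests on the full theory of the canonical basis and quantum Demazure modules. Once that is invoked, the present statement is a formal exercise in dualization and needs no further geometric or combinatorial ingredient.
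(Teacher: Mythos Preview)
The paper does not supply its own proof of this proposition: it is quoted as background from \cite[Proposition 3.2.3 (i) and equation (4.1)]{Kas4} and immediately followed by the definition of Demazure and opposite Demazure crystals, with no argument. So there is no proof in the paper to compare against.

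Your sketch is an accurate summary of the standard argument behind the cited result. Kashiwara indeed proves that the lower global basis is compatible with the Demazure filtration of $V(\lambda)$, and the passage to the upper global basis of $H^0(G/B,\mathcal{L}_\lambda)$ is precisely the dualization you describe. Your treatment of the opposite case via the $\widetilde{w_0}$-involution (or equivalently by swapping $B$ and $B^-$) matches how \cite{Kas4} handles equation (4.1). The only caveat is that this is not a self-contained proof but a reduction to Kashiwara's theorem, which is exactly how the paper treats it as well: a citation, not an argument.
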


These subsets $\mathcal{B}_w(\lambda)$ and $\mathcal{B}^w(\lambda)$ are called a \emph{Demazure crystal} and an \emph{opposite Demazure crystal}, respectively. 

\begin{prop}[{\cite[Propositions 3.2.3 (ii), (iii) and 4.2]{Kas4}}]\label{p:properties of Demazure}
Let $w \in W$, and $\lambda \in P_+$.
\begin{enumerate}
\item[{\rm (1)}] It holds that $\tilde{e}_i \mathcal{B}_w(\lambda) \subseteq \mathcal{B}_w(\lambda) \cup \{0\}$ and $\tilde{f}_i \mathcal{B}^w(\lambda) \subseteq \mathcal{B}^w(\lambda) \cup \{0\}$ for all $i \in I$.  
\item[{\rm (2)}] If $s_i w < w$, then it holds that 
\begin{align*}
\mathcal{B}_w(\lambda) = \bigcup_{k \in \z_{\geq 0}} \tilde{f}_i ^k \mathcal{B}_{s_i w}(\lambda) \setminus \{0\}\quad {\it and}\quad \mathcal{B}^{s_i w}(\lambda) = \bigcup_{k \in \z_{\geq 0}} \tilde{e}_i ^k \mathcal{B}^w(\lambda) \setminus \{0\}.
\end{align*}
\end{enumerate}
\end{prop}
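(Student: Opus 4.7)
My approach is to translate the two claims into statements about the Demazure modules $V_w(\lambda)$ and opposite Demazure modules $V^w(\lambda)$ via the compatibility of Kashiwara's global basis with these submodules. The governing principle is that the Chevalley generators $e_i$ and $f_i$ act on an upper global basis vector $G^{\mathrm{up}}_\lambda(b)$ with a distinguished leading term proportional to $G^{\mathrm{up}}_\lambda(\tilde e_i b)$, respectively $G^{\mathrm{up}}_\lambda(\tilde f_i b)$, whenever the latter is nonzero. Consequently, stability of the span of a global-basis-indexed subset under $e_i$ or $f_i$ forces stability of the indexing subset under the corresponding Kashiwara operator (up to $\{0\}$).

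For part (1), $V_w(\lambda)$ is by construction a $B$-submodule of $V(\lambda)$ and is therefore stable under each $e_i$. Dualizing the characterization of $\mathcal{B}_w(\lambda)$ through the basis $\{\pi_w(G^{\mathrm{up}}_\lambda(b))\}_{b \in \mathcal{B}_w(\lambda)}$ of $H^0(X_w, \mathcal{L}_\lambda) \simeq V_w(\lambda)^\ast$, the $e_i$-stability of $V_w(\lambda)$ together with the leading-term principle above forces $\tilde e_i b \in \mathcal{B}_w(\lambda)$ whenever $\tilde e_i b \neq 0$. The statement for $\mathcal{B}^w(\lambda)$ follows by the parallel argument: $V^w(\lambda)$ is a $B^-$-submodule, hence stable under every $f_i$, and the same leading-term analysis applied to the $\tilde f_i$-action yields $\tilde f_i \mathcal{B}^w(\lambda) \subseteq \mathcal{B}^w(\lambda) \cup \{0\}$.

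For part (2), the module-level input is the recursive description of $V_w(\lambda)$ when $s_i w < w$: picking a reduced expression for $w$ starting with $s_i$, one verifies on generators that $V_w(\lambda)$ is the minimal $\mathfrak{p}_i$-submodule of $V(\lambda)$ containing $V_{s_i w}(\lambda)$, equivalently the $B$-module spanned by vectors $f_i^k v$ with $v \in V_{s_i w}(\lambda)$ and $k \geq 0$. Transporting this through the global-basis compatibility, iterated $f_i$-action translates into iterated $\tilde f_i$-action, and the closure under $B$ is absorbed because $\mathcal{B}_{s_i w}(\lambda)$ is already $\tilde e_i$-stable by part (1). This gives the first equality of (2). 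The opposite Demazure formula can then be obtained either by running the dual argument (swapping $B, e_i, \tilde e_i$ with $B^-, f_i, \tilde f_i$), or by exploiting the identification $\widetilde{w_0} V_w(\lambda) = V^{w_0 w}(\lambda)$ from \eqref{eq:longest_Demazure} and pushing the Demazure recursion through the induced crystal involution on $\mathcal{B}(\lambda)$.

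The principal obstacle is precisely the global-basis/Demazure-module compatibility invoked above: the assertion that $V_w(\lambda)$ is spanned by a subset of the lower global basis indexed by $\mathcal{B}_w(\lambda)$, and the analogous opposite statement. This is a genuinely quantum input, proved by constructing the quantized Demazure module $V_q(w, \lambda)$ from the braid-group action of $\widetilde{s_i}$ on $U_q(\mathfrak{g})$-modules, showing by induction on $\ell(w)$ that it inherits a compatible crystal basis, and then specializing at $q = 1$. Once this deep input is accepted, every remaining step is a formal translation through the duality between global bases on $V(\lambda)$ and on $V(\lambda)^\ast \simeq H^0(G/B, \mathcal{L}_\lambda)$, combined with finite-dimensional $\mathfrak{sl}_2$-arguments at the simple root $\alpha_i$.
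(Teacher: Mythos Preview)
The paper does not give a proof of this proposition at all: it is quoted directly from Kashiwara \cite[Propositions 3.2.3 (ii), (iii) and 4.2]{Kas4} and used as background. Your sketch is essentially a summary of Kashiwara's original argument in that reference---the compatibility of the global basis with the Demazure filtration, proved by induction on $\ell(w)$ via the quantized braid-group action, together with the leading-term behaviour of $e_i$ and $f_i$ on global basis vectors---so there is nothing to compare against the present paper, and your outline is faithful to the cited source.
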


For $\lambda \in P_+$, we have $-w_0 \lambda \in P_+$, and the crystal $\mathcal{B}(-w_0 \lambda)$ is identified with the dual crystal of $\mathcal{B}(\lambda)$ (see \cite[Example 1.2.10]{Kas4}). 
Under this identification, an opposite Demazure crystal $\mathcal{B}^w(\lambda)$ corresponds to the Demazure crystal $\mathcal{B}_{w w_0}(-w_0 \lambda)$. 
For $i \in I$, an \emph{$i$-string} $L$ is a subset of $\mathcal{B}(\lambda)$ with $b_L ^{\rm low} \in L$ such that $\tilde{f}_i b_L ^{\rm low} = 0$ and such that $L = \{\tilde{e}_i ^k b_L ^{\rm low} \mid k \in \z_{\geq 0}\} \setminus \{0\}$.
The following is called the \emph{string property} of opposite Demazure crystals.

\begin{prop}[{see \cite[Proposition 3.3.5]{Kas4}}]\label{p:string property}
Let $w \in W$, $\lambda \in P_+$, and $L$ an $i$-string of $\mathcal{B}(\lambda)$ for $i \in I$.
Then the intersection $\mathcal{B}^w(\lambda) \cap L$ is either $\emptyset$, $L$, or $\{b_L ^{\rm low}\}$.
\end{prop}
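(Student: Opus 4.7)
My approach is to derive the statement from the analogous string property for \emph{ordinary} Demazure crystals, via the duality identification $\mathcal{B}(-w_0\lambda)\simeq\mathcal{B}(\lambda)^\vee$ recalled in the paragraph immediately preceding the proposition. According to that remark, the opposite Demazure crystal $\mathcal{B}^w(\lambda)$ corresponds under this identification to the Demazure crystal $\mathcal{B}_{ww_0}(-w_0\lambda)$; and by the general definition of a dual crystal \cite[Example~1.2.10]{Kas4}, the bijection intertwines $\tilde{e}_i$ with $\tilde{f}_i$, hence carries $i$-strings to $i$-strings while exchanging their highest and lowest endpoints.

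Concretely, let $\phi\colon\mathcal{B}(-w_0\lambda)\xrightarrow{\sim}\mathcal{B}(\lambda)$ denote the bijection of underlying sets coming from the duality, and fix an $i$-string $L\subseteq\mathcal{B}(\lambda)$. Then $L':=\phi^{-1}(L)$ is an $i$-string of $\mathcal{B}(-w_0\lambda)$, and $\phi$ sends the highest element $b_{L'}^{\rm high}$ of $L'$ (characterized by $\tilde{e}_i b_{L'}^{\rm high}=0$) to the lowest element $b_L^{\rm low}$ of $L$ (characterized by $\tilde{f}_i b_L^{\rm low}=0$). Applying the string property for ordinary Demazure crystals \cite[Proposition~3.3.5]{Kas4} to $\mathcal{B}_{ww_0}(-w_0\lambda)$ gives
\[
\mathcal{B}_{ww_0}(-w_0\lambda)\cap L' \;\in\; \bigl\{\emptyset,\; L',\; \{b_{L'}^{\rm high}\}\bigr\},
\]
and pushing this equality forward along $\phi$ yields exactly the three possibilities $\{\emptyset,\; L,\; \{b_L^{\rm low}\}\}$ claimed in the proposition.

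The only point that needs careful verification is the correspondence $\phi(\mathcal{B}_{ww_0}(-w_0\lambda))=\mathcal{B}^w(\lambda)$ itself. This is already asserted in the excerpt, and can be read off from the isomorphism $V^w(\lambda)^\ast\cong V_{w_0w}(\lambda)^\ast$ implicit in \eqref{eq:longest_Demazure}, combined with the compatibility between upper global bases and the duality between highest- and lowest-weight modules. I expect the most delicate piece of the overall argument to lie not in this translation but in the ingredient \cite[Proposition~3.3.5]{Kas4} that I am invoking as a black box: its proof for a $j$-string with $j\neq i$ requires a rank-two crystal analysis of how $\bigcup_{k\geq 0}\tilde{f}_i^k$ interacts with $j$-strings, and it is precisely this analysis that the duality lets me avoid repeating. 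A self-contained alternative would be a descending induction on $\ell(w_0)-\ell(w)$, with base case $\mathcal{B}^{w_0}(\lambda)=\{b_\lambda\}$ and inductive step powered by the identity $\mathcal{B}^w(\lambda)=\bigcup_{k\geq 0}\tilde{e}_i^k\mathcal{B}^{s_iw}(\lambda)\setminus\{0\}$ (from \cref{p:properties of Demazure}(2), applied in the case $s_iw>w$), but this only reshuffles the same rank-two difficulty into the $j\neq i$ step of the induction.
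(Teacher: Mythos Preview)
Your approach is correct and is precisely the intended reading: the paper gives no proof of this proposition but merely cites \cite[Proposition~3.3.5]{Kas4}, and the placement of that citation immediately after the remark identifying $\mathcal{B}^w(\lambda)$ with $\mathcal{B}_{ww_0}(-w_0\lambda)$ under the duality $\mathcal{B}(\lambda)^\vee\simeq\mathcal{B}(-w_0\lambda)$ signals exactly the derivation you spell out. The only cosmetic point is that, depending on how the duality is realized, an $i$-string may correspond to an $i^\ast$-string rather than an $i$-string on the other side, but this does not affect the argument since the ordinary string property applies to strings of any index.
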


The action of $\widetilde{w_0}$ on $V(\lambda)$ induces a bijective involution $\eta_\lambda \colon \mathcal{B}(\lambda) \rightarrow \mathcal{B}(\lambda)$ (see, for instance, \cite[Sect.\ 2.1]{Mor}). 
By \eqref{eq:longest_Demazure}, it follows that 
\begin{equation}\label{eq:involution_longest element_for_Demazure_crystal}
\begin{aligned}
\eta_\lambda (\mathcal{B}_w(\lambda)) = \mathcal{B}^{w_0 w}(\lambda) & & \text{and} & & \eta_\lambda (\mathcal{B}^w(\lambda)) = \mathcal{B}_{w_0 w}(\lambda)
\end{aligned}
\end{equation}
for $w \in W$.
The natural projection $G \twoheadrightarrow G/B$ induces an open embedding $U^- \hookrightarrow G/B$ by which we identify $U^-$ with an affine open subvariety of $G/B$. 
Then the intersection $U^- \cap X_w$ in $G/B$ is regarded as a closed subvariety of $U^-$.
For $b \in \mathcal{B}(\infty)$, let $G_w^{\rm up}(b) \in \c[U^- \cap X_w]$ denote the restriction of $G^{\rm up}(b) \in \c[U^-]$. 

\begin{prop}[{see \cite[Proposition 3.2.5]{Kas4} (and also \cite[Corollary 3.20]{FO})}]\label{p:property_of_Demazure_infty}
For $w \in W$, there uniquely exists a subset $\mathcal{B}_w(\infty) \subseteq \mathcal{B}(\infty)$, called a \emph{Demazure crystal}, such that the following hold: 
\begin{itemize}
\item it holds that $\tilde{e}_i \mathcal{B}_w(\infty) \subseteq \mathcal{B}_w(\infty) \cup \{0\}$ for all $i \in I$;
\item if $s_i w < w$, then it holds that 
\begin{align*}
\mathcal{B}_w(\infty) = \bigcup_{k \in \z_{\geq 0}} \tilde{f}_i ^k \mathcal{B}_{s_i w}(\infty);
\end{align*}
\item the set $\{G_w^{\rm up}(b) \mid b \in \mathcal{B}_w(\infty)\}$ forms a $\c$-basis of $\c[U^- \cap X_w]$;
\item the equality $G_w^{\rm up}(b) = 0$ holds for all $b \in \mathcal{B}(\infty) \setminus \mathcal{B}_w (\infty)$.
\end{itemize}
\end{prop}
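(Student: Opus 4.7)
The plan is to prove existence and uniqueness simultaneously by strong induction on $\ell(w)$, identifying $\mathcal{B}_w(\infty)$ with the union $\bigcup_{\lambda \in P_+}\pi_\lambda^{-1}(\mathcal{B}_w(\lambda))$ of preimages of the already well-defined Demazure crystals under the projections of \cref{p:relations_crystals}. For the base case $w = e$, the subvariety $U^- \cap X_e$ is the single point $\{eB\}$, so $\c[U^- \cap X_e] = \c$, which together with condition (iv) forces $\mathcal{B}_e(\infty) = \{b_\infty\}$. For the inductive step, assume $\mathcal{B}_v(\infty)$ has been defined for every $v$ with $\ell(v) < \ell(w)$, choose any $i \in I$ with $s_i w < w$, and set
\[
\mathcal{B}_w(\infty) \coloneqq \bigcup_{k \in \z_{\geq 0}} \tilde{f}_i^{\,k}\, \mathcal{B}_{s_i w}(\infty),
\]
so that the second bullet holds by construction.

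The core of the argument is the key lemma
\[
\pi_\lambda\bigl(\mathcal{B}_w(\infty) \cap \widetilde{\mathcal{B}}(\lambda)\bigr) = \mathcal{B}_w(\lambda) \qquad \text{for every } \lambda \in P_+.
\]
Since $\bigcup_{\lambda \in P_+} \widetilde{\mathcal{B}}(\lambda) = \mathcal{B}(\infty)$ and $\mathcal{B}_w(\lambda)$ is already known to be independent of the reduced word for $w$, this lemma shows that the recursive definition above is independent of the choice of $i$, giving both well-definedness and uniqueness. To prove the lemma, one combines the analogous recursion on $\mathcal{B}(\lambda)$ from \cref{p:properties of Demazure}(2) with the intertwining $\tilde{f}_i \pi_\lambda = \pi_\lambda \tilde{f}_i$ of \cref{p:relations_crystals}, tracking how elements of $\widetilde{\mathcal{B}}(\lambda)$ can exit $\widetilde{\mathcal{B}}(\lambda)$ under iterated $\tilde{f}_i$; the string property governs exactly this. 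Property (i) then follows from the lemma together with \cref{p:properties of Demazure}(1): for any $b$ with $\tilde{e}_i b \in \mathcal{B}(\infty)$, one chooses $\lambda$ large enough that both $b$ and $\tilde{e}_i b$ belong to $\widetilde{\mathcal{B}}(\lambda)$, and transfers the stability $\tilde{e}_i \mathcal{B}_w(\lambda) \subseteq \mathcal{B}_w(\lambda) \cup \{0\}$ to $\mathcal{B}_w(\infty)$ through $\pi_\lambda$.

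For the geometric conditions (iii) and (iv), the observation is that, for $\lambda \in P_{++}$, the restriction $\pi_w\bigl(G_\lambda^{\rm up}(\eta_\lambda(b_\lambda))\bigr)$ of the lowest weight section is a nowhere-vanishing section of $\mathcal{L}_\lambda$ on the affine open $U^- \cap X_w \subseteq X_w$, providing a local trivialization. Division by this section yields surjections $H^0(X_w, \mathcal{L}_\lambda) \twoheadrightarrow \c[U^- \cap X_w]$ compatible with the tensor product $\mathcal{L}_\lambda \otimes \mathcal{L}_\mu \to \mathcal{L}_{\lambda+\mu}$, and the resulting colimit over $\lambda \in P_{++}$ is an isomorphism. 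Under this system, $\pi_w\bigl(G^{\rm up}_\lambda(\pi_\lambda(b))\bigr)$ matches $G^{\rm up}_w(b)$ up to a nonzero scalar. By the existence result for $\mathcal{B}_w(\lambda)$ recalled just before \cref{p:properties of Demazure}, the set $\{\pi_w(G^{\rm up}_\lambda(b')) \mid b' \in \mathcal{B}_w(\lambda)\}$ is a basis of $H^0(X_w, \mathcal{L}_\lambda)$ with $\pi_w(G^{\rm up}_\lambda(b')) = 0$ outside $\mathcal{B}_w(\lambda)$, and the key lemma transports this to the basis $\{G^{\rm up}_w(b) \mid b \in \mathcal{B}_w(\infty)\}$ of $\c[U^- \cap X_w]$ together with the vanishing $G^{\rm up}_w(b) = 0$ for $b \notin \mathcal{B}_w(\infty)$.

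The main obstacle is the key lemma. The delicate point is that after enough applications of $\tilde{f}_i$, elements of $\widetilde{\mathcal{B}}(\lambda)$ can be sent to $0$ by $\pi_\lambda$, so one must use the string property to confirm that exactly the portion of $\bigcup_{k} \tilde{f}_i^{\,k} \mathcal{B}_{s_i w}(\infty)$ lying in $\widetilde{\mathcal{B}}(\lambda)$ maps bijectively onto $\mathcal{B}_w(\lambda) = \bigcup_{k} \tilde{f}_i^{\,k} \mathcal{B}_{s_i w}(\lambda) \setminus \{0\}$, with no element outside this portion contributing.
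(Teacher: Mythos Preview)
The paper does not prove this proposition; it is quoted from \cite[Proposition 3.2.5]{Kas4} for properties (i), (ii) and from \cite[Corollary 3.20]{FO} for properties (iii), (iv). There is thus no in-paper argument to compare against, and I comment only on your sketch.

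Your inductive construction and the key lemma $\mathcal{B}_w(\infty) \cap \widetilde{\mathcal{B}}(\lambda) = \pi_\lambda^{-1}(\mathcal{B}_w(\lambda))$ are correct; this is the relation recorded as \eqref{eq:Demazure_infty_limit} in the paper. Both inclusions follow directly from the $\tilde{f}_i$-intertwining of \cref{p:relations_crystals} together with the recursion in \cref{p:properties of Demazure}(2), since $\pi_\lambda$ is bijective on $\widetilde{\mathcal{B}}(\lambda)$; the string property is not actually needed here. Well-definedness (independence of the choice of $i$), uniqueness, and property (i) then follow as you indicate.

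The gap is in (iii) and (iv). Your argument presupposes that, after dividing by the trivializing section on $U^-$ (which in the paper's conventions is $G_\lambda^{\rm up}(b_\lambda)$, not $G_\lambda^{\rm up}(\eta_\lambda(b_\lambda))$), the image of $G^{\rm up}_\lambda(\pi_\lambda(b))$ in $\c[U^-]$ equals $G^{\rm up}(b)$ up to a nonzero scalar for every $b \in \widetilde{\mathcal{B}}(\lambda)$. This is true, but it is exactly the compatibility of the upper global bases of $V(\lambda)^\ast$ and $\c[U^-]$ under the restriction map, obtained by dualizing the compatibility of lower global bases under the surjection $U_q^- \twoheadrightarrow V_q(\lambda)$ proved in \cite{Kas2, Kas3}; it is not a formal consequence of the colimit-of-sections picture you describe. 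Once that compatibility is taken as input, your transfer of (iii) and (iv) from $\mathcal{B}_w(\lambda)$ to $\mathcal{B}_w(\infty)$ is correct.
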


Writing $\widetilde{\mathcal{B}}_w(\lambda) \coloneqq \mathcal{B}_w(\infty) \cap \widetilde{\mathcal{B}}(\lambda)$ for $w \in W$ and $\lambda \in P_+$, it follows that
\begin{equation}\label{eq:Demazure_infty_limit}
\begin{aligned}
\mathcal{B}_w(\infty) = \bigcup_{\lambda \in P_+} \widetilde{\mathcal{B}}_w(\lambda)
\end{aligned}
\end{equation}
by \cite[Corollary 4.4.5]{Kas2}. 
We set $N \coloneqq \ell(w_0) = \dim_\c (G/B)$.

\begin{defi}[{see \cite[Sect.\ 1]{Lit}}]\label{d:string_parametrization}
Let ${\bm i} = (i_1, \ldots, i_N) \in R(w_0)$, and $\lambda \in P_+$.
For $b \in \mathcal{B}(\lambda)$ (resp., $b \in \mathcal{B}(\infty)$), we define $\Phi_{\bm i} (b) = (a_1, \ldots, a_N) \in \z_{\geq 0}^N$ by 
\[a_k \coloneqq \max\{a \in \z_{\geq 0} \mid \tilde{e}_{i_k} ^a \tilde{e}_{i_{k-1}} ^{a_{k-1}} \cdots \tilde{e}_{i_1} ^{a_1} b \neq 0\}\quad {\rm for}\ 1 \leq k \leq N.\]
It is called \emph{Berenstein--Littelmann--Zelevinsky's string parametrization} associated with ${\bm i}$. 
\end{defi}

The maps $\Phi_{\bm i} \colon \mathcal{B}(\lambda) \rightarrow \z_{\geq 0} ^N$ and $\Phi_{\bm i} \colon \mathcal{B}(\infty) \rightarrow \z_{\geq 0} ^N$ are indeed injective. 
Let $\mathcal{C}_{\bm i} \subseteq \r^N$ denote the smallest real closed cone containing $\Phi_{\bm i} (\mathcal{B}(\infty))$, which is called a \emph{string cone}.

\begin{prop}[{see \cite[Sect.~3.2]{BZ} and \cite[Sect.~1]{Lit}}]\label{p:string_cone_property}
For ${\bm i} \in R(w_0)$, the string cone $\mathcal{C}_{\bm i}$ is a rational convex polyhedral cone, and the equality $\Phi_{\bm i} (\mathcal{B}(\infty)) = \mathcal{C}_{\bm i} \cap \z^N$ holds.
\end{prop}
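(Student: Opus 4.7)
The plan is to prove both assertions simultaneously using Kashiwara's iterated embedding of $\mathcal{B}(\infty)$ into a tensor product of elementary crystals, as developed in \cite{NZ}. The inclusion $\Phi_{\bm i}(\mathcal{B}(\infty)) \subseteq \mathcal{C}_{\bm i} \cap \z^N$ is immediate from \cref{d:string_parametrization} and the defining property of $\mathcal{C}_{\bm i}$; the real content is to show that $\mathcal{C}_{\bm i}$ is rational polyhedral and that every lattice point of $\mathcal{C}_{\bm i}$ lies in the image.

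First I would invoke Kashiwara's strict embedding $\Psi_i \colon \mathcal{B}(\infty) \hookrightarrow \mathcal{B}(\infty) \otimes B_i$, where $B_i = \{b_i(-k) \mid k \in \z_{\geq 0}\}$ is the elementary crystal. Iterating along the reduced word ${\bm i} = (i_1, \ldots, i_N)$, and using that ${\bm i} \in R(w_0)$ so the $\tilde{e}$-walk eventually reaches $b_\infty$, I obtain a strict embedding
\[
\Psi_{\bm i} \colon \mathcal{B}(\infty) \hookrightarrow B_{i_1} \otimes B_{i_2} \otimes \cdots \otimes B_{i_N}.
\]
Comparing with \cref{d:string_parametrization}, one checks that the record of Kashiwara indices along the tensor factors reproduces $\Phi_{\bm i}$ precisely, so that $\Psi_{\bm i}(b) = b_{i_1}(-a_1) \otimes \cdots \otimes b_{i_N}(-a_N)$ whenever $\Phi_{\bm i}(b) = (a_1, \ldots, a_N)$. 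The tensor product rule for $\tilde{e}_j$ and $\varepsilon_j$ then translates membership in $\Psi_{\bm i}(\mathcal{B}(\infty))$ into a finite system of linear inequalities on $(a_1, \ldots, a_N)$ with integer coefficients (these are the classical string inequalities of \cite[Sect.~3.2]{BZ}). This realizes $\Phi_{\bm i}(\mathcal{B}(\infty))$ as the lattice points of a rational polyhedral cone $C \subseteq \r^N$, forcing $\mathcal{C}_{\bm i} = C$.

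For the reverse inclusion on lattice points, given an integer point $(a_1, \ldots, a_N) \in C$, I would construct a candidate preimage
\[
b \coloneqq \tilde{f}_{i_1}^{a_1} \tilde{f}_{i_2}^{a_2} \cdots \tilde{f}_{i_N}^{a_N} b_\infty \in \mathcal{B}(\infty),
\]
well-defined because $\tilde{f}_i$ never annihilates an element of $\mathcal{B}(\infty)$, and then verify $\Phi_{\bm i}(b) = (a_1, \ldots, a_N)$ by induction on $k$, using the inequalities to guarantee the equalities $\varepsilon_{i_k}(\tilde{e}_{i_{k-1}}^{a_{k-1}} \cdots \tilde{e}_{i_1}^{a_1} b) = a_k$ at each step. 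The main obstacle is producing and justifying the explicit inequalities: unpacking the tensor product rule for a composition of elementary crystals along a general reduced word requires careful bookkeeping of the $\varepsilon_j$-values as one moves across tensor factors, and this is the combinatorial heart of the Berenstein--Littelmann--Zelevinsky string-cone theorem.
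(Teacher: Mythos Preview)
The paper does not prove this proposition: it is stated with attribution to \cite[Sect.~3.2]{BZ} and \cite[Sect.~1]{Lit} and no argument is given. So there is no ``paper's own proof'' to compare against; you are sketching a proof of a result the paper takes as background.

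Your outline is broadly in the right spirit, but one step needs care. The standard Kashiwara embedding $\Psi_i \colon \mathcal{B}(\infty) \hookrightarrow \mathcal{B}(\infty) \otimes B_i$ from \cite{Kas4,NZ} records $\varepsilon_i^\ast(b)$ (the $\ast$-twisted value), not $\varepsilon_i(b)$, in the $B_i$-factor. Consequently the iterated embedding does \emph{not} literally reproduce $\Phi_{\bm i}$ as you claim; rather, as the paper itself records in \cref{p:star_string}(3), the Kashiwara embedding $\Psi_{\bm j}$ and the string parametrization $\Phi_{\bm i}$ are related by a coordinate reversal together with the twist $w \leftrightarrow w^{-1}$ on Demazure pieces. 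This does not break your argument---since $\mathcal{B}_{w_0}(\infty) = \mathcal{B}(\infty)$ and $w_0^{-1} = w_0$, the two parametrizations have the same total image up to reordering coordinates, so polyhedrality and the lattice-point statement transfer---but the identification $\Psi_{\bm i}(b) = b_{i_1}(-a_1) \otimes \cdots \otimes b_{i_N}(-a_N)$ with $(a_1,\ldots,a_N) = \Phi_{\bm i}(b)$ is not correct as written and would need to be replaced by the $\ast$-version (or by Littelmann's direct inductive argument in \cite[Sect.~1]{Lit}, which works with $\Phi_{\bm i}$ itself). Your final paragraph already concedes that the genuine content lies in establishing the explicit inequalities, which is exactly what the cited references carry out.
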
 

\begin{defi}[{see \cite[Definition 3.5]{Kav2} and \cite[Sect.\ 1]{Lit}}]\label{d:string_polytopes}
For ${\bm i} \in R(w_0)$ and $\lambda \in P_+$, we define a subset $\mathcal{S}_{\bm i} (\lambda) \subseteq \z_{>0} \times \z^N$ by 
\[\mathcal{S}_{\bm i} (\lambda) \coloneqq \bigcup_{k \in \z_{>0}} \{(k, \Phi_{\bm i}(b)) \mid b \in \mathcal{B}(k\lambda)\}.\] 
Denote by $\mathcal{C}_{\bm i} (\lambda) \subseteq \mathbb{R}_{\geq 0} \times \mathbb{R}^N$ the smallest real closed cone containing $\mathcal{S}_{\bm i} (\lambda)$. 
Then we define a subset $\Delta_{\bm i} (\lambda) \subseteq \mathbb{R}^N$ by 
\[\Delta_{\bm i} (\lambda) \coloneqq \{{\bm a} \in \mathbb{R}^N \mid (1, {\bm a}) \in \mathcal{C}_{\bm i} (\lambda)\},\] 
which is called \emph{Berenstein--Littelmann--Zelevinsky's string polytope}.
\end{defi}

\begin{prop}[{see \cite[Sect.\ 3.2]{BZ} and \cite[Sect.\ 1]{Lit}}]\label{string lattice points}
For ${\bm i} \in R(w_0)$ and $\lambda \in P_+$, the string polytope $\Delta_{\bm i} (\lambda)$ is a rational convex polytope, and the equality $\Delta_{\bm i} (\lambda) \cap \mathbb{Z}^N = \Phi_{\bm i} (\mathcal{B}(\lambda))$ holds.
\end{prop}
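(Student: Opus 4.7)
The plan is to deduce both claims by presenting $\Delta_{\bm i}(\lambda)$ as an explicit polyhedral cut of the string cone $\mathcal{C}_{\bm i}$ and by exploiting an additivity structure on $\mathcal{S}_{\bm i}(\lambda)$ coming from tensor products of crystal bases. Three steps are needed.

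First, I would show that $\mathcal{S}_{\bm i}(\lambda) \cup \{(0,{\bm 0})\}$ is an additive sub-semigroup of $\z_{\geq 0} \times \z^N$. The tensor product rule for crystal bases embeds the Cartan component generated by $b_{k_1\lambda}\otimes b_{k_2\lambda}$ as a copy of $\mathcal{B}((k_1+k_2)\lambda)$ inside $\mathcal{B}(k_1\lambda)\otimes\mathcal{B}(k_2\lambda)$, and a direct computation with the combinatorial rule for the $\tilde{e}_i$ on a tensor product yields $\Phi_{\bm i}(b_1\otimes b_2)=\Phi_{\bm i}(b_1)+\Phi_{\bm i}(b_2)$ on this component, once the conventions of \cref{d:string_parametrization} are fixed. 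It follows that $\mathcal{C}_{\bm i}(\lambda)$ is the closed convex cone over the single slice $\{1\}\times\Phi_{\bm i}(\mathcal{B}(\lambda))$, so $\Delta_{\bm i}(\lambda)$ is automatically bounded (at most $\#\mathcal{B}(\lambda)=\dim V(\lambda)<\infty$ candidate vertices) and the inclusion $\Phi_{\bm i}(\mathcal{B}(\lambda))\subseteq\Delta_{\bm i}(\lambda)\cap\z^N$ is immediate from \cref{d:string_polytopes}.

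Next, to get rational polyhedrality and identify the lattice points, I would combine \cref{p:string_cone_property} with an explicit description of $\widetilde{\mathcal{B}}(\lambda)\subseteq\mathcal{B}(\infty)$ in terms of $\Phi_{\bm i}$. \cref{p:relations_crystals} identifies $\pi_\lambda$ with a bijection $\widetilde{\mathcal{B}}(\lambda)\to\mathcal{B}(\lambda)$ commuting with every $\tilde{f}_i$, so that $\Phi_{\bm i}(\mathcal{B}(\lambda))=\Phi_{\bm i}(\widetilde{\mathcal{B}}(\lambda))$. Unwinding the condition $\pi_\lambda(b)\neq 0$ along the reduced word ${\bm i}$ produces, position by position, affine-linear rational inequalities bounding the $k$th coordinate of $\Phi_{\bm i}(b)$ by an expression in $\lambda$ and in the later coordinates; these are the usual ``upper bounds'' needed to keep the successive $\tilde f_i$'s non-zero in $\mathcal{B}(\lambda)$. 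Adjoining them to the rational polyhedral inequalities of $\mathcal{C}_{\bm i}$ yields a rational polyhedron $\widetilde{\Delta}_{\bm i}(\lambda)$ with $\widetilde{\Delta}_{\bm i}(\lambda)\cap\z^N=\Phi_{\bm i}(\mathcal{B}(\lambda))$ by construction.

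Finally, I would identify $\Delta_{\bm i}(\lambda)$ with $\widetilde{\Delta}_{\bm i}(\lambda)$. By Step 1 one has $\Delta_{\bm i}(\lambda)\subseteq\widetilde{\Delta}_{\bm i}(\lambda)$; for the reverse, the inequalities of Step 2 are homogeneous under the simultaneous scaling $(k,\lambda,{\bm a})\mapsto(mk,m\lambda,m{\bm a})$, so any ${\bm a}\in\widetilde{\Delta}_{\bm i}(\lambda)\cap\z^N$ satisfies $m{\bm a}\in\Phi_{\bm i}(\mathcal{B}(m\lambda))$ for every $m\geq 1$, placing $(1,{\bm a})$ in the closed cone $\mathcal{C}_{\bm i}(\lambda)$ and hence ${\bm a}\in\Delta_{\bm i}(\lambda)$. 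The main obstacle is precisely this last saturation claim: it is easy to produce some integer multiple of ${\bm a}$ in the semigroup $\mathcal{S}_{\bm i}(\lambda)$ using the additivity of Step 1, but that does not by itself recover $(1,{\bm a})\in\mathcal{S}_{\bm i}(\lambda)$. The homogeneity of the $\lambda$-inequalities in Step 2 is what makes the descent work, and verifying it carefully --- in particular checking that no ``hidden'' non-homogeneous inequality is needed to cut out $\widetilde{\mathcal{B}}(\lambda)$ inside $\mathcal{B}(\infty)$ --- is where the real content of the proof lies.
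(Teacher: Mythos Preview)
The paper does not prove this proposition; it is quoted from Berenstein--Zelevinsky and Littelmann. The explicit description invoked later in the paper (Section~4) is that $\Delta_{\bm i}(\lambda)$ equals the set of $(a_1,\ldots,a_N)\in\mathcal{C}_{\bm i}$ satisfying $a_j\le\langle\lambda-a_{j+1}\alpha_{i_{j+1}}-\cdots-a_N\alpha_{i_N},h_{i_j}\rangle$ for all $j$, which is precisely the $\widetilde{\Delta}_{\bm i}(\lambda)$ you build in Step~2; your Steps~2--3 follow Littelmann's line of argument.

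Your Step~1, however, has a genuine gap. The additivity $\Phi_{\bm i}(b_1\otimes b_2)=\Phi_{\bm i}(b_1)+\Phi_{\bm i}(b_2)$ for $b_1\otimes b_2$ \emph{in the Cartan component} yields the inclusion
\[
\Phi_{\bm i}(\mathcal{B}((k_1+k_2)\lambda))\subseteq\Phi_{\bm i}(\mathcal{B}(k_1\lambda))+\Phi_{\bm i}(\mathcal{B}(k_2\lambda)),
\]
which is the \emph{wrong direction} for the semigroup claim: an arbitrary pair $(b_1,b_2)$ need not give $b_1\otimes b_2$ in the Cartan component (already for $\mathfrak{sl}_2$, take $b_1=b_2=\tilde f_1 b_\lambda$). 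And even if the semigroup property were established, it would not by itself force $\mathcal{C}_{\bm i}(\lambda)$ to be the cone over the level-$1$ slice, which is what you use to deduce boundedness and the inclusion $\Delta_{\bm i}(\lambda)\subseteq\widetilde{\Delta}_{\bm i}(\lambda)$.

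Fortunately Step~1 is dispensable. Both inclusions between $\Delta_{\bm i}(\lambda)$ and $\widetilde{\Delta}_{\bm i}(\lambda)$ follow from the homogeneity you already note in Step~3: since $\Phi_{\bm i}(\mathcal{B}(k\lambda))\subseteq\widetilde{\Delta}_{\bm i}(k\lambda)=k\,\widetilde{\Delta}_{\bm i}(\lambda)$ for every $k\ge 1$, the cone $\mathcal{C}_{\bm i}(\lambda)$ sits inside the cone over $\{1\}\times\widetilde{\Delta}_{\bm i}(\lambda)$, giving $\Delta_{\bm i}(\lambda)\subseteq\widetilde{\Delta}_{\bm i}(\lambda)$; your argument for the reverse inclusion via rational points is correct. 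So the core of your proof is sound and matches the cited sources; only the tensor-product detour should be dropped.
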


The ring $\r[P_\r]$ of polynomial functions on $P_\r \coloneqq P \otimes_\z \r$ with $\r$-coefficients is naturally identified with the symmetric algebra ${\rm Sym}(P_\r^\ast)$ of $P_\r^\ast \coloneqq {\rm Hom}_\r(P_\r, \r)$. 
For $\lambda \in P_\r$, we define a derivation $\partial_\lambda$ on ${\rm Sym}(P_\r^\ast) \simeq \r[P_\r]$ by the Leibniz rule and by $\partial_\lambda (f) = f(\lambda)$ for $f \in P_\r^\ast$. 
Then the symmetric algebra ${\rm Sym}(P_\r)$ of $P_\r$ can be regarded as the ring of differential operators on $\r[P_\r]$ with $\r$-coefficients. 
The string polytopes $\Delta_{\bm i} (\lambda)$, $\lambda \in P_+$, inherit information of the cohomology ring of $G/B$ as follows. 

\begin{thm}[{\cite[Corollary 5.3]{Kav1}}]\label{t:Borel_description_polytope}
The cohomology ring $H^\ast(G/B; \r)$ of $G/B$ over $\r$ is isomorphic to the quotient ${\rm Sym}(P_\r)/J$ of ${\rm Sym}(P_\r)$ by the homogeneous ideal $J$ defined to be 
\[J \coloneqq \{D \in {\rm Sym}(P_\r) \mid D \cdot P_{G/B} = 0\},\] 
where $P_{G/B} \in \r[P_\r]$ denotes the homogeneous polynomial of degree $n\ (= {\it the\ rank\ of}\ G)$ on $P_\r$ determined by 
\[P_{G/B} (\lambda) = {\rm Vol}_N (\Delta_{\bm i} (\lambda))\quad {\it for\ all}\ \lambda \in P_+.\]
\end{thm}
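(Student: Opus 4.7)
The plan is to introduce a graded ring homomorphism $\phi \colon {\rm Sym}(P_\r) \to H^\ast(G/B; \r)$ via first Chern classes of line bundles, establish its surjectivity from the classical Borel description, and then identify its kernel with $J$ by combining a volume identity for the string polytope with Poincar\'e duality.

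First I would define $\phi$ by $\phi(\lambda) \coloneqq c_1(\mathcal{L}_\lambda) \in H^2(G/B; \r)$ for $\lambda \in P_\r$, extended multiplicatively. The Borel presentation of $H^\ast(G/B; \q)$ as the coinvariant algebra of $W$ asserts that the classes $c_1(\mathcal{L}_\lambda)$, $\lambda \in P$, already generate $H^\ast(G/B; \q)$ as a ring; tensoring with $\r$ gives the surjectivity of $\phi$. Since $H^\ast(G/B; \r)$ is concentrated in even degrees, $\phi$ carries ${\rm Sym}^k(P_\r)$ into $H^{2k}(G/B; \r)$.

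The key geometric input is the volume identity
\[
\int_{G/B} c_1(\mathcal{L}_\lambda)^N = N!\cdot {\rm Vol}_N(\Delta_{\bm i}(\lambda)) \qquad (\lambda \in P_+).
\]
I would derive this from the fact that $\Delta_{\bm i}(\lambda)$ is a Newton--Okounkov body of $(G/B, \mathcal{L}_\lambda)$: using the string parametrization $\Phi_{\bm i}$ of the upper global basis, one constructs a $\z^N$-valued highest-term valuation on $\c(G/B)$ whose image semigroup on the section ring $\bigoplus_k H^0(G/B, \mathcal{L}_{k\lambda})$ is precisely $\mathcal{S}_{\bm i}(\lambda)$; the Kaveh--Khovanskii (and Lazarsfeld--Musta\c{t}\u{a}) theorem on Newton--Okounkov bodies then yields the displayed formula. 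Since both sides depend polynomially on $\lambda \in P$, the right-hand side extends uniquely to a homogeneous element $P_{G/B} \in \r[P_\r]$ of the stated form, making the defining condition for $P_{G/B}$ well posed. By polarization this identity upgrades to
\[
\int_{G/B} \phi(F) = F \cdot P_{G/B} \quad (F \in {\rm Sym}^N(P_\r)),
\]
with the right-hand side interpreted via the differential-operator action of ${\rm Sym}(P_\r)$ on $\r[P_\r]$ recalled just before the statement, and $\int_{G/B}\phi(F)=0$ whenever $\deg F \neq N$.

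To identify $\ker\phi$ with $J$ I would argue degreewise. For $D \in {\rm Sym}^k(P_\r)$ and $E \in {\rm Sym}^{N-k}(P_\r)$ the above identity together with $\phi(DE)=\phi(D)\cup\phi(E)$ gives
\[
\int_{G/B} \phi(D)\cup \phi(E) = (DE)\cdot P_{G/B} = E \cdot (D \cdot P_{G/B}).
\]
Poincar\'e duality and the surjectivity of $\phi$ show that $\phi(D)=0$ in $H^{2k}(G/B; \r)$ if and only if the left-hand side vanishes for every $E \in {\rm Sym}^{N-k}(P_\r)$. Since the pairing ${\rm Sym}^{N-k}(P_\r) \times {\rm Sym}^{N-k}(P_\r^\ast) \to \r$, $(E,f)\mapsto E\cdot f$, is perfect, this is equivalent to $D \cdot P_{G/B} = 0$, i.e.\ $D \in J$. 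The same formula gives the reverse inclusion $J \subseteq \ker\phi$. Combining graded pieces yields the isomorphism ${\rm Sym}(P_\r)/J \simeq H^\ast(G/B; \r)$.

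The main obstacle is the volume identity, namely verifying that $\Delta_{\bm i}(\lambda)$ really is a Newton--Okounkov body of $(G/B, \mathcal{L}_\lambda)$. This requires a careful analysis of how the string parametrization $\Phi_{\bm i}$ interacts with the multiplication of upper global basis elements inside the section ring, in order to identify the valuation image semigroup with $\mathcal{S}_{\bm i}(\lambda)$. Once this input is in place, the passage from the top-intersection formula to the full isomorphism is a formal consequence of polarization, Poincar\'e duality, and the perfectness of the natural symmetric-algebra pairing.
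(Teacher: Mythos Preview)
The paper does not supply its own proof of this theorem; it is quoted from \cite[Corollary 5.3]{Kav1}. The only argument the paper gives is the remark immediately following the statement: by the Weyl dimension formula one has $P_{G/B}(\lambda) = \prod_{\alpha \in \Phi^+} (\lambda,\alpha)/(\rho,\alpha)$, and the classical fact that the annihilator of this product in ${\rm Sym}(P_\r)$ is the ideal generated by $W$-invariants of positive degree then identifies ${\rm Sym}(P_\r)/J$ with the coinvariant algebra, i.e.\ with $H^\ast(G/B;\r)$ via the Borel description.

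Your proposal is correct and takes a genuinely different route. Rather than computing $P_{G/B}$ explicitly and invoking the known presentation of the coinvariant algebra, you bypass the explicit formula entirely: you pair the polarized volume identity with Poincar\'e duality and the perfect pairing between ${\rm Sym}^{N-k}(P_\r)$ and ${\rm Sym}^{N-k}(P_\r^\ast)$ to pin down $\ker\phi$ directly. This is more conceptual and transfers verbatim to any family of polytopes whose volumes realize the top self-intersection numbers of the $\mathcal{L}_\lambda$. The cost is that you appeal to the Newton--Okounkov body machinery of \cite{Kav2} for the identity $\int_{G/B} c_1(\mathcal{L}_\lambda)^N = N!\,{\rm Vol}_N(\Delta_{\bm i}(\lambda))$, which is heavier than needed here: since $|\Delta_{\bm i}(k\lambda)\cap\z^N| = |\mathcal{B}(k\lambda)| = \dim_\c H^0(G/B,\mathcal{L}_{k\lambda})$ by \cref{string lattice points}, comparing leading coefficients in $k$ already gives the identity, without building the valuation.
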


Let $\Phi^+$ denote the set of positive roots, and set $\rho \coloneqq \frac{1}{2}\sum_{\alpha \in \Phi^+} \alpha \in P_+$.
By the Weyl dimension formula, we see that 
\begin{equation}\label{eq:Weyl_volume_formula}
\begin{aligned}
{\rm Vol}_N (\Delta_{\bm i} (\lambda)) = \prod_{\alpha \in \Phi^+} \frac{(\lambda, \alpha)}{(\rho, \alpha)}
\end{aligned}
\end{equation}
for $\lambda \in P_+$, where $(\cdot, \cdot)$ is a $W$-invariant inner product on $P_\r$.
Hence the description of $H^\ast(G/B; \r)$ in \cref{t:Borel_description_polytope} coincides with the Borel description (see \cite[Remark 5.4]{Kav1} for more details). 

\begin{rem}[{see, for instance, \cite{Man}}]\label{r:Borel_description_over_Z}
The Borel description holds for the cohomology ring $H^\ast(G/B; \z)$ over $\z$ when $G = SL_{n+1}(\c)$ or $G = Sp_{2n}(\c)$. 
Hence \cref{t:Borel_description_polytope} also holds for $H^\ast(G/B; \z)$ in these cases. 
\end{rem}

The cohomology ring $H^\ast(G/B; \z)$ is isomorphic to the Chow ring $A^\ast(G/B)$ of $G/B$ (see \cite[Example 19.1.11]{Ful}), and it is easy to see that $[X^{w_0 w}] = [\widetilde{w_0} X_w] = [X_w]$ in $A^\ast(G/B)$ for all $w \in W$.

For ${\bm i} = (i_1, \ldots, i_N) \in R(w_0)$, consider an infinite sequence ${\bm j} = (\ldots, j_k, \ldots, j_{N+1}, j_N, \ldots, j_1)$ of elements in $I$ such that $j_k = i_k$ for $1 \le k \le N$, $j_k \neq j_{k+1}$ for all $k \in \z_{\geq 1}$, and there exist infinitely many $k \in \z_{\geq 1}$ with $j_k = i$ for each $i \in I$. 
Following \cite{Kas4, NZ}, a crystal structure on 
\[\z^{\infty} \coloneqq \{(\ldots, a_k, \ldots, a_2, a_1) \mid a_k \in \z\ {\rm for}\ k \in \z_{\geq 1}\ {\rm and}\ a_k = 0\ {\rm for}\ k \gg 0\}\] 
is associated to ${\bm j}$ as follows. 
For $k \in \z_{\geq 1}$, $i \in I$, and ${\bm a} = (\ldots, a_l, \ldots, a_2, a_1) \in \z^\infty$, set 
\begin{align*}
&\sigma_k({\bm a}) \coloneqq a_k + \sum_{l > k} c_{j_k, j_l} a_l \in \z,\\
&\sigma^{(i)}({\bm a}) \coloneqq \max\{\sigma_k({\bm a}) \mid k \in \z_{\geq 1},\ j_k = i\} \in \z,\ {\rm and}\\
&M^{(i)}({\bm a}) \coloneqq \{k \in \z_{\geq 1} \mid j_k = i,\ \sigma_k({\bm a}) = \sigma^{(i)}({\bm a})\}.
\end{align*}
Since we have $a_l = 0$ for $l \gg 0$, the integers $\sigma_k({\bm a}), \sigma^{(i)}({\bm a})$ are well-defined, and it follows that $\sigma^{(i)}({\bm a}) \geq 0$. 
In addition, the cardinality of $M^{(i)}({\bm a})$ is finite if and only if $\sigma^{(i)}({\bm a}) > 0$. 
Let us define a crystal structure on $\z^\infty$ by 
\begin{align*}
&{\rm wt}({\bm a}) \coloneqq - \sum_{k = 1} ^\infty a_k \alpha_{j_k},\ \varepsilon_i({\bm a}) \coloneqq \sigma^{(i)}({\bm a}),\ \varphi_i({\bm a}) \coloneqq \varepsilon_i ({\bm a}) + \langle {\rm wt}({\bm a}), h_i \rangle,\\
&\tilde{e}_i {\bm a} \coloneqq 
\begin{cases}
(a_k - \delta_{k, \max M^{(i)}({\bm a})})_{k \in \z_{\geq 1}} &{\rm if}\ \sigma^{(i)}({\bm a}) > 0,\\
0 &{\rm otherwise},
\end{cases}\\
&\tilde{f}_i {\bm a} \coloneqq (a_k + \delta_{k, \min M^{(i)}({\bm a})})_{k \in \z_{\geq 1}} 
\end{align*}
for $i \in I$ and ${\bm a} = (\ldots, a_k, \ldots, a_2, a_1) \in \z^\infty$, where $\delta_{k, l}$ is the Kronecker delta.
This crystal is denoted by $\z^\infty _{\bm j}$. 

\begin{prop}[{see \cite[Sect.~2.4]{NZ}}]\label{p:star_string}
Let ${\bm i}$ and ${\bm j}$ be as above.
\begin{enumerate}
\item[{\rm (1)}] There exists a unique strict embedding of crystals $\Psi_{\bm j} \colon \mathcal{B}(\infty) \hookrightarrow \z^\infty _{\bm j}$ such that $\Psi_{\bm j} (b_\infty) = (\ldots, 0, \ldots, 0, 0)$, which is called the \emph{Kashiwara embedding} with respect to ${\bm j}$. 
\item[{\rm (2)}] If $(\ldots, a_k, \ldots, a_2, a_1) \in \Psi_{\bm j} (\mathcal{B} (\infty))$, then $a_k = 0$ for all $k > N$. 
\item[{\rm (3)}] For all $w \in W$, it holds that
\[\Psi_{\bm j} (\mathcal{B}_w (\infty)) = \{(\ldots, 0, 0, a_N, \ldots, a_2, a_1) \mid (a_1, \ldots, a_N) \in \Phi_{\bm i} (\mathcal{B}_{w^{-1}} (\infty))\}.\]
\end{enumerate}
\end{prop}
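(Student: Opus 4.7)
My plan is to establish the three claims using Kashiwara's iterated tensor product construction, the $\ast$-involution on $\mathcal{B}(\infty)$, and induction on $\ell(w)$ for the Demazure statement.

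For Claim (1), first I would recall Kashiwara's elementary crystal $B_i = \{b_i(-m) \mid m \in \z_{\geq 0}\}$ for each $i \in I$, on which $\tilde{e}_i$ and $\tilde{f}_i$ shift $m$ by $\mp 1$ and the other crystal operators act trivially. Kashiwara's key lemma produces a unique strict embedding $\mathcal{B}(\infty) \hookrightarrow \mathcal{B}(\infty) \otimes B_i$ sending $b_\infty \mapsto b_\infty \otimes b_i(0)$. Iterating this embedding along ${\bm j}$ and identifying the infinite tensor product $\cdots \otimes B_{j_2} \otimes B_{j_1}$ with $\z^\infty_{\bm j}$ via $(\ldots, a_2, a_1) \leftrightarrow \cdots \otimes b_{j_2}(-a_2) \otimes b_{j_1}(-a_1)$ produces $\Psi_{\bm j}$. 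A direct check using the tensor product rule, together with the constraints $j_k \neq j_{k+1}$ and $a_\ell = 0$ for $\ell \gg 0$ defining $\z^\infty_{\bm j}$, shows that the pulled-back crystal structure agrees with the formulas given in the text. Uniqueness follows from the connectedness of $\mathcal{B}(\infty)$ under the $\tilde{f}_i$-operators together with the pinning $\Psi_{\bm j}(b_\infty) = 0$.

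Claim (3) is the main point, and I would argue by induction on $\ell(w)$. The base case $w = e$ is immediate since $\mathcal{B}_e(\infty) = \{b_\infty\}$ and both sides reduce to the zero sequence. For the inductive step, suppose $s_i w < w$. By \cref{p:property_of_Demazure_infty}, $\mathcal{B}_w(\infty) = \bigcup_{k \geq 0} \tilde{f}_i^k \mathcal{B}_{s_i w}(\infty)$. The bridge between this $\tilde{f}_i$-description of the Demazure crystal and the $\tilde{e}_i$-description used by the string parametrization is Kashiwara's $\ast$-involution on $\mathcal{B}(\infty)$, which satisfies $\mathcal{B}_w(\infty)^\ast = \mathcal{B}_{w^{-1}}(\infty)$ and interchanges the two types of recursions. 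Concretely, the tensor product rule determining when $\tilde{e}_{j_k}$ acts on the $k$-th factor of $\cdots \otimes B_{j_2} \otimes B_{j_1}$ recovers precisely the greedy maximization procedure defining $\Phi_{\bm i}$ applied to $b^\ast$, read in the first $N$ positions of ${\bm j}$. Combining this identification with the inductive hypothesis for $s_i w$ yields the claim.

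Finally, Claim (2) follows from (3) applied at $w = w_0$: since $V_{w_0}(\lambda) = V(\lambda)$ forces $\mathcal{B}_{w_0}(\lambda) = \mathcal{B}(\lambda)$ for all $\lambda \in P_+$, one deduces via \eqref{eq:Demazure_infty_limit} that $\mathcal{B}_{w_0}(\infty) = \mathcal{B}(\infty)$, so every element of $\Psi_{\bm j}(\mathcal{B}(\infty))$ has the form $(\ldots, 0, 0, a_N, \ldots, a_1)$ with $(a_1, \ldots, a_N) \in \Phi_{\bm i}(\mathcal{B}(\infty)) \subseteq \z_{\geq 0}^N$. The main obstacle is the detailed verification in Claim (3) that the Kashiwara tensor product rule translates, under $\ast$, into the greedy maximization procedure of $\Phi_{\bm i}$; this requires reconciling two opposite orderings (Kashiwara reading coordinates from large $k$ towards $k = 1$, string parametrization reading from $k = 1$ towards $k = N$) and carefully tracking the $\ast$-involution's action on $\tilde{f}_i^k \mathcal{B}_{s_i w}(\infty)$, which is the essential content of \cite[Sect.\ 2.4]{NZ}.
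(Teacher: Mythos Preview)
The paper does not give its own proof of this proposition; it is stated with the attribution ``see \cite[Sect.~2.4]{NZ}'' and treated as an input from the literature, so there is no in-paper argument to compare against.

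Your outline is essentially the argument of \cite{Kas4} and \cite{NZ}: build $\Psi_{\bm j}$ by iterating Kashiwara's embedding $\mathcal{B}(\infty)\hookrightarrow\mathcal{B}(\infty)\otimes B_i$, and link it to $\Phi_{\bm i}$ through the $\ast$-involution together with $\mathcal{B}_w(\infty)^\ast=\mathcal{B}_{w^{-1}}(\infty)$. One structural remark: the cleaner route to part~(3) is not a direct induction on $\ell(w)$, but rather to first establish the pointwise identity that for every $b\in\mathcal{B}(\infty)$ the sequence $\Psi_{\bm j}(b)$ equals $(\ldots,0,0,a_N,\ldots,a_1)$ with $(a_1,\ldots,a_N)=\Phi_{\bm i}(b^\ast)$ (this is precisely what \cite[Sect.~2.4]{NZ} proves, via the $\ast$-twisted Kashiwara operators), and then apply $\mathcal{B}_w(\infty)^\ast=\mathcal{B}_{w^{-1}}(\infty)$ in a single stroke. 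Your inductive framing on $\ell(w)$ is not incorrect, but it entangles two logically separate facts---the global $\Psi_{\bm j}=\Phi_{\bm i}\circ\ast$ identity and the $\ast$-stability of Demazure crystals---and makes the ``main obstacle'' you flag (reconciling the two reading orders) look harder than it is once the pointwise identity is isolated.
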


\section{Semi-toric degenerations of Richardson varieties}\label{s:semi-toric}

In this section, we review results by Morier-Genoud \cite{Mor} on semi-toric degenerations of Richardson varieties. 
We first recall relations between string polytopes and Lusztig polytopes, following \cite{Mor}.
By \cite[Proposition 8.2]{Lus3}, each reduced word ${\bm i} \in R(w_0)$ induces a bijective map $b_{\bm i}$ from $\z^N_{\geq 0}$ to the canonical basis $\{G^{\rm low}(b) \mid b \in \mathcal{B}(\infty)\}$, which is called a \emph{Lusztig parametrization}.
For ${\bm i} \in R(w_0)$, let $\Upsilon_{\bm i} \colon \mathcal{B}(\infty) \rightarrow \z^N_{\geq 0}$ denote the bijective map induced by $b_{\bm i} ^{-1}$.
Through the bijective map $\pi_\lambda \colon \widetilde{\mathcal{B}} (\lambda) \xrightarrow{\sim} \mathcal{B} (\lambda)$ in \cref{p:relations_crystals}, the map $\Upsilon_{\bm i}$ induces a \emph{Lusztig parametrization} $\mathcal{B} (\lambda) \hookrightarrow \z^N_{\geq 0}$ of $\mathcal{B}(\lambda)$, which we denote by the same symbol $\Upsilon_{\bm i}$. 
Replacing $\Phi_{\bm i}$ by $\Upsilon_{\bm i}$ in \cref{d:string_polytopes}, we define $\widehat{\mathcal{S}}_{\bm i} (\lambda), \widehat{\mathcal{C}}_{\bm i} (\lambda)$, and $\widehat{\Delta}_{\bm i} (\lambda)$. 
This $\widehat{\Delta}_{\bm i} (\lambda)$ is called the \emph{Lusztig polytope} associated with ${\bm i}$ and $\lambda$ (see \cite{BZ} and \cite[Sect.\ 12]{FFL}).
Let $i \mapsto i^\ast$ denote the involution on $I$ given by $w_0 (\alpha_i) = -\alpha_{i^\ast}$ for $i \in I$.
For ${\bm i} = (i_1, \ldots, i_N) \in R(w_0)$, we set ${\bm i}^\ast \coloneqq (i_1 ^\ast, \ldots, i_N ^\ast) \in R(w_0)$, and define $\Omega_{{\bm i}, \lambda} \colon \r^N \rightarrow \r^N$, $(t_1, \ldots, t_N) \mapsto (t_1^\prime, \ldots, t_N^\prime)$, for $\lambda \in P_\r$ by 
\[t_k^\prime \coloneqq \langle \lambda, h_{i_k} \rangle -t_k -\sum_{k < j \leq N} c_{i_k, i_j} t_j\]
for $1 \leq k \leq N$. 
The map $\Omega_{{\bm i}, \lambda}$ is a unimodular affine transformation. 

\begin{thm}[{see \cite[Corollary 2.17]{Mor}}]\label{t:relation_string_Lusztig}
For ${\bm i} = (i_1, \ldots, i_N) \in R(w_0)$ and $\lambda \in P_+$, the map $\Upsilon_{{\bm i}^\ast} \circ \eta_\lambda \circ \Phi_{\bm i}^{-1}$ coincides with $\Omega_{{\bm i}, \lambda}$ on $\Phi_{\bm i} (\mathcal{B}(\lambda))$. In particular, the following equality holds:
\[\Omega_{{\bm i}, \lambda} (\Delta_{\bm i}(\lambda)) = \widehat{\Delta}_{{\bm i}^\ast} (\lambda).\]
\end{thm}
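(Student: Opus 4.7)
The plan is to reduce the polytope identity to a pointwise identity of maps, then verify the latter using crystal-theoretic properties of $\eta_\lambda$ together with known descriptions of the Lusztig and string parametrizations.

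First, note that $\Omega_{{\bm i}, \lambda}$ is a unimodular affine isomorphism of $\r^N$ preserving $\z^N$. Both $\Delta_{\bm i}(\lambda)$ and $\widehat{\Delta}_{{\bm i}^\ast}(\lambda)$ are rational convex polytopes whose lattice points for the dilates $k\lambda$ are recorded by the semigroups $\mathcal{S}_{\bm i}(k\lambda)$ and $\widehat{\mathcal{S}}_{{\bm i}^\ast}(k\lambda)$, respectively (by \cref{string lattice points} and its Lusztig analogue). Thus the polytope identity $\Omega_{{\bm i}, \lambda}(\Delta_{\bm i}(\lambda)) = \widehat{\Delta}_{{\bm i}^\ast}(\lambda)$ is a formal consequence of the pointwise identity $\Omega_{{\bm i}, \lambda} \circ \Phi_{\bm i} = \Upsilon_{{\bm i}^\ast} \circ \eta_\lambda$ on $\mathcal{B}(k\lambda)$ for all $k \in \z_{>0}$, so I focus on proving this.

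To prove the pointwise identity, fix $b \in \mathcal{B}(\lambda)$ and set $(t_1, \ldots, t_N) \coloneqq \Phi_{\bm i}(b)$, so that $\tilde{e}_{i_N}^{t_N} \cdots \tilde{e}_{i_1}^{t_1} b = b_\lambda$ with greedy maximality at each step (\cref{d:string_parametrization}). The involution $\eta_\lambda$, being induced by $\widetilde{w_0}$ (which sends $\alpha_i$ to $-\alpha_{i^\ast}$), satisfies $\eta_\lambda \circ \tilde{e}_i = \tilde{f}_{i^\ast} \circ \eta_\lambda$ and $\eta_\lambda(b_\lambda) = b_\lambda^{\rm low}$. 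Applying $\eta_\lambda$ to the string equation therefore converts it into
\[\eta_\lambda(b) = \tilde{e}_{i_1^\ast}^{t_1} \cdots \tilde{e}_{i_N^\ast}^{t_N} b_\lambda^{\rm low},\]
with each exponent still maximal at its step. One then reads off the Lusztig parameters of this element: the base case $b = b_\lambda$ yields $\Upsilon_{{\bm i}^\ast}(b_\lambda^{\rm low}) = (\langle \lambda, h_{i_1}\rangle, \ldots, \langle \lambda, h_{i_N}\rangle)$, matching $\Omega_{{\bm i}, \lambda}(0, \ldots, 0)$ and giving the affine shift; successive $\tilde{e}_{i_k^\ast}$-applications then modify the Lusztig coordinates in a lower-triangular linear fashion governed by the Cartan matrix, which will produce exactly the linear part $t_k \mapsto -t_k - \sum_{k < j} c_{i_k, i_j} t_j$.

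The hard step is rigorously justifying the last claim, since $\Upsilon_{{\bm i}^\ast}$ is defined through PBW bases rather than directly through crystal operators, so the transformation law for Lusztig coordinates under $\tilde{e}_{i_k^\ast}$ is not immediate. To handle this, I would lift the pointwise identity to $\mathcal{B}(\infty)$ using the Kashiwara embeddings of \cref{p:star_string} and the projection $\pi_\lambda$ of \cref{p:relations_crystals}, where the transition between string and Lusztig parametrizations is governed by Berenstein--Zelevinsky's explicit tropical formulas; restricting to $\widetilde{\mathcal{B}}(\lambda)$ then specializes these to $\Omega_{{\bm i}, \lambda}$. Alternatively, one could argue by induction on $\sum_k t_k$, starting from the explicit base case $b = b_\lambda$ and propagating via the rank-two crystal relations. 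The specific simple affine form of $\Omega_{{\bm i}, \lambda}$ emerges precisely because the ``dual'' ordering ${\bm i}^\ast$ on the Lusztig side matches ${\bm i}$ on the string side in the right way to linearize the transition.
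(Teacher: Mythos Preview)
The paper does not supply its own proof of this theorem; it is quoted as \cite[Corollary 2.17]{Mor} and used as a black box. So there is no ``paper's proof'' to compare against. Let me therefore assess your sketch on its own merits.

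Your reduction of the polytope identity to the pointwise identity is fine, and the crystal identity $\eta_\lambda \circ \tilde{e}_i = \tilde{f}_{i^\ast} \circ \eta_\lambda$ is correct and gives the displayed expression for $\eta_\lambda(b)$ in terms of $\tilde{e}_{i_k^\ast}$ applied to the lowest weight vector. The problem is that this is where the actual content lies, and you do not carry it out. The Lusztig parametrization $\Upsilon_{{\bm i}^\ast}$ is defined via PBW-type bases, not by a greedy crystal-operator procedure, so the expression $\eta_\lambda(b) = \tilde{e}_{i_1^\ast}^{t_1} \cdots \tilde{e}_{i_N^\ast}^{t_N} b_\lambda^{\rm low}$ does not by itself tell you what $\Upsilon_{{\bm i}^\ast}(\eta_\lambda(b))$ is. Your claim that ``successive $\tilde{e}_{i_k^\ast}$-applications then modify the Lusztig coordinates in a lower-triangular linear fashion governed by the Cartan matrix'' is precisely the statement to be proved, not an argument for it; in general the action of a single $\tilde{e}_i$ on Lusztig coordinates is piecewise-linear, not linear, and showing that the particular sequence here collapses to the affine map $\Omega_{{\bm i}, \lambda}$ is the whole point.

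You yourself flag this as ``the hard step'' and then offer only programmatic remarks: invoke Berenstein--Zelevinsky tropical formulas, or induct on $\sum_k t_k$. Neither is executed, and neither is obviously short. (The BZ transition between string and Lusztig parametrizations for the \emph{same} reduced word is already a nontrivial piecewise-linear map; here you additionally have the twist by $\ast$ and the involution $\eta_\lambda$.) As written, the proposal is an outline with the essential computation missing. If you want to complete it along your lines, the cleanest route is probably to follow Morier-Genoud's original argument, which goes through the explicit relation between Kashiwara's $\ast$-involution, the string parametrization, and Lusztig's parametrization; alternatively, cite the result as the paper does.
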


Morier-Genoud \cite{Mor} used this relation to give semi-toric degenerations of Richardson varieties from Caldero's toric degenerations \cite{Cal} of flag varieties.
For ${\bm i} \in R(w_0)$, we define a subset $\mathscr{S}_{\bm i} \subseteq P_+ \times \z^N$ by 
\[\mathscr{S}_{\bm i} \coloneqq \bigcup_{\lambda \in P_+} \{(\lambda, \Phi_{\bm i} (b)) \mid b \in \mathcal{B}(\lambda)\},\] 
and denote by $\mathscr{C}_{\bm i} \subseteq P_\r \times \r^N$ the smallest real closed cone containing $\mathscr{S}_{\bm i}$. 
For each $\lambda \in P_+$, the following equalities hold:
\begin{align*}
\mathscr{S}_{\bm i} \cap \pi_1 ^{-1} (\z_{>0} \lambda) = \omega_\lambda(\mathcal{S}_{\bm i} (\lambda))\quad {\rm and}\quad \mathscr{C}_{\bm i} \cap \pi_1 ^{-1} (\r_{\geq 0} \lambda) = \omega_\lambda(\mathcal{C}_{\bm i} (\lambda)),
\end{align*}
where $\pi_1 \colon P_\r \times \r^N \rightarrow P_\r$ denotes the first projection, and $\omega_\lambda \colon \r_{\geq 0} \times \r^N \rightarrow P_\r \times \r^N$ is defined by $\omega_\lambda (k, {\bm a}) \coloneqq (k\lambda, {\bm a})$.
In particular, it holds that $\pi_2(\mathscr{C}_{\bm i} \cap \pi_1 ^{-1} (\lambda)) = \Delta_{\bm i} (\lambda)$ for the second projection $\pi_2 \colon P_\r \times \r^N \rightarrow \r^N$.
Note that $\pi_2 (\mathscr{C}_{\bm i})$ coincides with the string cone $\mathcal{C}_{\bm i}$, and we have $\pi_2 (\mathscr{S}_{\bm i}) = \Phi_{\bm i} (\mathcal{B}(\infty))$.

\begin{prop}[{see \cite[Sect.\ 3.2]{BZ} and \cite[Sect.\ 1]{Lit}}] 
The real closed cone $\mathscr{C}_{\bm i}$ is a rational convex polyhedral cone, and the equality $\mathscr{C}_{\bm i} \cap (P_+ \times \z^N) = \mathscr{S}_{\bm i}$ holds.
\end{prop}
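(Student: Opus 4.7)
The plan is to first exhibit $\mathscr{C}_{\bm i}$ as the solution set of a finite system of rational linear inequalities homogeneous in $(\lambda, {\bm a})$, and then to deduce the lattice-point equality from the slice formula already recorded in the excerpt together with \cref{string lattice points}.

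The key input is the Berenstein--Zelevinsky--Littelmann polyhedral description of string polytopes: there exist a finite index set $J$ and $\r$-linear functionals $F_j \colon \r^N \to \r$, $L_j \colon P_\r \to \r$ with rational coefficients such that
\begin{equation*}
\Delta_{\bm i}(\lambda) = \{{\bm a} \in \mathcal{C}_{\bm i} \mid F_j({\bm a}) \leq L_j(\lambda)\ \text{for all}\ j \in J\}
\end{equation*}
for every $\lambda \in P_+$. Combining these inequalities with a finite rational system cutting out the string cone $\mathcal{C}_{\bm i}$ (see \cref{p:string_cone_property}) and the finite rational system defining the dominant cone $\r_{\geq 0} P_+ \subseteq P_\r$ yields a rational convex polyhedral cone $\widetilde{\mathscr{C}}_{\bm i} \subseteq P_\r \times \r^N$ whose slice over each $\lambda \in P_+$ is $\{\lambda\} \times \Delta_{\bm i}(\lambda)$.

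Next I would identify $\widetilde{\mathscr{C}}_{\bm i}$ with $\mathscr{C}_{\bm i}$. The slice identity $\pi_2(\mathscr{C}_{\bm i} \cap \pi_1^{-1}(\lambda)) = \Delta_{\bm i}(\lambda)$ together with positive homogeneity of $\mathscr{C}_{\bm i}$ shows that the two cones agree on every ray $\r_{\geq 0}\lambda$ with $\lambda \in P_+$; since these rays are dense in the dominant cone and both $\mathscr{C}_{\bm i}$ and $\widetilde{\mathscr{C}}_{\bm i}$ are closed and convex, I obtain $\widetilde{\mathscr{C}}_{\bm i} = \mathscr{C}_{\bm i}$, establishing the rational polyhedrality of $\mathscr{C}_{\bm i}$.

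For the lattice-point statement, the inclusion $\mathscr{S}_{\bm i} \subseteq \mathscr{C}_{\bm i} \cap (P_+ \times \z^N)$ is immediate from the definitions. Conversely, given $(\lambda, {\bm a}) \in \mathscr{C}_{\bm i}$ with $\lambda \in P_+$ and ${\bm a} \in \z^N$, the slice formula places ${\bm a}$ in $\Delta_{\bm i}(\lambda) \cap \z^N$, which equals $\Phi_{\bm i}(\mathcal{B}(\lambda))$ by \cref{string lattice points}, so $(\lambda, {\bm a}) \in \mathscr{S}_{\bm i}$. The main obstacle is the first step: one genuinely needs the linearity of the non--string-cone inequalities of $\Delta_{\bm i}(\lambda)$ in $\lambda$, a nontrivial input from BZ/Littelmann theory. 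Once this description is in hand, everything else reduces to formal manipulations with slices and closures of rational convex cones.
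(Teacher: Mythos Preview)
The paper does not prove this proposition; it is stated with citations to \cite{BZ} and \cite{Lit} and taken for granted. Your argument is essentially the one implicit in those sources, and the key input you identify---the linearity in $\lambda$ of the non--string-cone inequalities---is exactly the description
\[
\Delta_{\bm i}(\lambda) = \Bigl\{(a_1,\ldots,a_N) \in \mathcal{C}_{\bm i} \Bigm| a_j \leq \bigl\langle\lambda - \textstyle\sum_{k>j} a_k\alpha_{i_k},\, h_{i_j}\bigr\rangle\ \text{for all}\ j\Bigr\}
\]
that the paper itself records (from \cite{Lit}) just before \cref{l:opposite_Demazure_union_of_faces}.

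One caution: in your identification step you invoke the slice identity $\pi_2(\mathscr{C}_{\bm i}\cap\pi_1^{-1}(\lambda)) = \Delta_{\bm i}(\lambda)$, but the inclusion $\subseteq$ there is not obvious a priori---it says that taking the closed convex cone over \emph{all} of $\mathscr{S}_{\bm i}$ produces nothing new over the ray $\r_{\geq 0}\lambda$---and is essentially part of what you are proving. It is cleaner to bypass this and argue directly: $\mathscr{S}_{\bm i}\subseteq\widetilde{\mathscr{C}}_{\bm i}$ gives $\mathscr{C}_{\bm i}\subseteq\widetilde{\mathscr{C}}_{\bm i}$ by minimality, while for the reverse inclusion observe that $\widetilde{\mathscr{C}}_{\bm i}$, being rational polyhedral, is generated as a real cone by its lattice points, and each lattice point $(\lambda,{\bm a})$ has $\lambda\in P_+$ and ${\bm a}\in\Delta_{\bm i}(\lambda)\cap\z^N = \Phi_{\bm i}(\mathcal{B}(\lambda))$ by \cref{string lattice points}, hence lies in $\mathscr{S}_{\bm i}\subseteq\mathscr{C}_{\bm i}$. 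This also yields the lattice-point equality immediately.
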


Denote by $P_{++} \subseteq P_+$ the set of regular dominant integral weights. 
For $\lambda \in P_{++}$, we see by \eqref{eq:Weyl_volume_formula} that ${\rm Vol}_N(\Delta_{\bm i} (\lambda)) > 0$, that is, $\Delta_{\bm i} (\lambda)$ is $N$-dimensional.
For $v, w \in W$ such that $v \leq w$, we define a subset $\mathscr{S}_{\bm i}(X_w^v) \subseteq P_+ \times \z^N$ by 
\[\mathscr{S}_{\bm i}(X_w^v) \coloneqq \bigcup_{\lambda \in P_+} \{(\lambda, \Phi_{\bm i} (b)) \mid b \in \mathcal{B}_w(\lambda) \cap \mathcal{B}^v(\lambda)\},\] 
and denote by $\mathscr{C}_{\bm i}(X_w^v) \subseteq P_\r \times \r^N$ the smallest real closed cone containing $\mathscr{S}_{\bm i}(X_w^v)$. 
For each $\lambda \in P_+$, we set
\begin{align*}
&\Delta_{\bm i}(\lambda, X_w^v) \coloneqq \pi_2(\mathscr{C}_{\bm i}(X_w^v) \cap \pi_1 ^{-1} (\lambda)).
\end{align*}
It is shown in \cite{Mor} that the cone $\mathscr{C}_{\bm i}(X_w^v)$ induces semi-toric degenerations of $X_w^v$ as follows. 

\begin{thm}[{see the proof of \cite[Proposition 3.5 and Theorem 3.7]{Mor}}]\label{t:semi-toric_degenerations}
Let ${\bm i} \in R(w_0)$, and $v, w \in W$ such that $v \leq w$.
\begin{enumerate}
\item[{\rm (1)}] The set $\mathscr{C}_{\bm i}(X_w^v)$ is a union of faces of $\mathscr{C}_{\bm i}$, and it holds that
\begin{align*}
\mathscr{C}_{\bm i}(X_w^v) \cap (P_+ \times \z^N) &= \mathscr{S}_{\bm i}(X_w^v).
\end{align*}
\item[{\rm (2)}] For $\lambda \in P_+$, the set $\Delta_{\bm i}(\lambda, X_w^v)$ is a union of faces of $\Delta_{\bm i}(\lambda)$.
\item[{\rm (3)}] For $\lambda \in P_{++}$, the Richardson variety $X_w^v$ degenerates into the union of irreducible normal toric varieties corresponding to the faces of $\Delta_{\bm i}(\lambda, X_w^v)$.
\end{enumerate}
\end{thm}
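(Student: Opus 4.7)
The plan is to follow the strategy of Caldero for toric degenerations, extended by Morier-Genoud to Richardson varieties via the string parametrization and the compatibility of upper global bases with Demazure and opposite Demazure submodules. I would organize the argument into three steps matching the three parts of the statement, and the polyhedral analysis for parts (1) and (2) should feed directly into the degeneration argument in part (3).

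For part (1), I would establish the face decomposition by combining the string properties of $\mathcal{B}_w(\lambda)$ and $\mathcal{B}^v(\lambda)$. \cref{p:string property} says $\mathcal{B}^v(\lambda) \cap L \in \{\emptyset, L, \{b_L^{\rm low}\}\}$ for every $i$-string $L$; reading this through $\Phi_{\bm i}$, the first coordinate $a_1$ (which parametrizes the $i_1$-string through $b$) is either unconstrained, forbidden, or locked to its maximum along each string, i.e., a facet condition. An induction on $\ell(v)$ using \cref{p:properties of Demazure}(2), and a symmetric induction on $\ell(w_0 w)$ using the involution $\eta_\lambda$ together with $\eta_\lambda(\mathcal{B}_w(\lambda)) = \mathcal{B}^{w_0 w}(\lambda)$, show that both $\Phi_{\bm i}(\mathcal{B}_w(\lambda))$ and $\Phi_{\bm i}(\mathcal{B}^v(\lambda))$ are unions of faces of $\Delta_{\bm i}(\lambda)$; their intersection therefore is too. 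The homogeneity of the string polytopes in the scaling $\lambda \mapsto k\lambda$ promotes this level-by-level description to a face decomposition of $\mathscr{C}_{\bm i}(X_w^v)$ inside $\mathscr{C}_{\bm i}$, and saturation of each face semigroup (together with \cref{string lattice points}) yields $\mathscr{C}_{\bm i}(X_w^v) \cap (P_+ \times \z^N) = \mathscr{S}_{\bm i}(X_w^v)$.

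Part (2) is then immediate from part (1) by slicing $\mathscr{C}_{\bm i}(X_w^v)$ at $\pi_1^{-1}(\lambda)$. For part (3), I would implement Caldero's construction: the string parametrization $\Phi_{\bm i}$ yields a $\z^N$-valued lowest-term valuation on the multi-section ring $R \coloneqq \bigoplus_{\lambda \in P_+} H^0(G/B, \mathcal{L}_\lambda)$, with the upper global basis as an adapted basis, so that the associated graded ring is the semigroup algebra of $\mathscr{S}_{\bm i}$. By the defining property of Demazure and opposite Demazure crystals, the images $\{\pi^v \circ \pi_w(G_\lambda^{\rm up}(b)) \mid b \in \mathcal{B}_w(\lambda) \cap \mathcal{B}^v(\lambda)\}$ span $H^0(X_w^v, \mathcal{L}_\lambda)$, so the induced valuation on $\bigoplus_\lambda H^0(X_w^v, \mathcal{L}_\lambda)$ has associated graded equal to the semigroup algebra of $\mathscr{S}_{\bm i}(X_w^v)$. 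For $\lambda \in P_{++}$, where $\mathcal{L}_\lambda$ is very ample, the standard Rees-algebra construction converts this into a flat family whose special fiber is $\operatorname{Proj}$ of the associated graded ring; by part (1), this is a union of normal toric varieties matching the top-dimensional faces of $\Delta_{\bm i}(\lambda, X_w^v)$.

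The main obstacle lies in part (3): verifying flatness of the degeneration and that the special fiber decomposes as an honest (not merely set-theoretic) union of the claimed toric components. Flatness follows from constancy of the graded Hilbert function, guaranteed by the upper-global-basis identification at every weight. The component decomposition requires that the semigroup algebra of $\mathscr{S}_{\bm i}(X_w^v)$ be reduced, equivalent to saturation of $\mathscr{S}_{\bm i}(X_w^v)$ on each top-dimensional face, and that the affine dimension of every top-dimensional face equals $\dim_\c(X_w^v) = \ell(w) - \ell(v)$; both are consequences of the polyhedral analysis carried out in part (1).
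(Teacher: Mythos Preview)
The paper does not give its own proof of this theorem: it is stated as a citation, with ``see the proof of \cite[Proposition 3.5 and Theorem 3.7]{Mor}'' as the only attribution, and is then used as a black box (for instance in the proof of \cref{c:union_of_faces_string_cone}). So there is no in-paper argument to compare against; your outline is essentially a reconstruction of Morier-Genoud's strategy, and as such it is on the right track.

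A few points where your sketch is looser than what the cited proof actually requires. First, the composition ``$\pi^v \circ \pi_w$'' is not well-posed as written: $\pi_w$ lands in $H^0(X_w, \mathcal{L}_\lambda)$ and $\pi^v$ is defined on $H^0(G/B, \mathcal{L}_\lambda)$; what you need is the single restriction $H^0(G/B, \mathcal{L}_\lambda) \to H^0(X_w^v, \mathcal{L}_\lambda)$ and the fact that $\{G_\lambda^{\rm up}(b) \mid b \in \mathcal{B}_w(\lambda) \cap \mathcal{B}^v(\lambda)\}$ maps to a basis there while the rest maps to zero. This compatibility of the upper global basis with Richardson varieties is a genuine theorem (it is the main representation-theoretic input in \cite{Mor}), not a formal consequence of the separate Demazure and opposite Demazure compatibilities, and you should flag it as such rather than fold it into ``by the defining property''. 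Second, in part~(1) your ``symmetric induction'' for $\mathcal{B}_w(\lambda)$ via $\eta_\lambda$ is not literally symmetric in $\Phi_{\bm i}$: as \cref{t:relation_string_Lusztig} shows, $\eta_\lambda$ intertwines $\Phi_{\bm i}$ with the Lusztig parametrization $\Upsilon_{{\bm i}^\ast}$, not with another string parametrization, so the face structure on the $\mathcal{B}_w(\lambda)$ side comes through that affine change of coordinates rather than by repeating the string-property argument verbatim. Finally, reducedness of the special fiber and the dimension match for top faces are not automatic corollaries of the face decomposition alone; in \cite{Mor} they come from the saturation property together with the known Hilbert-function identity $|\mathcal{B}_w(\lambda) \cap \mathcal{B}^v(\lambda)| = \dim_\c H^0(X_w^v, \mathcal{L}_\lambda)$, which again rests on the Richardson compatibility above.
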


For $w \in W$, we write $\mathcal{S}_{\bm i}(X_w) \coloneqq \pi_2 (\mathscr{S}_{\bm i}(X_w))$ and $\mathcal{C}_{\bm i}(X_w) \coloneqq \pi_2 (\mathscr{C}_{\bm i}(X_w))$.
By the definition of $\mathscr{S}_{\bm i}(X_w)$, it follows that 
\begin{align*}
\mathcal{S}_{\bm i}(X_w) = \bigcup_{\lambda \in P_+} \Phi_{\bm i} (\mathcal{B}_w (\lambda)) = \Phi_{\bm i} (\mathcal{B}_w (\infty)).
\end{align*}

\begin{cor}\label{c:union_of_faces_string_cone}
For ${\bm i} \in R(w_0)$ and $w \in W$, the set $\mathcal{C}_{\bm i}(X_w)$ is a union of faces of $\mathcal{C}_{\bm i}$, and it holds that $\mathcal{C}_{\bm i}(X_w) \cap \z^N = \mathcal{S}_{\bm i}(X_w)$.
\end{cor}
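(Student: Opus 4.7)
The approach is to deduce the corollary from \cref{t:semi-toric_degenerations} by specializing to $v = e$ and then pushing forward along the projection $\pi_2$. Since $X^e = G/B$, we have $X_w^e = X_w \cap X^e = X_w$, and $\mathcal{B}^e(\lambda) = \mathcal{B}(\lambda)$ gives $\mathcal{B}_w(\lambda) \cap \mathcal{B}^e(\lambda) = \mathcal{B}_w(\lambda)$. Hence $\mathscr{S}_{\bm i}(X_w^e) = \mathscr{S}_{\bm i}(X_w)$ and $\mathscr{C}_{\bm i}(X_w^e) = \mathscr{C}_{\bm i}(X_w)$, so \cref{t:semi-toric_degenerations}(1) already yields that $\mathscr{C}_{\bm i}(X_w)$ is a union of faces of $\mathscr{C}_{\bm i}$ with $\mathscr{C}_{\bm i}(X_w) \cap (P_+ \times \mathbb{Z}^N) = \mathscr{S}_{\bm i}(X_w)$. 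Writing $P_+^{\mathbb{R}} \coloneqq \mathbb{R}_{\geq 0} \cdot P_+ \subseteq P_{\mathbb{R}}$ for the real dominant cone, the remaining task is to transfer both conclusions through $\pi_2$.

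The crucial input is a saturation property: $\mathscr{C}_{\bm i}(X_w) + (P_+^{\mathbb{R}} \times \{{\bm 0}\}) \subseteq \mathscr{C}_{\bm i}(X_w)$. By induction on $\ell(w)$ via \cref{p:properties of Demazure}(2), starting from $\mathcal{B}_e(\mu) = \{b_\mu\}$, one has $b_\mu \in \mathcal{B}_w(\mu)$ for every $w \in W$ and $\mu \in P_+$, so $(\mu, {\bm 0}) \in \mathscr{S}_{\bm i}(X_w)$ and thus $P_+^{\mathbb{R}} \times \{{\bm 0}\} \subseteq \mathscr{C}_{\bm i}(X_w)$. More generally, Kashiwara's characterization of $\widetilde{\mathcal{B}}(\lambda)$ combined with \eqref{eq:Demazure_infty_limit} gives the monotonicity $\widetilde{\mathcal{B}}_w(\lambda) \subseteq \widetilde{\mathcal{B}}_w(\lambda + \mu)$ for $\mu \in P_+$, and the compatibility $\Phi_{\bm i}(\pi_\lambda(\tilde b)) = \Phi_{\bm i}(\tilde b)$ for $\tilde b \in \widetilde{\mathcal{B}}(\lambda)$ (using that $\pi_\lambda$ commutes with $\tilde{e}_i$ by \cref{p:relations_crystals}) then yields $\mathscr{S}_{\bm i}(X_w) + (P_+ \times \{{\bm 0}\}) \subseteq \mathscr{S}_{\bm i}(X_w)$; passing to cone closures produces the saturation. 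Using this, I argue that each maximal face $F$ of $\mathscr{C}_{\bm i}$ in $\mathscr{C}_{\bm i}(X_w)$ contains $P_+^{\mathbb{R}} \times \{{\bm 0}\}$: for $x_0$ in the relative interior of $F$, a small neighborhood in $\mathscr{C}_{\bm i}(X_w)$ is contained in $F$, so by saturation $x_0 + t(\mu, {\bm 0}) \in F$ for $\mu \in P_+^{\mathbb{R}}$ and small $t > 0$, and hence $(\mu, {\bm 0}) = \lim_{t \to \infty}(x_0 + t(\mu, {\bm 0}))/t \in F$ by closedness of the cone $F$. Such $P_+^{\mathbb{R}}$-saturated faces of $\mathscr{C}_{\bm i}$ are exactly the sets $\pi_2^{-1}(G) \cap \mathscr{C}_{\bm i}$ for $G$ a face of $\mathcal{C}_{\bm i}$, since any supporting functional $\ell(\lambda, {\bm a}) = \ell_1(\lambda) + \ell_2({\bm a})$ cutting out $F$ must have $\ell_1 \equiv 0$ (it vanishes on $P_+^{\mathbb{R}}$, which spans $P_{\mathbb{R}}$). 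Therefore $\pi_2(F)$ is a face of $\mathcal{C}_{\bm i}$, and $\mathcal{C}_{\bm i}(X_w) = \pi_2(\mathscr{C}_{\bm i}(X_w))$ is a union of such faces.

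For the lattice point equality, $\mathcal{S}_{\bm i}(X_w) \subseteq \mathcal{C}_{\bm i}(X_w) \cap \mathbb{Z}^N$ is immediate. For the converse, given ${\bm a} \in \mathcal{C}_{\bm i}(X_w) \cap \mathbb{Z}^N$, pick a lift $(\lambda_0, {\bm a}) \in F$ inside some maximal face $F \subseteq \mathscr{C}_{\bm i}(X_w)$; since $F \supseteq P_+^{\mathbb{R}} \times \{{\bm 0}\}$, the fiber over ${\bm a}$ in $F$ contains $\lambda_0 + P_+^{\mathbb{R}}$, and for $k$ sufficiently large one has $k\rho - \lambda_0 \in P_+^{\mathbb{R}}$, so $(k\rho, {\bm a}) \in F \cap (P_+ \times \mathbb{Z}^N) \subseteq \mathscr{S}_{\bm i}(X_w)$ by \cref{t:semi-toric_degenerations}(1); projecting gives ${\bm a} \in \mathcal{S}_{\bm i}(X_w)$. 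The main obstacle I anticipate is establishing the saturation $\mathscr{C}_{\bm i}(X_w) + (P_+^{\mathbb{R}} \times \{{\bm 0}\}) \subseteq \mathscr{C}_{\bm i}(X_w)$ rigorously: one must verify the monotonicity $\widetilde{\mathcal{B}}_w(\lambda) \subseteq \widetilde{\mathcal{B}}_w(\lambda + \mu)$ inside $\mathcal{B}(\infty)$ and that the string parameter is genuinely preserved when an element is regarded in $\widetilde{\mathcal{B}}(\lambda + \mu)$ rather than $\widetilde{\mathcal{B}}(\lambda)$, which requires care with how $\pi_\lambda$ interacts with the iterated raising operators $\tilde{e}_i$.
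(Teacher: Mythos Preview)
Your proposal is correct but follows a genuinely different route from the paper. The paper does not argue geometrically via saturation and projection of faces; instead it applies \cite[Lemma~3.6]{Mor} directly to the lattice set $\mathcal{S}_{\bm i}(X_w) = \Phi_{\bm i}(\mathcal{B}_w(\infty))$ inside $\mathcal{C}_{\bm i}$. That lemma says a subset of the lattice points of a rational cone is the set of lattice points in a union of faces provided (a) it is closed under positive integer scaling, and (b) adding any lattice point outside the subset lands outside the subset. Both hypotheses are checked by lifting the elements in question to $\mathscr{S}_{\bm i}$ via suitable $\lambda,\mu \in P_+$ (using \eqref{eq:Demazure_infty_limit}), invoking \cref{t:semi-toric_degenerations}(1) for $\mathscr{S}_{\bm i}(X_w)$, and then reading off the conclusion in $\mathcal{B}(\infty)$ from $\widetilde{\mathcal{B}}_w(\nu) = \widetilde{\mathcal{B}}(\nu) \cap \mathcal{B}_w(\infty)$.

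Your argument instead establishes the saturation $\mathscr{C}_{\bm i}(X_w) + (P_+^{\mathbb{R}}\times\{{\bm 0}\}) \subseteq \mathscr{C}_{\bm i}(X_w)$ and then shows that each maximal face of $\mathscr{C}_{\bm i}$ in the decomposition must contain $P_+^{\mathbb{R}}\times\{{\bm 0}\}$ and hence project to a face of $\mathcal{C}_{\bm i}$. This is more geometric and avoids reliance on the external lemma from \cite{Mor}; it also makes transparent why the projection $\pi_2$ carries the face decomposition along, which the paper's argument leaves implicit. On the other hand, the paper's route is shorter once the lemma is in hand, and it sidesteps the convex-geometric analysis of supporting functionals. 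Regarding your stated obstacle: the commutation of $\pi_\lambda$ with $\tilde{e}_i$ on $\widetilde{\mathcal{B}}(\lambda)$ follows formally from \cref{p:relations_crystals} (commutation with $\tilde{f}_i$) together with crystal axiom (iv) and the bijectivity of $\pi_\lambda|_{\widetilde{\mathcal{B}}(\lambda)}$, once one knows that $\widetilde{\mathcal{B}}(\lambda)$ is stable under $\tilde{e}_i$; the latter is standard from Kashiwara's description of $\widetilde{\mathcal{B}}(\lambda)$ via $\varepsilon_i^\ast$. Alternatively, the identity $\mathcal{S}_{\bm i}(X_w) = \Phi_{\bm i}(\mathcal{B}_w(\infty))$ recorded just before the corollary already encodes exactly the compatibility you need.
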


\begin{proof}
Let us apply \cite[Lemma 3.6]{Mor} to $\mathcal{S}_{\bm i}(X_w)$ using \cref{t:semi-toric_degenerations} (1). 
We first take $b \in \mathcal{B}(\infty)$ and $b^\prime \in \mathcal{B}(\infty) \setminus \mathcal{B}_w(\infty)$.
Then there exist $\lambda, \mu \in P_+$ such that $b \in \widetilde{\mathcal{B}}(\lambda)$ and $b^\prime \in \widetilde{\mathcal{B}}(\mu)$ by \eqref{eq:Demazure_infty_limit}.
Since we have $(\lambda, \Phi_{\bm i}(b)) \in \mathscr{S}_{\bm i}$ and $(\mu, \Phi_{\bm i}(b^\prime)) \in \mathscr{S}_{\bm i} \setminus \mathscr{S}_{\bm i}(X_w)$, \cref{t:semi-toric_degenerations} (1) implies that $(\lambda + \mu, \Phi_{\bm i}(b) + \Phi_{\bm i}(b^\prime)) \in \mathscr{S}_{\bm i} \setminus \mathscr{S}_{\bm i}(X_w)$.
Hence there exists $b'' \in \widetilde{\mathcal{B}}(\lambda + \mu) \setminus \widetilde{\mathcal{B}}_w(\lambda + \mu)$ such that $\Phi_{\bm i} (b'') = \Phi_{\bm i}(b) + \Phi_{\bm i}(b^\prime)$.
Since $\widetilde{\mathcal{B}}_w(\lambda + \mu) = \widetilde{\mathcal{B}}(\lambda + \mu) \cap \mathcal{B}_w(\infty)$, we have $b'' \notin \mathcal{B}_w(\infty)$, which implies that $\Phi_{\bm i}(b) + \Phi_{\bm i}(b^\prime) = \Phi_{\bm i} (b'') \notin \Phi_{\bm i} (\mathcal{B}_w (\infty)) = \mathcal{S}_{\bm i}(X_w)$.

We next take $b \in \mathcal{B}_w(\infty)$ and $k \in \z_{> 0}$. 
Then there exists $\lambda \in P_+$ such that $b \in \widetilde{\mathcal{B}}_w(\lambda)$ by \eqref{eq:Demazure_infty_limit}.
Since $(\lambda, \Phi_{\bm i} (b)) \in \mathscr{S}_{\bm i}(X_w)$, we see by \cref{t:semi-toric_degenerations} (1) that $(k\lambda, k\Phi_{\bm i} (b)) \in \mathscr{S}_{\bm i}(X_w)$.
Hence there exists $b^\prime \in \widetilde{\mathcal{B}}_w(k\lambda)$ such that $\Phi_{\bm i}(b^\prime) = k\Phi_{\bm i} (b)$, which implies that $k\Phi_{\bm i} (b) \in \Phi_{\bm i} (\mathcal{B}_w (\infty)) = \mathcal{S}_{\bm i}(X_w)$. 
From these, we can apply \cite[Lemma 3.6]{Mor} to $\mathcal{S}_{\bm i}(X_w)$, which implies that $\mathcal{S}_{\bm i}(X_w)$ is the set of lattice points in a union of faces of $\mathcal{C}_{\bm i}$.
Since $\mathcal{C}_{\bm i}(X_w)$ is the smallest real closed cone containing $\mathcal{S}_{\bm i}(X_w)$, we deduce the assertion of the corollary.
\end{proof}

\section{String parametrizations of opposite Demazure crystals}\label{s:string_opposite}

In this section, we give an explicit description of string parametrizations of opposite Demazure crystals.
To do that, we use the relation (\cref{t:relation_string_Lusztig}) between string polytopes and Lusztig polytopes.
Let ${\bm i} = (i_1, \ldots, i_N) \in R(w_0)$. 
Replacing $\Phi_{\bm i}$ by $\Upsilon_{\bm i}$ in the definitions of $\mathscr{S}_{\bm i}, \mathscr{C}_{\bm i}, \mathscr{S}_{\bm i}(X_w^v), \mathscr{C}_{\bm i}(X_w^v)$, and $\Delta_{\bm i}(\lambda, X_w^v)$, we obtain $\widehat{\mathscr{S}}_{\bm i}, \widehat{\mathscr{C}}_{\bm i}, \widehat{\mathscr{S}}_{\bm i} (X_w^v), \widehat{\mathscr{C}}_{\bm i} (X_w^v)$, and $\widehat{\Delta}_{\bm i}(\lambda, X_w^v)$, respectively.
Then it follows that $\pi_2 (\widehat{\mathscr{S}}_{\bm i}) = \Upsilon_{\bm i} (\mathcal{B}(\infty)) = \z_{\geq 0}^N$ and $\pi_2 (\widehat{\mathscr{C}}_{\bm i}) = \r_{\geq 0}^N$.
Define a unimodular transformation $\Omega_{\bm i} \colon P_\r \times \r^N \rightarrow P_\r \times \r^N$ by $\Omega_{\bm i}(\lambda, {\bm a}) \coloneqq (\lambda, \Omega_{{\bm i}, \lambda}({\bm a}))$ for $(\lambda, {\bm a}) \in P_\r \times \r^N$.
By \cref{t:relation_string_Lusztig}, it follows that $\Omega_{\bm i}(\mathscr{S}_{\bm i}) = \widehat{\mathscr{S}}_{{\bm i}^\ast}$ and $\Omega_{\bm i}(\mathscr{C}_{\bm i}) = \widehat{\mathscr{C}}_{{\bm i}^\ast}$.
In addition, we see by \eqref{eq:involution_longest element_for_Demazure_crystal} that $\Omega_{\bm i}(\mathscr{S}_{\bm i}(X_w^v)) = \widehat{\mathscr{S}}_{{\bm i}^\ast}(X_{w_0 v}^{w_0 w})$ and $\Omega_{\bm i}(\mathscr{C}_{\bm i}(X_w^v)) = \widehat{\mathscr{C}}_{{\bm i}^\ast}(X_{w_0 v}^{w_0 w})$.
Hence the following holds by \cref{t:semi-toric_degenerations}.

\begin{cor}\label{c:semi-toric_degenerations}
Let ${\bm i} \in R(w_0)$, and $v, w \in W$ such that $v \leq w$.
\begin{enumerate}
\item[{\rm (1)}] The set $\widehat{\mathscr{C}}_{\bm i}(X_w^v)$ is a union of faces of $\widehat{\mathscr{C}}_{\bm i}$, and it holds that
\begin{align*}
\widehat{\mathscr{C}}_{\bm i}(X_w^v) \cap (P_+ \times \z^N) &= \widehat{\mathscr{S}}_{\bm i}(X_w^v).
\end{align*}
\item[{\rm (2)}] The set $\widehat{\Delta}_{\bm i}(\lambda, X_w^v)$ is a union of faces of $\widehat{\Delta}_{\bm i}(\lambda)$.
\end{enumerate}
\end{cor}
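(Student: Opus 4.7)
The idea is to transport \cref{t:semi-toric_degenerations} from the string-parametrization side to the Lusztig-parametrization side via the unimodular transformation $\Omega_{\bm j}$ with ${\bm j} \coloneqq {\bm i}^\ast$. Concretely, set $w^\prime \coloneqq w_0 v$ and $v^\prime \coloneqq w_0 w$; since left multiplication by $w_0$ reverses the Bruhat order on $W$, the hypothesis $v \leq w$ translates to $v^\prime \leq w^\prime$, so \cref{t:semi-toric_degenerations} applies to the triple $({\bm j}, w^\prime, v^\prime)$.

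Next I would observe that $\Omega_{\bm j}$, being linear in $\lambda$ through the terms $\langle \lambda, h_{i_k} \rangle$ and in ${\bm a}$ through the remaining terms, extends to a linear automorphism of $P_\r \times \r^N$ that preserves the first coordinate and is lower triangular with $\pm 1$ on the diagonal in the second coordinate; hence it restricts to a bijection on $P_+ \times \z^N$. Combining this with the identities recalled in the paragraph preceding the corollary,
\[\Omega_{\bm j}(\mathscr{C}_{\bm j}) = \widehat{\mathscr{C}}_{\bm i}, \quad \Omega_{\bm j}(\mathscr{C}_{\bm j}(X_{w^\prime}^{v^\prime})) = \widehat{\mathscr{C}}_{\bm i}(X_w^v),\]
and the analogous equalities with $\mathscr{S}$ in place of $\mathscr{C}$, one immediately transports \cref{t:semi-toric_degenerations}(1): the face decomposition of $\mathscr{C}_{\bm j}(X_{w^\prime}^{v^\prime})$ inside $\mathscr{C}_{\bm j}$ is pushed to a face decomposition of $\widehat{\mathscr{C}}_{\bm i}(X_w^v)$ inside $\widehat{\mathscr{C}}_{\bm i}$, and the lattice-point equality $\mathscr{C}_{\bm j}(X_{w^\prime}^{v^\prime}) \cap (P_+ \times \z^N) = \mathscr{S}_{\bm j}(X_{w^\prime}^{v^\prime})$ is pushed to its Lusztig counterpart. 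This proves (1).

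For (2) I would restrict the same argument to the slice $\pi_1^{-1}(\lambda)$. On this slice $\Omega_{\bm j}$ acts by the unimodular affine isomorphism $\Omega_{{\bm j}, \lambda} \colon \Delta_{\bm j}(\lambda) \xrightarrow{\sim} \widehat{\Delta}_{{\bm j}^\ast}(\lambda) = \widehat{\Delta}_{\bm i}(\lambda)$ of \cref{t:relation_string_Lusztig}, and by construction it sends the face-union $\Delta_{\bm j}(\lambda, X_{w^\prime}^{v^\prime})$ to $\widehat{\Delta}_{\bm i}(\lambda, X_w^v)$; this transports \cref{t:semi-toric_degenerations}(2).

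The only real obstacle is bookkeeping: keeping straight the involution ${\bm i} \mapsto {\bm i}^\ast$ and the $w_0$-twist on $(v, w)$, and verifying that these two modifications combine correctly so that the hypotheses and conclusions of \cref{t:semi-toric_degenerations} exchange with those of the present corollary under $\Omega_{\bm j}$. Beyond this careful translation, no new geometric or combinatorial ingredient is required.
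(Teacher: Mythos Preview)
Your proposal is correct and follows essentially the same approach as the paper: the paper's proof consists precisely of the paragraph preceding the corollary, which records the identities $\Omega_{\bm i}(\mathscr{C}_{\bm i}) = \widehat{\mathscr{C}}_{{\bm i}^\ast}$ and $\Omega_{\bm i}(\mathscr{C}_{\bm i}(X_w^v)) = \widehat{\mathscr{C}}_{{\bm i}^\ast}(X_{w_0 v}^{w_0 w})$ and then says ``Hence the following holds by \cref{t:semi-toric_degenerations}.'' Your substitution ${\bm j} = {\bm i}^\ast$, $(w^\prime, v^\prime) = (w_0 v, w_0 w)$ is exactly the bookkeeping needed to invert these identities so that the target is $\widehat{\mathscr{C}}_{\bm i}(X_w^v)$ rather than $\widehat{\mathscr{C}}_{{\bm i}^\ast}(X_{w_0 v}^{w_0 w})$.
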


For $w \in W$, we write $\widehat{\mathcal{S}}_{\bm i}(X_w) \coloneqq \pi_2 (\widehat{\mathscr{S}}_{\bm i}(X_w))$ and $\widehat{\mathcal{C}}_{\bm i}(X_w) \coloneqq \pi_2 (\widehat{\mathscr{C}}_{\bm i}(X_w))$.
By the definition of $\widehat{\mathscr{S}}_{\bm i}(X_w)$, it follows that 
\begin{align*}
\widehat{\mathcal{S}}_{\bm i}(X_w) = \bigcup_{\lambda \in P_+} \Upsilon_{\bm i} (\mathcal{B}_w (\lambda)) = \Upsilon_{\bm i} (\mathcal{B}_w (\infty)).
\end{align*}
In a way similar to the proof of \cref{c:union_of_faces_string_cone}, we obtain the following by \cref{c:semi-toric_degenerations} (1). 

\begin{cor}\label{c:union_of_faces_PBW_cone}
For ${\bm i} \in R(w_0)$ and $w \in W$, the set $\widehat{\mathcal{C}}_{\bm i}(X_w)$ is a union of faces of $\r^N_{\geq 0}$, and it holds that $\widehat{\mathcal{C}}_{\bm i}(X_w) \cap \z^N = \widehat{\mathcal{S}}_{\bm i}(X_w)$.
\end{cor}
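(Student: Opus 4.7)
The plan is to mirror the proof of \cref{c:union_of_faces_string_cone} essentially verbatim, substituting the Lusztig parametrization $\Upsilon_{\bm i}$ for the string parametrization $\Phi_{\bm i}$ throughout, and invoking \cref{c:semi-toric_degenerations}~(1) in place of \cref{t:semi-toric_degenerations}~(1). Concretely, the goal is to verify the two hypotheses of \cite[Lemma 3.6]{Mor} for the subset $\widehat{\mathcal{S}}_{\bm i}(X_w) = \Upsilon_{\bm i}(\mathcal{B}_w(\infty)) \subseteq \z^N_{\geq 0}$, namely closure-type conditions under addition by elements outside the set and under positive integer scaling. Once these are in place, the lemma identifies $\widehat{\mathcal{S}}_{\bm i}(X_w)$ with the lattice points in a union of faces of the ambient cone $\pi_2(\widehat{\mathscr{C}}_{\bm i}) = \r^N_{\geq 0}$, and since $\widehat{\mathcal{C}}_{\bm i}(X_w)$ is by definition the smallest closed real cone containing $\widehat{\mathcal{S}}_{\bm i}(X_w)$, both assertions of the corollary follow at once.

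For the first (additivity) condition, I would take $b \in \mathcal{B}(\infty)$ and $b^\prime \in \mathcal{B}(\infty) \setminus \mathcal{B}_w(\infty)$, and use \eqref{eq:Demazure_infty_limit} to pick $\lambda,\mu \in P_+$ with $b \in \widetilde{\mathcal{B}}(\lambda)$ and $b^\prime \in \widetilde{\mathcal{B}}(\mu)$; by construction of $\widetilde{\mathcal{B}}_w(\mu) = \widetilde{\mathcal{B}}(\mu) \cap \mathcal{B}_w(\infty)$, one has $b^\prime \notin \widetilde{\mathcal{B}}_w(\mu)$. Thus $(\lambda, \Upsilon_{\bm i}(b)) \in \widehat{\mathscr{S}}_{\bm i}$ and $(\mu, \Upsilon_{\bm i}(b^\prime)) \in \widehat{\mathscr{S}}_{\bm i} \setminus \widehat{\mathscr{S}}_{\bm i}(X_w)$. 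By \cref{c:semi-toric_degenerations}~(1), $\widehat{\mathscr{C}}_{\bm i}(X_w)$ is a union of faces of $\widehat{\mathscr{C}}_{\bm i}$, so the same lemma applied in $P_\r \times \r^N$ forces $(\lambda+\mu, \Upsilon_{\bm i}(b)+\Upsilon_{\bm i}(b^\prime)) \in \widehat{\mathscr{S}}_{\bm i} \setminus \widehat{\mathscr{S}}_{\bm i}(X_w)$. Hence there exists $b^{\prime\prime} \in \widetilde{\mathcal{B}}(\lambda+\mu) \setminus \widetilde{\mathcal{B}}_w(\lambda+\mu)$ with $\Upsilon_{\bm i}(b^{\prime\prime}) = \Upsilon_{\bm i}(b)+\Upsilon_{\bm i}(b^\prime)$. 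Since $b^{\prime\prime} \notin \mathcal{B}_w(\infty)$, this yields $\Upsilon_{\bm i}(b) + \Upsilon_{\bm i}(b^\prime) \notin \Upsilon_{\bm i}(\mathcal{B}_w(\infty)) = \widehat{\mathcal{S}}_{\bm i}(X_w)$.

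For the scalar closure condition, I would take $b \in \mathcal{B}_w(\infty)$ and $k \in \z_{>0}$, pick $\lambda \in P_+$ with $b \in \widetilde{\mathcal{B}}_w(\lambda)$ by \eqref{eq:Demazure_infty_limit}, note $(\lambda, \Upsilon_{\bm i}(b)) \in \widehat{\mathscr{S}}_{\bm i}(X_w)$, and appeal again to \cref{c:semi-toric_degenerations}~(1) to conclude $(k\lambda, k\Upsilon_{\bm i}(b)) \in \widehat{\mathscr{S}}_{\bm i}(X_w)$, whence $k\Upsilon_{\bm i}(b) \in \widehat{\mathcal{S}}_{\bm i}(X_w)$. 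With both conditions established, \cite[Lemma 3.6]{Mor} completes the proof exactly as in \cref{c:union_of_faces_string_cone}.

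There is essentially no real obstacle here; the only point that needs to be double-checked is that the Lusztig parametrization $\Upsilon_{\bm i} \colon \mathcal{B}(\lambda) \hookrightarrow \z^N_{\geq 0}$ is compatible with $\Upsilon_{\bm i} \colon \mathcal{B}(\infty) \to \z^N_{\geq 0}$ through the bijection $\pi_\lambda \colon \widetilde{\mathcal{B}}(\lambda) \xrightarrow{\sim} \mathcal{B}(\lambda)$ of \cref{p:relations_crystals}, which is literally how the former is defined in the opening paragraph of Section~\ref{s:string_opposite}. Everything else is formal bookkeeping transported from the string side to the Lusztig side via the unimodular transformation $\Omega_{\bm i}$ recorded there.
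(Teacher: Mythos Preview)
Your proposal is correct and matches the paper's approach exactly: the paper simply states that the corollary is obtained ``in a way similar to the proof of \cref{c:union_of_faces_string_cone}'' using \cref{c:semi-toric_degenerations}~(1), and you have spelled out precisely that argument with $\Upsilon_{\bm i}$ in place of $\Phi_{\bm i}$.
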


For $\lambda \in P_+$, we see by \cite[Sect.~1]{Lit} that the string polytope $\Delta_{\bm i} (\lambda)$ is identical to the set of $(a_1, \ldots, a_N) \in \mathcal{C}_{\bm i}$ satisfying $a_j \le \langle\lambda -a_{j+1} \alpha_{i_{j+1}} - \cdots - a_N \alpha_{i_N}, h_{i_j}\rangle$ for all $1 \leq j \leq N$. 
For $1 \leq j \leq N$, let $F_j(\mathscr{C}_{\bm i})$ denote the facet of $\mathscr{C}_{\bm i}$ given by the equality
\[a_j = \langle \lambda, h_{i_j}\rangle - \sum_{j < k \leq N} a_k \langle\alpha_{i_k}, h_{i_j}\rangle\]
for $(\lambda, (a_1, \ldots, a_N)) \in \mathscr{C}_{\bm i}$.
For ${\bm k} = (k_1, \ldots, k_\ell)$ with $\ell \geq 0$ and with $1 \leq k_1 < \cdots < k_\ell \leq N$, we set 
\[F_{\bm k} (\mathscr{C}_{\bm i}) \coloneqq F_{k_1} (\mathscr{C}_{\bm i}) \cap \cdots \cap F_{k_\ell} (\mathscr{C}_{\bm i});\]
when $\ell = 0$, we think of ${\bm k}$ (resp., $F_{\bm k} (\mathscr{C}_{\bm i})$) as the empty sequence (resp., $\mathscr{C}_{\bm i}$).
Faces $F_j (\Delta_{\bm i}(\lambda))$ and $F_{\bm k} (\Delta_{\bm i}(\lambda))$ of $\Delta_{\bm i}(\lambda)$ are similarly defined. 
For $1 \leq j \leq N$, let $F_j(\widehat{\mathscr{C}}_{\bm i})$ denote the facet of $\widehat{\mathscr{C}}_{\bm i}$ given by $a_j = 0$ for $(\lambda, (a_1, \ldots, a_N)) \in \widehat{\mathscr{C}}_{\bm i}$. 
Replacing $F_j(\mathscr{C}_{\bm i})$ by $F_j(\widehat{\mathscr{C}}_{\bm i})$ in the definition of $F_{\bm k} (\mathscr{C}_{\bm i})$, we obtain $F_{\bm k} (\widehat{\mathscr{C}}_{\bm i})$.
Sets $F_j (\widehat{\Delta}_{\bm i}(\lambda)), F_{\bm k} (\widehat{\Delta}_{\bm i}(\lambda)), F_j (\r^N_{\geq 0}), F_{\bm k} (\r^N_{\geq 0})$ are similarly defined. 
Under the transformation $\Omega_{\bm i}$, the face $F_{\bm k} (\mathscr{C}_{\bm i})$ of $\mathscr{C}_{\bm i}$ corresponds to the face $F_{\bm k} (\widehat{\mathscr{C}}_{{\bm i}^\ast})$ of $\widehat{\mathscr{C}}_{{\bm i}^\ast}$.

\begin{lem}\label{l:opposite_Demazure_union_of_faces}
For $w \in W$, the set $\mathscr{C}_{\bm i}(X^w)$ coincides with a union of faces of $\mathscr{C}_{\bm i}$ of the form $F_{\bm k} (\mathscr{C}_{\bm i})$ for some ${\bm k}$. 
\end{lem}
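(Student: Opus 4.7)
The plan is to transfer the problem to the Lusztig (PBW) side via the unimodular transformation $\Omega_{\bm i}$. By \cref{t:relation_string_Lusztig} and the observations preceding \cref{c:semi-toric_degenerations}, $\Omega_{\bm i}$ identifies $\mathscr{C}_{\bm i}(X^w) = \mathscr{C}_{\bm i}(X_{w_0}^w)$ with $\widehat{\mathscr{C}}_{{\bm i}^\ast}(X_{w_0 w})$ and each face $F_{\bm k}(\mathscr{C}_{\bm i})$ with $F_{\bm k}(\widehat{\mathscr{C}}_{{\bm i}^\ast})$. It therefore suffices to show that for any ${\bm j} \in R(w_0)$ and $u \in W$ the cone $\widehat{\mathscr{C}}_{\bm j}(X_u)$ is a union of faces of the form $F_{\bm k}(\widehat{\mathscr{C}}_{\bm j})$. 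The advantage of this reformulation is that on the PBW side such a face is simply $\widehat{\mathscr{C}}_{\bm j} \cap V_{\bm k}$, where $V_{\bm k} = \{{\bm a} \in \r^N \mid a_{k_1} = \cdots = a_{k_\ell} = 0\}$.

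By \cref{c:union_of_faces_PBW_cone} I may write $\widehat{\mathcal{C}}_{\bm j}(X_u) = \bigcup_{{\bm k} \in S_u} F_{\bm k}(\r_{\geq 0}^N)$ for some collection $S_u$ of index sequences, and then establish the two inclusions
\[
\bigcup_{{\bm k} \in S_u} F_{\bm k}(\widehat{\mathscr{C}}_{\bm j}) \;\subseteq\; \widehat{\mathscr{C}}_{\bm j}(X_u) \;\subseteq\; \bigcup_{{\bm k} \in S_u} F_{\bm k}(\widehat{\mathscr{C}}_{\bm j})
\]
separately.

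For the first inclusion, my strategy is a lattice-point density argument. Each $F_{\bm k}(\widehat{\mathscr{C}}_{\bm j})$ is rational polyhedral, hence the closed convex hull of its lattice points, and since $\widehat{\mathscr{C}}_{\bm j}(X_u)$ is closed it is enough to show that every lattice point $(\lambda, {\bm a}) \in F_{\bm k}(\widehat{\mathscr{C}}_{\bm j})$ lies in $\widehat{\mathscr{S}}_{\bm j}(X_u)$. Such a point belongs to $\widehat{\mathscr{S}}_{\bm j}$, so ${\bm a} = \Upsilon_{\bm j}(b)$ for a unique $b \in \widetilde{\mathcal{B}}(\lambda) \subseteq \mathcal{B}(\infty)$; on the other hand ${\bm a} \in F_{\bm k}(\r_{\geq 0}^N) \cap \z^N \subseteq \widehat{\mathcal{S}}_{\bm j}(X_u) = \Upsilon_{\bm j}(\mathcal{B}_u(\infty))$ by \cref{c:union_of_faces_PBW_cone}, so the injectivity of $\Upsilon_{\bm j}$ on $\mathcal{B}(\infty)$ forces $b \in \mathcal{B}_u(\infty) \cap \widetilde{\mathcal{B}}(\lambda) = \widetilde{\mathcal{B}}_u(\lambda)$. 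Thus $(\lambda, {\bm a}) \in \widehat{\mathscr{S}}_{\bm j}(X_u)$, as required.

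For the reverse inclusion I would invoke \cref{c:semi-toric_degenerations}(1), which tells me that $\widehat{\mathscr{C}}_{\bm j}(X_u)$ is itself a union of faces of $\widehat{\mathscr{C}}_{\bm j}$, so it suffices to check, face by face, that each face $F$ of $\widehat{\mathscr{C}}_{\bm j}$ contained in $\widehat{\mathscr{C}}_{\bm j}(X_u)$ lies inside $F_{{\bm k}_0}(\widehat{\mathscr{C}}_{\bm j})$ for some ${\bm k}_0 \in S_u$. The image $\pi_2(F)$ is a convex cone contained in $\widehat{\mathcal{C}}_{\bm j}(X_u) \subseteq \bigcup_{{\bm k} \in S_u} V_{\bm k}$; the elementary fact that a convex set contained in a finite union of linear subspaces is contained in one of them produces some ${\bm k}_0 \in S_u$ with $\pi_2(F) \subseteq V_{{\bm k}_0}$, and hence $F \subseteq \widehat{\mathscr{C}}_{\bm j} \cap V_{{\bm k}_0} = F_{{\bm k}_0}(\widehat{\mathscr{C}}_{\bm j})$. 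I expect this reverse inclusion to be the main obstacle, since it requires combining the abstract face decomposition from \cref{c:semi-toric_degenerations}(1) with the coordinate-subspace structure of the projection coming from \cref{c:union_of_faces_PBW_cone}, and one must be careful that $\pi_2(F)$ is only a convex cone (not a priori a face of $\r_{\geq 0}^N$), so the convex-in-union-of-subspaces observation has to be explicitly invoked.
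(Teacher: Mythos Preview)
Your proof is correct and shares the paper's overall strategy: reduce via $\Omega_{\bm i}$ to the PBW side, then use \cref{c:union_of_faces_PBW_cone} to decompose $\widehat{\mathcal{C}}_{\bm j}(X_u)$ into coordinate faces $F_{\bm k}(\r_{\geq 0}^N)$ and lift this decomposition to $\widehat{\mathscr{C}}_{\bm j}(X_u)$. Your argument for the first inclusion is exactly the lattice-point computation the paper uses.

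The difference lies in the reverse inclusion. The paper does not split into two inclusions at all: it observes directly that
\[
\widehat{\mathscr{S}}_{\bm j}(X_u)=\{(\lambda,{\bm a})\in\widehat{\mathscr{S}}_{\bm j}\mid {\bm a}\in\Upsilon_{\bm j}(\mathcal{B}_u(\infty))\}
= \Bigl(\bigcup_{{\bm k}\in S_u} F_{\bm k}(\widehat{\mathscr{C}}_{\bm j})\Bigr)\cap(P_+\times\z^N),
\]
using only the bijectivity of $\Upsilon_{\bm j}$ on $\mathcal{B}(\infty)$ and the identity $\widetilde{\mathcal{B}}_u(\lambda)=\widetilde{\mathcal{B}}(\lambda)\cap\mathcal{B}_u(\infty)$. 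Since $\widehat{\mathscr{C}}_{\bm j}(X_u)$ is by definition the smallest closed cone containing the left-hand side, and the right-hand side is the full lattice of a rational polyhedral set, the equality of cones follows at once. In other words, the very same bijectivity argument you used for one inclusion already gives the other: if $(\lambda,{\bm a})\in\widehat{\mathscr{S}}_{\bm j}(X_u)$ then the corresponding $b$ lies in $\mathcal{B}_u(\infty)$, so ${\bm a}\in\Upsilon_{\bm j}(\mathcal{B}_u(\infty))$ lands in some $F_{\bm k}(\r_{\geq 0}^N)$, hence $(\lambda,{\bm a})\in F_{\bm k}(\widehat{\mathscr{C}}_{\bm j})$. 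Your detour through \cref{c:semi-toric_degenerations}(1) and the convex-set-in-a-union-of-subspaces lemma is valid but unnecessary; the paper's route is shorter and avoids invoking the abstract face decomposition altogether.
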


\begin{proof}
Since we have $\Omega_{\bm i}(\mathscr{C}_{\bm i}(X^w)) = \widehat{\mathscr{C}}_{{\bm i}^\ast}(X_{w_0 w})$, it suffices to prove that the set $\widehat{\mathscr{C}}_{\bm i}(X_w)$ coincides with a union of faces of the form $F_{\bm k} (\widehat{\mathscr{C}}_{\bm i})$ for some ${\bm k}$. 
By \cref{c:union_of_faces_PBW_cone}, the set $\widehat{\mathcal{C}}_{\bm i}(X_w)$ is a union of faces of the form $F_{\bm k} (\r^N_{\geq 0})$ for some ${\bm k}$. 
Writing 
\begin{equation}\label{eq:assumption_Demazure_of_PBW}
\begin{aligned}
\widehat{\mathcal{C}}_{\bm i}(X_w) = F_{{\bm k}^{(1)}} (\r^N_{\geq 0}) \cup \cdots \cup F_{{\bm k}^{(t)}} (\r^N_{\geq 0}),
\end{aligned}
\end{equation}
let us prove that $\widehat{\mathscr{C}}_{\bm i}(X_w) = F_{{\bm k}^{(1)}} (\widehat{\mathscr{C}}_{\bm i}) \cup \cdots \cup F_{{\bm k}^{(t)}} (\widehat{\mathscr{C}}_{\bm i})$.
By the definition of $\widehat{\mathscr{C}}_{\bm i}(X_w)$, it suffices to show that 
\begin{equation}\label{eq:goal_Demazure_of_PBW}
\begin{aligned}
\widehat{\mathscr{S}}_{\bm i}(X_w) = (F_{{\bm k}^{(1)}} (\widehat{\mathscr{C}}_{\bm i}) \cup \cdots \cup F_{{\bm k}^{(t)}} (\widehat{\mathscr{C}}_{\bm i})) \cap (P_+ \times \z^N).
\end{aligned}
\end{equation}
Since $\Upsilon_{\bm i} \colon \mathcal{B}(\infty) \rightarrow \z^N_{\geq 0}$ is bijective, the equality $\widetilde{\mathcal{B}}_w(\lambda) = \widetilde{\mathcal{B}}(\lambda) \cap \mathcal{B}_w(\infty)$ implies that 
\begin{align*}
\widehat{\mathscr{S}}_{\bm i}(X_w) &= \{(\lambda, {\bm a}) \in \widehat{\mathscr{S}}_{\bm i} \mid {\bm a} \in \Upsilon_{\bm i} (\mathcal{B}_w(\infty))\}\\
&= \{(\lambda, {\bm a}) \in \widehat{\mathscr{S}}_{\bm i} \mid {\bm a} \in F_{{\bm k}^{(1)}} (\r^N_{\geq 0}) \cup \cdots \cup F_{{\bm k}^{(t)}} (\r^N_{\geq 0})\}\\
&\quad({\rm by}\ \eqref{eq:assumption_Demazure_of_PBW}\ {\rm since}\ \Upsilon_{\bm i} (\mathcal{B}_w(\infty)) = \widehat{\mathcal{C}}_{\bm i}(X_w) \cap \z^N)\\
&= (F_{{\bm k}^{(1)}} (\widehat{\mathscr{C}}_{\bm i}) \cup \cdots \cup F_{{\bm k}^{(t)}} (\widehat{\mathscr{C}}_{\bm i})) \cap (P_+ \times \z^N), 
\end{align*}
which proves \eqref{eq:goal_Demazure_of_PBW}.
\end{proof}

For ${\bm i} = (i_1, \ldots, i_N) \in R(w_0)$ and $w \in W$, we set
\begin{equation}\label{eq:reduced_word_compatible_subset}
\begin{aligned}
R({\bm i}, w) \coloneqq \{{\bm k} = (k_1, \ldots, k_{\ell(w)}) \mid 1 \leq k_1 < \cdots < k_{\ell(w)} \leq N,\ (i_{k_1}, \ldots, i_{k_{\ell(w)}}) \in R(w)\}.
\end{aligned}
\end{equation}

\begin{ex}
Let $G = SL_4(\c)$, ${\bm i} \coloneqq (2, 1, 2, 3, 2, 1) \in R(w_0)$, and $w \coloneqq s_1 s_2 s_1 = s_2 s_1 s_2 \in W$.
Then we have $R({\bm i}, w) = \{(1, 2, 3), (1, 2, 5), (2, 3, 6), (2, 5, 6)\}$.
\end{ex}

The following is the main result of this section. 

\begin{thm}\label{t:string_parametrization_opposite_Demazure}
For ${\bm i} = (i_1, \ldots, i_N) \in R(w_0)$ and $w \in W$, the following equalities hold:
\begin{align*}
\mathscr{C}_{\bm i}(X^w) = \bigcup_{{\bm k} \in R({\bm i}, w)} F_{\bm k} (\mathscr{C}_{\bm i})\quad \text{and}\quad \widehat{\mathscr{C}}_{\bm i}(X_w) = \bigcup_{{\bm k} \in R({\bm i}^\ast, w_0 w)} F_{\bm k} (\widehat{\mathscr{C}}_{\bm i}).
\end{align*}
\end{thm}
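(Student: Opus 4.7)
My plan has three parts: reduce the two equations to one via $\Omega_{\bm i}$, reduce that to a polytope-level lattice-point identity, and prove the identity by iterating the string property.

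Since $\Omega_{\bm i}(\mathscr{C}_{\bm i}(X^w)) = \widehat{\mathscr{C}}_{{\bm i}^*}(X_{w_0 w})$ (from the discussion preceding \cref{c:semi-toric_degenerations}) and direct inspection of the defining linear equations shows $\Omega_{\bm i}(F_{\bm k}(\mathscr{C}_{\bm i})) = F_{\bm k}(\widehat{\mathscr{C}}_{{\bm i}^*})$, relabeling ${\bm i}^* \to {\bm i}$ and $w_0 w \to w$ turns the first equality into the second; I therefore focus on the first. By \cref{l:opposite_Demazure_union_of_faces} the left side is a union of faces $F_{\bm k}(\mathscr{C}_{\bm i})$, and any such union of rational polyhedral faces of $\mathscr{C}_{\bm i}$ is determined by its lattice points in $P_+ \times \z^N$ via the fiberwise identification $\pi_1^{-1}(\lambda) \cap \mathscr{C}_{\bm i} \cap \z^N = \{\lambda\} \times \Phi_{\bm i}(\mathcal{B}(\lambda))$. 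The statement therefore reduces to the polytope identity
\begin{equation*}
\Phi_{\bm i}(\mathcal{B}^w(\lambda)) = \bigcup_{{\bm k} \in R({\bm i}, w)} F_{\bm k}(\Delta_{\bm i}(\lambda)) \cap \z^N
\end{equation*}
for every $\lambda \in P_+$ and $w \in W$.

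Two preliminary facts drive the proof. For $b \in \mathcal{B}(\lambda)$ with $\Phi_{\bm i}(b) = (a_1, \ldots, a_N)$ and $b_j := \tilde{e}_{i_j}^{a_j} \cdots \tilde{e}_{i_1}^{a_1} b$, the weight relation ${\rm wt}(b_j) = \lambda - \sum_{k > j} a_k \alpha_{i_k}$ reduces the facet inequality defining $F_j(\Delta_{\bm i}(\lambda))$ at $\Phi_{\bm i}(b)$ to $0 \leq \varphi_{i_j}(b_{j-1})$, so $\Phi_{\bm i}(b)$ saturates this inequality exactly when $b_{j-1}$ is at the bottom of its $i_j$-string. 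Second, applying \cref{p:properties of Demazure}(2) to the relation $s_i(s_i v) = v < s_i v$ yields $\mathcal{B}^v(\lambda) = \bigcup_k \tilde{e}_i^k \mathcal{B}^{s_i v}(\lambda)$ whenever $s_i v > v$, so $\mathcal{B}^v \cap L$ is an $\tilde{e}_i$-upward closure in every $i$-string $L$ and hence lies in $\{\emptyset, L\}$. Equivalently, the singleton case $\{b_L^{\rm low}\}$ of \cref{p:string property} for $\mathcal{B}^v$ can occur only when $i$ is a left descent of $v$.

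For the inclusion $\subseteq$, given $b \in \mathcal{B}^w(\lambda)$ I iterate the string property along ${\bm i}$. Set $w_0 := w$, $b_0 := b$, and at step $j$ apply \cref{p:string property} to the $i_j$-string $L$ of $b_{j-1}$ inside $\mathcal{B}^{w_{j-1}}(\lambda)$. If $\mathcal{B}^{w_{j-1}} \cap L = L$ (case a), set $w_j := w_{j-1}$; otherwise $\mathcal{B}^{w_{j-1}} \cap L = \{b_{j-1}\}$ (case b), the second preliminary forces $s_{i_j} w_{j-1} < w_{j-1}$, and \cref{p:properties of Demazure}(2) places $b_j = \tilde{e}_{i_j}^{a_j} b_{j-1}$ into $\mathcal{B}^{s_{i_j} w_{j-1}}$, so set $w_j := s_{i_j} w_{j-1}$. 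Let ${\bm k}$ record the case-(b) indices; the first preliminary gives $\Phi_{\bm i}(b) \in F_{\bm k}(\Delta_{\bm i}(\lambda))$, and $b_N = b_\lambda \in \mathcal{B}^{w_N}$ forces $w_N = e$, so $|{\bm k}| = \ell(w)$ and the subword $(i_{k_1}, \ldots, i_{k_{\ell(w)}})$ is a reduced expression for $w$, giving ${\bm k} \in R({\bm i}, w)$. For the reverse inclusion, given ${\bm k} \in R({\bm i}, w)$ and a lattice point $(a_1, \ldots, a_N) \in F_{\bm k}(\Delta_{\bm i}(\lambda)) \cap \z^N$, I reconstruct $b$ by descending induction from $b_N := b_\lambda$ and $w_N := e$: set $b_{j-1} := \tilde{f}_{i_j}^{a_j} b_j$, and $w_{j-1} := s_{i_j} w_j$ for $j \in {\bm k}$, $w_{j-1} := w_j$ otherwise. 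Closure of $\mathcal{B}^{w_{j-1}}$ under $\tilde{f}_{i_j}$ (\cref{p:properties of Demazure}(1)) handles $j \notin {\bm k}$; for $j \in {\bm k}$, reducedness of $(i_{k_1}, \ldots, i_{k_{\ell(w)}})$ gives $s_{i_j} w_{j-1} < w_{j-1}$, activity of $F_j$ at $(a_1, \ldots, a_N)$ places $b_{j-1}$ at the bottom of its $i_j$-string, and $b_j \in \mathcal{B}^{w_j} = \bigcup_k \tilde{e}_{i_j}^k \mathcal{B}^{w_{j-1}}$ combined with \cref{p:string property} applied to $\mathcal{B}^{w_{j-1}}$ puts this bottom into $\mathcal{B}^{w_{j-1}}$. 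Setting $j = 0$ yields $b \in \mathcal{B}^w(\lambda)$.

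The main obstacle is synchronizing the string-property case analysis with the Coxeter-theoretic recursion for reduced subwords: each case-(b) step in the forward iteration must genuinely come from a left descent of the current $w_{j-1}$ (so that the recorded subword is reduced of length exactly $\ell(w)$), and the backward construction at each $j \in {\bm k}$ must land the bottom of the $i_j$-string back into the strictly smaller $\mathcal{B}^{s_{i_j} w_j}$. Both rest on the sharp interplay between \cref{p:properties of Demazure}(2) and \cref{p:string property} encoded in the second preliminary.
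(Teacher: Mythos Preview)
Your approach matches the paper's almost exactly: both reduce to the first equality via $\Omega_{\bm i}$, both invoke \cref{l:opposite_Demazure_union_of_faces} as structural input, and both run the string property together with \cref{p:properties of Demazure} in two directions. Your reverse inclusion is correct and is essentially the paper's first half of the proof.

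There is, however, a real gap in your forward inclusion. The assertion ``$b_N = b_\lambda \in \mathcal{B}^{w_N}(\lambda)$ forces $w_N = e$'' is false for singular $\lambda$: whenever $w_N$ stabilizes $\lambda$ one has $b_\lambda \in \mathcal{B}^{w_N}(\lambda)$. Concretely, take $G = SL_3(\c)$, ${\bm i} = (1,2,1)$, $w = s_1$, $\lambda = \varpi_2$; then $\mathcal{B}^{s_1}(\lambda) = \mathcal{B}(\lambda)$, and for $b = b_\lambda$ every $i_j$-string $L$ is either a singleton or satisfies $\mathcal{B}^{w_{j-1}} \cap L = L$, so your iteration never leaves case (a), giving $w_N = s_1$ and ${\bm k} = \emptyset$. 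Your case analysis based on $\mathcal{B}^{w_{j-1}}\cap L$ cannot distinguish the degenerate situation $|L|=1$ from a genuine case-(a), and this is exactly where length fails to drop. The paper avoids this by working face-by-face rather than point-by-point for this direction: it takes $\lambda \in P_{++}$ and $b$ in the \emph{relative interior} of a face $F_{\bm k}(\mathscr{C}_{\bm i}) \subseteq \mathscr{C}_{\bm i}(X^w)$, so that $\varphi_{i_j}(b_{j-1}) > 0$ at every $j\notin{\bm k}$, eliminating the degeneracy; regularity of $\lambda$ then legitimately forces $w_{\ell+1}=e$. Your argument becomes correct if you likewise restrict the forward direction to $\lambda \in P_{++}$; this already suffices, since a union of faces $F_{\bm k}(\mathscr{C}_{\bm i})$ is determined by its lattice points over $P_{++}$, and you have already observed that both sides are such unions.
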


\begin{proof}
Since we have $\Omega_{\bm i}(\mathscr{C}_{\bm i}(X^w)) = \widehat{\mathscr{C}}_{{\bm i}^\ast}(X_{w_0 w})$ and $\Omega_{\bm i}(F_{\bm k}(\mathscr{C}_{\bm i})) = F_{\bm k}(\widehat{\mathscr{C}}_{{\bm i}^\ast})$, it suffices to prove the first equality.
We first take $\lambda \in P_+$ and $b \in \mathcal{B}(\lambda)$ such that $(\lambda, \Phi_{\bm i}(b)) \in F_{\bm k} (\mathscr{C}_{\bm i})$ for some ${\bm k} = (k_1, \ldots, k_{\ell(w)}) \in R({\bm i}, w)$.
Write $\Phi_{\bm i}(b) = (a_1, \ldots, a_N)$, and define $b_0, \ldots, b_N \in \mathcal{B}(\lambda)$ by
\begin{equation}\label{eq:inductive_crystal_elements}
\begin{aligned}
&b_N \coloneqq b_\lambda & & \text{and} & & b_j \coloneqq \tilde{f}_{i_{j+1}}^{a_{j+1}} b_{j+1}\quad \text{for}\quad 0 \leq j \leq N-1. 
\end{aligned}
\end{equation}
By the definition of $\Phi_{\bm i}$, we have $b_0 = b$ and $\varepsilon_{i_j} (b_j) = 0$ for $1 \leq j \leq N$.
For $1 \leq l \leq \ell(w)$, it follows by the condition $(\lambda, \Phi_{\bm i}(b)) \in F_{k_l} (\mathscr{C}_{\bm i})$ that 
\begin{align*}
a_{k_l} &= \langle\lambda, h_{i_{k_l}}\rangle - \sum_{k_l < j \leq N} a_j \langle\alpha_{i_j}, h_{i_{k_l}}\rangle\\
&= \langle{\rm wt}(b_{k_l}), h_{i_{k_l}}\rangle\quad(\text{by\ the\ definition\ of}\ b_{k_l})\\
&= \varphi_{i_{k_l}} (b_{k_l})\quad(\text{by\ the\ definition\ of\ crystals\ since}\ \varepsilon_{i_{k_l}} (b_{k_l}) = 0).
\end{align*}
Hence we see by \cref{p:properties of Demazure} that 
\[
b_{k_{\ell(w)} -1} = \tilde{f}_{i_{k_{\ell(w)}}}^{a_{k_{\ell(w)}}} b_{k_{\ell(w)}} = \tilde{f}_{i_{k_{\ell(w)}}}^{\varphi_{i_{k_{\ell(w)}}} (b_{k_{\ell(w)}})} b_{k_{\ell(w)}} \in \mathcal{B}^{s_{i_{k_{\ell(w)}}}}(\lambda).
\]
Then it holds by \cref{p:properties of Demazure} (1) that $b_{k_{\ell(w)-1}} = \tilde{f}_{i_{k_{\ell(w)-1}+1}}^{a_{k_{\ell(w)-1}+1}} \cdots \tilde{f}_{i_{k_{\ell(w)}-1}}^{a_{k_{\ell(w)}-1}} b_{k_{\ell(w)}-1} \in \mathcal{B}^{s_{i_{k_{\ell(w)}}}}(\lambda)$, which implies by \cref{p:properties of Demazure} again that 
\[
b_{k_{\ell(w)-1} -1} = \tilde{f}_{i_{k_{\ell(w)-1}}}^{a_{k_{\ell(w)-1}}} b_{k_{\ell(w)-1}} = \tilde{f}_{i_{k_{\ell(w)-1}}}^{\varphi_{i_{k_{\ell(w)-1}}} (b_{k_{\ell(w)-1}})} b_{k_{\ell(w)-1}} \in \mathcal{B}^{s_{i_{k_{\ell(w)-1}}} s_{i_{k_{\ell(w)}}}}(\lambda).
\]
Continuing this argument, we obtain that $b \in \mathcal{B}^{s_{i_{k_1}} \cdots s_{i_{k_{\ell(w)}}}}(\lambda) = \mathcal{B}^w(\lambda)$. 
Hence we have $(\lambda, \Phi_{\bm i}(b)) \in \mathscr{S}_{\bm i}(X^w)$, which shows that $F_{\bm k} (\mathscr{C}_{\bm i}) \subseteq \mathscr{C}_{\bm i}(X^w)$ for all ${\bm k} \in R({\bm i}, w)$.

By \cref{l:opposite_Demazure_union_of_faces}, the set $\mathscr{C}_{\bm i}(X^w)$ coincides with a union of faces of the form $F_{\bm k} (\mathscr{C}_{\bm i})$ for some ${\bm k}$. 
Fix ${\bm k} = (k_1, \ldots, k_\ell)$ such that $F_{\bm k} (\mathscr{C}_{\bm i}) \subseteq \mathscr{C}_{\bm i}(X^w)$.
We take $\lambda \in P_{++}$ and $b \in \mathcal{B}^w(\lambda)$ such that $(\lambda, \Phi_{\bm i}(b))$ is included in the relative interior of $F_{\bm k} (\mathscr{C}_{\bm i})$. 
Then we have $(\lambda, \Phi_{\bm i}(b)) \notin F_j (\mathscr{C}_{\bm i})$ for all $j \in \{1, \ldots, N\} \setminus \{k_1, \ldots, k_\ell\}$. 
We write $\Phi_{\bm i}(b) = (a_1, \ldots, a_N)$, and define $b_0, \ldots, b_N \in \mathcal{B}(\lambda)$ by \eqref{eq:inductive_crystal_elements}. 
Since we have $(\lambda, \Phi_{\bm i}(b)) \notin F_1(\mathscr{C}_{\bm i}) \cup \cdots \cup F_{k_1 - 1}(\mathscr{C}_{\bm i})$, it holds that $\varphi_{i_s} (b_{s-1}) > 0$ for all $1 \leq s \leq k_1 - 1$.
Hence the string property of $\mathcal{B}^w(\lambda)$ (\cref{p:string property}) implies that $b_{k_1-1} \in \mathcal{B}^w(\lambda)$.
We set $w_1 \coloneqq w$, and define $w_2, \ldots, w_{\ell+1} \in W$ by 
\[
w_t \coloneqq  
\begin{cases}
s_{i_{k_{t-1}}} w_{t-1} \quad&(w_{t-1} > s_{i_{k_{t-1}}} w_{t-1}),\\
w_{t-1} \quad&(w_{t-1} < s_{i_{k_{t-1}}} w_{t-1})
\end{cases}
\]
for $2 \leq t \leq \ell+1$.
Then it follows by \cref{p:properties of Demazure} that $b_{k_1} \in \mathcal{B}^{w_2}(\lambda)$.
Since it holds that $(\lambda, \Phi_{\bm i}(b)) \notin F_{k_1 + 1} (\mathscr{C}_{\bm i}) \cup \cdots \cup F_{k_2 - 1} (\mathscr{C}_{\bm i})$, we obtain that $\varphi_{i_s} (b_{s-1}) > 0$ for all $k_1 + 1 \leq s \leq k_2 - 1$.
Hence the string property of $\mathcal{B}^{w_2}(\lambda)$ (\cref{p:string property}) implies that $b_{k_2-1} \in \mathcal{B}^{w_2}(\lambda)$.
This implies by \cref{p:properties of Demazure} that $b_{k_2} \in \mathcal{B}^{w_3}(\lambda)$.
Continuing this argument, we conclude that $b_\lambda = b_N \in \mathcal{B}^{w_{\ell+1}}(\lambda)$. 
Since $\lambda \in P_{++}$, this implies that $w_{\ell+1} = e$.
Hence it follows by the definition of $w_{\ell+1}$ that there exists a subsequence $(k_{t_1}, \ldots, k_{t_m})$ of $(k_1, \ldots, k_\ell)$ giving a reduced word $(i_{k_{t_1}}, \ldots, i_{k_{t_m}}) \in R(w)$. 
Since we have
\[F_{\bm k}(\mathscr{C}_{\bm i}) = F_{k_1}(\mathscr{C}_{\bm i}) \cap \cdots \cap F_{k_\ell}(\mathscr{C}_{\bm i}) \subseteq F_{k_{t_1}}(\mathscr{C}_{\bm i}) \cap \cdots \cap F_{k_{t_m}}(\mathscr{C}_{\bm i}),\]
this proves the theorem.
\end{proof}

The following is an immediate consequence of \cref{t:string_parametrization_opposite_Demazure} and the proof of \cref{l:opposite_Demazure_union_of_faces}.

\begin{cor}\label{c:string_parametrization_opposite_Demazure}
For ${\bm i} \in R(w_0)$, $w \in W$, and $\lambda \in P_+$, the equalities $\Delta_{\bm i}(\lambda, X^w) = \bigcup_{{\bm k} \in R({\bm i}, w)} F_{\bm k} (\Delta_{\bm i}(\lambda))$, $\widehat{\Delta}_{\bm i}(\lambda, X_w) = \bigcup_{{\bm k} \in R({\bm i}^\ast, w_0 w)} F_{\bm k} (\widehat{\Delta}_{\bm i}(\lambda))$, and $\widehat{\mathcal{C}}_{\bm i}(X_w) = \bigcup_{{\bm k} \in R({\bm i}^\ast, w_0 w)} F_{\bm k} (\r^N_{\geq 0})$ hold.
\end{cor}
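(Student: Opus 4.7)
The plan is to deduce all three equalities directly from the cone-level identities in \cref{t:string_parametrization_opposite_Demazure} by applying the projection $\pi_2$, possibly after intersecting with $\pi_1^{-1}(\lambda)$. Since the facets $F_j(\mathscr{C}_{\bm i})$ and $F_j(\widehat{\mathscr{C}}_{\bm i})$ were defined precisely so that their restrictions to each $\lambda$-fiber give the corresponding facets of $\Delta_{\bm i}(\lambda)$ and $\widehat{\Delta}_{\bm i}(\lambda)$, the argument is essentially bookkeeping, and the statements of the corollary should follow formally.

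First, for the equality concerning $\Delta_{\bm i}(\lambda, X^w)$, I would intersect $\mathscr{C}_{\bm i}(X^w) = \bigcup_{{\bm k} \in R({\bm i}, w)} F_{\bm k}(\mathscr{C}_{\bm i})$ with $\pi_1^{-1}(\lambda)$ and apply $\pi_2$. By definition the left side becomes $\Delta_{\bm i}(\lambda, X^w)$. On the right side, the defining affine equation of $F_j(\mathscr{C}_{\bm i})$ inside $\mathscr{C}_{\bm i}$, namely $a_j = \langle \lambda, h_{i_j}\rangle - \sum_{j < k \leq N} a_k \langle\alpha_{i_k}, h_{i_j}\rangle$, specializes on the fiber over a fixed $\lambda$ to exactly the equation of $F_j(\Delta_{\bm i}(\lambda))$ recalled from Littelmann's description. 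Since $\pi_2$ already sends $\mathscr{C}_{\bm i} \cap \pi_1^{-1}(\lambda)$ onto $\Delta_{\bm i}(\lambda)$, it sends $F_{\bm k}(\mathscr{C}_{\bm i}) \cap \pi_1^{-1}(\lambda)$ onto $F_{\bm k}(\Delta_{\bm i}(\lambda))$, and the first equality follows. The identity for $\widehat{\Delta}_{\bm i}(\lambda, X_w)$ is obtained in exactly the same way from the hat-version of \cref{t:string_parametrization_opposite_Demazure}, using that $F_j(\widehat{\mathscr{C}}_{\bm i})$ is the facet $a_j = 0$, which also cuts out $F_j(\widehat{\Delta}_{\bm i}(\lambda))$ on each fiber.

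For the third equality, I would apply $\pi_2$ directly to $\widehat{\mathscr{C}}_{\bm i}(X_w) = \bigcup_{{\bm k} \in R({\bm i}^\ast, w_0 w)} F_{\bm k}(\widehat{\mathscr{C}}_{\bm i})$. Because the equations $a_j = 0$ defining $F_j(\widehat{\mathscr{C}}_{\bm i})$ do not involve the $P_\r$-coordinate, the projection trivially yields $\pi_2(F_{\bm k}(\widehat{\mathscr{C}}_{\bm i})) = F_{\bm k}(\r^N_{\geq 0})$, using $\pi_2(\widehat{\mathscr{C}}_{\bm i}) = \r^N_{\geq 0}$ recorded at the start of the section. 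Combined with $\widehat{\mathcal{C}}_{\bm i}(X_w) = \pi_2(\widehat{\mathscr{C}}_{\bm i}(X_w))$, this gives the claimed union. I do not expect a genuine obstacle here: the reasoning is simply the projection-side of the equivalence between $\widehat{\mathcal{C}}$-level and $\widehat{\mathscr{C}}$-level descriptions that appears inside the proof of \cref{l:opposite_Demazure_union_of_faces}, now exploited in the opposite direction, and the only point worth double-checking is the fiberwise surjectivity of $\pi_2$ on each face, which is immediate from the explicit equations.
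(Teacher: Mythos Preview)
Your proposal is correct and matches the paper's intended argument: the paper states that the corollary is an immediate consequence of \cref{t:string_parametrization_opposite_Demazure} and the proof of \cref{l:opposite_Demazure_union_of_faces}, and your sketch spells out exactly this, namely intersecting the cone-level identities with $\pi_1^{-1}(\lambda)$ (or projecting via $\pi_2$) and using that the defining equations of $F_j(\mathscr{C}_{\bm i})$ and $F_j(\widehat{\mathscr{C}}_{\bm i})$ specialize fiberwise to those of $F_j(\Delta_{\bm i}(\lambda))$ and $F_j(\widehat{\Delta}_{\bm i}(\lambda))$.
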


\begin{ex}
Let $G = SL_4(\c)$, ${\bm i} \coloneqq (1, 2, 3, 2, 1, 2) \in R(w_0)$, and $w \coloneqq s_3 s_2 \in W$.
Then we have $R({\bm i}, w) = \{(3, 4), (3, 6)\}$. 
Hence \cref{c:string_parametrization_opposite_Demazure} implies for $\lambda \in P_+$ that 
\begin{align*}
\Delta_{\bm i}(\lambda, X^w) =&\ F_{(3, 4)} (\Delta_{\bm i}(\lambda)) \cup F_{(3, 6)} (\Delta_{\bm i}(\lambda))\\
=&\ \{(a_1, \ldots, a_6) \in \Delta_{\bm i}(\lambda) \mid a_3 = \langle \lambda, h_3\rangle + a_4 + a_6,\ a_4 = \langle \lambda, h_2\rangle + a_5 - 2a_6\}\\ 
&\cup \{(a_1, \ldots, a_6) \in \Delta_{\bm i}(\lambda) \mid a_3 = \langle \lambda, h_3\rangle + a_4 + a_6,\ a_6 = \langle \lambda, h_2\rangle\}.
\end{align*}
\end{ex}

\begin{rem}\label{r:relation_with_PBW_quantum_nilpotent}
For $w \in W$, Kimura \cite[Theorems 4.25 and 4.29]{Kim} proved that the quantum nilpotent subalgebra $U_q^-(w)$ is compatible with the upper global basis and that the corresponding subset $\mathscr{B}(U_q^-(w))$ of $\mathcal{B}(\infty)$ has a PBW-parametrization $\mathscr{B}(U_q^-(w)) = \{b({\bm c}, (i_1, \ldots, i_{\ell(w)}))\}_{{\bm c} \in \z_{\geq 0}^{\ell(w)}}$ for each $(i_1, \ldots, i_{\ell(w)}) \in R(w)$, where we used the notation in \cite[Definition 3.28]{KimOya}.
By the proof of \cite[Theorem 5.13]{Kim}, the set $\mathscr{B}(U_q^-(w))$ is included in $\mathcal{B}_{w^{-1}}(\infty)$. 
We fix $(i_1, \ldots, i_{\ell(w)}) \in R(w)$, and extend it to a reduced word ${\bm i} = (i_1, \ldots, i_{\ell(w)}, i_{\ell(w)+1}, \ldots, i_N) \in R(w_0)$.  
Then we have $s_{i_{\ell(w)+1}} \cdots s_{i_N} = w^{-1} w_0$ and $s_{i_{\ell(w)+1}^\ast} \cdots s_{i_N^\ast} = w_0 w^{-1}$, which implies that $(\ell(w)+1, \ldots, N) \in R({\bm i}^\ast, w_0 w^{-1})$.
Hence the face $F_{(\ell(w)+1, \ldots, N)} (\r^N_{\geq 0})$ of $\r^N_{\geq 0}$ appears in the description of $\widehat{\mathcal{C}}_{\bm i}(X_{w^{-1}})$ in \cref{c:string_parametrization_opposite_Demazure}. 
This face corresponds to the set $\mathscr{B}(U_q^-(w))$ since the definition of $\Upsilon_{\bm i}$ implies that 
\[\Upsilon_{\bm i}(b((c_1, \ldots, c_{\ell(w)}), (i_1, \ldots, i_{\ell(w)}))) = (c_1, \ldots, c_{\ell(w)}, 0, \ldots, 0)\] 
for all $(c_1, \ldots, c_{\ell(w)}) \in \z_{\geq 0}^{\ell(w)}$.
\end{rem}

For a rational convex polytope $F \subseteq \r^N$ with dimension $d$, let $\r F \subseteq \r^N$ denote the affine span of $F$, and ${\rm Vol}_d (F)$ the volume of $F$ with respect to the lattice $\r F \cap \z^N$. 
For $\lambda \in P_{++}$, the line bundle $\mathcal{L}_\lambda$ on $G/B$ is very ample, that is, it induces a closed embedding $G/B \hookrightarrow \mathbb{P}(H^0(G/B, \mathcal{L}_\lambda)^\ast) = \mathbb{P}(V(\lambda))$ (see, for instance, \cite[Sect.\ 1.4]{Bri}). 
For a closed subvariety $Z \subseteq G/B$, we obtain a closed embedding $Z \hookrightarrow \mathbb{P}(V(\lambda))$ by composing $G/B \hookrightarrow \mathbb{P}(V(\lambda))$ with the inclusion map $Z \hookrightarrow G/B$.
Then the volume of $\mathcal{L}_\lambda$ on $Z$ is defined by
\[{\rm Vol}(Z, \mathcal{L}_\lambda) \coloneqq \frac{1}{d!} {\rm deg}(Z \hookrightarrow \mathbb{P}(V(\lambda))),\]
where $d \coloneqq \dim_\c Z$.
Since we have $|\mathcal{B}^w(\lambda)| = \dim_\c H^0(X^w, \mathcal{L}_\lambda)$ for $w \in W$ and $\lambda \in P_+$, we obtain the following by \cref{c:string_parametrization_opposite_Demazure}.

\begin{cor}\label{c:string_opposite_Demazure_volume}
For $w \in W$ and $\lambda \in P_+$, the dimension $\dim_\c H^0(X^w, \mathcal{L}_\lambda)$ equals the cardinality of $\bigcup_{{\bm k} \in R({\bm i}, w)} F_{\bm k} (\Delta_{\bm i}(\lambda)) \cap \z^N$.
In particular, it holds for $\lambda \in P_{++}$ that
\[{\rm Vol}(X^w, \mathcal{L}_\lambda) = \sum_{{\bm k} \in R({\bm i}, w)} {\rm Vol}_{N-\ell(w)} (F_{\bm k} (\Delta_{\bm i}(\lambda))).\]
\end{cor}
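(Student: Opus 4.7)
For the first equality, my plan is to combine the Borel--Weil-type identification $H^0(X^w, \mathcal{L}_\lambda) \simeq V^w(\lambda)^\ast$ (cited after \eqref{eq:longest_Demazure}) with the definition of the opposite Demazure crystal, which yields $\dim_\c H^0(X^w, \mathcal{L}_\lambda) = |\mathcal{B}^w(\lambda)|$. Since $X^w = X_{w_0}^w$ and $\mathcal{B}_{w_0}(\lambda) = \mathcal{B}(\lambda)$, specializing \cref{c:string_parametrization_opposite_Demazure} at $\lambda$, together with the injectivity of $\Phi_{\bm i}$ on $\mathcal{B}(\lambda)$ and the identification $\Phi_{\bm i}(\mathcal{B}^w(\lambda)) = \Delta_{\bm i}(\lambda, X^w) \cap \z^N$ implicit in \cref{t:semi-toric_degenerations}(1), gives $|\mathcal{B}^w(\lambda)| = |\bigcup_{{\bm k} \in R({\bm i}, w)} F_{\bm k}(\Delta_{\bm i}(\lambda)) \cap \z^N|$. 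This already delivers the first assertion of the corollary exactly as the author's lead-in sentence to the statement suggests.

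For the volume formula, the plan is to invoke asymptotic Riemann--Roch. For $\lambda \in P_{++}$, the line bundle $\mathcal{L}_\lambda$ is very ample on $G/B$ hence on $X^w$, so the definition of ${\rm Vol}(X^w, \mathcal{L}_\lambda)$ via degree of the projective embedding, combined with the leading-term behaviour of the Hilbert polynomial, gives
\[
{\rm Vol}(X^w, \mathcal{L}_\lambda) = \lim_{k \to \infty} \frac{\dim_\c H^0(X^w, \mathcal{L}_{k\lambda})}{k^d}, \qquad d := N - \ell(w).
\]
A direct check from the cone description of $\mathscr{C}_{\bm i}$ and the facet equation $a_j = \langle \lambda, h_{i_j}\rangle - \sum_{j < l \leq N} a_l \langle \alpha_{i_l}, h_{i_j}\rangle$ shows $\Delta_{\bm i}(k\lambda) = k\Delta_{\bm i}(\lambda)$ and $F_{\bm k}(\Delta_{\bm i}(k\lambda)) = kF_{\bm k}(\Delta_{\bm i}(\lambda))$; substituting $k\lambda$ into the first equality therefore expresses the numerator as the count of lattice points in the $k$-fold dilate of $\bigcup_{{\bm k}} F_{\bm k}(\Delta_{\bm i}(\lambda))$. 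Applying inclusion--exclusion and Ehrhart's theorem to each term then yields the asymptotic expansion in $k$ whose leading coefficient should be the desired sum of lattice volumes.

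The main obstacle will be dimension bookkeeping: I must verify that every $F_{\bm k}(\Delta_{\bm i}(\lambda))$ with ${\bm k} \in R({\bm i}, w)$ has top dimension $d$, while for distinct ${\bm k}, {\bm k}' \in R({\bm i}, w)$ the intersection $F_{\bm k}(\Delta_{\bm i}(\lambda)) \cap F_{{\bm k}'}(\Delta_{\bm i}(\lambda))$ has strictly smaller dimension and thus contributes only $O(k^{d-1})$ Ehrhart-asymptotically. The first claim follows from \cref{t:semi-toric_degenerations}(3): the Richardson variety $X^w$, of complex dimension $d$, degenerates into a union of normal toric varieties indexed by the faces listed in \cref{c:string_parametrization_opposite_Demazure}, and each irreducible component of this limit has dimension $d$, forcing the corresponding face to be $d$-dimensional. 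The second claim is automatic once the first is established, since two distinct faces of equal dimension always meet in a common proper face of each. With these bookkeeping facts in place, only the singleton terms in inclusion--exclusion survive division by $k^d$ and passage to the limit, collapsing the sum to $\sum_{{\bm k} \in R({\bm i}, w)} {\rm Vol}_{N-\ell(w)}(F_{\bm k}(\Delta_{\bm i}(\lambda)))$ as required.
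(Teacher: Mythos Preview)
Your argument is correct and matches the paper's approach: the paper simply records that $|\mathcal{B}^w(\lambda)| = \dim_\c H^0(X^w, \mathcal{L}_\lambda)$ and invokes \cref{c:string_parametrization_opposite_Demazure}, leaving the volume statement as an ``In particular''; you have written out the Ehrhart-asymptotic justification that is implicit there.

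One small wrinkle in your dimension bookkeeping: the degeneration argument from \cref{t:semi-toric_degenerations}(3) only shows that the \emph{maximal} faces among the $F_{\bm k}(\Delta_{\bm i}(\lambda))$ are $d$-dimensional (those are the irreducible components of the limit), not that every $F_{\bm k}$ with ${\bm k}\in R({\bm i},w)$ is. This is harmless for the formula, since any $F_{\bm k}$ of dimension $<d$ has ${\rm Vol}_{d}(F_{\bm k})=0$ and contributes nothing to the right-hand side. Moreover your second claim is valid independently of the first: for distinct ${\bm k},{\bm k}'\in R({\bm i},w)$, the intersection $F_{\bm k}\cap F_{{\bm k}'}$ lies in the affine subspace cut out by the equations indexed by ${\bm k}\cup{\bm k}'$, and these $>\ell(w)$ equations are affinely independent (the defining system $a_j=\langle\lambda,h_{i_j}\rangle-\sum_{j<l}c_{i_j,i_l}a_l$ is triangular in $a_1,\ldots,a_N$), so the intersection has dimension $<d$ regardless.
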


For $\lambda \in P_{++}$, let $X(\Delta_{\bm i}(\lambda))$ denote the normal toric variety corresponding to $\Delta_{\bm i}(\lambda)$.
The following is an immediate consequence of \cref{t:semi-toric_degenerations} (3) and \cref{c:string_parametrization_opposite_Demazure}.

\begin{cor}
For $w \in W$ and $\lambda \in P_{++}$, the opposite Schubert variety $X^w$ degenerates into the union $\bigcup_{{\bm k} \in R({\bm i}, w)} X(F_{\bm k} (\Delta_{\bm i}(\lambda)))$ of irreducible normal toric subvarieties $X(F_{\bm k} (\Delta_{\bm i}(\lambda)))$ of $X(\Delta_{\bm i}(\lambda))$ corresponding to the faces $F_{\bm k} (\Delta_{\bm i}(\lambda))$, ${\bm k} \in R({\bm i}, w)$.
\end{cor}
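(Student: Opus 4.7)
The plan is to simply combine \cref{t:semi-toric_degenerations}(3) (applied to the Richardson variety $X^w = X_{w_0}^w$, where the Bruhat condition $w \leq w_0$ is automatic) with the explicit face decomposition furnished by \cref{c:string_parametrization_opposite_Demazure}. Since the corollary is flagged as an immediate consequence, no new ideas are needed; the work is just to check that the pieces line up.

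First I would instantiate \cref{t:semi-toric_degenerations}(3) at $v = w$ and $w \to w_0$. This yields, for any $\lambda \in P_{++}$, a flat degeneration of $X^w = X_{w_0}^w$ whose special fiber is the union of the irreducible normal toric varieties $X(F)$ where $F$ ranges over the facets (of top dimension in $\Delta_{\bm i}(\lambda, X^w)$) of the polytopal union $\Delta_{\bm i}(\lambda, X^w)$, all sitting inside the ambient toric variety $X(\Delta_{\bm i}(\lambda))$ attached to Caldero's degeneration of $G/B$.

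Next I would substitute the identity
\[
\Delta_{\bm i}(\lambda, X^w) \;=\; \bigcup_{{\bm k} \in R({\bm i}, w)} F_{\bm k}(\Delta_{\bm i}(\lambda))
\]
from \cref{c:string_parametrization_opposite_Demazure}. Each $F_{\bm k}(\Delta_{\bm i}(\lambda))$ is a face of $\Delta_{\bm i}(\lambda)$, hence corresponds to an irreducible normal closed toric subvariety $X(F_{\bm k}(\Delta_{\bm i}(\lambda)))$ of $X(\Delta_{\bm i}(\lambda))$. Thus the special fiber of the degeneration is exactly $\bigcup_{{\bm k} \in R({\bm i}, w)} X(F_{\bm k}(\Delta_{\bm i}(\lambda)))$, which is the claim.

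The only mild point to verify, rather than a real obstacle, is that the face decomposition given by \cref{c:string_parametrization_opposite_Demazure} consists of top-dimensional faces of $\Delta_{\bm i}(\lambda, X^w)$, so that no superfluous toric components are introduced or omitted; this follows because the elements of $R({\bm i}, w)$ have fixed length $\ell(w)$, so each $F_{\bm k}(\Delta_{\bm i}(\lambda))$ has codimension $\ell(w)$ in $\Delta_{\bm i}(\lambda)$, matching $\dim_\c X^w = N - \ell(w)$. No further computation is required.
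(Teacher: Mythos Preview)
Your proposal is correct and follows exactly the paper's approach: the corollary is stated there as an immediate consequence of \cref{t:semi-toric_degenerations}(3) applied to $X^w = X_{w_0}^w$ together with the face decomposition of $\Delta_{\bm i}(\lambda, X^w)$ from \cref{c:string_parametrization_opposite_Demazure}. Your additional dimension check (that each $F_{\bm k}(\Delta_{\bm i}(\lambda))$ has codimension $\ell(w)$, matching $\dim_\c X^w$) is a reasonable sanity remark but not needed for the argument as the paper states it.
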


\section{Kogan faces and Demazure crystals of type $A$}\label{s:type_A}

In this section, we consider the case $G = SL_{n+1}(\c)$, and relate results in \cite{KST} with crystal bases.
We identify the set $I$ of vertices of the Dynkin diagram with $\{1, \ldots, n\}$ as follows:
\begin{align*}
&A_n\ \begin{xy}
\ar@{-} (50,0) *++!D{1} *\cir<3pt>{};
(60,0) *++!D{2} *\cir<3pt>{}="C"
\ar@{-} "C";(65,0) \ar@{.} (65,0);(70,0)^*!U{}
\ar@{-} (70,0);(75,0) *++!D{n-1} *\cir<3pt>{}="D"
\ar@{-} "D";(85,0) *++!D{n} *\cir<3pt>{}="E"
\end{xy}.
\end{align*}
Then the Weyl group $W$ is isomorphic to the symmetric group $\mathfrak{S}_{n+1}$ by identifying the simple reflection $s_i$ with the transposition $(i\ i+1)$. 
We set $N \coloneqq \ell(w_0) = \frac{n(n +1)}{2}$, and define ${\bm i}_A = (i_1, \ldots, i_N) \in R(w_0)$ by
\begin{equation}\label{eq:reduced_word_type_A}
\begin{aligned}
{\bm i}_A \coloneqq (1, 2, 1, 3, 2, 1, \ldots, n, n-1, \ldots, 1).
\end{aligned}
\end{equation}
Then Littelmann \cite[Theorem 5.1]{Lit} proved that the string cone $\mathcal{C}_{{\bm i}_A}$ coincides with the set of 
\begin{equation}\label{eq:coordinate_type_A_string}
\begin{aligned}
(a_1 ^{(1)}, a_1 ^{(2)}, a_2 ^{(1)}, a_1 ^{(3)}, a_2 ^{(2)}, a_3 ^{(1)}, \ldots, a_1 ^{(n)}, \ldots, a_n ^{(1)}) \in \r^N
\end{aligned}
\end{equation}
satisfying the following inequalities:
\begin{align*}
&a_1 ^{(1)} \geq 0, & &a_1 ^{(2)} \geq a_2 ^{(1)} \geq 0, & &\ldots, & &a_1 ^{(n)} \geq \cdots \geq a_n ^{(1)} \geq 0.
\end{align*}
We arrange the equations of the facets of $\mathcal{C}_{{\bm i}_A}$ as 
\begin{align*}
a_1 ^{(1)} = 0,\ a_2 ^{(1)} = 0,\ a_1 ^{(2)} = a_2 ^{(1)},\ a_3 ^{(1)} = 0,\ \ldots,\ a_2 ^{(n-1)} = a_3 ^{(n-2)},\ a_1 ^{(n)} = a_2 ^{(n-1)},
\end{align*}
and denote the corresponding facets of $\mathcal{C}_{{\bm i}_A}$ by $F_1^\vee(\mathcal{C}_{{\bm i}_A}), \ldots, F_N^\vee(\mathcal{C}_{{\bm i}_A})$, respectively. 
For $1 \leq j \leq N$ and $\lambda \in P_+$, let $F_j^\vee(\Delta_{{\bm i}_A}(\lambda))$ be the face of $\Delta_{{\bm i}_A}(\lambda)$ given by $F_j^\vee(\Delta_{{\bm i}_A}(\lambda)) \coloneqq \Delta_{{\bm i}_A}(\lambda) \cap F_j^\vee(\mathcal{C}_{{\bm i}_A})$.
For ${\bm k} = (k_1, \ldots, k_\ell)$ with $\ell \geq 0$ and with $1 \leq k_1 < \cdots < k_\ell \leq N$, we write $F_{\bm k}^\vee(\mathcal{C}_{{\bm i}_A}) \coloneqq F_{k_1}^\vee(\mathcal{C}_{{\bm i}_A}) \cap \cdots \cap F_{k_\ell}^\vee(\mathcal{C}_{{\bm i}_A})$ and $F_{\bm k}^\vee(\Delta_{{\bm i}_A}(\lambda)) \coloneqq F_{k_1}^\vee(\Delta_{{\bm i}_A}(\lambda)) \cap \cdots \cap F_{k_\ell}^\vee(\Delta_{{\bm i}_A}(\lambda))$.
Faces $F_j^\vee(\mathscr{C}_{{\bm i}_A})$ and $F_{\bm k}^\vee(\mathscr{C}_{{\bm i}_A})$ of $\mathscr{C}_{{\bm i}_A}$ are similarly defined.

For $\lambda \in P_+$ and $1 \leq k \leq n+1$, we set $a_k ^{(0)} \coloneqq \sum_{k \leq \ell \leq n} \langle \lambda, h_\ell \rangle$, where $a_{n+1}^{(0)}$ is regarded as $0$. 
Then the \emph{Gelfand--Tsetlin polytope} $GT(\lambda)$ is defined to be the set of \eqref{eq:coordinate_type_A_string} satisfying the following inequalities:
\[\begin{matrix}
a_1 ^{(0)} & & a_2 ^{(0)} & & \cdots & & & a_n ^{(0)} & & a_{n+1} ^{(0)}\\
 & a_1 ^{(1)} & & a_2 ^{(1)} & & \cdots & & & a_n ^{(1)} & \\
 & & a_1 ^{(2)} & & \cdots & & & a_{n -1} ^{(2)} & & \\
 & & & \ddots & & \ldots & & & & \\
 & & & & a_1 ^{(n-1)} & & a_2 ^{(n-1)} & & & \\
 & & & & & a_1 ^{(n)}, & & & & 
\end{matrix}\]
where we mean by the notation 
\[\begin{matrix}
a & & c\\
 & b & 
\end{matrix}\]
that $a \geq b \geq c$. 
The Gelfand--Tsetlin polytope $GT(\lambda)$ is an integral polytope for all $\lambda \in P_+$, and it is $N$-dimensional if $\lambda \in P_{++}$.
A face of $GT(\lambda)$ given by equations of the type $a_k^{(l)} = a_k^{(l+1)}$ (resp., $a_k^{(l)} = a_{k+1}^{(l-1)}$) is called a \emph{Kogan face} (resp., a \emph{dual Kogan face}).
We arrange the equations of the type $a_k^{(l)} = a_{k+1}^{(l-1)}$ as 
\begin{align*}
a_2 ^{(n-1)} = a_1 ^{(n)},\ a_3 ^{(n-2)} = a_2 ^{(n-1)},\ a_2 ^{(n-2)} = a_1 ^{(n-1)},\ a_4 ^{(n-3)} = a_3 ^{(n-2)},\ \ldots,\ a_3 ^{(0)} = a_2 ^{(1)},\ a_2 ^{(0)} = a_1 ^{(1)},
\end{align*}
and denote the corresponding facets of $GT(\lambda)$ by $F_1 (GT(\lambda)), \ldots, F_N (GT(\lambda))$, respectively.
We also arrange the equations of the type $a_k^{(l)} = a_k^{(l+1)}$ as 
\begin{align*}
a_1 ^{(n-1)} = a_1 ^{(n)},\ a_1 ^{(n-2)} = a_1 ^{(n-1)},\ a_2 ^{(n-2)} = a_2 ^{(n-1)},\ a_1 ^{(n-3)} = a_1 ^{(n-2)},\ \ldots,\ a_{n-1} ^{(0)} = a_{n-1} ^{(1)},\ a_n ^{(0)} = a_n ^{(1)},
\end{align*}
and denote the corresponding facets of $GT(\lambda)$ by $F^\vee_1(GT(\lambda)), \ldots, F^\vee_N(GT(\lambda))$, respectively. 
For ${\bm k} = (k_1, \ldots, k_\ell)$ with $\ell \geq 0$ and with $1 \leq k_1 < \cdots < k_\ell \leq N$, we define $F_{\bm k}(GT(\lambda))$ (resp., $F_{\bm k}^\vee(GT(\lambda))$) in a way similar to $F_{\bm k}(\Delta_{{\bm i}_A}(\lambda))$ (resp., $F_{\bm k}^\vee(\Delta_{{\bm i}_A}(\lambda))$).

\begin{defi}[{see \cite[Sect.\ 2.2.1]{Kog} and \cite[Sects.\ 3.3 and 4.3]{KST}}]
A Kogan face $F_{\bm k}^\vee(GT(\lambda))$ (resp., a dual Kogan face $F_{\bm k}(GT(\lambda))$) of $GT(\lambda)$ is said to be \emph{reduced} if $(i_{k_1}, \ldots, i_{k_\ell})$ is a reduced word. 
In this case, we set $w(F_{\bm k}^\vee(GT(\lambda))) \coloneqq w_0 s_{i_{k_1}} \cdots s_{i_{k_\ell}} w_0$ (resp., $w(F_{\bm k}(GT(\lambda))) \coloneqq w_0 s_{i_{k_1}} \cdots s_{i_{k_\ell}} w_0$).
\end{defi}

Littelmann \cite[Corollary 5]{Lit} gave a unimodular affine transformation from the string polytope $\Delta_{{\bm i}_A}(\lambda)$ to $GT(\lambda)$. 
Under this transformation, the face $F_{\bm k}(\Delta_{{\bm i}_A}(\lambda))$ (resp., $F_{\bm k}^\vee(\Delta_{{\bm i}_A}(\lambda))$) of $\Delta_{{\bm i}_A}(\lambda)$ corresponds to the dual Kogan face $F_{\bm k}(GT(\lambda))$ (resp., the Kogan face $F_{\bm k}^\vee(GT(\lambda))$) of $GT(\lambda)$. 
Hence we obtain the following as an immediate consequence of \cref{c:string_parametrization_opposite_Demazure}.

\begin{cor}\label{c:dual_Kogan_opposite_Demazure}
For $w \in W$ and $\lambda \in P_+$, the union $\bigcup_{{\bm k} \in R({\bm i}_A, w)} F_{\bm k}(GT(\lambda))$ of reduced dual Kogan faces $F_{\bm k}(GT(\lambda))$ with $w(F_{\bm k}(GT(\lambda))) = w_0 w w_0$ gives a polyhedral parametrization of $\mathcal{B}^w(\lambda)$ under the unimodular affine transformation $GT(\lambda) \simeq \Delta_{{\bm i}_A}(\lambda)$.
\end{cor}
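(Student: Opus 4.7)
The plan is to obtain \cref{c:dual_Kogan_opposite_Demazure} as a direct specialization of \cref{c:string_parametrization_opposite_Demazure} to the reduced word ${\bm i} = {\bm i}_A$, followed by transport along Littelmann's unimodular affine transformation $\Delta_{{\bm i}_A}(\lambda) \xrightarrow{\sim} GT(\lambda)$ from \cite[Corollary~5]{Lit}. First I would apply \cref{c:string_parametrization_opposite_Demazure} with ${\bm i} = {\bm i}_A$ to obtain
\[
\Phi_{{\bm i}_A}(\mathcal{B}^w(\lambda)) \;=\; \bigcup_{{\bm k} \in R({\bm i}_A, w)} F_{\bm k}(\Delta_{{\bm i}_A}(\lambda)) \cap \z^N,
\]
so that the image of the opposite Demazure crystal under the string parametrization is already described as a union of lattice-point sets of faces of the string polytope indexed by $R({\bm i}_A, w)$.

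Next I would transport this description to $GT(\lambda)$. The key fact is that Littelmann's map is a \emph{unimodular} affine bijection, hence in particular carries lattice points to lattice points and faces to faces. What must be checked is that under this identification the facet $F_j(\Delta_{{\bm i}_A}(\lambda))$ defined in \cref{s:string_opposite} corresponds to the facet $F_j(GT(\lambda))$ defined by the enumeration of equations $a_k^{(\ell)} = a_{k+1}^{(\ell-1)}$ given in the present section. This is a direct check: Littelmann's change of coordinates rewrites the inequality $a_j \leq \langle\lambda - a_{j+1}\alpha_{i_{j+1}} - \cdots - a_N \alpha_{i_N}, h_{i_j}\rangle$ defining $F_j(\Delta_{{\bm i}_A}(\lambda))$ precisely as an equation $a_k^{(\ell)} = a_{k+1}^{(\ell-1)}$ of $GT(\lambda)$, in the same order as both lists are written. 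Consequently $F_{\bm k}(\Delta_{{\bm i}_A}(\lambda))$ corresponds to $F_{\bm k}(GT(\lambda))$ for every tuple ${\bm k}$.

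Finally, I would match the indexing sets. For ${\bm k} = (k_1, \ldots, k_\ell) \in R({\bm i}_A, w)$ the tuple $(i_{k_1}, \ldots, i_{k_\ell})$ is by definition a reduced word for $w$; hence the dual Kogan face $F_{\bm k}(GT(\lambda))$ is reduced and satisfies
\[
w(F_{\bm k}(GT(\lambda))) \;=\; w_0\, s_{i_{k_1}} \cdots s_{i_{k_\ell}}\, w_0 \;=\; w_0\, w\, w_0,
\]
and conversely every reduced dual Kogan face with $w(F_{\bm k}(GT(\lambda))) = w_0 w w_0$ arises in this way. Combining this bijection of index sets with the transport of faces and lattice points from the previous paragraph yields the claimed equality
\[
\Phi_{{\bm i}_A}(\mathcal{B}^w(\lambda)) \;=\; \bigcup_{{\bm k}} F_{\bm k}(GT(\lambda)) \cap \z^N,
\]
where the union runs over reduced dual Kogan faces with $w(F_{\bm k}(GT(\lambda))) = w_0 w w_0$. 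The only subtle step is the compatibility of the two enumerations of facets; once that bookkeeping is accepted from Littelmann's construction, the corollary is immediate.
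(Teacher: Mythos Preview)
Your proposal is correct and follows essentially the same route as the paper: specialize \cref{c:string_parametrization_opposite_Demazure} to ${\bm i} = {\bm i}_A$, then transport across Littelmann's unimodular affine isomorphism $\Delta_{{\bm i}_A}(\lambda) \simeq GT(\lambda)$, using that this map sends $F_{\bm k}(\Delta_{{\bm i}_A}(\lambda))$ to $F_{\bm k}(GT(\lambda))$. The paper simply asserts this facet correspondence from \cite[Corollary~5]{Lit} and states the corollary as an immediate consequence, so your version is a slightly more detailed rendering of the same argument.
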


\begin{rem}
Such relation between reduced dual Kogan faces and the character of $\mathcal{B}^w(\lambda)$ was previously given in \cite[Corollary 5.2]{KST}.
\end{rem}

Let $\mathcal{PD}_n$ denote the set of subsets of 
\[Y_n \coloneqq \{(i, j) \in \z^2 \mid 1 \leq i \leq n,\ 1 \leq j \leq n-i+1\}.\]
We regard $Y_n$ as a Young diagram, and represent an element $D \in \mathcal{PD}_n$ by putting $+$ in the boxes contained in $D$. 
For instance, a set $D = \{(1, 3), (2, 1), (3, 1), (4, 1)\} \in \mathcal{PD}_4$ is represented as 
\[D = \begin{ytableau}
\mbox{} & \mbox{} & + & \mbox{} \\
+ & \mbox{} & \mbox{} & \none \\
+ & \mbox{} & \none & \none \\
+ & \none & \none & \none
\end{ytableau}.\]
An element of $\mathcal{PD}_n$ is called a \emph{pipe dream} (see \cite[Sect.\ 1.4]{KnM}).
Arrange the elements of $Y_n$ as 
\[((n, 1), (n-1, 1), (n-1, 2), (n-2, 1), \ldots, (1, n-1), (1, n)).\]
Then we associate to each $D \in \mathcal{PD}_n$ a sequence ${\bm k}_D = (k_1, \ldots, k_{|D|})$ with $1 \leq k_1 < \cdots < k_{|D|} \leq N$ by arranging $1 \leq k \leq N$ such that the $k$-th element of $Y_n$ is included in $D$.
For instance, a pipe dream $D = \{(1, 3), (2, 1), (3, 1), (4, 1)\} \in \mathcal{PD}_4$ gives a sequence ${\bm k}_D = (1, 2, 4, 9)$.
For $D \in \mathcal{PD}_n$, the Kogan face $F_{{\bm k}_D}^\vee(GT(\lambda))$ is reduced if and only if the pipe dream $D$ is reduced in the sense of \cite[Definition 1.4.3]{KnM} (see \cite[Lemma 1.4.5]{KnM}).
A reduced pipe dream is the same as an \emph{rc-graph} invented in \cite{FK} and developed in \cite{BB}.
In order to relate reduced Kogan faces with Demazure crystals, we review the notion of \emph{ladder moves} introduced in \cite{BB}.

\begin{defi}[{see \cite[Sect.\ 3]{BB}}]
For $(i, j) \in Y_n$, the \emph{ladder move} $L_{i, j}$ is defined as follows.
Take $D \in \mathcal{PD}_n$ satisfying the following conditions:
\begin{itemize}
\item $(i, j) \in D$, $(i, j+1) \notin D$,
\item there exists $1 \leq m < i$ such that $(i-m, j), (i-m, j+1) \notin D$ and $(i-k, j), (i-k, j+1) \in D$ for all $1 \leq k < m$.
\end{itemize}
Then we define $L_{i, j} (D) \in \mathcal{PD}_n$ by 
\[L_{i, j} (D) \coloneqq D \cup \{(i-m, j+1)\} \setminus \{(i, j)\}.\]
Denote the pipe dream $L_{i, j} (D)$ also by $L_{i-m, j}^\prime(D)$.
\end{defi}

\begin{ex}\label{ex:ladder_moves_type_A}
Let $n = 4$. Then we obtain the following: 
\begin{align*}
\xymatrix{
{\begin{ytableau}
\mbox{} & \mbox{} & \mbox{} & \mbox{} \\
+ & + & \mbox{} & \none \\
+ & \mbox{} & \none & \none \\
+ & \none & \none & \none
\end{ytableau}} \ar@{|->}[d]^-{L_{2, 2}} \ar@{|->}[r]^-{L_{3, 1}} & {\begin{ytableau}
\mbox{} & + & \mbox{} & \mbox{} \\
+ & + & \mbox{} & \none \\
\mbox{} & \mbox{} & \none & \none \\
+ & \none & \none & \none
\end{ytableau}} \ar@{|->}[r]^-{L_{4, 1}} & {\begin{ytableau}
\mbox{} & + & \mbox{} & \mbox{} \\
+ & + & \mbox{} & \none \\
\mbox{} & + & \none & \none \\
\mbox{} & \none & \none & \none
\end{ytableau}} & \\
{\begin{ytableau}
\mbox{} & \mbox{} & + & \mbox{} \\
+ & \mbox{} & \mbox{} & \none \\
+ & \mbox{} & \none & \none \\
+ & \none & \none & \none
\end{ytableau}} \ar@{|->}[r]^-{L_{2, 1}} & {\begin{ytableau}
\mbox{} & + & + & \mbox{} \\
\mbox{} & \mbox{} & \mbox{} & \none \\
+ & \mbox{} & \none & \none \\
+ & \none & \none & \none
\end{ytableau}} \ar@{|->}[r]^-{L_{3, 1}} & {\begin{ytableau}
\mbox{} & + & + & \mbox{} \\
\mbox{} & + & \mbox{} & \none \\
\mbox{} & \mbox{} & \none & \none \\
+ & \none & \none & \none
\end{ytableau}} \ar@{|->}[r]^-{L_{4, 1}} & {\begin{ytableau}
\mbox{} & + & + & \mbox{} \\
\mbox{} & + & \mbox{} & \none \\
\mbox{} & + & \none & \none \\
\mbox{} & \none & \none & \none
\end{ytableau}.} 
}
\end{align*}
\end{ex}

We set
\[(p_1, \ldots, p_N) \coloneqq (n, \underbrace{n-1, n-1}_2, \underbrace{n-2, n-2, n-2}_3, \ldots, \underbrace{1, \ldots, 1}_n).\]
Then the sequence $((p_1, i_1), \ldots, (p_N, i_N))$ gives an arrangement of the elements of $Y_n$, which is different from the arrangement above.
We associate to each $D \in \mathcal{PD}_n$ a sequence ${\bm k}_D^\prime = (k_1, \ldots, k_{N-|D|})$ with $1 \leq k_1 < \cdots < k_{N-|D|} \leq N$ by arranging $1 \leq k \leq N$ such that $(p_k, i_k) \in Y_n \setminus D$.
For instance, a pipe dream $D = \{(1, 3), (2, 1), (3, 1), (4, 1)\} \in \mathcal{PD}_4$ corresponds to a sequence ${\bm k}_D^\prime = (2, 4, 5, 7, 9, 10)$.
For $w \in W$, define $D(w) \in \mathcal{PD}_n$ by the condition that ${\bm k}_{D(w)}^\prime$ is the minimum element in $R({\bm i}_A, w)$ with respect to the lexicographic order. 
If $w = e$, then we think of $D(w)$ as $\mathcal{PD}_n$.

\begin{prop}\label{p:relation_with_BB}
For $w \in W$, the equality 
\[D(w) = \{(i, j) \mid 1 \leq i \leq n,\ 1 \leq j \leq m(i)\}\]
holds, where $m(i)$ is the cardinality of $\{i < j \leq n+1 \mid w^{-1} w_0 (j) < w^{-1} w_0 (i)\}$ for $1 \leq i \leq n$.
\end{prop}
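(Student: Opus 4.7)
The plan is to reinterpret $D(w)$ using the block decomposition ${\bm i}_A = B_1 \cdots B_n$, where $B_b \coloneqq (s_b, s_{b-1}, \ldots, s_1)$ occupies positions $\binom{b}{2}+1, \ldots, \binom{b+1}{2}$. Under $k \mapsto (p_k, i_k)$, the block $B_b$ corresponds to the $(n+1-b)$-th row of $Y_n$. A direct indexing check shows that the set $\{(i, j) \mid 1 \leq j \leq m(i)\}$ has complement in $Y_n$ given precisely by the prefix of length $j_b \coloneqq b - m(n+1-b)$ in each block $B_b$. Hence the theorem is equivalent to the statement that the lex-min reduced subword of ${\bm i}_A$ for $w$ consists of the first $j_b$ positions in each $B_b$.

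I would prove this equivalent statement by induction on $n$, resting on the following key lemma: for any reduced subword $K \subseteq \{1, \ldots, N\}$ of ${\bm i}_A$ with product $w$, the intersection $K \cap B_n$ has cardinality at least $j_n = n+1 - w^{-1}(n+1)$, and equality holds iff $K \cap B_n$ is the prefix of length $j_n$ in $B_n$; in that case $K \setminus B_n$ is a reduced subword of ${\bm i}_A^{(n-1)} \coloneqq B_1 \cdots B_{n-1}$ for $w' \coloneqq w v_n^{-1} \in S_n$, where $v_n \coloneqq s_n s_{n-1} \cdots s_{n - j_n + 1}$. To prove the lemma I would track the value $n+1$ under right-multiplication by simple reflections: each step must strictly increase length, so $n+1$ can only move from its current position $p+1$ to $p$ via $s_p$ and never in reverse. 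Since $n+1$ starts at position $n+1$ and must end at $w^{-1}(n+1) = n+1 - j_n$, it needs exactly $j_n$ downward moves via $s_n, s_{n-1}, \ldots, s_{n - j_n + 1}$ in chronological order. In ${\bm i}_A$, $s_n$ occurs only at the first position of $B_n$, and for each $r \geq 2$ the first occurrence of $s_{n - r + 1}$ strictly after the preceding move is precisely the $r$-th position of $B_n$, forcing the $j_n$ required moves to occupy the prefix of $B_n$ of length $j_n$.

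Given the key lemma, lex-minimality of $K$ forces $|K \cap B_n|$ to be as small as possible (any extra position in $B_n$ is lexicographically larger than any position in ${\bm i}_A^{(n-1)}$), so $|K \cap B_n| = j_n$ and $K \cap B_n$ is the prescribed prefix. Then $K \setminus B_n$ is a reduced subword of ${\bm i}_A^{(n-1)}$ for $w'$, and its lex-minimality is inherited from that of $K$. By induction on $n$, $K \setminus B_n$ is the prefix subword for $w'$ with block lengths $j_b^{(n-1)} = b - m'(n - b)$, where $m'(i) = c_i((w')^{-1} w_0^{(n)})$. To close the induction I verify the compatibility $j_b^{(n)} = j_b^{(n-1)}$ for $b \leq n-1$, equivalently $m(i) = m'(i-1)$ for $2 \leq i \leq n$. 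Combinatorially, $m(i)$ equals the number of values $a \leq n+1-i$ appearing to the left of $n+2-i$ in the one-line notation of $w$, and $m'(i-1)$ is the analogous count for $w'$; since $w'$ is obtained from $w$ by deleting the entry $n+1$ from its position (and placing $n+1$ at the last position), and since $n+1 > n+1-i$ for $i \geq 2$, this count is preserved.

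The main obstacle is the key lemma, specifically showing that the $j_n$ moves of $n+1$ are forced to occupy the prefix of $B_n$ rather than being spread across earlier blocks. The tracking-of-$n+1$ argument must handle the fact that earlier occurrences of $s_{n - r}$ in blocks $B_{n-r}, \ldots, B_{n-1}$ are all chronologically before the unique $s_n$ in $B_n$, and so cannot be used to move $n+1$ (which only starts moving after the first $s_n$). Once the prefix in $B_n$ is forced, the passage to lex-minimality via the "minimize $|K \cap B_n|$ first" principle is straightforward, but this reduction is the real combinatorial content of the argument.
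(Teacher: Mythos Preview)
Your overall strategy---induction on $n$ via the block decomposition ${\bm i}_A = B_1 \cdots B_n$ and tracking the value $n+1$---is genuinely different from the paper's proof, which proceeds by induction on $\ell(w)$ and removes one box at a time using braid-move arguments. Your reformulation of the statement (that the complement of $D(w)$ consists of the prefix of length $j_b = b - m(n+1-b)$ in each block $B_b$) is correct, your key lemma is correctly stated, the tracking-of-$(n+1)$ proof of it is valid, and the compatibility check $m(i) = m'(i-1)$ at the end is fine.

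The gap is exactly where you claim the argument is ``straightforward'': you assert that lex-minimality of $K$ forces $|K \cap B_n|$ to be minimal, citing that ``any extra position in $B_n$ is lexicographically larger than any position in ${\bm i}_A^{(n-1)}$''. But lexicographic order on increasing sequences compares entries from the \emph{front}, not the back. Concretely, suppose $K$ has $K \cap B_n = P_n \cup Q$ with $Q \neq \emptyset$, and write $K' = K \cap \{1,\ldots,\binom{n}{2}\}$. To conclude $K_0 = K_0' \cup P_n \leq K$ in lex order you would need to compare the first $|K'|$ entries of $K_0'$ (the lex-min subword for $w'$ in ${\bm i}_A^{(n-1)}$) against $K'$ (a reduced subword for some $w'' \leq_R w'$). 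This comparison would follow if $K'$ could be extended, inside ${\bm i}_A^{(n-1)}$, to a reduced subword for $w'$; but that extendability is \emph{not} automatic---in general, for $u \leq_R v$ and a reduced word ${\bm j}$ for the longest element, not every reduced subword for $u$ extends to one for $v$ (take ${\bm j} = (1,2,1,3,2,1)$, $u = s_1 s_2 s_1$, $v = u s_3$, and the subword $(3,5,6)$). So the step you flag as routine is in fact the missing combinatorial content.

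One way to repair it: first observe (from your own tracking argument applied to $w'' \in S_n$ and the fact that $(w'')^{-1} w'$ must fix $n+1$) that position $j_n+1$ of $B_n$ can \emph{never} lie in $K$, whence $q \coloneqq \prod_{r \in Q} s_{n+1-r}$ lies in $S_{n-j_n}$ and therefore commutes with $v_n$; this gives $w' = w'' q$ with lengths additive. Then one still has to produce a reduced subword of the tail of ${\bm i}_A^{(n-1)}$ (after $\max K'$) for $q$, and this requires an additional inductive or exchange-type argument that you have not supplied. Until that step is filled in, the proof is incomplete, even though the final claim is correct.
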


\begin{proof}
We proceed by induction on $\ell(w)$.
If $\ell(w) = 0$, then the assertion is obvious. 
Assume that $\ell \coloneqq \ell(w) > 0$, and write ${\bm k}_{D(w)}^\prime = (k_1, \ldots, k_\ell)$. 
If we set $i \coloneqq i_{k_\ell}$ and $w^\prime \coloneqq w s_i$, then we have ${\bm k}_{D(w^\prime)}^\prime = (k_1, \ldots, k_{\ell-1})$, and it follows that 
\[D(w) = D(w^\prime) \setminus \{(p_{k_\ell}, i)\}.\]
By the induction hypothesis, we have $D(w^\prime) = \{(i, j) \mid 1 \leq i \leq n,\ 1 \leq j \leq m^\prime(i)\}$, where $m^\prime(i)$ is the cardinality of $\{i < j \leq n+1 \mid (w^\prime)^{-1} w_0 (j) < (w^\prime)^{-1} w_0(i)\}$ for $1 \leq i \leq n$.
Since we have $(w^\prime)^{-1} w_0 = s_i w^{-1} w_0$ and $\ell((w^\prime)^{-1} w_0) = \ell(w^{-1} w_0) + 1$, the set $\{(i, j) \mid 1 \leq i \leq n,\ 1 \leq j \leq m(i)\}$ is obtained from $D(w^\prime)$ by removing $(((w^\prime)^{-1} w_0)^{-1} (i+1), m^\prime(((w^\prime)^{-1} w_0)^{-1} (i+1))) \in D(w^\prime)$.
Hence it suffices to show that $((w^\prime)^{-1} w_0)^{-1} (i+1) = p_{k_\ell}$ and $m^\prime (p_{k_\ell}) = i$.

Since we have $(p_{k_\ell}, i) \in D(w^\prime)$, it holds that $i \leq m^\prime(p_{k_\ell})$.
Suppose for a contradiction that $i < m^\prime (p_{k_\ell})$. 
Consider words obtained from $(i_{k_1}, \ldots, i_{k_\ell})$ by moving $i_{k_\ell} = i$ to the left via $2$-moves, i.e., moves of the form $(i, j) \mapsto (j, i)$ with $c_{i, j} = 0$.
Since these are reduced words, we have $m^\prime (p_{k_\ell} + 1) \neq i-1$ by the definition of ${\bm k}_{D(w^\prime)}^\prime$.
If $i-1 < m^\prime (p_{k_\ell} + 1)$, then we can find an element of $R({\bm i}_A, w)$ which is smaller than $(k_1, \ldots, k_\ell)$ with respect to the lexicographic order; this contradicts to the definition of $D(w)$. 
Hence it holds that $m^\prime (p_{k_\ell} + 1) < i - 1$. 
Then the following reduced word is obtained from $(i_{k_1}, \ldots, i_{k_\ell})$ by moving $i_{k_\ell} = i$ to the left via $2$-moves:
\[(i_{k_1}, \ldots, i_{k_{t_1-2}}, i, i - 1, i, i_{k_{t_1 + 1}}, \ldots, i_{k_{\ell - 1}}),\]
where $t_1$ is defined by $(p_{k_{t_1}}, i_{k_{t_1}}) = (p_{k_\ell} + 1, i - 1)$. 
Replacing $(i, i - 1, i)$ by $(i - 1, i, i - 1)$, we obtain a reduced word $(i_{k_1}, \ldots, i_{k_{t_1-2}}, i - 1, i, i - 1, i_{k_{t_1 + 1}}, \ldots, i_{k_{\ell - 1}}) \in R(w)$.
We next consider words obtained from this by moving $i - 1$ in the $(t_1-1)$-th position to the left via $2$-moves. 
Then the argument above implies that $m^\prime (p_{k_\ell} + 2) < i - 2$, and we obtain a reduced word 
\[(i_{k_1}, \ldots, i_{k_{t_2-2}}, i - 1, i - 2, i - 1, i_{k_{t_2 + 1}}, \ldots, i_{k_{t_1-2}}, i, i - 1, i_{k_{t_1 + 1}}, \ldots, i_{k_{\ell - 1}}),\]
where $t_2$ is defined by $(p_{k_{t_2}}, i_{k_{t_2}}) = (p_{k_\ell} + 2, i - 2)$. 
Continuing this argument, we have $m^\prime (p_{k_\ell} + i - 1) = m^\prime (p_{k_\ell} + i) = 0$, and obtain a reduced word of the form 
\begin{equation}\label{eq:reduced_word_2-moves_last_step}
\begin{aligned}
(i_{k_1}, \ldots, i_{k_{t_i}}, 1, i_{k_{t_i} + 1}, \ldots, i_{k_{t_{i - 1}-2}}, 2, 1, i_{k_{t_{i - 1} + 1}}, \ldots, i_{k_{t_1-2}}, i, i - 1, i_{k_{t_1 + 1}}, \ldots, i_{k_{\ell - 1}}),
\end{aligned}
\end{equation}
where $t_i$ is defined by $(p_{k_{t_i}}, i_{k_{t_i}}) = (p_{k_\ell} + i, 1)$. 
However, since $i_{k_{t_i}} = 1$, the word \eqref{eq:reduced_word_2-moves_last_step} is not reduced, which gives a contradiction. 
Thus, we deduce that $m^\prime (p_{k_\ell}) = i$.

By the definition of ${\bm k}_{D(w^\prime)}^\prime$, we have $(p_k, i_k) \in D(w^\prime)$ for all $k_{\ell-1} < k \leq N$.
Since $k_{\ell-1} < k_\ell$, it holds that $m^\prime(t) = n - t + 1$ for $1 \leq t \leq p_{k_\ell} - 1$. 
Hence we have $(w^\prime)^{-1} w_0 (t) = n - t + 2$ for $1 \leq t \leq p_{k_\ell} - 1$ and $(w^\prime)^{-1} w_0 (p_{k_\ell}) = i + 1$, which implies that $((w^\prime)^{-1} w_0)^{-1} (i+1) = p_{k_\ell}$. 
This proves the proposition.
\end{proof}

Let $\mathscr{L}(D(w))$ denote the set of elements of $\mathcal{PD}_n$ obtained from $D(w)$ by applying sequences of ladder moves.

\begin{cor}\label{c:last_box_for_w_prime}
For $w \in W$ with $\ell \coloneqq \ell(w) > 0$, write ${\bm k}_{D(w)}^\prime = (k_1, \ldots, k_\ell)$ and $w^\prime \coloneqq w s_{i_{k_\ell}}$. 
Then it holds that $(p_{k_\ell}, i_{k_\ell}+1) \notin D$ and 
\[\{(i, j) \mid 1 \leq i \leq p_{k_\ell} - 1,\ 1 \leq j \leq n + 1 -i\} \cup \{(p_{k_\ell}, j) \mid 1 \leq j \leq i_{k_\ell}\} \subseteq D\]
for all $D \in \mathscr{L}(D(w^\prime))$.
\end{cor}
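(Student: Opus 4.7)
The plan is to argue by induction on the number of ladder moves applied to $D(w^\prime)$. Let $R \coloneqq \{(i, j) \mid 1 \leq i \leq p_{k_\ell} - 1,\ 1 \leq j \leq n + 1 -i\} \cup \{(p_{k_\ell}, j) \mid 1 \leq j \leq i_{k_\ell}\}$; the two invariants to maintain are $R \subseteq D$ and $(p_{k_\ell}, i_{k_\ell}+1) \notin D$.

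For the base case $D = D(w^\prime)$, I would extract from the final paragraph of the proof of \cref{p:relation_with_BB} the equalities $m^\prime(t) = n - t + 1$ for $1 \leq t \leq p_{k_\ell} - 1$ and (shown earlier in the same proof) $m^\prime(p_{k_\ell}) = i_{k_\ell}$. Combined with the explicit description of $D(w^\prime)$ in \cref{p:relation_with_BB}, these give $R \subseteq D(w^\prime)$ and $(p_{k_\ell}, i_{k_\ell} + 1) \notin D(w^\prime)$ directly.

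For the inductive step, let $D_0$ satisfy both invariants and let $D_1 = L_{i, j}(D_0)$ for some applicable ladder move moving $(i, j)$ to $(i - m, j + 1)$. I must check that no box of $R$ is removed and that $(i - m, j + 1) \neq (p_{k_\ell}, i_{k_\ell} + 1)$. If $(i, j) \in R$, then $i \leq p_{k_\ell}$, so $i - m < p_{k_\ell}$; thus row $i - m$ lies in the completely filled part of $R$, and since $j \leq n+1-i < n+1-(i-m)$ the box $(i - m, j)$ is a valid position in $R \subseteq D_0$, contradicting the defining condition $(i - m, j) \notin D_0$ of a ladder move. If instead $i - m = p_{k_\ell}$ and $j = i_{k_\ell}$, then $(i - m, j) = (p_{k_\ell}, i_{k_\ell}) \in R \subseteq D_0$, again contradicting $(i - m, j) \notin D_0$. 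The only bookkeeping point is that rows strictly above $p_{k_\ell}$ of $R$ are completely filled, which is what forces any potentially dangerous ladder move to violate its own upper condition; there is no deeper obstacle.
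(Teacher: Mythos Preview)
Your proof is correct and follows essentially the same approach as the paper's: establish the two invariants for $D(w')$ using the data extracted from the proof of \cref{p:relation_with_BB}, then verify they are preserved under ladder moves. The paper compresses this into two sentences, first recording the dichotomy that either $p_{k_\ell} = n - i_{k_\ell} + 1$ or $(p_{k_\ell}, i_{k_\ell}+1) = (p_{k_{\ell-1}}, i_{k_{\ell-1}})$, and then asserting that the claim follows ``by the definition of ladder moves''; your write-up simply unpacks that second sentence into the explicit case analysis.
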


\begin{proof}
By \cref{p:relation_with_BB}, we see that $p_{k_\ell} = n-i_{k_\ell}+1$ or $(p_{k_\ell}, i_{k_\ell}+1) = (p_{k_{\ell-1}}, i_{k_{\ell-1}})$. 
This implies the assertion by the definition of ladder moves. 
\end{proof}

By \cref{p:relation_with_BB}, the set $D(w)$ coincides with $D_{\rm bot}(w^{-1} w_0)$ in \cite[Sect.\ 3]{BB}. Hence we obtain the following. 

\begin{thm}[{see \cite[Theorem 3.7 (c)]{BB}}]\label{t:ladder_moves_BB}
For $w \in W$, the set $\mathscr{L}(D(w))$ coincides with the set of reduced pipe dreams $D$ such that $w(F_{{\bm k}_D}^\vee (GT(\lambda))) = w_0 w$.
\end{thm}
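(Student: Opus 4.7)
The plan is to deduce the theorem from \cite[Theorem 3.7 (c)]{BB} via the identification made in \cref{p:relation_with_BB}. In the language of \cite{BB}, that theorem states that, for any $v \in W$, the set $RP(v)$ of reduced pipe dreams for $v$ coincides with the orbit of the bottom pipe dream $D_{\rm bot}(v)$ under ladder moves. My task is therefore to match the authors' data $(D(w),\ \mathscr{L}(D(w)),\ w(F_{{\bm k}_D}^\vee(GT(\lambda))))$ with Bergeron--Billey's $(D_{\rm bot}(v),\ RP(v),\ v)$ for the choice $v = w^{-1} w_0$.

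The starting configuration matches immediately. \cref{p:relation_with_BB} gives the formula $D(w) = \{(i,j) \mid 1 \leq i \leq n,\ 1 \leq j \leq m(i)\}$, where $m(i)$ is the number of $j > i$ with $v(j) < v(i)$, i.e.\ the Lehmer code of $v = w^{-1} w_0$. This is exactly the definition of $D_{\rm bot}(v)$ in \cite{BB}. Moreover, the ladder moves $L_{i,j}$ of the excerpt have the same triggering pattern and effect as those of \cite{BB}, so $\mathscr{L}(D(w))$ is nothing other than the BB-orbit of $D_{\rm bot}(w^{-1} w_0)$, which by \cite[Theorem 3.7 (c)]{BB} equals $RP(w^{-1} w_0)$.

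It remains to translate the labeling condition $w(F_{{\bm k}_D}^\vee(GT(\lambda))) = w_0 w$. By the definition recalled before the theorem, this condition is equivalent to $s_{i_{k_1}} \cdots s_{i_{k_{|D|}}} = w w_0$, where the product is taken in the authors' reading order (rows bottom-to-top, left-to-right within a row), so that, from the indexing convention in the excerpt, box $(i,j)$ is labelled by $s_{n-i-j+2}$. In \cite{BB} the same pipe dream is read in the opposite order and box $(i,j)$ is labelled by $s_{i+j-1}$. Since $w_0 s_k w_0 = s_{n+1-k}$ in $\mathfrak{S}_{n+1}$, reversing the order and conjugating by $w_0$ transforms the authors' product $w w_0$ into $w_0 (w w_0)^{-1} w_0 = w^{-1} w_0 = v$. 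Hence $w(F_{{\bm k}_D}^\vee(GT(\lambda))) = w_0 w$ if and only if $D \in RP(w^{-1} w_0)$ in the sense of \cite{BB}; combining this with the previous paragraph yields the claimed equality.

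The main obstacle is the bookkeeping in the last paragraph: the labeling and reading-order conventions are genuinely different from those of \cite{BB}, and one has to verify carefully that the composition of reversal and $w_0$-conjugation really turns the authors' condition into the BB condition (and preserves reducedness of the word). I would pin down the correspondence on small examples $n = 2, 3$ before writing the general identification, after which the theorem is an immediate transcription of \cite[Theorem 3.7 (c)]{BB}.
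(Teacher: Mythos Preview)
Your proposal is correct and follows essentially the same approach as the paper: the paper's entire argument is the single sentence preceding the theorem, ``By \cref{p:relation_with_BB}, the set $D(w)$ coincides with $D_{\rm bot}(w^{-1} w_0)$ in \cite[Sect.\ 3]{BB}. Hence we obtain the following.'' You reproduce this identification and additionally spell out the bookkeeping that matches the two labeling conventions (the paper's $s_{n-i-j+2}$ read bottom-to-top/left-to-right versus BB's $s_{i+j-1}$), which the paper leaves implicit.
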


\begin{ex}\label{ex:length_one_type_A}
For $1 \leq i \leq n$, we have $D(s_i) = Y_n \setminus \{(n-i+1, i)\}$ and $\mathscr{L}(D(s_i)) = \{D(s_i)\}$. 
\end{ex}

\begin{ex}
Let $n = 2$. Then we see that 
\begin{align*}
&\mathscr{L}(D(s_1)) = \left\{\begin{ytableau}
+ & +\\
\mbox{} & \none 
\end{ytableau}\right\}, & &\mathscr{L}(D(s_2)) = \left\{\begin{ytableau}
+ & \mbox{}\\
+ & \none 
\end{ytableau}\right\},\\
&\mathscr{L}(D(s_1 s_2)) = \left\{\begin{ytableau}
+ & \mbox{}\\
\mbox{} & \none 
\end{ytableau}\right\}, & &\mathscr{L}(D(s_2 s_1)) = \left\{\begin{ytableau}
\mbox{} & \mbox{}\\
+ & \none 
\end{ytableau}, \begin{ytableau}
\mbox{} & +\\
\mbox{} & \none 
\end{ytableau}\right\}.
\end{align*}
\end{ex}

\begin{ex}
Let $n = 4$, and $w = s_2 s_3 s_4 s_3 s_2 s_1$. Then it follows that 
\[D(w) = \begin{ytableau}
\mbox{} & \mbox{} & \mbox{} & \mbox{} \\
+ & + & \mbox{} & \none \\
+ & \mbox{} & \none & \none \\
+ & \none & \none & \none
\end{ytableau}.\]
Hence the set $\mathscr{L}(D(w))$ consists of the seven pipe dreams given in \cref{ex:ladder_moves_type_A}.
\end{ex}

We also review the notion of (the transpose of) mitosis operators introduced in \cite{KnM}.
For $1 \leq j \leq n$ and $D \in \mathcal{PD}_n$, we set 
\begin{align*}
&{\rm start}_j^\top(D) \coloneqq \min\{1 \leq i \leq n-j+1 \mid (i, j) \notin D\} \cup \{n-j+2\}\ {\rm and}\\
&\mathcal{J}_j^\top(D) \coloneqq \{1 \leq i < {\rm start}_j^\top(D) \mid (i, j+1) \notin D\}.
\end{align*}
For each $i \in \mathcal{J}_j^\top(D)$, we define a pipe dream $D_i(j) \in \mathcal{PD}_n$ as follows: write $\{1 \leq p \leq i \mid (p, j+1) \notin D\} = \{p_0 < p_1 < \cdots < p_r = i\}$, and set 
\[D_i(j) \coloneqq L_{p_r, j} \cdots L_{p_1, j}(D \setminus \{(p_0, j)\}).\]

\begin{defi}[{see \cite[Definition 1.6.1]{KnM}}]
For $1 \leq j \leq n$ and $D \in \mathcal{PD}_n$, the \emph{transposed mitosis operator} ${\rm mitosis}_j^\top$ sends $D$ to 
\[{\rm mitosis}_j^\top(D) \coloneqq \{D_i(j) \mid i \in \mathcal{J}_j^\top(D)\}.\]
If $\mathcal{J}_j^\top(D) = \emptyset$, then we regard ${\rm mitosis}_j^\top(D)$ as the empty set. 
For a subset $\mathcal{A} \subseteq \mathcal{PD}_n$, we set ${\rm mitosis}_j^\top(\mathcal{A}) \coloneqq \bigcup_{D \in \mathcal{A}} {\rm mitosis}_j^\top(D)$.
\end{defi}

The following theorem is obtained by transposing the corresponding property of usual mitosis operators.  

\begin{thm}[{see \cite[Theorem C]{KnM} and \cite[Theorem 15]{Mil}}]\label{t:mitosis_formula}
For $w \in W$ and $(j_1, \ldots, j_{\ell(w)}) \in R(w)$, it holds that
\[\mathscr{L}(D(w)) = {\rm mitosis}_{j_{\ell(w)}}^\top \cdots {\rm mitosis}_{j_1}^\top (Y_n).\]
\end{thm}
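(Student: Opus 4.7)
The plan is induction on $\ell(w)$, with the main input being the single-step identity
\[\mathscr{L}(D(w s_j)) = {\rm mitosis}^\top_j(\mathscr{L}(D(w))) \qquad \text{whenever } \ell(w s_j) = \ell(w) + 1.\]
The base case $w = e$ is immediate: by \cref{p:relation_with_BB} one has $D(e) = Y_n$, and since no ladder move is available on $Y_n$, it follows that $\mathscr{L}(D(e)) = \{Y_n\}$, which matches the empty composition on the right-hand side. Granting the single-step identity, iterating along a reduced word $(j_1, \ldots, j_{\ell(w)}) \in R(w)$ yields the claimed formula.

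The single-step identity is the column-direction analog of the main combinatorial result of \cite[Theorem C]{KnM} / \cite[Theorem 15]{Mil}, and one proof mirrors theirs in the transposed setting. For the inclusion ``$\supseteq$'', one checks directly from the definitions of $L_{i,j}$ and ${\rm mitosis}^\top_j$ that every pipe dream $D_i(j)$ produced from $D \in \mathscr{L}(D(w))$ is reduced and has reading word in $R((w s_j) w_0)$; hence $D_i(j) \in \mathscr{L}(D(w s_j))$ by \cref{t:ladder_moves_BB}. For the inclusion ``$\subseteq$'', one uses \cref{c:last_box_for_w_prime} to locate the box of $D \in \mathscr{L}(D(w s_j))$ corresponding to the last letter of a reduced expression for $w s_j$; removing this box and undoing the appropriate ladder moves produces a pipe dream $D' \in \mathscr{L}(D(w))$ such that $D$ appears as one of the $D'_i(j)$ inside ${\rm mitosis}^\top_j(D')$.

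Alternatively, one can deduce the theorem from \cite{KnM, Mil} directly via the pipe-dream transposition $\tau:(i,j) \mapsto (j,i)$. Since $Y_n$ is $\tau$-symmetric and $\tau$ intertwines $L_{i,j}$ and ${\rm mitosis}^\top_j$ with their row-direction analogs from Bergeron--Billey and Knutson--Miller, applying $\tau$ to Miller's statement recovers the asserted identity after converting the reduced word for $w$ on the left-hand side to a reduced word for $w^{-1}$ on the right-hand side (which is handled by reversing the word, keeping in mind how $\tau$ acts on the composition order of operators). The main obstacle in either approach is the combinatorial bookkeeping: reconciling the reading order used here (the arrangement $((p_1,i_1), \ldots, (p_N,i_N))$) and the box-labeling by column index $j$ with the conventions of \cite{BB, KnM, Mil}, so that the indexing permutations match on the two sides of the transposition. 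Once this bookkeeping is carried out, the statement reduces to the cited theorem.
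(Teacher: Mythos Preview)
The paper does not give its own proof of this theorem: it is stated with a citation to \cite[Theorem C]{KnM} and \cite[Theorem 15]{Mil}, prefaced only by the one-line remark that ``the following theorem is obtained by transposing the corresponding property of usual mitosis operators.'' Your second approach---applying the transposition $\tau\colon (i,j)\mapsto (j,i)$ to Miller's row-direction statement---is therefore exactly the route the paper indicates, and your identification of the necessary bookkeeping (matching the reading order $((p_1,i_1),\ldots,(p_N,i_N))$ and the column-index labeling with the conventions in \cite{BB,KnM,Mil}, and tracking the passage $w\leftrightarrow w^{-1}$ under $\tau$) is the right content to fill in.

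Your first approach, the direct inductive proof via the single-step identity $\mathscr{L}(D(ws_j)) = {\rm mitosis}^\top_j(\mathscr{L}(D(w)))$, is also sound and is in fact the transposed form of the argument in \cite{Mil}; the base case is correct as you wrote it, and invoking \cref{t:ladder_moves_BB} to identify $\mathscr{L}(D(w))$ with the set of reduced pipe dreams for $w^{-1}w_0$ is the right way to check that mitosis outputs land in the correct set. The sketch for the reverse inclusion is a bit loose---\cref{c:last_box_for_w_prime} constrains the shape of elements of $\mathscr{L}(D(w'))$ rather than directly locating a removable box in an arbitrary $D\in\mathscr{L}(D(ws_j))$---but the underlying idea (undo ladder moves in column $j$ to recover a preimage under ${\rm mitosis}^\top_j$) is exactly the content of \cite[Theorem 15]{Mil} transposed, so this is not a gap so much as a pointer back to the cited source.
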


Let us consider another operator $M_j$ for $1 \leq j \leq n$, which is similar to ${\rm mitosis}_j^\top$. 
Take $D \in \mathcal{PD}_n$, and assume that $\mathcal{J}_j^\top(D) \neq \emptyset$. 
We denote by $p_0$ the minimum element of $\mathcal{J}_j^\top(D)$, and by $M_j (D)$ the set of elements of $\mathcal{PD}_n$ obtained from $D \setminus \{(p_0, j)\}$ by applying sequences of ladder moves $L_{p, q}$ such that $q = j$.
For a subset $\mathcal{A} \subseteq \mathcal{PD}_n$ such that $\mathcal{J}_j^\top(D) \neq \emptyset$ for all $D \in \mathcal{A}$, we set $M_j(\mathcal{A}) \coloneqq \bigcup_{D \in \mathcal{A}} M_j(D)$; it follows by definition that ${\rm mitosis}_j^\top(\mathcal{A}) \subseteq M_j(\mathcal{A})$.
For $w \in W$, we define a set $\mathscr{M}(w)$ of pipe dreams by 
\[\mathscr{M}(w) \coloneqq M_{i_{k_\ell}} \cdots M_{i_{k_1}} (Y_n),\]
where we write ${\bm k}^\prime_{D(w)} = (k_1, \ldots, k_\ell)$.
Note that $M_{i_{k_r}} \cdots M_{i_{k_1}} (Y_n) = \mathscr{M}(s_{i_{k_1}} \cdots s_{i_{k_r}})$ for each $1 \leq r \leq \ell$.
By \cref{c:last_box_for_w_prime}, the set $\mathscr{M}(w)$ is well-defined and coincides with the set of pipe dreams of the form $\widetilde{L}_\ell \widetilde{L}_{\ell-1} \cdots \widetilde{L}_2 D(w)$, where $\widetilde{L}_r$ for $2 \leq r \leq \ell$ is given by a sequence of ladder moves $L_{p, i_{k_r}}$ such that $p_{k_r} < p$.
In particular, it holds that $\mathscr{M}(w) \subseteq \mathscr{L}(D(w))$. 
Since we have ${\rm mitosis}_{i_{k_\ell}}^\top \cdots {\rm mitosis}_{i_{k_1}}^\top (Y_n) \subseteq \mathscr{M}(w)$ by definition, we obtain the following by \cref{t:mitosis_formula}.

\begin{cor}
For $w \in W$, the equality $\mathscr{M}(w) = \mathscr{L}(D(w))$ holds.
\end{cor}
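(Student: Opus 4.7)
The proof is essentially a sandwich argument combining the two inclusions already noted in the paragraph preceding the statement with \cref{t:mitosis_formula}. The plan is to apply \cref{t:mitosis_formula} to the specific reduced word $(i_{k_1}, \ldots, i_{k_\ell}) \in R(w)$ coming from ${\bm k}^\prime_{D(w)} = (k_1, \ldots, k_\ell)$.

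First I would verify that $(i_{k_1}, \ldots, i_{k_\ell})$ is indeed a reduced word for $w$. This is immediate from the defining condition ${\bm k}^\prime_{D(w)} \in R({\bm i}_A, w)$ together with the definition \eqref{eq:reduced_word_compatible_subset} of $R({\bm i}_A, w)$. Hence \cref{t:mitosis_formula} applies with $(j_1, \ldots, j_{\ell(w)}) = (i_{k_1}, \ldots, i_{k_\ell})$ and yields
\[
\mathscr{L}(D(w)) \;=\; {\rm mitosis}_{i_{k_\ell}}^\top \cdots {\rm mitosis}_{i_{k_1}}^\top (Y_n).
\]

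Next I would chain the already-established inclusions. The paragraph before the corollary records that ${\rm mitosis}_j^\top(\mathcal{A}) \subseteq M_j(\mathcal{A})$ for every admissible subset $\mathcal{A} \subseteq \mathcal{PD}_n$, so iterating this gives
\[
{\rm mitosis}_{i_{k_\ell}}^\top \cdots {\rm mitosis}_{i_{k_1}}^\top (Y_n) \;\subseteq\; M_{i_{k_\ell}} \cdots M_{i_{k_1}}(Y_n) \;=\; \mathscr{M}(w).
\]
On the other hand, the description of elements of $\mathscr{M}(w)$ as $\widetilde{L}_\ell \widetilde{L}_{\ell-1} \cdots \widetilde{L}_2 D(w)$ (obtained in the excerpt via \cref{c:last_box_for_w_prime}) already shows $\mathscr{M}(w) \subseteq \mathscr{L}(D(w))$. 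Combining the three displays yields $\mathscr{L}(D(w)) \subseteq \mathscr{M}(w) \subseteq \mathscr{L}(D(w))$, whence equality.

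There is no real obstacle here; the only conceptual point is that the reduced word one feeds into \cref{t:mitosis_formula} must match the sequence of $M$-operators appearing in the definition of $\mathscr{M}(w)$, and this match is built into the definition of $D(w)$. The remaining content (that $\mathscr{M}(w)$ is well-defined, that each composition is legal, and that $M_j$ dominates ${\rm mitosis}_j^\top$) has already been recorded in the paragraph above the corollary, so the proof collapses to the two-line sandwich above.
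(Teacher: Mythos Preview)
Your proposal is correct and matches the paper's own argument exactly: the paper derives the corollary from the sandwich $\mathscr{L}(D(w)) = {\rm mitosis}_{i_{k_\ell}}^\top \cdots {\rm mitosis}_{i_{k_1}}^\top (Y_n) \subseteq \mathscr{M}(w) \subseteq \mathscr{L}(D(w))$, invoking \cref{t:mitosis_formula} for the first equality and the two inclusions recorded just before the statement.
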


We also use the following fundamental properties (Lemmas \ref{l:shape_of_reduced_pipe} and \ref{l:inverse_ladder_move}) of reduced pipe dreams.

\begin{lem}[{see the proof of \cite[Lemma 3.6]{BB}}]\label{l:shape_of_reduced_pipe}
Let $w \in W$, and $D \in \mathscr{M}(w)$. 
If there exists $(i, j) \in D$ satisfying the following conditions:
\begin{itemize}
\item $(i, j+1) \notin D$,
\item there exists $1 \leq r < i$ such that $(i-r, j) \notin D$ and $(i-k, j), (i-k, j+1) \in D$ for all $1 \leq k < r$,
\end{itemize}
then it holds that $(i-r, j+1) \notin D$.
\end{lem}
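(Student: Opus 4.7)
My plan is to induct on the minimum number of ladder moves required to obtain $D$ from $D(w)$; the corollary $\mathscr{M}(w) = \mathscr{L}(D(w))$ proved immediately before this lemma guarantees the existence of such a sequence. For the base case $D = D(w)$, I would invoke the explicit description of $D(w)$ from \cref{p:relation_with_BB}, namely $D(w) = \{(i', j') \mid 1 \leq j' \leq m(i')\}$ for the function $m$ defined there. Under the hypotheses of the configuration, $(i, j) \in D(w)$ together with $(i, j + 1) \notin D(w)$ force $m(i) = j$, and $(i - r, j) \notin D(w)$ forces $m(i - r) \leq j - 1$. Since $m(i - r) < j + 1$, the conclusion $(i - r, j + 1) \notin D(w)$ is immediate.

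For the inductive step I take $D' = L_{p, q}(D)$ with $D$ satisfying the conclusion. A ladder move modifies $D$ only at $(p, q)$ (removed) and $(p - m, q + 1)$ (added) for the $m$ intrinsic to the move, so if $\{q, q + 1\} \cap \{j, j + 1\} = \emptyset$ the configurations on columns $j, j + 1$ agree in $D$ and $D'$, and the induction hypothesis closes this case at once. Otherwise I would perform a case analysis on whether $(p, q)$ or $(p - m, q + 1)$ coincides with one of the positions $(i, j), (i - k, j), (i - k, j + 1)$ for $0 \leq k \leq r$. In each case, I would combine the ladder move's defining emptiness conditions at $(p, q + 1), (p - m, q)$, and $(p - m, q + 1)$ with the induction hypothesis applied to a suitable related configuration in $D$ to deduce $(i - r, j + 1) \notin D'$.

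The main obstacle is this inductive case analysis: a ladder move can create a column of $+$'s in column $j$ or $j + 1$ that was not present in $D$, so the configuration in $D'$ need not transfer directly to a configuration in $D$. The crucial structural input is that a hypothetical coincidence $(p - m, q + 1) = (i - r, j + 1)$ would force, by unwinding the definitions, either a violation of one of the ladder move's emptiness conditions or a contradiction with the hypothesis $(i, j + 1) \notin D'$; the remaining sub-cases reduce, after an appropriate shift of the indices $i$ and $r$, to the induction hypothesis applied on $D$.
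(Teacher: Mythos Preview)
The paper does not prove this lemma itself but defers to the proof of \cite[Lemma~3.6]{BB}. That argument is not inductive: it uses the strand interpretation of a reduced pipe dream directly, observing that if the hypotheses held together with $(i-r,j+1)\in D$, then the two pipes crossing at $(i,j)$ would be forced to cross a second time at $(i-r,j+1)$, contradicting reducedness of $D$.

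Your inductive plan has a real gap. You address the case where the added box $(p-m,q+1)$ coincides with $(i-r,j+1)$, but you gloss over the case where the \emph{removed} box $(p,q)$ equals $(i,j+1)$ --- that is, where the ladder move creates the hole at $(i,j+1)$ that makes the hypotheses applicable in $D'$ when they were not in $D$. In that sub-case the induction hypothesis on $D$ is too weak to conclude. Concretely, take $D=\{(i,j),(i,j+1),(i-1,j+1),(i-1,j+2)\}$ (and nothing else in the vicinity): one checks directly that every configuration in $D$ satisfies the conclusion of the lemma, yet $D'=L_{i,j+1}(D)$ fails it at $(i',j',r')=(i,j,1)$, since $(i-1,j)\notin D'$ while $(i-1,j+1)\in D'$. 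Of course this $D$ is not reduced and hence does not lie in $\mathscr{L}(D(w))$ for any $w$, but your inductive step invokes nothing about $D$ beyond the induction hypothesis, so it cannot exclude this pattern. To close the gap you would have to appeal to reducedness of $D$ --- at which point the induction is redundant and you may as well run the Bergeron--Billey strand argument on $D'$ directly.
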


\begin{lem}[{see the proof of \cite[Lemma 3.5]{BB}}]\label{l:inverse_ladder_move}
Let $w \in W$, and $D \in \mathscr{M}(w)$. 
If there exist $D^\prime \in \mathcal{PD}_n$ and $(i, j) \in Y_n$ such that $D = L_{i, j} D^\prime$, then $D^\prime \in \mathscr{M}(w)$. 
\end{lem}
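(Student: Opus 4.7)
For any pipe dream $E \in \mathcal{PD}_n$, let $\pi(E) \in W$ denote the element $s_{i_{k_1}} \cdots s_{i_{k_{|E|}}}$ where ${\bm k}_E = (k_1, \ldots, k_{|E|})$; note that $E$ is reduced exactly when $|E| = \ell(\pi(E))$. The key claim I will establish is that a ladder move preserves $\pi$: if $D = L_{i,j} D'$, then $\pi(D) = \pi(D')$. This holds for arbitrary pipe dreams, not only reduced ones.

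Granting the key claim, the lemma follows quickly. By the corollary immediately preceding the statement, $\mathscr{M}(w) = \mathscr{L}(D(w))$, and by \cref{t:ladder_moves_BB} this set equals the collection of reduced pipe dreams $E$ with $w(F_{{\bm k}_E}^\vee(GT(\lambda))) = w_0 w$, i.e.\ with $\pi(E) = w_0 w w_0$ and $|E| = \ell(w)$. Since $D \in \mathscr{M}(w)$ we have $\pi(D) = w_0 w w_0$ and $|D| = \ell(w)$. A ladder move preserves the cardinality of a pipe dream, so $|D'| = \ell(w)$, and by the key claim $\pi(D') = \pi(D) = w_0 w w_0$. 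The equality $|D'| = \ell(\pi(D'))$ then says precisely that $D'$ is reduced, and with the correct associated permutation $w_0 w$, so by \cref{t:ladder_moves_BB} we get $D' \in \mathscr{M}(w)$.

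To prove the key claim I would compare the two words directly. In the arrangement underlying ${\bm k}_{\cdot}$, the box $(p,q)$ is labeled by the simple reflection $s_{n-p-q+2}$; writing $a := n-i-j+2$, the subwords read from the window $R = \{(i-k,c) : 0 \le k \le m,\ c \in \{j,j+1\}\}$ for $D'$ and $D$ are respectively
\begin{align*}
&s_a,\ s_{a+1},\, s_a,\ s_{a+2},\, s_{a+1},\ \ldots,\ s_{a+m-1},\, s_{a+m-2}, \\
&s_{a+1},\, s_a,\ s_{a+2},\, s_{a+1},\ \ldots,\ s_{a+m-1},\, s_{a+m-2},\ s_{a+m-1},
\end{align*}
each of length $2m-1$, and an induction on $m$ shows these are transformable into one another by a cascade of braid relations $s_b s_{b+1} s_b = s_{b+1} s_b s_{b+1}$. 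Outside $R$ the two pipe dreams coincide, so it remains to pass the letters coming from outside-$R$ boxes lying between the first and last $R$-positions in the global arrangement through the window subword; \cref{l:shape_of_reduced_pipe}, applied to the corner configurations of $R$ and combined with the ladder hypothesis, provides the structural control needed to carry out these commutation moves. This bookkeeping is the main obstacle and is essentially the content of the proof of \cite[Lemma 3.5]{BB}, to which the statement refers.
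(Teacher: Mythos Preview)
Your reduction is correct and matches the paper's intent: the paper gives no argument beyond the citation ``see the proof of \cite[Lemma 3.5]{BB}'', and your route---identify $\mathscr{M}(w)$ with the set of reduced pipe dreams for a fixed permutation via the preceding corollary and \cref{t:ladder_moves_BB}, then invoke the Bergeron--Billey fact that (inverse) ladder moves preserve both reducedness and the associated permutation---is exactly what that citation is meant to convey.

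Two small points. First, your assertion that the key claim ``holds for arbitrary pipe dreams, not only reduced ones'' is stronger than you need and is not what your sketch actually proves: your use of \cref{l:shape_of_reduced_pipe} already presumes $D\in\mathscr{M}(w)$, and in any case the application only requires the statement when $D$ is reduced. Second, the word-manipulation outline you give (braid cascade on the window word, then commute past interspersed letters) is not how \cite{BB} argues; their proof is pictorial, tracking the pipes through the local window and observing that the ladder move reroutes two specific strands without changing the global pairing, which sidesteps the commutation bookkeeping entirely. Your final sentence correctly defers this to \cite{BB}, so there is no gap, but the invocation of \cref{l:shape_of_reduced_pipe} for the commutation step is not the right tool.
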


The following is the main result of this section. 

\begin{thm}\label{t:main_result_type_A}
For $w \in W$, it holds that
\[\mathcal{C}_{{\bm i}_A}(X_w) = \bigcup_{D \in \mathscr{M}(w)} F_{{\bm k}_D}^\vee (\mathcal{C}_{{\bm i}_A}).\] 
\end{thm}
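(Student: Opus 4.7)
The plan is to proceed by induction on $\ell(w)$. The base case $w = e$ is direct: $\mathcal{B}_e(\infty) = \{b_\infty\}$, whence $\mathcal{C}_{{\bm i}_A}(X_e) = \{0\}$; on the other hand $\mathscr{M}(e) = \{Y_n\}$, and the full intersection $F_{{\bm k}_{Y_n}}^\vee(\mathcal{C}_{{\bm i}_A}) = \bigcap_{j=1}^N F_j^\vee(\mathcal{C}_{{\bm i}_A})$ collapses to $\{0\}$ by iteratively substituting through the explicit defining equations $a_1^{(1)}=0$, $a_2^{(1)}=0$, $a_1^{(2)}=a_2^{(1)}$, $a_3^{(1)}=0$, and so on.

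For the inductive step, assume the theorem for all Weyl group elements of length less than $\ell(w)$, and choose a left descent $j \in I$ of $w$ (so $s_j w < w$). Set $w^\prime \coloneqq s_j w$, whence $\ell(w^\prime) = \ell(w) - 1$. By \cref{p:property_of_Demazure_infty},
\[
\mathcal{B}_w(\infty) \;=\; \bigcup_{k \ge 0} \tilde{f}_j^{\,k}\,\mathcal{B}_{w^\prime}(\infty).
\]
Pushing this equality forward by $\Phi_{{\bm i}_A}$ and combining it, via \cref{c:union_of_faces_string_cone}, with the inductive hypothesis applied to $w^\prime$, the theorem reduces to two claims: (a) for each $D^\prime \in \mathscr{M}(w^\prime)$, the saturation $\bigcup_k \tilde{f}_j^{\,k}$ translates on the string side into $\bigcup_{D \in M_j(D^\prime)} F_{{\bm k}_D}^\vee(\mathcal{C}_{{\bm i}_A}) \cap \z^N$; and (b) the identity $\mathscr{M}(w) = M_j(\mathscr{M}(w^\prime))$ holds. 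Claim (b) is a consequence of the word-independent description $\mathscr{M}(\cdot) = \mathscr{L}(D(\cdot))$ derived just before the theorem, applied to any reduced word for $w$ beginning with $j$.

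The main obstacle is claim (a). Using the $\min M^{(j)}$-rule for $\tilde{f}_j$ on the crystal $\z^\infty_{\bm j}$ recalled in \cref{s:crystal_bases}, one sees that $\tilde{f}_j$ shifts a single coordinate of the string parametrization---namely the coordinate at the position singled out by the minimum element of $\mathcal{J}_j^\top$. The iterated saturation $\bigcup_k \tilde{f}_j^{\,k}$ therefore relaxes exactly one facet equation of $F_{{\bm k}_{D^\prime}}^\vee(\mathcal{C}_{{\bm i}_A})$ associated with column $j$ and re-imposes compensating equalities further up the column strip in precisely the ladder-move pattern encoded in $M_j$. Here \cref{l:shape_of_reduced_pipe} controls the geometry of the strip along which ladder moves can act, and \cref{l:inverse_ladder_move} ensures that the resulting pipe dreams all lie in $\mathscr{M}(w)$. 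Verifying this correspondence bijectively, and checking that the resulting union of faces exhausts $\bigcup_{D \in \mathscr{M}(w)} F_{{\bm k}_D}^\vee(\mathcal{C}_{{\bm i}_A})$ without redundancy, is the combinatorial core of the argument.
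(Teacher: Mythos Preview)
Your plan has a genuine gap at the heart of claim (a): you conflate the string parametrization $\Phi_{{\bm i}_A}$ with the Kashiwara embedding $\Psi_{\bm j}$. The $\min M^{(j)}$-rule for $\tilde{f}_j$ that you invoke lives on the crystal $\z^\infty_{\bm j}$, and it governs the coordinates of $\Psi_{\bm j}(b)$, not of $\Phi_{{\bm i}_A}(b)$. These two parametrizations are \emph{not} the same map; by \cref{p:star_string}(3) their images agree only after swapping $w$ for $w^{-1}$ at the level of Demazure crystals. There is no simple rule in the paper for how $\tilde{f}_j$ moves the coordinates of $\Phi_{{\bm i}_A}(b)$, so your assertion that ``$\tilde{f}_j$ shifts a single coordinate of the string parametrization'' is unjustified. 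The paper fixes this by passing to $\Psi_{\bm j}(\mathcal{B}_{w^{-1}}(\infty))$, where the explicit $\sigma$-computation \eqref{eq:diagram_formula_of_sigma_for_crystal} is available; only then do Lemmas \ref{l:shape_of_reduced_pipe} and \ref{l:inverse_ladder_move} enter to control the ladder combinatorics.

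This repair forces a second correction. Once you work with $\mathcal{B}_{w^{-1}}(\infty)$, the Demazure recursion requires a \emph{left} descent of $w^{-1}$, i.e.\ a \emph{right} descent $i$ of $w$, and the induction step relates $w$ to $w' = w s_i$. The paper does not take an arbitrary such $i$: it takes the specific $i = i_{k_\ell}$ coming from ${\bm k}_{D(w)}^\prime$, because by construction $\mathscr{M}(w) = M_{i_{k_\ell}}(\mathscr{M}(w'))$ on the nose (no appeal to word-independence needed), and because \cref{c:last_box_for_w_prime} gives the ``fullness below $(p_{k_\ell}, i_{k_\ell})$'' property used to pin down where $\tilde{f}_i$ first acts. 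Your claim (b), that $\mathscr{M}(w) = M_j(\mathscr{M}(s_j w))$ for an arbitrary left descent $j$ of $w$, does not follow from \cref{t:mitosis_formula}: that theorem recovers $\mathscr{L}(D(w))$ by applying ${\rm mitosis}^\top$ along a reduced word \emph{from the left}, so the outermost operator corresponds to the \emph{last} letter of the word, hence to a right descent of $w$, and it is ${\rm mitosis}_j^\top$ rather than $M_j$. As written, neither claim (a) nor claim (b) goes through.
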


\begin{proof}
Since $\mathcal{C}_{{\bm i}_A}(X_w) \cap \z^N = \Phi_{{\bm i}_A}(\mathcal{B}_w(\infty))$, the assertion is equivalent to the equality
\[\Phi_{{\bm i}_A}(\mathcal{B}_w(\infty)) = \bigcup_{D \in \mathscr{M}(w)} F_{{\bm k}_D}^\vee (\mathcal{C}_{{\bm i}_A}) \cap \z^N.\]
We consider the Kashiwara embedding $\Psi_{\bm j} \colon \mathcal{B}(\infty) \hookrightarrow \z^\infty _{\bm j}$ associated with an infinite sequence ${\bm j} = (\ldots, j_2, j_1)$ extending ${\bm i}_A$ as in Sect.\ \ref{s:crystal_bases}. 
By \cref{p:star_string} (3), it suffices to prove that 
\[\Psi_{\bm j} (\mathcal{B}_{w^{-1}}(\infty)) = \bigcup_{D \in \mathscr{M}(w)} F_{{\bm k}_D}^\vee({\bm j}),\]
where 
\[F_{{\bm k}_D}^\vee({\bm j}) \coloneqq \{(\ldots, 0, 0, a_N, \ldots, a_2, a_1) \in \Psi_{\bm j} (\mathcal{B}(\infty)) \mid (a_1, \ldots, a_N) \in F_{{\bm k}_D}^\vee (\mathcal{C}_{{\bm i}_A})\}.\]

We proceed by induction on $\ell(w)$.
If $\ell(w) = 0$, then the assertion is obvious. 
If $\ell(w) = 1$, then we have $w = s_i$ for some $i \in I$.
By the definition of the crystal $\z^\infty _{\bm j}$, the image $\Psi_{\bm j} (\mathcal{B}_{s_i}(\infty)) = \{\tilde{f}_i^t \Psi_{\bm j} (b_\infty) \mid t \in \z_{\geq 0}\}$ coincides with the set of $(\ldots, a_l, \ldots, a_2, a_1) \in \z^\infty_{\bm j}$ such that $a_k = 0$ for all $k \neq \min\{l \in \z_{\geq 1} \mid j_l = i\}$.
Hence the assertion holds by \cref{ex:length_one_type_A}.

Assume that $\ell \coloneqq \ell(w) \geq 2$, and write ${\bm k}_{D(w)}^\prime = (k_1, \ldots, k_\ell)$. 
We set $i \coloneqq i_{k_\ell}$ and $w^\prime \coloneqq w s_i$. 
Then we have ${\bm k}_{D(w^\prime)}^\prime = (k_1, \ldots, k_{\ell-1})$.
By the induction hypothesis, it follows that 
\begin{equation}\label{eq:main_type_A_induction_hypothesis}
\begin{aligned}
\Psi_{\bm j} (\mathcal{B}_{(w^\prime)^{-1}}(\infty)) = \bigcup_{D \in \mathscr{M}(w^\prime)} F_{{\bm k}_D}^\vee({\bm j}).
\end{aligned}
\end{equation}

We take $b \in \mathcal{B}_{w^{-1}}(\infty)$.
Let us prove that $\Psi_{\bm j} (b) \in F_{{\bm k}_D}^\vee({\bm j})$ for some $D \in \mathscr{M}(w)$.
By \cref{p:property_of_Demazure_infty}, there exist $b^\prime \in \mathcal{B}_{(w^\prime)^{-1}}(\infty)$ and $s \in \z_{\geq 0}$ such that $b = \tilde{f}_i^s b^\prime$.
We set
\begin{align*}
&\Psi_{\bm j}(b) \eqqcolon {\bm a} = (\ldots, 0, 0, a_N, \ldots, a_2, a_1)\ {\rm and}\ \Psi_{\bm j}(b^\prime) \eqqcolon {\bm a}^\prime = (\ldots, 0, 0, a_N^\prime, \ldots, a_2^\prime, a_1^\prime).
\end{align*}
Then it follows that ${\bm a} = \Psi_{\bm j}(b) = \tilde{f}_i^s \Psi_{\bm j}(b^\prime) = \tilde{f}_i^s {\bm a}^\prime$.
We write $a_{(p_k, i_k)} \coloneqq a_k$, $a_{(p_k, i_k)}^\prime \coloneqq a_k^\prime$, and $\sigma_{(p_k, i_k)} ({\bm a}^\prime) \coloneqq \sigma_k ({\bm a}^\prime)$ for $1 \leq k \leq N$, where $\sigma_k ({\bm a}^\prime)$ is the notation used in the definition of $\z^\infty _{\bm j}$. 
For $(p, q) \in Y_n$, we have
\begin{equation}\label{eq:diagram_formula_of_sigma_for_crystal}
\begin{aligned}
\sigma_{(p, q)} ({\bm a}^\prime) = a_{(p, q)}^\prime - a_{(p, q-1)}^\prime + \sum_{1 \leq p^\prime \leq p-1} (-a_{(p^\prime, q-1)}^\prime +2a_{(p^\prime, q)}^\prime -a_{(p^\prime, q+1)}^\prime),
\end{aligned}
\end{equation}
where $a_{(p^\prime, q^\prime)}^\prime \coloneqq 0$ if $(p^\prime, q^\prime) \notin Y_n$.
By \eqref{eq:main_type_A_induction_hypothesis}, there exists $D^\prime \in \mathscr{M}(w^\prime)$ such that ${\bm a}^\prime \in F_{{\bm k}_{D^\prime}}^\vee({\bm j})$.
If we have $a_k = a_k^\prime$ for all $1 \leq k \leq N$ such that $(p_k, i_k) \in D^\prime$ and such that $k \neq k_\ell$, then ${\bm a} \in F_{{\bm k}_D}^\vee({\bm j})$ for $D \coloneqq D^\prime \setminus \{(p_{k_\ell}, i_{k_\ell} = i)\} \in \mathscr{M}(w)$. 
Assume that there exists $1 \leq k \leq N$ such that $(p_k, i_k) \in D^\prime$, $k \neq k_\ell$, and $a_k \neq a_k^\prime$. 
Take such $k$ arbitrary. 
Then the action of $\tilde{f}_i^s$ on ${\bm a}^\prime$ changes the coordinate $a_k^\prime$, which implies by the definition of $\z^\infty _{\bm j}$ that $i_k = i$ and that
\begin{equation}\label{eq:computation_of_sigma_for_k}
\begin{aligned}
\sigma_k ({\bm a}^\prime) > \max\{\sigma_y ({\bm a}^\prime) \mid y < k,\ i_y = i\}.
\end{aligned}
\end{equation}
Since ${\bm a}^\prime \in F_{{\bm k}_{D^\prime}}^\vee({\bm j})$, this implies by \eqref{eq:diagram_formula_of_sigma_for_crystal} that $(p_k, i + 1) \notin D^\prime$. 
Hence it follows by \cref{c:last_box_for_w_prime} that $p_k > p_{k_\ell}$. 
Since we have $(p_{k_\ell}, i + 1) \notin D^\prime$ by \cref{c:last_box_for_w_prime}, there exists $0 < t \leq p_k - p_{k_\ell}$ such that $(p_k - t, i + 1) \notin D^\prime$ and $(p_k-z, i+1) \in D^\prime$ for all $1 \leq z < t$. 
Then it holds by \cref{l:shape_of_reduced_pipe} that $(p_k - z, i) \in D^\prime$ for all $1 \leq z < t$. 
Hence we see by \eqref{eq:diagram_formula_of_sigma_for_crystal} that
\[\sigma_{(p_k, i)} ({\bm a}^\prime) = \sigma_{(p_k - 1, i)} ({\bm a}^\prime) = \cdots = \sigma_{(p_k - t + 1, i)} ({\bm a}^\prime),\]
which implies that 
\begin{align*}
\sigma_{(p_k - t, i)} ({\bm a}^\prime) - \sigma_{(p_k, i)} ({\bm a}^\prime) &= \sigma_{(p_k - t, i)} ({\bm a}^\prime) - \sigma_{(p_k - t + 1, i)} ({\bm a}^\prime)\\
&= a_{(p_k - t, i + 1)}^\prime - a_{(p_k - t, i)}^\prime \geq 0.
\end{align*}
Combining this with \eqref{eq:computation_of_sigma_for_k}, we deduce by the definition of $\z^\infty _{\bm j}$ that $a_{(p_k - t, i + 1)} = a_{(p_k - t, i)}$, that is, ${\bm a} \in F_{{\bm k}_{\{(p_k - t, i + 1)\}}}^\vee({\bm j})$.
If $(p_k-t, i) \notin D^\prime \setminus \{(p_{k_\ell}, i)\}$, then $L_{p_k, i} (D^\prime \setminus \{(p_{k_\ell}, i)\})$ is well-defined and coincides with $D^\prime \setminus \{(p_{k_\ell}, i), (p_k, i)\} \cup \{(p_k - t, i + 1)\}$. 
If $(p_k-t, i) \in D^\prime \setminus \{(p_{k_\ell}, i)\}$, then repeat this argument by replacing $(p_k, i)$ with $(p_k-t, i)$.
Then we obtain a sequence $(0 < t = t_1 < \cdots < t_q < t_{q+1} \leq p_k - p_{k_\ell})$ such that $L_{p_k, i} L_{p_k-t_1, i} \cdots L_{p_k-t_q, i}(D^\prime \setminus \{(p_{k_\ell}, i)\})$ is well-defined and coincides with 
\[D^\prime \setminus \{(p_{k_\ell}, i), (p_k, i), (p_k-t_1, i), \ldots, (p_k-t_q, i)\} \cup \{(p_k - t_1, i + 1), \ldots, (p_k - t_{q+1}, i + 1)\}.\]
In addition, it holds that ${\bm a} \in F_{{\bm k}_{\{(p_k - t_1, i + 1), \ldots, (p_k - t_{q+1}, i + 1)\}}}^\vee({\bm j})$.
Thus, we deduce that $\Psi_{\bm j}(b) = {\bm a} \in F_{{\bm k}_D}^\vee({\bm j})$ for some $D \in \mathcal{PD}_n$ which is obtained from $D^\prime \setminus \{(p_{k_\ell}, i)\}$ by applying a sequence of ladder moves of the form $L_{p, i}$. 
Since $D \in \mathscr{M}(w)$, we have proved that $\Psi_{\bm j} (\mathcal{B}_{w^{-1}}(\infty)) \subseteq \bigcup_{D \in \mathscr{M}(w)} F_{{\bm k}_D}^\vee({\bm j})$.

Conversely, let us prove that $\bigcup_{D \in \mathscr{M}(w)} F_{{\bm k}_D}^\vee({\bm j}) \subseteq \Psi_{\bm j} (\mathcal{B}_{w^{-1}}(\infty))$.
For each $D \in \mathscr{M}(w)$, we take $b \in \mathcal{B}(\infty)$ such that $\Psi_{\bm j}(b) \eqqcolon {\bm a} = (\ldots, 0, 0, a_N, \ldots, a_2, a_1)$ satisfies the following conditions:
\begin{itemize}
\item $a_{(p, q)}$ is sufficiently larger than $a_{(p, q-1)}$ if $(p, q) \in Y_n \setminus D$;
\item $a_{(p, q)}-a_{(p, q-1)}$ is sufficiently larger than $a_{(p^\prime, q^\prime)}-a_{(p^\prime, q^\prime-1)}$ for $(p, q), (p^\prime, q^\prime) \in Y_n \setminus D$ such that $p^\prime < p$,
\end{itemize}
where we write $a_{(p_k, i_k)} \coloneqq a_k$ for $1 \leq k \leq N$ and set $a_{(p, 0)} \coloneqq 0$.
Then we see that $(a_1, \ldots, a_N)$ is included in the relative interior of $F_{{\bm k}_D}^\vee (\mathcal{C}_{{\bm i}_A})$.
Let us prove that $b \in \mathcal{B}_{w^{-1}}(\infty)$.
Setting $b^\prime \coloneqq \tilde{e}_i^{\varepsilon_i(b)} b$, it suffices to show that $b^\prime \in \mathcal{B}_{(w^\prime)^{-1}}(\infty)$ by \cref{p:property_of_Demazure_infty}.
Write $\Psi_{\bm j}(b^\prime) \eqqcolon {\bm a}^\prime = (\ldots, 0, 0, a_N^\prime, \ldots, a_2^\prime, a_1^\prime)$, and set $a_{(p_k, i_k)}^\prime \coloneqq a_k^\prime$, $\sigma_{(p_k, i_k)} ({\bm a}) \coloneqq \sigma_k ({\bm a})$ for $1 \leq k \leq N$.
By \cref{l:inverse_ladder_move} and the definition of $\mathscr{M}(w)$, there exist $D^\prime \in \mathscr{M}(w^\prime)$ and $(u_1, i), \ldots, (u_r, i) \in Y_n$ such that $D = L_{u_r, i} \cdots L_{u_1, i} (D^\prime \setminus \{(p_{k_\ell}, i)\})$ and such that $D^\prime \setminus \{(p_{k_\ell}, i)\}$ cannot be written as $L_{u, i}(D'' \setminus \{(p_{k_\ell}, i)\})$ for some $u$ and $D'' \in \mathcal{PD}_n$.
We define $u_1^\prime, \ldots, u_r^\prime$ by 
\[L_{u_k, i} \cdots L_{u_1, i} (D^\prime \setminus \{(p_{k_\ell}, i)\}) = L_{u_{k-1}, i} \cdots L_{u_1, i} (D^\prime \setminus \{(p_{k_\ell}, i)\}) \setminus \{(u_k, i)\} \cup \{(u_k^\prime, i+1)\}\] 
for $1 \leq k \leq r$.
In a way similar to the proof of $\Psi_{\bm j} (\mathcal{B}_{w^{-1}}(\infty)) \subseteq \bigcup_{D \in \mathscr{M}(w)} F_{{\bm k}_D}^\vee({\bm j})$, it holds that
\begin{align*}
&\sigma_{(u_k^\prime, i)} ({\bm a}) = \sigma_{(u_k^\prime+1, i)} ({\bm a}) = \cdots = \sigma_{(u_k-1, i)} ({\bm a})\ {\rm and}\\
&\sigma_{(u_k^\prime, i)} ({\bm a}) - \sigma_{(u_k, i)} ({\bm a}) = a_{(u_k, i - 1)} - a_{(u_k, i)} \leq 0
\end{align*}
for $1 \leq k \leq r$. 
In addition, since $(u_k^\prime, i) \notin D$, we see by the choice of ${\bm a}$ that $\sigma_{(u, i)} ({\bm a}) < \sigma_{(u_k^\prime, i)} ({\bm a})$ for all $1 \leq u < u_k^\prime$.
We also have $\sigma_{(p_{k_\ell}, i)} ({\bm a}) = a_{(p_{k_\ell}, i)}$ and $\sigma_{(p, i)} ({\bm a}) = 0$ for all $1 \leq p < p_{k_\ell}$.
From these and the definition of $\z^\infty _{\bm j}$, it follows that $a_{(u_k, i - 1)}^\prime = a_{(u_k, i)}^\prime$ for all $1 \leq k \leq r$ and that $a_{(p_{k_\ell}, i)}^\prime = 0$, where $a_{(u_k, i - 1)}^\prime \coloneqq 0$ if $i = 1$.
Since $(n-i+1, i+1) \notin Y_n$, \cref{l:shape_of_reduced_pipe} and the choice of $D^\prime$ imply that if $(p, i) \in Y_n \setminus D$ satisfies $(p, i+1) \in D$, then $p = u_k^\prime$ for some $1 \leq k \leq r$.
Hence we have ${\bm a}^\prime \in F_{{\bm k}_{D^\prime}}^\vee({\bm j})$, which implies by \eqref{eq:main_type_A_induction_hypothesis} that $b^\prime \in \mathcal{B}_{(w^\prime)^{-1}}(\infty)$. 
Thus, we have proved that $b \in \mathcal{B}_{w^{-1}}(\infty)$. 
Since $(a_1, \ldots, a_N)$ is included in the relative interior of $F_{{\bm k}_D}^\vee (\mathcal{C}_{{\bm i}_A})$, this implies by \cref{c:union_of_faces_string_cone} that $F_{{\bm k}_D}^\vee (\mathcal{C}_{{\bm i}_A}) \subseteq \mathcal{C}_{{\bm i}_A}(X_w)$, which is equivalent to the inclusion $F_{{\bm k}_D}^\vee ({\bm j}) \subseteq \Psi_{\bm j} (\mathcal{B}_{w^{-1}}(\infty))$.
This proves the theorem.
\end{proof}

In a way similar to the proof of \cref{l:opposite_Demazure_union_of_faces}, we obtain the following by \cref{t:main_result_type_A}.

\begin{cor}\label{c:Kogan_face_string_polytope}
For $w \in W$ and $\lambda \in P_+$, the following equalities hold: 
\begin{align*}
\mathscr{C}_{{\bm i}_A}(X_w) = \bigcup_{D \in \mathscr{M}(w)} F_{{\bm k}_D}^\vee (\mathscr{C}_{{\bm i}_A})\quad{\it and}\quad \Delta_{{\bm i}_A}(\lambda, X_w) = \bigcup_{D \in \mathscr{M}(w)} F_{{\bm k}_D}^\vee (\Delta_{{\bm i}_A}(\lambda)).
\end{align*}
\end{cor}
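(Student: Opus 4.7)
The plan is to mirror the argument of \cref{l:opposite_Demazure_union_of_faces}, with \cref{t:main_result_type_A} playing the role that \cref{c:union_of_faces_PBW_cone} plays there. The essential observation to record first is a compatibility between facets on the string side and on the cone over $P_+$: the defining equations of the facets $F_j^\vee(\mathcal{C}_{{\bm i}_A})$, namely the relations $a_1^{(1)} = 0$, $a_2^{(1)} = 0$, $a_1^{(2)} = a_2^{(1)}$, and so on, involve only the string coordinates and not $\lambda$. Hence the same hyperplanes cut out the facets $F_j^\vee(\mathscr{C}_{{\bm i}_A})$ inside $\mathscr{C}_{{\bm i}_A}$, and for every tuple ${\bm k}$ one has
\[F_{\bm k}^\vee(\mathscr{C}_{{\bm i}_A}) = \mathscr{C}_{{\bm i}_A} \cap \pi_2^{-1}(F_{\bm k}^\vee(\mathcal{C}_{{\bm i}_A})).\]

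Granting this, I would first address the cone equality. Intersecting \cref{t:main_result_type_A} with $\z^N$ and invoking \cref{c:union_of_faces_string_cone} gives $\Phi_{{\bm i}_A}(\mathcal{B}_w(\infty)) = \bigcup_{D \in \mathscr{M}(w)} F_{{\bm k}_D}^\vee(\mathcal{C}_{{\bm i}_A}) \cap \z^N$. Combining this with the displayed compatibility, with the bijection $\pi_\lambda \colon \widetilde{\mathcal{B}}(\lambda) \xrightarrow{\sim} \mathcal{B}(\lambda)$ of \cref{p:relations_crystals}, with the identity $\widetilde{\mathcal{B}}_w(\lambda) = \widetilde{\mathcal{B}}(\lambda) \cap \mathcal{B}_w(\infty)$, and with the fact that $\Phi_{{\bm i}_A}$ is compatible with $\pi_\lambda$ on $\widetilde{\mathcal{B}}(\lambda)$, the same bookkeeping as in \cref{l:opposite_Demazure_union_of_faces} yields
\[\mathscr{S}_{{\bm i}_A}(X_w) = \Big(\bigcup_{D \in \mathscr{M}(w)} F_{{\bm k}_D}^\vee(\mathscr{C}_{{\bm i}_A})\Big) \cap (P_+ \times \z^N).\]
Since $\mathscr{C}_{{\bm i}_A}(X_w)$ is by definition the smallest real closed cone containing $\mathscr{S}_{{\bm i}_A}(X_w)$, and since the lattice points are Zariski dense in each rational polyhedral cone $F_{{\bm k}_D}^\vee(\mathscr{C}_{{\bm i}_A})$, this lattice-level equality upgrades to the first equality of the corollary.

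The polytope equality then follows by intersecting with the fiber $\pi_1^{-1}(\lambda)$ and applying $\pi_2$, using the identity $\Delta_{{\bm i}_A}(\lambda, X_w) = \pi_2(\mathscr{C}_{{\bm i}_A}(X_w) \cap \pi_1^{-1}(\lambda))$ together with the fiberwise identity $F_{\bm k}^\vee(\Delta_{{\bm i}_A}(\lambda)) = \pi_2(F_{\bm k}^\vee(\mathscr{C}_{{\bm i}_A}) \cap \pi_1^{-1}(\lambda))$, again valid because the defining equations of $F_{\bm k}^\vee$ do not involve $\lambda$. I do not anticipate any serious obstacle: the only delicate step is the passage from the lattice-point equality for $\mathscr{S}_{{\bm i}_A}(X_w)$ to the cone-level equality, and that is exactly the point already handled inside the proof of \cref{l:opposite_Demazure_union_of_faces}.
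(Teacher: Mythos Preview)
Your proposal is correct and follows exactly the approach the paper indicates: it invokes \cref{t:main_result_type_A} in place of \cref{c:union_of_faces_PBW_cone} and then repeats the bookkeeping of \cref{l:opposite_Demazure_union_of_faces} (using injectivity of $\Phi_{{\bm i}_A}$, the identity $\widetilde{\mathcal{B}}_w(\lambda)=\widetilde{\mathcal{B}}(\lambda)\cap\mathcal{B}_w(\infty)$, and the $\lambda$-independence of the defining equations of $F_j^\vee$) to lift from $\mathcal{C}_{{\bm i}_A}$ to $\mathscr{C}_{{\bm i}_A}$ and then slice to $\Delta_{{\bm i}_A}(\lambda)$. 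This is precisely what the paper means by ``in a way similar to the proof of \cref{l:opposite_Demazure_union_of_faces}.''
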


The following is an immediate consequence of \cref{t:ladder_moves_BB} and \cref{c:Kogan_face_string_polytope}.

\begin{cor}\label{c:Kogan_Demazure_crystal}
For $w \in W$ and $\lambda \in P_+$, the union $\bigcup_{D \in \mathscr{M}(w)} F_{{\bm k}_D}^\vee(GT(\lambda))$ of reduced Kogan faces $F_{{\bm k}_D}^\vee(GT(\lambda))$ with $w(F_{{\bm k}_D}^\vee(GT(\lambda))) = w_0 w$ gives a polyhedral parametrization of $\mathcal{B}_w(\lambda)$ under the unimodular affine transformation $GT(\lambda) \simeq \Delta_{{\bm i}_A}(\lambda)$.
\end{cor}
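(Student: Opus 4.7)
The plan is to chain together three ingredients already available: (i) the string polytope version of the statement (Corollary \ref{c:Kogan_face_string_polytope}), (ii) Littelmann's unimodular affine transformation $\Delta_{{\bm i}_A}(\lambda) \simeq GT(\lambda)$ and its compatibility with the labeling of facets, and (iii) the identification of $\mathscr{M}(w)$ with the combinatorial set appearing on the Gelfand--Tsetlin side (Theorem \ref{t:ladder_moves_BB}, together with the corollary $\mathscr{M}(w) = \mathscr{L}(D(w))$).

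First I would recall from \cref{string lattice points} that $\Delta_{{\bm i}_A}(\lambda) \cap \z^N = \Phi_{{\bm i}_A}(\mathcal{B}(\lambda))$ and, more generally, that
\[\Phi_{{\bm i}_A}(\mathcal{B}_w(\lambda)) = \Delta_{{\bm i}_A}(\lambda, X_w) \cap \z^N\]
by the definition of $\Delta_{{\bm i}_A}(\lambda, X_w)$ (together with the fact that $\Phi_{{\bm i}_A}$ is injective). Inserting the identity of \cref{c:Kogan_face_string_polytope} into the right-hand side gives
\[\Phi_{{\bm i}_A}(\mathcal{B}_w(\lambda)) = \bigcup_{D \in \mathscr{M}(w)} F_{{\bm k}_D}^\vee(\Delta_{{\bm i}_A}(\lambda)) \cap \z^N,\]
which is the desired parametrization at the level of the string polytope.

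Next I would transport this equality to the Gelfand--Tsetlin polytope. As recalled in the paragraph preceding \cref{c:dual_Kogan_opposite_Demazure}, Littelmann's unimodular affine transformation $\Delta_{{\bm i}_A}(\lambda) \xrightarrow{\sim} GT(\lambda)$ matches the facet labeled by $F_j^\vee(\Delta_{{\bm i}_A}(\lambda))$ with the facet $F_j^\vee(GT(\lambda))$ (we set up the two labelings in exactly parallel orders), hence sends $F_{{\bm k}_D}^\vee(\Delta_{{\bm i}_A}(\lambda))$ to the Kogan face $F_{{\bm k}_D}^\vee(GT(\lambda))$ for every pipe dream $D$. The transformation is unimodular, so it sends lattice points to lattice points bijectively. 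Composing with the bijection $\Phi_{{\bm i}_A} \colon \mathcal{B}_w(\lambda) \hookrightarrow \z^N$, we obtain a bijective parametrization of $\mathcal{B}_w(\lambda)$ by $\bigcup_{D \in \mathscr{M}(w)} F_{{\bm k}_D}^\vee(GT(\lambda)) \cap \z^N$.

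Finally I would verify the claim that the faces appearing are exactly the reduced Kogan faces $F_{{\bm k}_D}^\vee(GT(\lambda))$ satisfying $w(F_{{\bm k}_D}^\vee(GT(\lambda))) = w_0 w$. This is purely combinatorial: by the corollary stated just before \cref{l:shape_of_reduced_pipe} we have $\mathscr{M}(w) = \mathscr{L}(D(w))$, and \cref{t:ladder_moves_BB} identifies $\mathscr{L}(D(w))$ with the set of reduced pipe dreams $D$ such that $w(F_{{\bm k}_D}^\vee(GT(\lambda))) = w_0 w$. So the index set $\mathscr{M}(w)$ in the parametrization is precisely this collection of reduced Kogan faces, completing the proof. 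There is no real obstacle here; the only thing that requires a moment's care is checking that Littelmann's isomorphism genuinely matches the two orderings of facets used to define $F_{\bm k}^\vee$ on either side, which is immediate from the explicit formulas recalled in Section~\ref{s:type_A}.
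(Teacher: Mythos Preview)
Your proposal is correct and follows essentially the same approach as the paper, which simply records the corollary as ``an immediate consequence of \cref{t:ladder_moves_BB} and \cref{c:Kogan_face_string_polytope}.'' Your write-up merely unpacks this: the string-polytope identity from \cref{c:Kogan_face_string_polytope}, the transport along Littelmann's unimodular transformation (which by construction matches the $F_j^\vee$-labelings), and the identification $\mathscr{M}(w)=\mathscr{L}(D(w))$ combined with \cref{t:ladder_moves_BB} to recover the description in terms of reduced Kogan faces with $w(F_{{\bm k}_D}^\vee(GT(\lambda)))=w_0 w$. The only minor imprecision is that the equality $\Phi_{{\bm i}_A}(\mathcal{B}_w(\lambda))=\Delta_{{\bm i}_A}(\lambda,X_w)\cap\z^N$ relies on \cref{t:semi-toric_degenerations}~(1) rather than the bare definition, but this is already established earlier in the paper.
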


\begin{rem}
Such relation between reduced Kogan faces and the character of $\mathcal{B}_w(\lambda)$ was previously given in \cite[Theorem 5.1]{KST} and in \cite[Lemma 4.7]{BF}.
\end{rem}

In addition, we obtain the following by \cref{t:semi-toric_degenerations} (3) and \cref{c:Kogan_face_string_polytope}.

\begin{cor}
For $w \in W$ and $\lambda \in P_{++}$, the Schubert variety $X_w$ degenerates into the union $\bigcup_{D \in \mathscr{M}(w)} X(F_{{\bm k}_D}^\vee (\Delta_{{\bm i}_A}(\lambda)))$ of irreducible normal toric subvarieties $X(F_{{\bm k}_D}^\vee (\Delta_{{\bm i}_A}(\lambda)))$ of $X(\Delta_{{\bm i}_A}(\lambda))$ corresponding to the faces $F_{{\bm k}_D}^\vee (\Delta_{{\bm i}_A}(\lambda))$, $D \in \mathscr{M}(w)$.
\end{cor}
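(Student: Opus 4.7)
The corollary is framed as an immediate consequence of the preceding results, so the proof plan is essentially a matter of combining two ingredients without any new technical input.

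First, I would invoke Theorem~\ref{t:semi-toric_degenerations}(3) applied to the Richardson variety $X_w = X_w \cap X^e = X_w^e$. This asserts that for $\lambda \in P_{++}$ the Schubert variety $X_w$ admits a flat degeneration inside $X(\Delta_{{\bm i}_A}(\lambda))$ whose special fiber is the reduced union of the irreducible normal toric subvarieties corresponding to the maximal faces of $\Delta_{{\bm i}_A}(\lambda, X_w)$. (For this step the regularity hypothesis $\lambda \in P_{++}$ is exactly what guarantees that $\Delta_{{\bm i}_A}(\lambda)$ is full-dimensional and that Caldero's toric degeneration of $G/B$ restricts to a semi-toric degeneration of the subvariety.)

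Second, I would identify these faces explicitly. By Corollary~\ref{c:Kogan_face_string_polytope}, we have
\[
\Delta_{{\bm i}_A}(\lambda, X_w) = \bigcup_{D \in \mathscr{M}(w)} F_{{\bm k}_D}^\vee(\Delta_{{\bm i}_A}(\lambda)).
\]
Thus the maximal faces appearing on the right-hand side of the degeneration in Theorem~\ref{t:semi-toric_degenerations}(3) are exactly the faces $F_{{\bm k}_D}^\vee(\Delta_{{\bm i}_A}(\lambda))$ with $D \in \mathscr{M}(w)$ (after discarding any face strictly contained in another one, although under the bijection with $\mathscr{M}(w) = \mathscr{L}(D(w))$ all such faces have the common dimension $N - \ell(w) = \dim_\c X_w$, so no redundancy occurs). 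Substituting this description into the conclusion of Theorem~\ref{t:semi-toric_degenerations}(3) yields the claimed degeneration
\[
X_w \rightsquigarrow \bigcup_{D \in \mathscr{M}(w)} X\bigl(F_{{\bm k}_D}^\vee(\Delta_{{\bm i}_A}(\lambda))\bigr).
\]

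There is no genuine obstacle in this argument, since all the hard work has already been done: Morier-Genoud's semi-toric degeneration provides the geometric degeneration statement, while Corollary~\ref{c:Kogan_face_string_polytope} (which rests on Theorem~\ref{t:main_result_type_A} and hence on the inductive crystal-theoretic analysis using $M_i$, Lemmas~\ref{l:shape_of_reduced_pipe}--\ref{l:inverse_ladder_move}, and the Kashiwara embedding) identifies the faces combinatorially. The only point deserving a brief remark in the written proof is that the faces $F_{{\bm k}_D}^\vee(\Delta_{{\bm i}_A}(\lambda))$ with $D \in \mathscr{M}(w)$ are indeed the top-dimensional faces of $\Delta_{{\bm i}_A}(\lambda, X_w)$, so that naming them via $\mathscr{M}(w)$ matches the irreducible-component decomposition of the special fiber produced by Theorem~\ref{t:semi-toric_degenerations}(3).
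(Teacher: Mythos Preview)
Your proposal is correct and matches the paper's own argument exactly: the corollary is stated as an immediate consequence of Theorem~\ref{t:semi-toric_degenerations}(3) and Corollary~\ref{c:Kogan_face_string_polytope}, which are precisely the two ingredients you combine. Your added remark that all faces $F_{{\bm k}_D}^\vee(\Delta_{{\bm i}_A}(\lambda))$ with $D \in \mathscr{M}(w)$ have common dimension $\ell(w)$ is a helpful clarification but not strictly needed, since Theorem~\ref{t:semi-toric_degenerations}(3) already refers to the union over all faces of $\Delta_{{\bm i}_A}(\lambda, X_w)$.
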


In the rest of this section, we review Kiritchenko-Smirnov-Timorin's results \cite{KST} to make explanations in Sect.\ \ref{s:type_C} clear.
Let $\mathscr{P} = \mathscr{P}_N$ denote the set of convex polytopes in $\r^N$. 
This set is endowed with a commutative semigroup structure by the Minkowski sum of convex polytopes: 
\[Q_1 + Q_2 \coloneqq \{x + y \mid x \in Q_1,\ y \in Q_2\}.\] 
Since this semigroup has the cancellation property, we can embed $\mathscr{P}$ into its Grothendieck group $G(\mathscr{P})$; an element of $G(\mathscr{P})$ is called a \emph{virtual polytope}. 
For $c \in \r_{\ge 0}$ and a convex polytope $Q \in \mathscr{P}$, we define a convex polytope $c Q$ by $c Q \coloneqq \{c x \mid x \in Q\}$; this induces an $\r$-linear space structure on $G(\mathscr{P})$. 
It is easy to see that 
\begin{equation}\label{eq:additivity_of_GT}
\begin{aligned}
GT(\lambda + \mu) = GT(\lambda) + GT(\mu)\quad{\rm for\ all}\ \lambda, \mu \in P_+,
\end{aligned}
\end{equation}
and that the Gelfand--Tsetlin polytopes $GT(\lambda)$, $\lambda \in P_{++}$, have the same normal fan which we denote by $\Sigma_A$.
The corresponding toric variety $X(\Sigma_A)$ is singular in general.
However, it has a resolution of singularities whose exceptional locus does not contain any divisors, which is called a \emph{small resolution}. 
More strongly, Nishinou--Nohara--Ueda \cite{NNU} proved the following.

\begin{prop}[{\cite[Proposition 4.1]{NNU}}]\label{p:small_resolution_type_A}
Every refinement of the fan $\Sigma_A$ into simplicial cones without adding new rays gives a small resolution of $X(\Sigma_A)$. 
\end{prop}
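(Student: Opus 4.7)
The plan is to reduce the claim to a combinatorial property of the ray generators of $\Sigma_A$ by standard toric-geometric arguments, then establish that property by explicit inspection of the facet description of $GT(\lambda)$. Any refinement $\Sigma^\prime$ of $\Sigma_A$ induces a proper birational toric morphism $\pi \colon X(\Sigma^\prime) \to X(\Sigma_A)$, and the exceptional divisors of $\pi$ correspond bijectively with those rays of $\Sigma^\prime$ that fail to be rays of $\Sigma_A$. The hypothesis that no new rays are introduced therefore immediately implies that $\pi$ is small in the birational-geometric sense, so the task reduces to showing that $X(\Sigma^\prime)$ is smooth.

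Since $\Sigma^\prime$ is simplicial, $X(\Sigma^\prime)$ has at worst finite quotient singularities, and smoothness is equivalent to the statement that, for every maximal cone $\sigma \in \Sigma^\prime$, the primitive generators of the rays of $\sigma$ extend to a $\mathbb{Z}$-basis of the ambient lattice. Because no new rays appear in the refinement, these primitive generators are already among the primitive inner normals to the facets of $GT(\lambda)$. The proof thus reduces to the following combinatorial claim: any collection of primitive facet-normals of $GT(\lambda)$ that spans a simplicial cone in $\Sigma_A$ is unimodular.

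To establish this, I would first write the primitive normals to the two facet families $a_k^{(l)} = a_k^{(l+1)}$ (Kogan) and $a_k^{(l)} = a_{k+1}^{(l-1)}$ (dual Kogan) explicitly in the coordinates of Section 5; each such normal has entries in $\{-1, 0, +1\}$ and is supported on a small, controlled subset of coordinates. Next I would order the rays of a fixed maximal simplicial cone by the row index $l$ of the associated facet (refined by lexicographic order in $k$), and show that in this ordering the matrix whose columns are the chosen primitive generators is block upper triangular with $\pm 1$ along the diagonal. Its determinant is then $\pm 1$, giving unimodularity and hence smoothness of the corresponding affine chart of $X(\Sigma^\prime)$.

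The main obstacle is precisely this last triangularity check. It rests on a compatibility statement for simultaneously active facets of $GT(\lambda)$: one needs to know which sets of Kogan and dual Kogan inequalities can hold with equality at a common face in such a way that the resulting outer normals are linearly independent. Such a statement can be read off from the inequality description of $GT(\lambda)$ together with the vertex description in terms of admissible GT patterns, but carrying it out in a uniform way is where the bulk of the work lies; once it is in hand, the triangular unimodular form, and hence the proposition, follows mechanically.
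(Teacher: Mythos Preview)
The paper gives no proof of this proposition; it is quoted verbatim from \cite[Proposition 4.1]{NNU}. Your proposal is therefore not being compared against an argument in this paper but against an external reference.

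Your reduction is correct: a toric refinement with no new rays is automatically small, and smoothness of a simplicial toric variety amounts to unimodularity of every maximal cone. The remaining step --- that any $N$ linearly independent primitive facet normals of $GT(\lambda)$ form a $\mathbb{Z}$-basis --- is also true, but you are making its verification harder than it needs to be. Every facet inequality of $GT(\lambda)$ has the form $a_j^{(i)} \geq a_j^{(i+1)}$ or $a_j^{(i+1)} \geq a_{j+1}^{(i)}$ (with the $a_j^{(0)}$ constant), so each primitive normal has at most one entry equal to $+1$, at most one entry equal to $-1$, and zeros elsewhere. A matrix whose rows have this property is (the transpose of) the incidence matrix of a directed graph, hence totally unimodular by a classical theorem. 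Total unimodularity means every square submatrix has determinant $0$ or $\pm 1$, which immediately gives the unimodularity you need for \emph{any} linearly independent $N$-subset of facet normals, with no compatibility analysis or appeal to the vertex description required. Your triangularity-by-row-index route could also be completed, but it would amount to reproving this classical fact in a special case; the compatibility obstacle you anticipate does not actually arise.
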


\begin{rem}
For other string polytopes of type $A_n$, Cho--Kim--Lee--Park \cite{CKLP} studied when we have small resolutions.
\end{rem}

For $\lambda \in P_+$ and ${\bm \epsilon} = (\epsilon_1, \ldots, \epsilon_n) \in \z^n$ such that $0 = \epsilon_1 \leq \epsilon_2 \leq \cdots \leq \epsilon_n$, we define an integral convex polytope $\widetilde{GT}_{\bm \epsilon}(\lambda)$ to be the set of \eqref{eq:coordinate_type_A_string} satisfying the following inequalities:
\begin{align*}
&a_j ^{(i)} + \epsilon_{i+1} \geq a_j ^{(i+1)} \geq a_{j+1} ^{(i)}\quad {\rm for}\ 0 \leq i \leq n-1\ {\rm and}\ 1 \leq j \leq n-i.
\end{align*}
If $\epsilon_1 = \cdots = \epsilon_n$, then we have $\widetilde{GT}_{\bm \epsilon}(\lambda) = GT(\lambda)$. 
We fix ${\bm \epsilon} = (\epsilon_1, \ldots, \epsilon_n) \in \z^n$ such that $0 = \epsilon_1 < \epsilon_2 < \cdots < \epsilon_n$. 
Then, for $\lambda \in P_{++}$, the polytope $\widetilde{GT}_{\bm \epsilon}(\lambda)$ is simple, and its normal fan $\widetilde{\Sigma}_A$ gives an example of \cref{p:small_resolution_type_A}.
Note that $\widetilde{\Sigma}_A$ does not depend on the choices of ${\bm \epsilon}$ and $\lambda$.
By \eqref{eq:additivity_of_GT}, the map $P_+ \rightarrow \mathscr{P}$, $\lambda \mapsto GT(\lambda)$, induces an injective group homomorphism $P \hookrightarrow G(\mathscr{P})$. 
Denote by $\Lambda_{GT} \subseteq G(\mathscr{P})$ the image of $P$ in $G(\mathscr{P})$, by $\Lambda_{\widetilde{GT}} \subseteq G(\mathscr{P})$ the $\z$-submodule generated by \emph{all} integral convex polytopes whose normal fans coincide with $\widetilde{\Sigma}_A$, and by $V_{\widetilde{GT}} \subseteq G(\mathscr{P})$ the $\r$-linear subspace spanned by $\Lambda_{\widetilde{GT}}$. 
Fix $\lambda \in P_{++}$. 
Let $\Gamma_1, \ldots, \Gamma_z$ be the facets of $\widetilde{GT}_{\bm \epsilon}(\lambda)$, and $\xi_1, \ldots, \xi_z$ the primitive vectors in the corresponding rays of $\widetilde{\Sigma}_A$. 
For $1 \leq i \leq z$, define an $\r$-linear function $x_i \colon V_{\widetilde{GT}} \rightarrow \r$ by 
\[x_i (Q) \coloneqq \max \{\langle \xi_i, p \rangle \mid p \in Q\}\]
for each convex polytope $Q \in V_{\widetilde{GT}}$. 
Then the $\r$-linear space $V_{\widetilde{GT}}$ is identified with $\r^z$ by arranging the values of $x_1, \ldots, x_z$ (see the proof of \cite[Theorem 2.1.1]{Tim}). 
In addition, by \cite[Theorem 2.1.1]{Tim}, there exists a homogeneous polynomial ${\rm vol}_{\widetilde{GT}}$ of degree $N$ on $V_{\widetilde{GT}}$, called the \emph{volume polynomial} on $V_{\widetilde{GT}}$, such that the value ${\rm vol}_{\widetilde{GT}}(Q)$ for each convex polytope $Q \in V_{\widetilde{GT}}$ equals the $N$-dimensional volume of $Q$.
Note that the $\z$-module $\Lambda_{\widetilde{GT}}$ is a lattice of rank $z$, and we have $\Lambda_{\widetilde{GT}}/\z^N \simeq {\rm Pic}(X(\widetilde{\Sigma}_A))$, where $X(\widetilde{\Sigma}_A)$ is the normal toric variety corresponding to the fan $\widetilde{\Sigma}_A$, and the right action of $\z^N$ on $\Lambda_{\widetilde{GT}}$ is given by translations of convex polytopes $Q \subseteq \r^N$ (see \cite[Theorem 4.2.1]{CLS}).
We define a $\z$-algebra $R_{\widetilde{GT}}$, called a \emph{polytope ring}, by 
\[R_{\widetilde{GT}} \coloneqq {\rm Sym}(\Lambda_{\widetilde{GT}})/J_{\widetilde{GT}},\] 
where $J_{\widetilde{GT}}$ is the homogeneous ideal of ${\rm Sym}(\Lambda_{\widetilde{GT}})$ given by  
\[J_{\widetilde{GT}} \coloneqq \{D \in {\rm Sym}(\Lambda_{\widetilde{GT}}) \mid D \cdot {\rm vol}_{\widetilde{GT}} = 0\},\]
where ${\rm Sym}(\Lambda_{\widetilde{GT}})$ is regarded as a ring of differential operators on $\r[V_{\widetilde{GT}}]$ in a way similar to ${\rm Sym}(P_\r)$ in \cref{t:Borel_description_polytope}.
By \cite[Sect.\ 1.4]{KhPu}, this $\z$-algebra $R_{\widetilde{GT}}$ is isomorphic to the cohomology ring $H^\ast(X(\widetilde{\Sigma}_A); \z)$.
Similarly, the volume polynomial ${\rm vol}_{GT}$ on $V_{GT} \coloneqq \Lambda_{GT} \otimes_\z \r$ and the polytope ring $R_{GT} = {\rm Sym}(\Lambda_{GT})/J_{GT}$ are defined. 
Then it follows by \cref{t:Borel_description_polytope} and \cref{r:Borel_description_over_Z} that the polytope ring $R_{GT}$ is isomorphic to the cohomology ring $H^\ast(G/B; \z)$. 
Since we have 
\[\widetilde{GT}_{\bm \epsilon}(\lambda) + GT(\mu) = \widetilde{GT}_{\bm \epsilon}(\lambda + \mu)\] 
for $\mu \in P_+$, it follows that $GT(\mu) \in \Lambda_{\widetilde{GT}}$. 
Hence we see that $\Lambda_{GT} \subseteq \Lambda_{\widetilde{GT}}$. 
Let $\gamma \colon \Lambda_{GT} \hookrightarrow \Lambda_{\widetilde{GT}}$ denote the inclusion map, and $\gamma_\ast \colon {\rm Sym}(\Lambda_{GT}) \rightarrow {\rm Sym}(\Lambda_{\widetilde{GT}})$ the $\z$-algebra homomorphism induced by $\gamma$. 

\begin{prop}[{see \cite[Proposition 2.1]{KST}}]
There exist a $\z$-module $M_{\widetilde{GT}, GT}$, a surjective $\z$-module homomorphism $\pi \colon R_{\widetilde{GT}} \twoheadrightarrow M_{\widetilde{GT}, GT}$, and an injective $\z$-module homomorphism $\iota \colon R_{GT} \hookrightarrow M_{\widetilde{GT}, GT}$ such that 
\begin{itemize}
\item if $\pi(f^\prime) = \iota(f)$ and $\pi(g^\prime) = \iota(g)$ for some $f^\prime, g^\prime \in R_{\widetilde{GT}}$ and $f, g \in R_{GT}$, then it holds that $\pi(f^\prime g^\prime) = \iota(f g)$,
\item for all $D \in {\rm Sym}(\Lambda_{GT})$, it holds that
\[\iota(D \bmod J_{GT}) = \pi(\gamma_\ast (D) \bmod J_{\widetilde{GT}}).\]
\end{itemize}
\end{prop}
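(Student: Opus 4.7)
The plan is to construct $M_{\widetilde{GT}, GT}$ as a pushout-style quotient that tautologically forces the required compatibility between the two polytope rings. Concretely, I would set
\[M_{\widetilde{GT}, GT} := (R_{\widetilde{GT}} \oplus R_{GT})/K,\]
where $K$ is the $\z$-submodule of $R_{\widetilde{GT}} \oplus R_{GT}$ generated by all pairs $(\gamma_\ast(D) \bmod J_{\widetilde{GT}},\, -D \bmod J_{GT})$ as $D$ ranges over ${\rm Sym}(\Lambda_{GT})$. Define $\pi(f') := (f', 0) + K$ and $\iota(f) := (0, f) + K$; both maps are evidently $\z$-linear. Surjectivity of $\pi$ is a standard pushout argument: given any class $(f', f) + K$, lift $f$ to some $\tilde f \in {\rm Sym}(\Lambda_{GT})$ and use the fact that $(\gamma_\ast(\tilde f), -\tilde f) \in K$ to rewrite $(f', f) + K = \pi(f' + \gamma_\ast(\tilde f) \bmod J_{\widetilde{GT}})$.

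The crucial technical input is the key inclusion
\[\gamma_\ast^{-1}(J_{\widetilde{GT}}) \subseteq J_{GT},\]
which I would derive from the compatibility ${\rm vol}_{\widetilde{GT}}|_{V_{GT}} = {\rm vol}_{GT}$. This compatibility holds because both volume polynomials measure the $N$-dimensional volume of polytopes in $\Lambda_{GT} \subseteq \Lambda_{\widetilde{GT}}$, so they must agree on the sublattice via $\gamma$. For any $D \in {\rm Sym}(\Lambda_{GT})$, the operator $\gamma_\ast(D)$ on $\r[V_{\widetilde{GT}}]$ differentiates only in directions tangent to $V_{GT}$, so its action commutes with restriction to $V_{GT}$:
\[(\gamma_\ast(D) \cdot {\rm vol}_{\widetilde{GT}})|_{V_{GT}} = D \cdot ({\rm vol}_{\widetilde{GT}}|_{V_{GT}}) = D \cdot {\rm vol}_{GT}.\]
Hence $\gamma_\ast(D) \in J_{\widetilde{GT}}$ forces $D \in J_{GT}$. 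This immediately yields injectivity of $\iota$: if $(0, f) \in K$, there is some $D \in \gamma_\ast^{-1}(J_{\widetilde{GT}}) \subseteq J_{GT}$ with $f \equiv -D \pmod{J_{GT}}$, so $f = 0$ in $R_{GT}$.

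The two compatibility properties now follow essentially formally. Property (2) amounts to the tautology $(\gamma_\ast(D) \bmod J_{\widetilde{GT}}, -D \bmod J_{GT}) \in K$, which is exactly the assertion $\pi(\gamma_\ast(D) \bmod J_{\widetilde{GT}}) = \iota(D \bmod J_{GT})$. For property (1), the hypothesis $\pi(f') = \iota(f)$ unpacks to the existence of $F \in {\rm Sym}(\Lambda_{GT})$ with $f' \equiv \gamma_\ast(F) \pmod{J_{\widetilde{GT}}}$ and $f \equiv F \pmod{J_{GT}}$, and similarly a $G$ for $g', g$. Since $\gamma_\ast$ is a ring homomorphism, multiplying the two pairs of congruences gives $f'g' \equiv \gamma_\ast(FG) \pmod{J_{\widetilde{GT}}}$ and $fg \equiv FG \pmod{J_{GT}}$, so $(f'g', -fg) \in K$, i.e., $\pi(f'g') = \iota(fg)$. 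I expect the main obstacle to be identifying the correct construction of $M_{\widetilde{GT}, GT}$ and proving the key lemma $\gamma_\ast^{-1}(J_{\widetilde{GT}}) \subseteq J_{GT}$; once those are in hand, the remaining verifications are formal $\z$-module manipulations.
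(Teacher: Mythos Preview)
The paper does not supply its own proof of this proposition; it is simply quoted from \cite[Proposition 2.1]{KST}. So there is no argument in the present paper to compare against.

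That said, your construction is correct and is the standard one. The pushout $M_{\widetilde{GT},GT}=(R_{\widetilde{GT}}\oplus R_{GT})/K$ with $K$ the image of $D\mapsto(\gamma_\ast(D)\bmod J_{\widetilde{GT}},\,-D\bmod J_{GT})$ is exactly the amalgamated sum that forces the second bullet, and your derivations of surjectivity of $\pi$, of property~(1) from the fact that $\gamma_\ast$ is a ring map, and of property~(2) as a tautology are all fine. The only substantive step is injectivity of $\iota$, which you correctly reduce to $\gamma_\ast^{-1}(J_{\widetilde{GT}})\subseteq J_{GT}$, and in turn to ${\rm vol}_{\widetilde{GT}}|_{V_{GT}}={\rm vol}_{GT}$. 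One small point worth making explicit in that step: the polytope $GT(\lambda)$ does not have normal fan $\widetilde{\Sigma}_A$ but only the coarser fan $\Sigma_A$, so it sits on the boundary of the cone of genuine $\widetilde{\Sigma}_A$-polytopes inside $V_{\widetilde{GT}}$; nevertheless, since $\widetilde{\Sigma}_A$ and $\Sigma_A$ have the same rays (Proposition~\ref{p:small_resolution_type_A}), the support numbers $x_i(GT(\lambda))$ still cut out $GT(\lambda)$ exactly, and by continuity of the polynomial ${\rm vol}_{\widetilde{GT}}$ its value there is ${\rm Vol}_N(GT(\lambda))$. With that clarification your argument is complete.
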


For $1 \leq i \leq z$, we define a derivation $\partial_i \in \Lambda_{\widetilde{GT}}$ on $\r[V_{\widetilde{GT}}]$ by $\partial_i \coloneqq \frac{\partial}{\partial x_i}$. 
For each face $F = \Gamma_{k_1} \cap \cdots \cap \Gamma_{k_\ell}$ of $\widetilde{GT}_{\bm \epsilon}(\lambda)$ of codimension $\ell$, we write 
\[\partial_F \coloneqq \partial_{k_1} \cdots \partial_{k_\ell} \in {\rm Sym}(\Lambda_{\widetilde{GT}}),\]
and denote by $[F]$ the corresponding element of $R_{\widetilde{GT}}$.
By abuse of notation, the image $\pi([F]) \in M_{\widetilde{GT}, GT}$ is also denoted by $[F]$.
The elements $[F]$ for all faces $F$ of $\widetilde{GT}_{\bm \epsilon}(\lambda)$ generate the polytope ring $R_{\widetilde{GT}}$ as a $\z$-module by \cite[Lemma 2.6.3]{Tim}, which implies that their images in $M_{\widetilde{GT}, GT}$ give a $\z$-module generator of $M_{\widetilde{GT}, GT}$.  
For a Kogan face $F_{\bm k}^\vee(GT(\lambda))$ (resp., a dual Kogan face $F_{\bm k}(GT(\lambda))$), there uniquely exists a face $F_{\bm k}^\vee(\widetilde{GT}_{\bm \epsilon}(\lambda))$ (resp., $F_{\bm k}(\widetilde{GT}_{\bm \epsilon}(\lambda))$) of $\widetilde{GT}_{\bm \epsilon}(\lambda)$ such that the corresponding cones $C_{F_{\bm k}^\vee(GT(\lambda))} \in \Sigma_A$ and $C_{F_{\bm k}^\vee(\widetilde{GT}_{\bm \epsilon}(\lambda))} \in \widetilde{\Sigma}_A$ (resp., $C_{F_{\bm k}(GT(\lambda))} \in \Sigma_A$ and $C_{F_{\bm k}(\widetilde{GT}_{\bm \epsilon}(\lambda))} \in \widetilde{\Sigma}_A$) coincide (see \cite[Proposition 2.3]{KST}).
Then we define $[F_{\bm k}^\vee(GT)], [F_{\bm k}(GT)] \in M_{\widetilde{GT}, GT}$ by 
\[
[F_{\bm k}^\vee(GT)] \coloneqq [F_{\bm k}^\vee(\widetilde{GT}_{\bm \epsilon}(\lambda))]\quad {\rm and}\quad [F_{\bm k}(GT)] \coloneqq [F_{\bm k}(\widetilde{GT}_{\bm \epsilon}(\lambda))],
\]
which are independent of the choices of ${\bm \epsilon}$ and $\lambda$.

\begin{thm}[{\cite[Theorem 4.3 and Corollary 4.5]{KST}}]
For $w \in W$, the cohomology class $[X^w] = [X_{w_0 w}] \in H^\ast(G/B; \z) \simeq R_{GT}$ is described as follows: 
\begin{align*}
[X^w] = \sum_{{\bm k} \in R({\bm i}_A, w_0 w w_0)} [F_{\bm k}^\vee(GT)] = \sum_{{\bm k} \in R({\bm i}_A, w)} [F_{\bm k}(GT)].
\end{align*}
\end{thm}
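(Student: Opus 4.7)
The plan is to verify each equality by comparing both sides as elements of $M_{\widetilde{GT}, GT}$ through their ``volume pairing'' against the Kaveh volume polynomial $P_{GT}$. By \cref{t:Borel_description_polytope} and \cref{r:Borel_description_over_Z}, a homogeneous class $\xi \in R_{GT}$ of degree $\ell(w)$ is uniquely determined by the polynomial function $\lambda \mapsto (\xi \cdot P_{GT})(\lambda)$ on $P_{++}$, where $R_{GT}$ acts on $\r[P_\r]$ by differentiations; for $\xi = [X^w]$ this polynomial equals $(N-\ell(w))! \cdot \mathrm{Vol}(X^w, \mathcal{L}_\lambda)$, the leading term of the Hilbert polynomial of the embedding $X^w \hookrightarrow \mathbb{P}(V(\lambda))$. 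Because $GT(\lambda)$ itself is not simple, I would carry out the computation in the resolved module $M_{\widetilde{GT}, GT}$ via the compatible pair $(\pi,\iota)$, where each face class $[F_{\bm k}(GT)]$ or $[F^\vee_{\bm k}(GT)]$ is represented by the concrete derivation $\partial_{k_1} \cdots \partial_{k_\ell}$ on $\r[V_{\widetilde{GT}}]$ coming from the simple polytope $\widetilde{GT}_{\bm \epsilon}(\lambda)$.

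For the second equality, the standard polytope-ring identity gives
\[
\partial_{F_{\bm k}(\widetilde{GT}_{\bm \epsilon}(\lambda))} \cdot \mathrm{vol}_{\widetilde{GT}}(\mu)\big|_{\mu = \lambda} = (N-\ell(w))! \cdot \mathrm{Vol}_{N-\ell(w)}\bigl(F_{\bm k}(\widetilde{GT}_{\bm \epsilon}(\lambda))\bigr),
\]
and the limit ${\bm \epsilon} \to 0$ replaces the face on the right by $F_{\bm k}(GT(\lambda))$ of the same dimension. By \cref{c:dual_Kogan_opposite_Demazure}, transported through Littelmann's unimodular affine transformation $GT(\lambda) \simeq \Delta_{{\bm i}_A}(\lambda)$, the union $\bigcup_{{\bm k}\in R({\bm i}_A,w)} F_{\bm k}(GT(\lambda))$ coincides with the set of lattice points parametrizing $\mathcal{B}^w(\lambda)$, and \cref{t:semi-toric_degenerations}(3) identifies $\mathrm{Vol}(X^w,\mathcal{L}_\lambda)$ with $\sum_{{\bm k}\in R({\bm i}_A,w)} \mathrm{Vol}_{N-\ell(w)}(F_{\bm k}(GT(\lambda)))$, since flat degenerations preserve degrees and overlaps of faces lie in strictly smaller dimension. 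This matches the volume pairing of $[X^w]$, forcing the equality in $M_{\widetilde{GT}, GT}$ and hence in $R_{GT}$.

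For the first equality, I would run the analogous argument with $X^w$ rewritten as $X_{w_0 w}$. \cref{c:Kogan_Demazure_crystal} says that $\bigcup_{D \in \mathscr{M}(w_0 w)} F^\vee_{{\bm k}_D}(GT(\lambda))$ parametrizes $\mathcal{B}_{w_0 w}(\lambda)$, and \cref{t:ladder_moves_BB} identifies these $D$ with the reduced pipe dreams satisfying $w(F^\vee_{{\bm k}_D}) = w_0 \cdot w_0 w = w$. Unwinding the definition $w(F^\vee_{\bm k}) = w_0 s_{i_{k_1}} \cdots s_{i_{k_\ell}} w_0$, this condition becomes $s_{i_{k_1}} \cdots s_{i_{k_\ell}} = w_0 w w_0$, yielding the bijection $\{{\bm k}_D \mid D \in \mathscr{M}(w_0 w)\} = R({\bm i}_A, w_0 w w_0)$. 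The Schubert-variety semi-toric degeneration recorded as the last corollary of Section 5 then plays the role of \cref{t:semi-toric_degenerations}(3), and the same volume-pairing computation closes the argument.

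The hard part will be the transfer between the three rings $R_{GT}$, $M_{\widetilde{GT},GT}$, and $R_{\widetilde{GT}}$. Since the face classes $[F_{\bm k}(GT)]$ and $[F^\vee_{\bm k}(GT)]$ are only intrinsically defined in the middle module, promoting an equality of volume pairings to an equality of classes requires the defining property of the pair $(\pi,\iota)$ together with the nondegeneracy of the pairing $\partial \mapsto \partial \cdot \mathrm{vol}_{\widetilde{GT}}$ on $R_{\widetilde{GT}}$; that nondegeneracy in turn relies on \cref{p:small_resolution_type_A}, which identifies $R_{\widetilde{GT}}$ with the cohomology of a smooth projective toric variety. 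Once this module gymnastics is in place, the combinatorial heart of the proof reduces entirely to \cref{c:dual_Kogan_opposite_Demazure} and \cref{c:Kogan_Demazure_crystal}.
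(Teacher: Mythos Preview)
The paper does not supply its own proof of this theorem; it is quoted from \cite[Theorem 4.3 and Corollary 4.5]{KST} at the end of Section~\ref{s:type_A} as background for the type $C$ generalization. There is thus no in-paper argument to compare against directly.

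Your outline does, however, match the strategy the paper adopts for the analogous type $C$ statements (Corollaries \ref{c:description_of_Schubert_class_symplectic_Kogan} and \ref{c:description_of_Schubert_class_dual_symplectic_Kogan}), where the authors write ``In a way similar to the proof of \cite[Theorem 4.3]{KST}, we obtain the following by \cref{c:dual_Kogan_opposite_Demazure_type_C}'' (respectively, from \cref{c:string_Demazure_volume_type_C}). The scheme is precisely what you describe: the semi-toric degeneration yields a volume identity ${\rm Vol}(X^w,\mathcal{L}_\lambda) = \sum_{\bm k} {\rm Vol}_{N-\ell(w)}(F_{\bm k}(\Delta(\lambda)))$, and one matches this against the action of the sum of face derivations on the volume polynomial. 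Your identification of the combinatorial inputs---\cref{c:dual_Kogan_opposite_Demazure} for the dual Kogan side, \cref{c:Kogan_Demazure_crystal} together with \cref{t:ladder_moves_BB} for the Kogan side, and the re-indexing $\{{\bm k}_D \mid D \in \mathscr{M}(w_0 w)\} = R({\bm i}_A, w_0 w w_0)$---is correct.

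The one place your sketch is thin is the step you yourself flag as the ``hard part'': passing from an equality of volume pairings to an equality of classes in $M_{\widetilde{GT}, GT}$. Individual face classes $[F_{\bm k}(GT)]$ need not lie in $\iota(R_{GT})$, so nondegeneracy in $R_{GT}$ alone is insufficient, while nondegeneracy in $R_{\widetilde{GT}}$ is too strong (the lifts to $R_{\widetilde{GT}}$ will generally differ by something in $\ker\pi$). What makes the argument go through in \cite{KST} is the concrete construction of $M_{\widetilde{GT}, GT}$ as the quotient of ${\rm Sym}(\Lambda_{\widetilde{GT}})$ by the annihilator of ${\rm vol}_{GT}$ regarded as a function on $V_{\widetilde{GT}}$; elements of $M_{\widetilde{GT}, GT}$ are then, by definition, determined by their action on ${\rm vol}_{GT}$. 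Once that is stated, your volume-matching argument closes.
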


\section{Symplectic Kogan faces and Demazure crystals of type $C$}\label{s:type_C}

In this section, we consider the case $G = Sp_{2n} (\c)$, and extend results in Sect.\ \ref{s:type_A} to this case. 
We identify the set $I$ of vertices of the Dynkin diagram with $\{1, \ldots, n\}$ as follows:
\begin{align*}
&C_n\ \begin{xy}
\ar@{=>} (50,0) *++!D{1} *\cir<3pt>{};
(60,0) *++!D{2} *\cir<3pt>{}="C"
\ar@{-} "C";(65,0) \ar@{.} (65,0);(70,0)^*!U{}
\ar@{-} (70,0);(75,0) *++!D{n-1} *\cir<3pt>{}="D"
\ar@{-} "D";(85,0) *++!D{n} *\cir<3pt>{}="E"
\end{xy}.
\end{align*}
Write $N \coloneqq n^2$, and define ${\bm i}_C = (i_1, \ldots, i_N) \in R(w_0)$ by
\begin{equation}\label{eq:reduced_word_type_C}
\begin{aligned}
{\bm i}_C \coloneqq (1, \underbrace{2, 1, 2}_3, \underbrace{3, 2, 1, 2, 3}_5, \ldots, \underbrace{n, n-1, \ldots, 1, \ldots, n-1, n}_{2n-1}).
\end{aligned}
\end{equation}
Then Littelmann \cite[Theorem 6.1]{Lit} proved that the string cone $\mathcal{C}_{{\bm i}_C}$ coincides with the set of 
\begin{equation}\label{eq:coordinate_type_C_string}
\begin{aligned}
(a_1 ^{(1)}, \underbrace{b_1 ^{(2)}, a_2 ^{(1)}, a_1 ^{(2)}}_3, \underbrace{b_1 ^{(3)}, b_2 ^{(2)},  a_3 ^{(1)},  a_2 ^{(2)},  a_1 ^{(3)}}_5, \ldots, \underbrace{b_1 ^{(n)}, \ldots, b_{n-1} ^{(2)}, a_n ^{(1)}, \ldots, a_1 ^{(n)}}_{2n-1}) \in \r^N
\end{aligned}
\end{equation}
satisfying the following inequalities:
\begin{align*}
&a_1 ^{(1)} \geq 0, & &b_1 ^{(2)} \geq a_2 ^{(1)} \geq a_1 ^{(2)} \geq 0, & &\ldots, & &b_1 ^{(n)} \geq \cdots \geq b_{n-1} ^{(2)} \geq a_n ^{(1)} \geq \cdots \geq a_1 ^{(n)} \geq 0.
\end{align*}
We arrange the equations of the facets of $\mathcal{C}_{{\bm i}_C}$ as 
\begin{align*}
a_1 ^{(1)} = 0,\ b_1 ^{(2)} = a_2 ^{(1)},\ a_2 ^{(1)} = a_1 ^{(2)},\ a_1 ^{(2)} = 0,\ b_1 ^{(3)} = b_2 ^{(2)},\ \ldots,\ a_2 ^{(n-1)} = a_1 ^{(n)},\ a_1 ^{(n)} = 0,
\end{align*}
and denote the corresponding facets of $\mathcal{C}_{{\bm i}_C}$ by $F_1^\vee(\mathcal{C}_{{\bm i}_C}), \ldots, F_N^\vee(\mathcal{C}_{{\bm i}_C})$, respectively. 
For $1 \leq j \leq N$, $\lambda \in P_+$, and ${\bm k} = (k_1, \ldots, k_\ell)$ with $\ell \geq 0$ and with $1 \leq k_1 < \cdots < k_\ell \leq N$, define $F_j^\vee(\Delta_{{\bm i}_C}(\lambda)), F_j^\vee(\mathscr{C}_{{\bm i}_C}), F_{\bm k}^\vee(\mathcal{C}_{{\bm i}_C}), F_{\bm k}^\vee(\Delta_{{\bm i}_C}(\lambda))$, and $F_{\bm k}^\vee(\mathscr{C}_{{\bm i}_C})$ as in Sect.\ \ref{s:type_A}.

For $\lambda \in P_+$ and $1 \leq k \leq n$, we set $b_k ^{(1)} \coloneqq \sum_{1 \leq \ell \leq n-k+1} \langle \lambda, h_\ell \rangle$, and write $b_{n+1}^{(1)} = b_n^{(2)} = \cdots = b_2^{(n)} \coloneqq 0$. 
Then the \emph{symplectic Gelfand--Tsetlin polytope} $SGT(\lambda)$ is defined to be the set of \eqref{eq:coordinate_type_C_string} satisfying the following inequalities:
\[\begin{matrix}
b_1 ^{(1)} & & b_2 ^{(1)} & & \cdots & b_n ^{(1)} & & b_{n+1} ^{(1)} \\
 & a_1 ^{(1)} & & a_2 ^{(1)} & & & a_n ^{(1)} & \\
 & & b_1 ^{(2)} & & \cdots & b_{n -1} ^{(2)}  & & b_n ^{(2)}\\
 & & & a_1 ^{(2)} & & & a_{n -1} ^{(2)} & \\
 & & & & \ddots & & & \vdots \\
 & & & & & b_1 ^{(n)} & & b_2 ^{(n)}\\
 & & & & & & a_1 ^{(n)}. &
\end{matrix}\]
The symplectic Gelfand--Tsetlin polytope $SGT(\lambda)$ is an integral polytope for all $\lambda \in P_+$, and it is $N$-dimensional if $\lambda \in P_{++}$.
A face of $SGT(\lambda)$ given by equations of the types $a_k^{(l)} = b_{k-1}^{(l+1)}$ and $b_k^{(l)} = a_k^{(l)}$ (resp., $a_k^{(l)} = b_k^{(l+1)}$ and $b_k^{(l)} = a_{k-1}^{(l)}$) is called a \emph{symplectic Kogan face} (resp., a \emph{dual symplectic Kogan face}).
We arrange the equations of the types $a_k^{(l)} = b_k^{(l+1)}$ and $b_k^{(l)} = a_{k-1}^{(l)}$ as
\begin{align*}
&b_2^{(n)} = a_1 ^{(n)},\ a_1 ^{(n-1)} = b_1^{(n)},\ a_2 ^{(n-1)} = b_2^{(n)},\ b_2^{(n-1)} = a_1 ^{(n-1)},\ a_1 ^{(n-2)} = b_1^{(n-1)},\ a_2 ^{(n-2)} = b_2^{(n-1)},\\ 
&a_3 ^{(n-2)} = b_3^{(n-1)},\ b_3^{(n-2)} = a_2 ^{(n-2)},\ \ldots,\ a_{n-1}^{(1)} = b_{n-1}^{(2)},\ a_n ^{(1)} = b_n^{(2)},\ b_n ^{(1)} = a_{n-1} ^{(1)},\ \ldots,\ b_2^{(1)} = a_1^{(1)},
\end{align*}
and denote the corresponding facets of $SGT(\lambda)$ by $F_1 (SGT(\lambda)), \ldots, F_N (SGT(\lambda))$, respectively.
We also arrange the equations of the types $a_k^{(l)} = b_{k-1}^{(l+1)}$ and $b_k^{(l)} = a_k^{(l)}$ as 
\begin{align*}
&b_1 ^{(n)} = a_1 ^{(n)},\ a_2 ^{(n-1)} = b_1 ^{(n)},\ b_2 ^{(n-1)} = a_2 ^{(n-1)},\ b_1 ^{(n-1)} = a_1 ^{(n-1)},\ a_2 ^{(n-2)} = b_1 ^{(n-1)},\ a_3 ^{(n-2)} = b_2 ^{(n-1)},\\ 
&b_3 ^{(n-2)} = a_3 ^{(n-2)},\ \ldots,\ a_{n-1} ^{(1)} = b_{n-2} ^{(2)},\ a_n ^{(1)} = b_{n-1} ^{(2)},\ b_n ^{(1)} = a_n ^{(1)},\ \ldots, b_2 ^{(1)} = a_2 ^{(1)},\ b_1 ^{(1)} = a_1 ^{(1)},
\end{align*}
and denote the corresponding facets of $SGT(\lambda)$ by $F^\vee_1(SGT(\lambda)), \ldots, F^\vee_N(SGT(\lambda))$, respectively.
For ${\bm k} = (k_1, \ldots, k_\ell)$ with $\ell \geq 0$ and with $1 \leq k_1 < \cdots < k_\ell \leq N$, define $F_{\bm k}(SGT(\lambda))$ (resp., $F_{\bm k}^\vee(SGT(\lambda))$) in a way similar to $F_{\bm k}(\Delta_{{\bm i}_C}(\lambda))$ (resp., $F_{\bm k}^\vee(\Delta_{{\bm i}_C}(\lambda))$).
Littelmann \cite[Corollary 7]{Lit} gave a unimodular affine transformation from the string polytope $\Delta_{{\bm i}_C}(\lambda)$ to $SGT(\lambda)$. 
Under this transformation, the face $F_{\bm k}(\Delta_{{\bm i}_C}(\lambda))$ (resp., $F_{\bm k}^\vee(\Delta_{{\bm i}_C}(\lambda))$) of $\Delta_{{\bm i}_C}(\lambda)$ corresponds to the dual symplectic Kogan face $F_{\bm k}(SGT(\lambda))$ (resp., the symplectic Kogan face $F_{\bm k}^\vee(SGT(\lambda))$) of $SGT(\lambda)$. 
Hence Corollaries \ref{c:string_parametrization_opposite_Demazure} and \ref{c:string_opposite_Demazure_volume} imply the following.

\begin{cor}\label{c:dual_Kogan_opposite_Demazure_type_C}
For $w \in W$ and $\lambda \in P_+$, the union $\bigcup_{{\bm k} \in R({\bm i}_C, w)} F_{\bm k}(SGT(\lambda))$ gives a polyhedral parametrization of $\mathcal{B}^w(\lambda)$ under the unimodular affine transformation $SGT(\lambda) \simeq \Delta_{{\bm i}_C}(\lambda)$.
In particular, it holds for $\lambda \in P_{++}$ that
\[{\rm Vol}(X^w, \mathcal{L}_\lambda) = \sum_{{\bm k} \in R({\bm i}_C, w)} {\rm Vol}_{N-\ell(w)} (F_{\bm k} (SGT(\lambda))).\]
\end{cor}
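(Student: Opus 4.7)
The plan is to derive this corollary as a direct application of the results already established in Sections 3 and 4, combined with Littelmann's unimodular affine transformation $\Delta_{{\bm i}_C}(\lambda) \simeq SGT(\lambda)$ from \cite[Corollary 7]{Lit}. The statement is really a translation of \cref{c:string_parametrization_opposite_Demazure} and \cref{c:string_opposite_Demazure_volume} (instantiated at ${\bm i} = {\bm i}_C$) from the language of string polytopes into the language of symplectic Gelfand--Tsetlin polytopes, so the proof reduces to checking that the correspondence between faces behaves correctly.

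First I would recall, as already stated in the paragraph preceding the corollary, that Littelmann's unimodular affine transformation $\Delta_{{\bm i}_C}(\lambda) \xrightarrow{\sim} SGT(\lambda)$ carries each face $F_{\bm k}(\Delta_{{\bm i}_C}(\lambda))$ to the dual symplectic Kogan face $F_{\bm k}(SGT(\lambda))$, since the chosen enumerations of the facets of $\mathcal{C}_{{\bm i}_C}$ and of $SGT(\lambda)$ were set up precisely to match under this transformation. Combining this face correspondence with \cref{c:string_parametrization_opposite_Demazure} applied to ${\bm i} = {\bm i}_C$, we obtain
\[
\Phi_{{\bm i}_C}(\mathcal{B}^w(\lambda)) = \Delta_{{\bm i}_C}(\lambda, X^w) \cap \z^N = \bigcup_{{\bm k} \in R({\bm i}_C, w)} F_{\bm k}(\Delta_{{\bm i}_C}(\lambda)) \cap \z^N,
\]
which, transported through the unimodular isomorphism $\Delta_{{\bm i}_C}(\lambda) \simeq SGT(\lambda)$, yields the polyhedral parametrization of $\mathcal{B}^w(\lambda)$ by $\bigcup_{{\bm k} \in R({\bm i}_C, w)} F_{\bm k}(SGT(\lambda))$ asserted in the corollary.

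For the volume formula, assume $\lambda \in P_{++}$. \cref{c:string_opposite_Demazure_volume} applied to ${\bm i} = {\bm i}_C$ gives
\[
{\rm Vol}(X^w, \mathcal{L}_\lambda) = \sum_{{\bm k} \in R({\bm i}_C, w)} {\rm Vol}_{N - \ell(w)}(F_{\bm k}(\Delta_{{\bm i}_C}(\lambda))).
\]
Since Littelmann's affine transformation is unimodular, it preserves the lattice $\z^N$ and hence preserves the normalized volume ${\rm Vol}_d$ of each face; consequently ${\rm Vol}_{N-\ell(w)}(F_{\bm k}(\Delta_{{\bm i}_C}(\lambda))) = {\rm Vol}_{N-\ell(w)}(F_{\bm k}(SGT(\lambda)))$ for every ${\bm k} \in R({\bm i}_C, w)$, which yields the claimed volume identity.

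There is essentially no obstacle beyond bookkeeping: the only point that requires any care is verifying that the two arrangements of facet equations (those of $\mathcal{C}_{{\bm i}_C}$ arranged in the order $a_1^{(1)} = 0,\ b_1^{(2)} = a_2^{(1)}, \ldots$ and those of $SGT(\lambda)$ arranged in the order $b_2^{(n)} = a_1^{(n)},\ a_1^{(n-1)} = b_1^{(n)}, \ldots$) really do correspond index by index under Littelmann's map. This is a direct computation using the explicit change of coordinates in \cite[Corollary 7]{Lit}, and once it is checked, the corollary drops out immediately from the results cited above.
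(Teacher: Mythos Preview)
Your proposal is correct and matches the paper's approach exactly: the paper derives this corollary immediately from Corollaries \ref{c:string_parametrization_opposite_Demazure} and \ref{c:string_opposite_Demazure_volume} via Littelmann's unimodular transformation, just as you do. One small slip in your final paragraph: the facet equations you list for $\mathcal{C}_{{\bm i}_C}$ (namely $a_1^{(1)}=0,\ b_1^{(2)}=a_2^{(1)},\ldots$) are those defining the faces $F_j^\vee$, whereas the corollary concerns the faces $F_j(\Delta_{{\bm i}_C}(\lambda))$ (given by the string-polytope equalities $a_j=\langle\lambda,h_{i_j}\rangle-\sum_{k>j}c_{i_j,i_k}a_k$), and it is these that correspond to the dual symplectic Kogan faces $F_j(SGT(\lambda))$ you listed.
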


Let $\mathcal{SPD}_n$ denote the set of subsets of 
\[SY_n \coloneqq \{(i, j) \in \z^2 \mid 1 \leq i \leq n,\ i \leq j \leq 2n-i\}.\]
We regard $SY_n$ as a shifted Young diagram, and represent an element $D \in \mathcal{SPD}_n$ by putting $+$ in the boxes contained in $D$. 
For instance, a set $D = \{(1, 1), (1, 2), (1, 3), (2, 2), (2, 3)\} \in \mathcal{SPD}_3$ is represented as 
\[D = \begin{ytableau}
+ & + & + & \mbox{} & \mbox{} \\
\none & + & + & \mbox{} & \none \\
\none & \none & \mbox{} & \none & \none
\end{ytableau}.\]
An element of $\mathcal{SPD}_n$ is called a \emph{skew pipe dream} (see \cite[Sect.\ 5.2]{Kir}).
We arrange the elements of $SY_n$ as 
\[((p_1, q_1), \ldots, (p_N, q_N)) \coloneqq ((n, n), (n-1, n+1), (n-1, n), (n-1, n-1), \ldots, (1, 2), (1, 1)).\]
Then it holds that $q_k \in \{n-i_k+1, n+i_k-1\}$ for $1 \leq k \leq N$.
We associate to each $D \in \mathcal{SPD}_n$ a sequence ${\bm k}_D = (k_1, \ldots, k_{|D|})$ with $1 \leq k_1 < \cdots < k_{|D|} \leq N$ by arranging $1 \leq k \leq N$ such that $(p_k, q_k) \in D$.
For instance, a skew pipe dream $D = \{(1, 1), (1, 2), (1, 3), (2, 2), (2, 3)\} \in \mathcal{SPD}_3$ gives a sequence ${\bm k}_D = (3, 4, 7, 8, 9)$.
Let us introduce the notion of \emph{ladder moves} for skew pipe dreams, which is an analog of that for pipe dreams.
For $(i, j) \in SY_n$, the \emph{ladder move} $L_{i, j}$ is defined as follows: write $(i, j) = (p_k, q_k)$, and take $D \in \mathcal{SPD}_n$ satisfying the following conditions:
\begin{itemize}
\item $(i, j) \in D$, $(i, j+1) \notin D$,
\item there exists $k < \ell \leq N$ such that $q_\ell \in \{j, 2n - j\}$, $(p_\ell, q_\ell), (p_\ell, q_\ell + 1) \notin D$, and $(p_r, q_r), (p_r, q_r + 1) \in D$ for all $k < r < \ell$ such that $q_r \in \{j, 2n - j\}$.
\end{itemize}
Then we define $L_{i, j} (D) \in \mathcal{SPD}_n$ by 
\[L_{i, j} (D) \coloneqq D \cup \{(p_\ell, q_\ell + 1)\} \setminus \{(i, j)\}.\]
Denote the skew pipe dream $L_{i, j} (D)$ also by $L_{p_\ell, q_\ell}^\prime(D)$.

\begin{ex}\label{ex:ladder_moves_type_C}
Let $n = 3$. Then we obtain the following: 
\begin{align*}
\xymatrix{
{\begin{ytableau}
+ & + & + & \mbox{} & \mbox{} \\
\none & + & \mbox{} & \mbox{} & \none \\
\none & \none & + & \none & \none
\end{ytableau}} \ar@{|->}[d]^-{L_{3, 3}} \ar@{|->}[r]^-{L_{2, 2}} & {\begin{ytableau}
+ & + & + & \mbox{} & + \\
\none & \mbox{} & \mbox{} & \mbox{} & \none \\
\none & \none & + & \none & \none
\end{ytableau}} \ar@{|->}[d]^-{L_{3, 3}} & \\
{\begin{ytableau}
+ & + & + & \mbox{} & \mbox{} \\
\none & + & \mbox{} & + & \none \\
\none & \none & \mbox{} & \none & \none
\end{ytableau}} \ar@{|->}[r]^-{L_{2, 2}} & {\begin{ytableau}
+ & + & + & \mbox{} & + \\
\none & \mbox{} & \mbox{} & + & \none \\
\none & \none & \mbox{} & \none & \none
\end{ytableau}} \ar@{|->}[r]^-{L_{2, 4}} & {\begin{ytableau}
+ & + & + & \mbox{} & + \\
\none & \mbox{} & + & \mbox{} & \none \\
\none & \none & \mbox{} & \none & \none
\end{ytableau}.} 
}
\end{align*}
\end{ex}

We associate to each $D \in \mathcal{SPD}_n$ a sequence ${\bm k}_D^\prime = (k_1, \ldots, k_{N-|D|})$ with $1 \leq k_1 < \cdots < k_{N-|D|} \leq N$ by arranging $1 \leq k \leq N$ such that $(p_k, q_k) \in SY_n \setminus D$.
For instance, a skew pipe dream $D = \{(1, 1), (1, 2), (1, 3), (2, 2), (2, 3)\} \in \mathcal{SPD}_3$ corresponds to a sequence ${\bm k}_D^\prime = (1, 2, 5, 6)$.
For $w \in W$, define $D(w) \in \mathcal{SPD}_n$ by the condition that ${\bm k}_{D(w)}^\prime$ is the minimum element in $R({\bm i}_C, w)$ with respect to the lexicographic order. 
If $w = e$, then we think of $D(w)$ as $SY_n$.
In a way similar to the proofs of \cref{p:relation_with_BB} and \cref{c:last_box_for_w_prime}, we obtain the following.

\begin{prop}\label{p:BB_type_description}
For $w \in W$, there exist $m(1), \ldots, m(n) \in \z_{\geq 0}$ such that 
\[D(w) = \{(i, j) \in SY_n \mid j \leq m(i) + i - 1\}.\]
In addition, writing ${\bm k}_{D(w)}^\prime = (k_1, \ldots, k_\ell)$ and $w^\prime \coloneqq w s_{i_{k_\ell}}$, it holds that $(p_{k_\ell}, q_{k_\ell}+1) \notin D$ and 
\[\{(i, j) \mid 1 \leq i \leq p_{k_\ell} - 1,\ i \leq j \leq 2n-i\} \cup \{(p_{k_\ell}, j) \mid p_{k_\ell} \leq j \leq q_{k_\ell}\} \subseteq D\]
for all $D \in \mathcal{SPD}_n$ which is obtained from $D(w^\prime)$ by applying a sequence of ladder moves.
\end{prop}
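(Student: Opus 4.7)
The plan is to proceed by induction on $\ell(w)$, mimicking the strategy of \cref{p:relation_with_BB} and \cref{c:last_box_for_w_prime}. The base case $w = e$ gives $D(e) = SY_n$, with $m(i) = 2(n-i)+1$. For the inductive step, write ${\bm k}_{D(w)}^\prime = (k_1, \ldots, k_\ell)$ with $\ell \coloneqq \ell(w) \geq 1$, and set $i^\ast \coloneqq i_{k_\ell}$, $w^\prime \coloneqq w s_{i^\ast}$. Then ${\bm k}_{D(w^\prime)}^\prime = (k_1, \ldots, k_{\ell-1})$ and
\[
D(w) = D(w^\prime) \setminus \{(p_{k_\ell}, q_{k_\ell})\}.
\]
By the induction hypothesis, $D(w^\prime) = \{(i,j) \in SY_n \mid j \leq m^\prime(i) + i - 1\}$ for some $m^\prime(1), \ldots, m^\prime(n)$, so the first assertion will follow from the sub-claim that $(p_{k_\ell}, q_{k_\ell})$ is the rightmost box of its row inside $D(w^\prime)$, i.e., $q_{k_\ell} = m^\prime(p_{k_\ell}) + p_{k_\ell} - 1$; granting this, $D(w)$ inherits the claimed form with $m(p_{k_\ell}) = m^\prime(p_{k_\ell}) - 1$ and $m(i) = m^\prime(i)$ for $i \neq p_{k_\ell}$.

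To establish the sub-claim I argue by contradiction. If some $(p_{k_\ell}, q)$ with $q > q_{k_\ell}$ lay in $D(w^\prime)$, it would occupy a position $k_0 < k_\ell$ of the arrangement with $k_0 \notin \{k_1, \ldots, k_{\ell-1}\}$. Following the type-$A$ template from the proof of \cref{p:relation_with_BB}, the plan is to slide the final letter $i^\ast$ of the reduced word $(i_{k_1}, \ldots, i_{k_\ell}) \in R(w)$ leftward via $2$-moves (commutations $s_i s_j = s_j s_i$ for $|i-j| \geq 2$), invoking three-term braid relations and the type-$C$ four-term braid $s_{n-1} s_n s_{n-1} s_n = s_n s_{n-1} s_n s_{n-1}$ whenever commutation is obstructed. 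At each stage the lexicographic minimality of ${\bm k}_{D(w)}^\prime$ together with the shape of $D(w^\prime)$ forces the values $m^\prime$ in the affected rows to be prescribed; iterating, one eventually produces either a repeated letter $s_1$ or $s_n$ arising from the palindromic block structure of ${\bm i}_C$, or an adjacent identical pair forced by a braid, both of which contradict $(i_{k_1}, \ldots, i_{k_\ell}) \in R(w)$.

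For the second assertion, once $q_{k_\ell} = m^\prime(p_{k_\ell}) + p_{k_\ell} - 1$ is known, the box $(p_{k_\ell}, q_{k_\ell}+1)$ is absent from $D(w^\prime)$ (it either lies outside $SY_n$ or sits in a position of lower arrangement-index, already vacated in $D(w^\prime)$), while the shape of $D(w^\prime)$ shows that every $(i,j)$ with $1 \leq i \leq p_{k_\ell}-1$ and every $(p_{k_\ell}, j)$ with $p_{k_\ell} \leq j \leq q_{k_\ell}$ belongs to $D(w^\prime)$. Each such listed box is immediately followed in the arrangement by another box of $D(w^\prime)$, so the ladder-move requirement ``$(i,j) \in D$ and $(i,j+1) \notin D$'' fails at it; by induction on the number of applied moves, the same failure persists for any $D$ obtained from $D(w^\prime)$ by a sequence of ladder moves, giving the inclusion, while the absence of $(p_{k_\ell}, q_{k_\ell}+1)$ is preserved for the same reason. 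The main obstacle is the sliding argument in the sub-claim: unlike in type $A$, the four-term braid can propagate effects past positions that would not occur in the type-$A$ analysis, and the palindromic structure of each row-block of ${\bm i}_C$ means a given simple reflection appears twice per block. A careful accounting of which arrangement positions correspond to a column $q$ versus its mirror $2n - q$ in the shifted shape $SY_n$ will be needed to confine the sliding and close the contradiction.
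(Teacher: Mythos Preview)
Your approach is the one the paper intends: it simply says ``In a way similar to the proofs of \cref{p:relation_with_BB} and \cref{c:last_box_for_w_prime}'', and your induction-plus-sliding scheme is exactly that adaptation. So at the level of strategy there is nothing to add.

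There is, however, a genuine local error in your argument for the second assertion. You claim that for each protected box ``the ladder-move requirement `$(i,j)\in D$ and $(i,j+1)\notin D$' fails at it''. This is false for the rightmost box $(i,2n-i)$ of each full row $i<p_{k_\ell}$ and for $(p_{k_\ell},q_{k_\ell})$ itself: in those cases $(i,j+1)$ either lies outside $SY_n$ or is the very box you are asserting is absent, so the precondition \emph{is} met. What actually blocks the ladder move there is the \emph{second} clause of the definition: any candidate landing position $(p_\ell,q_\ell)$ with $\ell>k$ and $q_\ell\in\{j,2n-j\}$ lies in a row $\le p_{k_\ell}$ and, by the inductive hypothesis on the protected set, satisfies $(p_\ell,q_\ell)\in D$. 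The same observation handles the persistence of $(p_{k_\ell},q_{k_\ell}+1)\notin D$: a ladder move can only place a $+$ at $(p_\ell,q_\ell+1)$ with $(p_\ell,q_\ell)\notin D$, but the unique such pair is $(p_{k_\ell},q_{k_\ell})$, which is protected. Once you replace the ``right-neighbour'' claim by this landing-position argument, your induction for the second part goes through, and the first part is then exactly the type-$C$ transcription of the contradiction in \cref{p:relation_with_BB} that you outline.
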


For $1 \leq i \leq n$, let us define an operator $M_i$ for skew pipe dreams, which is an analog of the operator $M_i$ for pipe dreams studied in Sect.\ \ref{s:type_A}.
For $D \in \mathcal{SPD}_n$, we set 
\[r_0 \coloneqq \max\{1\leq r \leq N \mid q_r \in \{n-i+1, n+i-1\},\ (p_r, q_r+1) \notin D\},\]
and assume that 
\begin{align*}
&\{(p_r, q_r) \mid q_r \in \{n-i+1, n+i-1\},\ r_0 \leq r \leq N\}\\ 
&\cup \{(p_r, q_r+1) \mid q_r \in \{n-i+1, n+i-1\},\ r_0+1 \leq r \leq N\} \subseteq D.
\end{align*}
Then the operator $M_i$ sends $D$ to the set $M_i(D)$ of elements of $\mathcal{SPD}_n$ obtained from $D \setminus \{(p_{r_0}, q_{r_0})\}$ by applying sequences of ladder moves $L_{p, q}$ such that $q \in \{n-i+1, n+i-1\}$.
For a subset $\mathcal{A} \subseteq \mathcal{SPD}_n$ such that $M_i(D)$ is defined for all $D \in \mathcal{A}$, we set $M_i(\mathcal{A}) \coloneqq \bigcup_{D \in \mathcal{A}} M_i(D)$.
For $w \in W$, we define a set $\mathscr{M}(w)$ of skew pipe dreams by 
\[\mathscr{M}(w) \coloneqq M_{i_{k_\ell}} \cdots M_{i_{k_1}} (SY_n),\]
where we write ${\bm k}^\prime_{D(w)} = (k_1, \ldots, k_\ell)$.
Note that $M_{i_{k_r}} \cdots M_{i_{k_1}} (SY_n) = \mathscr{M}(s_{i_{k_1}} \cdots s_{i_{k_r}})$ for each $1 \leq r \leq \ell$.
By \cref{p:BB_type_description}, the set $\mathscr{M}(w)$ is well-defined and coincides with the set of skew pipe dreams of the form $\widetilde{L}_\ell \widetilde{L}_{\ell-1} \cdots \widetilde{L}_2 D(w)$, where $\widetilde{L}_r$ for $2 \leq r \leq \ell$ is given by a sequence of ladder moves $L_{p_t, q_t}$ such that $t < k_r$ and such that $q_t \in \{n-i_{k_r}+1, n+i_{k_r}-1\}$.

\begin{rem}
The notion of mitosis operators for skew pipe dreams was introduced by Kiritchenko \cite[Sect.\ 5.2]{Kir}, but we do not use it in this paper. 
\end{rem}

\begin{ex}\label{ex:length_one_type_C}
For $1 \leq i \leq n$, we have $D(s_i) = SY_n \setminus \{(n-i+1, n+i-1)\}$ and $\mathscr{M}(s_i) = \{D(s_i)\}$. 
\end{ex}

\begin{ex}
Let $n = 2$. Then we see that 
\begin{align*}
&\mathscr{M}(s_1) = \left\{\begin{ytableau}
+ & + & +\\
\none & \mbox{} & \none 
\end{ytableau}\right\}, & &\mathscr{M}(s_2) = \left\{\begin{ytableau}
+ & + & \mbox{} \\
\none & + & \none 
\end{ytableau}\right\},\\
&\mathscr{M}(s_1 s_2) = \left\{\begin{ytableau}
+ & + & \mbox{}\\
\none & \mbox{} & \none 
\end{ytableau}\right\}, & &\mathscr{M}(s_2 s_1) = \left\{\begin{ytableau}
+ & \mbox{} & \mbox{}\\
\none & + & \none 
\end{ytableau}, \begin{ytableau}
+ & \mbox{} & + \\
\none & \mbox{} & \none 
\end{ytableau}\right\},\\
&\mathscr{M}(s_1 s_2 s_1) = \left\{\begin{ytableau}
+ & \mbox{} & \mbox{}\\
\none & \mbox{} & \none 
\end{ytableau}\right\}, & &\mathscr{M}(s_2 s_1 s_2) = \left\{\begin{ytableau}
\mbox{} & \mbox{} & \mbox{}\\
\none & + & \none 
\end{ytableau}, \begin{ytableau}
\mbox{} & \mbox{} & +\\
\none & \mbox{} & \none 
\end{ytableau}, \begin{ytableau}
\mbox{} & + & \mbox{}\\
\none & \mbox{} & \none 
\end{ytableau}\right\}.
\end{align*}
These sets of skew pipe dreams coincide with those obtained by Kiritchenko \cite[Example 2.9]{Kir} using mitosis operators.
\end{ex}

\begin{ex}
Let $n = 3$, and $w = s_2 s_1 s_3 s_2$. Then it follows that 
\[D(w) = \begin{ytableau}
+ & + & + & \mbox{} & \mbox{} \\
\none & + & \mbox{} & \mbox{} & \none \\
\none & \none & + & \none & \none
\end{ytableau}.\]
Hence the set $\mathscr{M}(w)$ consists of the five skew pipe dreams given in \cref{ex:ladder_moves_type_C}.
\end{ex}

Now we obtain analogs of \cref{t:main_result_type_A} and \cref{c:Kogan_face_string_polytope} as follows.

\begin{thm}\label{t:main_result_string_Demazure_type_C}
For $w \in W$ and $\lambda \in P_+$, the equalities $\mathcal{C}_{{\bm i}_C}(X_w) = \bigcup_{D \in \mathscr{M}(w)} F_{{\bm k}_D}^\vee (\mathcal{C}_{{\bm i}_C})$, $\mathscr{C}_{{\bm i}_C}(X_w) = \bigcup_{D \in \mathscr{M}(w)} F_{{\bm k}_D}^\vee (\mathscr{C}_{{\bm i}_C})$, and $\Delta_{{\bm i}_C}(\lambda, X_w) = \bigcup_{D \in \mathscr{M}(w)} F_{{\bm k}_D}^\vee (\Delta_{{\bm i}_C}(\lambda))$ hold.
\end{thm}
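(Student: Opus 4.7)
The plan is to prove the first equality $\mathcal{C}_{{\bm i}_C}(X_w) = \bigcup_{D \in \mathscr{M}(w)} F_{{\bm k}_D}^\vee (\mathcal{C}_{{\bm i}_C})$ by induction on $\ell(w)$, following the strategy of the proof of \cref{t:main_result_type_A}, and then deduce the remaining two equalities by the argument used to derive \cref{c:Kogan_face_string_polytope} from \cref{t:main_result_type_A} (the same manipulation as in the proof of \cref{l:opposite_Demazure_union_of_faces}). Since $\mathcal{C}_{{\bm i}_C}(X_w) \cap \z^N = \Phi_{{\bm i}_C}(\mathcal{B}_w(\infty))$, the equality to prove is equivalent, via \cref{p:star_string} (3) applied to an infinite sequence ${\bm j}$ extending ${\bm i}_C$, to the statement
\[
\Psi_{\bm j}(\mathcal{B}_{w^{-1}}(\infty)) = \bigcup_{D \in \mathscr{M}(w)} F_{{\bm k}_D}^\vee({\bm j}),
\]
where $F_{{\bm k}_D}^\vee({\bm j}) \coloneqq \{(\ldots, 0, a_N, \ldots, a_1) \in \Psi_{\bm j}(\mathcal{B}(\infty)) \mid (a_1, \ldots, a_N) \in F_{{\bm k}_D}^\vee(\mathcal{C}_{{\bm i}_C})\}$.

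The base cases $\ell(w) = 0$ and $\ell(w) = 1$ are immediate from \cref{ex:length_one_type_C} and the crystal structure of $\z^\infty_{\bm j}$. For the induction step, write ${\bm k}^\prime_{D(w)} = (k_1, \ldots, k_\ell)$, set $i \coloneqq i_{k_\ell}$, $w^\prime \coloneqq w s_i$, and use \cref{p:property_of_Demazure_infty} to decompose each $b \in \mathcal{B}_{w^{-1}}(\infty)$ as $b = \tilde{f}_i^s b^\prime$ with $b^\prime \in \mathcal{B}_{(w^\prime)^{-1}}(\infty)$. Writing $\Psi_{\bm j}(b) = {\bm a}$ and $\Psi_{\bm j}(b^\prime) = {\bm a}^\prime$, and using the indexing $a_{(p_k, q_k)} \coloneqq a_k$, the local formula for $\sigma_{(p,q)}({\bm a}^\prime)$ (analogous to \eqref{eq:diagram_formula_of_sigma_for_crystal}, now with the type $C$ Cartan integers $c_{n-1, n} = -2$ and $c_{n, n-1} = -1$ entering when $q = n\pm 1$ in a neighborhood of the rightmost column) allows one to track exactly which coordinates of ${\bm a}^\prime$ are affected by $\tilde{f}_i^s$: only those positions $(p, q)$ with $q \in \{n-i+1, n+i-1\}$ satisfying $(p, q+1) \notin D^\prime$ for the skew pipe dream $D^\prime \in \mathscr{M}(w^\prime)$ associated to ${\bm a}^\prime$ by the induction hypothesis. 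One then verifies, using \cref{p:BB_type_description} exactly as \cref{c:last_box_for_w_prime} is used in type $A$, that the associated sequence of coordinate shifts corresponds precisely to a sequence of ladder moves $L_{p, q}$ with $q \in \{n-i+1, n+i-1\}$ applied to $D^\prime \setminus \{(p_{k_\ell}, q_{k_\ell})\}$, producing an element $D \in \mathscr{M}(w)$ with ${\bm a} \in F_{{\bm k}_D}^\vee({\bm j})$.

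For the reverse inclusion, given $D \in \mathscr{M}(w)$ one takes ${\bm a} \in F_{{\bm k}_D}^\vee(\mathcal{C}_{{\bm i}_C})$ in the relative interior with coordinates chosen generically so that differences $a_{(p, q)} - a_{(p, q-1)}$ for $(p, q) \in SY_n \setminus D$ are in strictly decreasing order of magnitude as $p$ increases; this forces the positions realizing the maxima defining the crystal action in $\z^\infty_{\bm j}$ to be the expected ones. Applying $\tilde{e}_i^{\varepsilon_i(b)}$ and invoking the inverse ladder move analog of \cref{l:inverse_ladder_move}, together with the shape lemma analog of \cref{l:shape_of_reduced_pipe}, one obtains a skew pipe dream $D^\prime \in \mathscr{M}(w^\prime)$ with $\Psi_{\bm j}(\tilde{e}_i^{\varepsilon_i(b)} b) \in F_{{\bm k}_{D^\prime}}^\vee({\bm j})$; then the induction hypothesis and \cref{p:property_of_Demazure_infty} give $b \in \mathcal{B}_{w^{-1}}(\infty)$. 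By \cref{c:union_of_faces_string_cone} this yields the full face inclusion $F_{{\bm k}_D}^\vee(\mathcal{C}_{{\bm i}_C}) \subseteq \mathcal{C}_{{\bm i}_C}(X_w)$.

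The main obstacle will be the combinatorial bookkeeping in the induction step: in type $C$ the shifted Young diagram has boxes on \emph{two sides} of the central column (indices $q \in \{n-i+1, n+i-1\}$), so the ladder moves along a fixed Cartan index $i$ act on two interleaved columns simultaneously, and one must verify that the operator $M_i$ correctly captures the combined effect. The delicate point is the column $i = n$ where the simple root is long and $c_{n-1, n} = -2$ appears in $\sigma_{(p, q)}({\bm a}^\prime)$; here the shape lemma for skew pipe dreams and the formula for $\sigma_{(p, q)}$ must be matched carefully so that the lattice point equality $a_{(p, q+1)}^\prime = a_{(p, q)}^\prime$ detected by the crystal action still corresponds precisely to placing a box of $D^\prime$ into position $(p, q+1)$. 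Once this analog of the type $A$ lemmas is established, the inductive argument goes through verbatim, and the three asserted equalities follow: the first from the induction just described, and the second and third by intersecting with $P_\r \times \{\lambda\}$ or taking the corresponding cone closures in $P_\r \times \r^N$ as in the proof of \cref{l:opposite_Demazure_union_of_faces}.
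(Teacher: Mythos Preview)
Your overall framework---reducing to the Kashiwara embedding statement, inducting on $\ell(w)$, and tracking the action of $\tilde{f}_i^s$ via ladder moves---matches the paper exactly, including the derivation of the second and third equalities from the first. The gap is in the reverse inclusion: you \emph{invoke} ``the inverse ladder move analog of \cref{l:inverse_ladder_move}'' and ``the shape lemma analog of \cref{l:shape_of_reduced_pipe}'' for skew pipe dreams, but neither of these is available off the shelf. In type $A$, those lemmas come from Bergeron--Billey's combinatorics of rc-graphs; there is no parallel independent theory for $\mathcal{SPD}_n$ cited in the paper, and you have not supplied one.

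The paper's key move is to establish the needed properties---called ($P_1$) and ($P_2$), the skew analogs of \cref{l:shape_of_reduced_pipe} and \cref{l:inverse_ladder_move}---\emph{inside} the induction, using the crystal and geometric structure rather than pure combinatorics. For ($P_1$), one supposes the shape property fails at some box and, by applying $\tilde{f}_i^{t_0}$ to a suitably chosen generic point ${\bm a}$ in the relative interior of $F_{{\bm k}_D}^\vee(\mathcal{C}_{{\bm i}_C})$, lands in the relative interior of a face $F_{{\bm k}_{\widetilde{D}}}^\vee$ with $|\widetilde{D}| = |D| - 2$; since $\tilde{f}_i^{t_0} b \in \mathcal{B}_{w^{-1}}(\infty)$, this face must lie in $\Delta_{{\bm i}_C}(\lambda, X_w)$, contradicting $\dim_\r \Delta_{{\bm i}_C}(\lambda, X_w) = \ell(w)$. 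For ($P_2$), one applies $\tilde{e}_i^{\varepsilon_i(b)}$ to a generic point (with the \emph{opposite} ordering of the coordinate gaps from the one you specify), computes the resulting skew pipe dream $D'$ explicitly, and then uses the induction hypothesis $\mathcal{C}_{{\bm i}_C}(X_{w'}) = \bigcup_{\widehat{D} \in \mathscr{M}(w')} F_{{\bm k}_{\widehat{D}}}^\vee(\mathcal{C}_{{\bm i}_C})$ together with a cardinality comparison (via the already-proved ($P_1$)) to conclude $D' \in \mathscr{M}(w')$. So the missing piece in your proposal is not bookkeeping around the long root, but the realization that the combinatorial lemmas themselves are consequences of the induction hypothesis and a dimension count, not prerequisites for it.
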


\begin{proof}
We consider the Kashiwara embedding $\Psi_{\bm j} \colon \mathcal{B}(\infty) \hookrightarrow \z^\infty _{\bm j}$ associated with an infinite sequence ${\bm j} = (\ldots, j_2, j_1)$ extending ${\bm i}_C$ as in Sect.\ \ref{s:crystal_bases}. 
As in the proofs of \cref{t:main_result_type_A} and \cref{c:Kogan_face_string_polytope}, it suffices to show that 
\[\Psi_{\bm j} (\mathcal{B}_{w^{-1}}(\infty)) = \bigcup_{D \in \mathscr{M}(w)} F_{{\bm k}_D}^\vee({\bm j}),\] 
where 
\[F_{{\bm k}_D}^\vee({\bm j}) \coloneqq \{(\ldots, 0, 0, a_N, \ldots, a_2, a_1) \in \Psi_{\bm j} (\mathcal{B}(\infty)) \mid (a_1, \ldots, a_N) \in F_{{\bm k}_D}^\vee (\mathcal{C}_{{\bm i}_C})\}.\]

We proceed by induction on $\ell(w)$.
If $\ell(w) = 0$, then the assertion is obvious. 
If $\ell(w) = 1$, then we have $w = s_i$ for some $i \in I$.
By the definition of the crystal $\z^\infty _{\bm j}$, the image $\Psi_{\bm j} (\mathcal{B}_{s_i}(\infty)) = \{\tilde{f}_i^t \Psi_{\bm j} (b_\infty) \mid t \in \z_{\geq 0}\}$ coincides with the set of $(\ldots, a_l, \ldots, a_2, a_1) \in \z^\infty_{\bm j}$ such that $a_k = 0$ for all $k \neq \min\{l \in \z_{\geq 1} \mid j_l = i\}$.
Hence the assertion holds by \cref{ex:length_one_type_C}.

Assume that $\ell \coloneqq \ell(w) \geq 2$, and write ${\bm k}_{D(w)}^\prime = (k_1, \ldots, k_\ell)$. 
We set $i \coloneqq i_{k_\ell}$ and $w^\prime \coloneqq w s_i$. 
By the induction hypothesis, it follows that 
\begin{equation}\label{eq:main_type_C_induction_hypothesis}
\begin{aligned}
\Psi_{\bm j} (\mathcal{B}_{(w^\prime)^{-1}}(\infty)) = \bigcup_{D \in \mathscr{M}(w^\prime)} F_{{\bm k}_D}^\vee({\bm j}),
\end{aligned}
\end{equation}
and hence that 
\begin{equation}\label{eq:main_type_C_induction_hypothesis_cone}
\begin{aligned}
\mathcal{C}_{{\bm i}_C}(X_{w^\prime}) = \bigcup_{D \in \mathscr{M}(w^\prime)} F_{{\bm k}_D}^\vee(\mathcal{C}_{{\bm i}_C}).
\end{aligned}
\end{equation}
By the argument of the proof of \cref{t:main_result_type_A}, it suffices to show the following properties of $D \in \mathscr{M}(w^\prime)$ instead of Lemmas \ref{l:shape_of_reduced_pipe} and \ref{l:inverse_ladder_move}:
\begin{enumerate}
\item[{($P_1$)}] if an element $(p_k, q_k) \in D$ with $q_k \in \{n-i+1, n+i-1\}$ satisfies the following conditions:
\begin{itemize}
\item $(p_k, q_k+1) \notin D$,
\item there exists $k < r_k \leq N$ such that $q_{r_k} \in \{n-i+1, n+i-1\}$, $(p_{r_k}, q_{r_k}) \notin D$, and $(p_r, q_r), (p_r, q_r + 1) \in D$ for all $k < r < r_k$ such that $q_r \in \{n-i+1, n+i-1\}$,
\end{itemize}
then we have $(p_{r_k}, q_{r_k} + 1) \notin D$;
\item[{($P_2$)}] there exist $D^\prime \in \mathscr{M}(w^\prime)$ and $(u_1, v_1), \ldots, (u_r, v_r) \in SY_n$ with $v_1, \ldots, v_r \in \{n-i+1, n+i-1\}$ such that $D = L_{u_r, v_r} \cdots L_{u_1, v_1} (D^\prime)$ and such that $D^\prime$ cannot be written as $L_{u, v}(D'')$ for some $D'' \in \mathcal{SPD}_n$ and $(u, v) \in SY_n$ with $v \in \{n-i+1, n+i-1\}$.
\end{enumerate}
Let us take ${\bm a} = (\ldots, 0, 0, a_N, \ldots, a_2, a_1) \in \Psi_{\bm j}(\mathcal{B}(\infty))$ such that if $(p, q) \in SY_n \setminus D$, then $a_{(p, q)}$ is sufficiently larger than $a_{(p, q-1)}$, where we write $a_{(p_k, q_k)} \coloneqq a_k$ for $1 \leq k \leq N$ and set $a_{(p, p-1)} \coloneqq 0$ when $p = q$.
Then we see that $(a_1, \ldots, a_N)$ is included in the relative interior of $F_{{\bm k}_D}^\vee (\mathcal{C}_{{\bm i}_C})$.
By \eqref{eq:main_type_C_induction_hypothesis}, there exists $b \in \mathcal{B}_{(w^\prime)^{-1}}(\infty)$ such that $\Psi_{\bm j}(b) = {\bm a}$.

We first prove the property ($P_1$). 
In this case, we assume that $a_{(p_t, q_t)}-a_{(p_t, q_t-1)}$ is sufficiently larger than $a_{(p_r, q_r)}-a_{(p_r, q_r-1)}$ for $(p_t, q_t), (p_r, q_r) \in SY_n \setminus D$ such that $r < t$.
Suppose for a contradiction that there exists $(p_k, q_k) \in D$ with the conditions in ($P_1$) such that $(p_{r_k}, q_{r_k} + 1) \in D$.
We may assume that $k$ is maximum in such $k$. 
By the definition of $\z^\infty _{\bm j}$, it follows that the coordinate $a_k = a_{(p_k, q_k)}$ changes by the action of $\tilde{f}^t_i$ on ${\bm a}$ for sufficiently large $t$. 
Let $t_0$ denote the minimum of $t \in \z_{\geq 1}$ such that the action of $\tilde{f}^t_i$ changes the coordinate $a_k$.
In a way similar to the proof of \cref{t:main_result_type_A}, we know how the operator $\tilde{f}^{t_0}_i$ acts on ${\bm a}$. 
We write $\tilde{f}^{t_0}_i{\bm a} \eqqcolon {\bm a}^\prime = (\ldots, 0, 0, a_N^\prime, \ldots, a_2^\prime, a_1^\prime)$.
Then it follows that $(a_1^\prime, \ldots, a_N^\prime)$ is included in the relative interior of $F_{{\bm k}_{\widetilde{D}}}^\vee (\mathcal{C}_{{\bm i}_C})$, where $\widetilde{D}$ is given by
\begin{align*}
\widetilde{D} \coloneqq&\ D \setminus \{(p_t, q_t) \mid q_t \in \{n-i+1, n+i-1\},\ k \leq t,\ (p_t, q_t+1) \notin D\}\\ 
&\cup \{(p_t, q_t+1) \mid L^\prime_{p_t, q_t}\ {\rm is\ defined\ for}\ D \setminus \{(p_t, q_t)\}\}.
\end{align*}
Since $\tilde{f}^{t_0}_i b \in \mathcal{B}_{w^{-1}}(\infty)$ by \cref{p:property_of_Demazure_infty}, it follows by \cref{c:union_of_faces_string_cone} that $F_{{\bm k}_{\widetilde{D}}}^\vee (\mathcal{C}_{{\bm i}_C}) \subseteq \mathcal{C}_{{\bm i}_C}(X_w)$. 
This implies in a way similar to the proof of \cref{l:opposite_Demazure_union_of_faces} that $F_{{\bm k}_{\widetilde{D}}}^\vee (\Delta_{{\bm i}_C}(\lambda)) \subseteq \Delta_{{\bm i}_C}(\lambda, X_w)$ for all $\lambda \in P_+$.
For $\lambda \in P_{++}$, this gives a contradiction as follows. 
Since $|\widetilde{D}| = |D| - 2$ and $D \in \mathscr{M}(w^\prime)$, we have 
\[\dim_\r F_{{\bm k}_{\widetilde{D}}}^\vee (\Delta_{{\bm i}_C}(\lambda)) = \dim_\r F_{{\bm k}_{D}}^\vee (\Delta_{{\bm i}_C}(\lambda)) + 2 = \ell(w^\prime) + 2 = \ell(w) + 1.\]
However, it holds that $\dim_\r \Delta_{{\bm i}_C}(\lambda, X_w) = \ell(w)$ since $|\Delta_{{\bm i}_C}(\lambda, X_w) \cap \z^N| = |\Phi_{{\bm i}_C}(\mathcal{B}_w(\lambda))| = |\mathcal{B}_w(\lambda)| = \dim_\c H^0(X_w, \mathcal{L}_\lambda)$, which is a contradiction. 
This proves the property ($P_1$).

We next prove the property ($P_2$). 
In this case, we assume that $a_{(p_r, q_r)}-a_{(p_r, q_r-1)}$ is sufficiently larger than $a_{(p_t, q_t)}-a_{(p_t, q_t-1)}$ for $(p_r, q_r), (p_t, q_t) \in SY_n \setminus D$ such that $r < t$.
Write $\tilde{e}^{\varepsilon_i(b)}_i {\bm a} \eqqcolon {\bm a}^\prime = (\ldots, 0, 0, a_N^\prime, \ldots, a_2^\prime, a_1^\prime)$.
As in the proof of the property ($P_1$), we see that $(a_1^\prime, \ldots, a_N^\prime)$ is included in the relative interior of $F_{{\bm k}_{D^\prime}}^\vee (\mathcal{C}_{{\bm i}_C})$, where $D^\prime$ is given by
\begin{align*}
D^\prime \coloneqq&\ D \setminus \{(p_t, q_t+1) \mid q_t \in \{n-i+1, n+i-1\},\ (p_t, q_t) \in SY_n \setminus D\}\\ 
&\cup \{(p_t, q_t) \mid q_t \in \{n-i+1, n+i-1\},\ D \setminus \{(p_t, q_t+1)\} = L_{p_t, q_t} D''\ {\rm for\ some}\ D''\}.
\end{align*}
Since $\tilde{e}^{\varepsilon_i(b)}_i b \in \mathcal{B}_{(w^\prime)^{-1}}(\infty)$ by \cref{p:property_of_Demazure_infty}, it follows by \eqref{eq:main_type_C_induction_hypothesis_cone} that $F_{{\bm k}_{D^\prime}}^\vee (\mathcal{C}_{{\bm i}_C}) \subseteq \mathcal{C}_{{\bm i}_C}(X_{w^\prime}) = \bigcup_{\widehat{D} \in \mathscr{M}(w^\prime)} F_{{\bm k}_{\widehat{D}}}^\vee(\mathcal{C}_{{\bm i}_C})$ and that there exists $\widehat{D} \in \mathscr{M}(w^\prime)$ such that $\widehat{D} \subseteq D^\prime$. 
Since we have $(n+i-1, n-i+2) \notin SY_n$, the property ($P_1$) implies that for all $(p_t, q_t) \in SY_n \setminus D$ such that $q_t \in \{n-i+1, n+i-1\}$ and such that $(p_t, q_t+1) \in D$, there exist $D'' \in \mathcal{SPD}_n$ and $(p_r, q_r) \in SY_n$ with $q_r \in \{n-i+1, n+i-1\}$ such that $D \setminus \{(p_r, q_r+1)\} = L_{p_r, q_r} D''$. 
This implies that $|D^\prime| = |D|$, and hence that $D^\prime = \widehat{D} \in \mathscr{M}(w^\prime)$. 
Since $D^\prime$ has the desired property, we deduce the property ($P_2$). 
This proves the theorem.
\end{proof}

\begin{rem}
There exists a similarity property between string parametrizations of type $B_n$ and of type $C_n$ (see \cite[Sect.\ 5]{Kas6} and \cite[Sect.\ 6]{Fuj}). 
Hence \cref{t:main_result_string_Demazure_type_C} is naturally extended to the case of type $B_n$.
\end{rem}

Since we have $|\mathcal{B}_w(\lambda)| = \dim_\c H^0(X_w, \mathcal{L}_\lambda)$ for $w \in W$ and $\lambda \in P_+$, we obtain the following by \cref{t:main_result_string_Demazure_type_C}.

\begin{cor}\label{c:string_Demazure_volume_type_C}
For $w \in W$ and $\lambda \in P_+$, the dimension $\dim_\c H^0(X_w, \mathcal{L}_\lambda)$ equals the cardinality of $\bigcup_{D \in \mathscr{M}(w)} F_{{\bm k}_D}^\vee (SGT(\lambda)) \cap \z^N$.
In particular, it holds for $\lambda \in P_{++}$ that
\[{\rm Vol}(X_w, \mathcal{L}_\lambda) = \sum_{D \in \mathscr{M}(w)} {\rm Vol}_{\ell(w)} (F_{{\bm k}_D}^\vee (SGT(\lambda))).\]
\end{cor}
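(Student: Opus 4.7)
The plan is to mirror the proof of \cref{t:main_result_type_A}. First I would pass to the Kashiwara embedding $\Psi_{\bm j} \colon \mathcal{B}(\infty) \hookrightarrow \z^\infty_{\bm j}$ associated with an infinite extension ${\bm j}$ of ${\bm i}_C$; by \cref{p:star_string}(3), all three equalities in the theorem reduce, via the mechanism in the proof of \cref{l:opposite_Demazure_union_of_faces}, to the single identity
\[\Psi_{\bm j}(\mathcal{B}_{w^{-1}}(\infty)) = \bigcup_{D \in \mathscr{M}(w)} F^\vee_{{\bm k}_D}({\bm j}),\]
where $F^\vee_{{\bm k}_D}({\bm j})$ denotes the set of elements of $\Psi_{\bm j}(\mathcal{B}(\infty))$ whose final $N$ entries lie in $F^\vee_{{\bm k}_D}(\mathcal{C}_{{\bm i}_C})$.

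I would then proceed by induction on $\ell(w)$. The case $\ell(w)=0$ is trivial, and $\ell(w)=1$ follows from \cref{ex:length_one_type_C} since $\Psi_{\bm j}(\mathcal{B}_{s_i}(\infty))$ is the single $i$-string through the origin in $\z^\infty_{\bm j}$. For the inductive step, write ${\bm k}^\prime_{D(w)}=(k_1,\ldots,k_\ell)$, set $i \coloneqq i_{k_\ell}$ and $w^\prime \coloneqq w s_i$. By \cref{p:property_of_Demazure_infty}, $\mathcal{B}_{w^{-1}}(\infty) = \bigcup_{t \ge 0} \tilde{f}_i^t\, \mathcal{B}_{(w^\prime)^{-1}}(\infty)$, and the induction hypothesis reduces matters to tracking, for each $D^\prime \in \mathscr{M}(w^\prime)$, how the action of $\bigcup_t \tilde{f}_i^t$ on $F^\vee_{{\bm k}_{D^\prime}}({\bm j})$ corresponds combinatorially to the operator $M_i$ on skew pipe dreams. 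The tracking itself is driven by the standard formulas for the counters $\sigma_k({\bm a})$ that control the crystal structure on $\z^\infty_{\bm j}$.

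The main obstacle is that this tracking requires two structural properties of skew pipe dreams in $\mathscr{M}(w^\prime)$, analogous to \cref{l:shape_of_reduced_pipe} and \cref{l:inverse_ladder_move} in the type $A$ argument: $(P_1)$ a staircase property ensuring that whenever a column of $+$ boxes above some $(p_k,q_k) \in D^\prime$ with $(p_k,q_k+1) \notin D^\prime$ (and $q_k \in \{n-i+1, n+i-1\}$) terminates at an empty box $(p_{r_k},q_{r_k}) \notin D^\prime$, the neighbouring box $(p_{r_k}, q_{r_k}+1)$ is also empty; and $(P_2)$ closure of $\mathscr{M}(w^\prime)$ under inverse ladder moves $L_{u,v}$ with $v \in \{n-i+1,n+i-1\}$. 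Whereas in type $A$ these are known from Bergeron--Billey \cite{BB}, for the shifted diagram $SY_n$ no combinatorial predecessor is available, so new proofs must be given here.

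I expect the verification of $(P_1)$ and $(P_2)$ to be the hard part. My plan is to avoid a direct combinatorial attack and instead argue by dimension contradiction using the geometry already at our disposal: by \cref{c:union_of_faces_string_cone}, $\mathcal{C}_{{\bm i}_C}(X_w)$ is a union of faces of $\mathcal{C}_{{\bm i}_C}$, and the Borel--Weil type identity $|\mathcal{B}_w(\lambda)| = \dim_\c H^0(X_w, \mathcal{L}_\lambda)$ forces $\dim_\r \Delta_{{\bm i}_C}(\lambda, X_w) = \ell(w)$ for $\lambda \in P_{++}$. Picking a generic lattice point ${\bm a}$ in the relative interior of a suitable face $F^\vee_{{\bm k}_{D^\prime}}(\mathcal{C}_{{\bm i}_C})$, with carefully prescribed orderings on the magnitudes of its coordinates (decreasing or increasing in the relevant direction, according to whether we are testing $(P_1)$ or $(P_2)$), and applying $\tilde{f}_i^{t_0}$ or $\tilde{e}_i^{\varepsilon_i(b)}$ to the preimage $b \in \mathcal{B}(\infty)$ of ${\bm a}$ under $\Psi_{\bm j}$, would produce a face of $\Delta_{{\bm i}_C}(\lambda, X_w)$ of dimension greater than $\ell(w)$ if the property failed, giving the desired contradiction. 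The delicate point is the bookkeeping on $SY_n$, where each row splits into two branches at column indices $q \in \{n-i+1, n+i-1\}$ and a careful accounting of which ladder moves can be triggered on each side is required.
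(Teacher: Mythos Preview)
You have written a proof plan for \cref{t:main_result_string_Demazure_type_C}, not for \cref{c:string_Demazure_volume_type_C}. Your phrase ``all three equalities in the theorem'' is the giveaway: the corollary has no such three equalities; those belong to the preceding theorem. In the paper, \cref{c:string_Demazure_volume_type_C} is a one-line consequence of \cref{t:main_result_string_Demazure_type_C}: since $|\mathcal{B}_w(\lambda)| = \dim_\c H^0(X_w, \mathcal{L}_\lambda)$ and, by the theorem together with the unimodular identification $\Delta_{{\bm i}_C}(\lambda) \simeq SGT(\lambda)$, the set $\Phi_{{\bm i}_C}(\mathcal{B}_w(\lambda))$ is exactly $\bigcup_{D \in \mathscr{M}(w)} F_{{\bm k}_D}^\vee(SGT(\lambda)) \cap \z^N$, the cardinality statement is immediate; the volume formula then follows by the usual asymptotics of $\dim_\c H^0(X_w, \mathcal{L}_{k\lambda})$ as $k \to \infty$.

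That said, if one reads your proposal as a proof of \cref{t:main_result_string_Demazure_type_C} itself, it matches the paper's approach essentially line for line: the same reduction via \cref{p:star_string}(3), the same induction, the isolation of the two structural properties $(P_1)$ and $(P_2)$ replacing \cref{l:shape_of_reduced_pipe} and \cref{l:inverse_ladder_move}, and the same dimension-contradiction strategy using \cref{c:union_of_faces_string_cone} and $\dim_\r \Delta_{{\bm i}_C}(\lambda, X_w) = \ell(w)$. So the mathematics is sound; you have simply re-proved the theorem rather than invoking it. For the corollary, all that is needed is the sentence quoted above.
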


The following is an immediate consequence of Theorems \ref{t:semi-toric_degenerations} (3) and \ref{t:main_result_string_Demazure_type_C}.

\begin{cor}
For $w \in W$ and $\lambda \in P_{++}$, the Schubert variety $X_w$ degenerates into the union $\bigcup_{D \in \mathscr{M}(w)} X(F_{{\bm k}_D}^\vee (\Delta_{{\bm i}_C}(\lambda)))$ of irreducible normal toric subvarieties $X(F_{{\bm k}_D}^\vee (\Delta_{{\bm i}_C}(\lambda)))$ of $X(\Delta_{{\bm i}_C}(\lambda))$ corresponding to the faces $F_{{\bm k}_D}^\vee (\Delta_{{\bm i}_C}(\lambda))$, $D \in \mathscr{M}(w)$.
\end{cor}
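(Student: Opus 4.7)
The plan is to mirror the proof of \cref{t:main_result_type_A}, adapting it to the type-C combinatorics of skew pipe dreams. First, by the argument used in the proof of \cref{l:opposite_Demazure_union_of_faces} (applied to $X_w$ rather than $X^w$, with ladder-move-generated faces in place of $R({\bm i}, w)$), it suffices to establish the cone identity $\mathcal{C}_{{\bm i}_C}(X_w) = \bigcup_{D \in \mathscr{M}(w)} F_{{\bm k}_D}^\vee(\mathcal{C}_{{\bm i}_C})$; the other two equalities then follow formally. Fixing an infinite sequence ${\bm j}$ extending ${\bm i}_C$ and invoking \cref{p:star_string}(3), this becomes the Kashiwara-embedding statement
\[\Psi_{\bm j}(\mathcal{B}_{w^{-1}}(\infty)) = \bigcup_{D \in \mathscr{M}(w)} F_{{\bm k}_D}^\vee({\bm j}),\]
where $F_{{\bm k}_D}^\vee({\bm j})$ denotes the preimage of $F_{{\bm k}_D}^\vee(\mathcal{C}_{{\bm i}_C})$ in $\Psi_{\bm j}(\mathcal{B}(\infty))$.

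The proof would proceed by induction on $\ell(w)$. The cases $\ell(w) \leq 1$ are immediate from \cref{ex:length_one_type_C} together with the explicit action of $\tilde{f}_i$ on $\z^\infty_{\bm j}$. For $\ell(w) \geq 2$, set $i \coloneqq i_{k_\ell}$ and $w^\prime \coloneqq w s_i$ with ${\bm k}_{D(w)}^\prime = (k_1, \ldots, k_\ell)$, and apply the induction hypothesis to $w^\prime$. For the inclusion ``$\subseteq$'', write $b = \tilde{f}_i^s b^\prime$ with $\Psi_{\bm j}(b^\prime) \in F_{{\bm k}_{D^\prime}}^\vee({\bm j})$ for some $D^\prime \in \mathscr{M}(w^\prime)$, and use a type-$C$ analog of \eqref{eq:diagram_formula_of_sigma_for_crystal} to translate the $\sigma_k$-comparisons coming from the definition of $\z^\infty_{\bm j}$ into a sequence of ladder moves $L_{p,q}$ with $q \in \{n-i+1, n+i-1\}$ applied to $D^\prime \setminus \{(p_{k_\ell}, q_{k_\ell})\}$, thereby producing the required $D \in \mathscr{M}(w)$. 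For the inclusion ``$\supseteq$'', start from $\Psi_{\bm j}(b) = {\bm a}$ in the relative interior of $F_{{\bm k}_D}^\vee(\mathcal{C}_{{\bm i}_C})$ with rapidly growing coordinates indexed by $SY_n \setminus D$, apply $\tilde{e}_i^{\varepsilon_i(b)}$, identify the resulting $\sigma$-equalities with a configuration $D^\prime \in \mathscr{M}(w^\prime)$, and conclude $b \in \mathcal{B}_{w^{-1}}(\infty)$ via \cref{p:property_of_Demazure_infty}.

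The main obstacle is that the type-$A$ Lemmas \ref{l:shape_of_reduced_pipe} and \ref{l:inverse_ladder_move}, which control the local geometry of $\mathscr{M}(w^\prime)$ and its closure under inverse ladder moves, are not available for skew pipe dreams through any Bergeron--Billey-style combinatorial input. My plan is to replace them by geometric analogs, say ($P_1$) and ($P_2$), and establish these along the way: by \cref{c:union_of_faces_string_cone}, $\mathcal{C}_{{\bm i}_C}(X_w)$ is a union of faces of $\mathcal{C}_{{\bm i}_C}$, and for $\lambda \in P_{++}$ one has $\dim_\r \Delta_{{\bm i}_C}(\lambda, X_w) = \ell(w)$, because $|\Phi_{{\bm i}_C}(\mathcal{B}_w(\lambda))| = |\mathcal{B}_w(\lambda)| = \dim_\c H^0(X_w, \mathcal{L}_\lambda)$ and the Weyl dimension formula controls its growth in $\lambda$. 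Any violation of ($P_1$) would exhibit a face of $\Delta_{{\bm i}_C}(\lambda)$ of dimension $\ell(w) + 1$ that nonetheless sits inside $\Delta_{{\bm i}_C}(\lambda, X_w)$, giving a contradiction; ($P_2$) would then follow from the already-established forward inclusion combined with the construction of a generic point in the relative interior of the relevant face. With these in hand, the two inclusions in the induction step go through as in the proof of \cref{t:main_result_type_A}, closing the argument.
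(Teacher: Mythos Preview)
Your proposal is a faithful outline of the paper's proof of \cref{t:main_result_string_Demazure_type_C} (the polytope/cone identity $\Delta_{{\bm i}_C}(\lambda, X_w) = \bigcup_{D \in \mathscr{M}(w)} F_{{\bm k}_D}^\vee (\Delta_{{\bm i}_C}(\lambda))$), including the inductive structure, the replacement of the type-$A$ combinatorial Lemmas~\ref{l:shape_of_reduced_pipe} and~\ref{l:inverse_ladder_move} by geometric properties $(P_1)$ and $(P_2)$, and the dimension-count contradiction via $\dim_\r \Delta_{{\bm i}_C}(\lambda, X_w) = \ell(w)$. On that front you match the paper essentially step for step.

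However, the statement you were asked to prove is not \cref{t:main_result_string_Demazure_type_C}; it is the subsequent corollary asserting that $X_w$ \emph{degenerates} to the union of toric subvarieties $X(F_{{\bm k}_D}^\vee (\Delta_{{\bm i}_C}(\lambda)))$. The paper's proof of this corollary is a single sentence: it is an immediate consequence of \cref{t:semi-toric_degenerations}~(3) (Morier-Genoud's semi-toric degeneration of $X_w = X_w^e$ to the toric varieties associated with the faces of $\Delta_{{\bm i}_C}(\lambda, X_w)$) combined with \cref{t:main_result_string_Demazure_type_C} (which identifies those faces). Your proposal never invokes \cref{t:semi-toric_degenerations}~(3) or any degeneration machinery; it stops at the polytope identity and never passes to the geometric statement about flat families. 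So as written, your argument proves a prerequisite theorem but does not actually address the corollary.
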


It is easy to see that 
\begin{equation}\label{eq:additivity_of_SGT}
\begin{aligned}
SGT(\lambda + \mu) = SGT(\lambda) + SGT(\mu)\quad{\rm for\ all}\ \lambda, \mu \in P_+,
\end{aligned}
\end{equation}
and that the symplectic Gelfand--Tsetlin polytopes $SGT(\lambda)$, $\lambda \in P_{++}$, have the same normal fan which we denote by $\Sigma_C$.
The corresponding toric variety $X(\Sigma_C)$ is singular in general.
However, the proof of \cite[Proposition 4.1]{NNU} can also be applied to the fan $\Sigma_C$, and we obtain the following.

\begin{prop}\label{p:small_resolution_type_C}
Every refinement of the fan $\Sigma_C$ into simplicial cones without adding new rays gives a small resolution of $X(\Sigma_C)$. 
\end{prop}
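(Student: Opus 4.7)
The plan is to adapt the proof of \cite[Proposition 4.1]{NNU}, which is the corresponding statement for the type $A$ fan $\Sigma_A$ recalled as \cref{p:small_resolution_type_A}, to the symplectic setting. Two ingredients are needed: an explicit description of the primitive generators of the rays of $\Sigma_C$, and a total unimodularity property of that set of vectors.

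First I would describe each ray of $\Sigma_C$ via the corresponding facet of $SGT(\lambda)$ for some fixed $\lambda \in P_{++}$. Inspecting the list of inequalities defining $SGT(\lambda)$, every facet is cut out by a single equality that either equates two entries $a_k^{(l)}, b_{k'}^{(l')}$ of the symplectic Gelfand--Tsetlin pattern, or else fixes one such entry to a boundary value (either $0$, for an entry on the lower-right edge of the pattern, or a linear expression in $\lambda$, for an entry $b_k^{(1)}$ on the top row). The primitive normal vector of each such facet therefore lies in the set $\{\pm(e_\alpha - e_\beta)\} \cup \{\pm e_\alpha\}$, where $\{e_\alpha\}$ denotes the standard basis of the coordinate lattice $\z^N$ on the space in which $SGT(\lambda)$ lives.

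Next, I would invoke the well-known fact that this family of vectors is totally unimodular: every square submatrix formed from such vectors has determinant in $\{0, \pm 1\}$. A direct consequence is that any $N$ such vectors that are $\z$-linearly independent form a $\z$-basis of $\z^N$. Now suppose $\widetilde{\Sigma}_C$ is any refinement of $\Sigma_C$ into simplicial cones that adds no new rays. Then each maximal cone of $\widetilde{\Sigma}_C$ is generated by $N$ primitive vectors from the above family, and these vectors are $\z$-linearly independent since the cone is simplicial of top dimension, so by total unimodularity the cone is a smooth (unimodular) cone. Hence $X(\widetilde{\Sigma}_C)$ is a smooth toric variety. The induced toric morphism $\pi \colon X(\widetilde{\Sigma}_C) \to X(\Sigma_C)$ is projective and birational, and because $\widetilde{\Sigma}_C$ shares the same set of rays as $\Sigma_C$, it introduces no new torus-invariant prime divisors; therefore the exceptional locus of $\pi$ has codimension at least $2$, which is the definition of a small resolution.

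The main technical obstacle will be the first step: the case-by-case verification that every facet normal of $SGT(\lambda)$ does indeed reduce to a vector in $\{\pm(e_\alpha - e_\beta)\} \cup \{\pm e_\alpha\}$, with particular care required at facets arising from inequalities touching the $\lambda$-dependent boundary entries $b_k^{(1)}$ or the edge zeros $b_{n+1}^{(1)} = \cdots = b_2^{(n)} = 0$. Once that combinatorial description is established, the total unimodularity and the subsequent toric arguments are entirely parallel to the type $A$ case of \cite{NNU}, so the only genuinely new content of the proof is the verification that the symplectic Gelfand--Tsetlin facet normals have the same shape as their type $A$ counterparts.
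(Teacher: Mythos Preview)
Your proposal is correct and follows exactly the approach the paper indicates: the paper's entire argument is the sentence ``the proof of \cite[Proposition 4.1]{NNU} can also be applied to the fan $\Sigma_C$,'' and you have correctly unpacked what that proof consists of---namely, that the facet normals of $SGT(\lambda)$ are all of the form $\pm e_\alpha$ or $\pm(e_\alpha - e_\beta)$ (hence form a totally unimodular system), so that any simplicial refinement without new rays is automatically smooth and, sharing the same rays, gives a small resolution.
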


For $\lambda \in P_+$ and ${\bm \epsilon} = (\epsilon_2, \ldots, \epsilon_n, \epsilon_1^\prime, \ldots, \epsilon_n^\prime) \in \z^{2n-1}$ such that $0 = \epsilon_1^\prime \leq \epsilon_2 \leq \epsilon_2^\prime \leq \cdots \leq \epsilon_n \leq \epsilon_n^\prime$, we define an integral convex polytope $\widetilde{SGT}_{\bm \epsilon}(\lambda)$ to be the set of \eqref{eq:coordinate_type_C_string} satisfying the following inequalities:
\begin{align*}
&b_j ^{(i)} + \epsilon_i^\prime \geq a_j ^{(i)} \geq b_{j+1} ^{(i)} & &{\rm for}\ 1 \leq i \leq n\ {\rm and}\ 1 \leq j \leq n-i+1,\\
&a_j ^{(i)} + \epsilon_{i+1} \geq b_j ^{(i+1)} \geq a_{j+1} ^{(i)} & &{\rm for}\ 1 \leq i \leq n-1\ {\rm and}\ 1 \leq j \leq n-i.
\end{align*}
If $\epsilon_i$ and $\epsilon_j^\prime$ are all $0$, then we have $\widetilde{SGT}_{\bm \epsilon}(\lambda) = SGT(\lambda)$. 
We fix ${\bm \epsilon} = (\epsilon_2, \ldots, \epsilon_n, \epsilon_1^\prime, \ldots, \epsilon_n^\prime) \in \z^{2n-1}$ such that $0 = \epsilon_1^\prime < \epsilon_2 < \epsilon_2^\prime < \cdots < \epsilon_n < \epsilon_n^\prime$. 
Then the polytope $\widetilde{SGT}_{\bm \epsilon}(\lambda)$ is simple, and its normal fan gives an example of \cref{p:small_resolution_type_C}.
By \eqref{eq:additivity_of_SGT}, the map $P_+ \rightarrow \mathscr{P}$, $\lambda \mapsto SGT(\lambda)$, induces an injective group homomorphism $P \hookrightarrow G(\mathscr{P})$. 
We define $R_{SGT}, R_{\widetilde{SGT}}$, and $M_{\widetilde{SGT}, SGT}$ as in Sect.\ \ref{s:type_A}.
Then it follows by \cref{t:Borel_description_polytope} and \cref{r:Borel_description_over_Z} that the polytope ring $R_{SGT}$ is isomorphic to the cohomology ring $H^\ast(G/B; \z)$. 
We fix $\lambda \in P_{++}$.
As in Sect.\ \ref{s:type_A}, a symplectic Kogan face $F_{\bm k}^\vee(SGT(\lambda))$ (resp., a dual symplectic Kogan face $F_{\bm k}(SGT(\lambda))$) gives an element $[F_{\bm k}^\vee(SGT)]$ (resp., $[F_{\bm k}(SGT)]$) of $M_{\widetilde{SGT}, SGT}$.
In a way similar to the proof of \cite[Theorem 4.3]{KST}, we obtain the following by \cref{c:dual_Kogan_opposite_Demazure_type_C}. 

\begin{cor}\label{c:description_of_Schubert_class_symplectic_Kogan}
For $w \in W$, the cohomology class $[X^w] = [X_{w_0 w}] \in H^\ast(G/B; \z) \simeq R_{SGT}$ is described as follows:
\begin{align*}
[X^w] &= \sum_{{\bm k} \in R({\bm i}_C, w)} [F_{\bm k}(SGT)].
\end{align*}
\end{cor}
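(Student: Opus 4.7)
The proof closely mirrors that of \cite[Theorem 4.3]{KST}, replacing Gelfand--Tsetlin polytopes with symplectic Gelfand--Tsetlin polytopes and invoking the type $C$ semi-toric degeneration in \cref{c:dual_Kogan_opposite_Demazure_type_C}. The isomorphism $H^\ast(G/B;\z) \simeq R_{SGT}$ from \cref{t:Borel_description_polytope} together with \cref{r:Borel_description_over_Z} sends $[X^w]$ to a homogeneous element of degree $\ell(w)$; each summand $[F_{\bm k}(SGT)] \in M_{\widetilde{SGT}, SGT}$ is also homogeneous of degree $\ell(w)$, since ${\bm k} \in R({\bm i}_C, w)$ has length $\ell(w)$. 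Thus, identifying the LHS with its image under $\iota \colon R_{SGT} \hookrightarrow M_{\widetilde{SGT}, SGT}$, both sides are homogeneous elements of the same degree in $M_{\widetilde{SGT}, SGT}$.

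The plan is to show that the two sides agree after being multiplied by $[SGT(\lambda)]^{N-\ell(w)}$ for every $\lambda \in P_{++}$, and then invoke the fact that a homogeneous polynomial of degree $\ell(w)$ on $V_{SGT}$ is determined by its values on $\lambda^{N-\ell(w)}$ as $\lambda$ ranges over $P_{++}$ (by polarization on the Zariski-dense cone). For the LHS, the image of $[SGT(\lambda)]$ in $H^2(G/B;\z)$ is $c_1(\mathcal{L}_\lambda)$, so the projective degree formula gives
\[
[X^w] \cdot [SGT(\lambda)]^{N-\ell(w)} = (N-\ell(w))! \cdot {\rm Vol}(X^w, \mathcal{L}_\lambda)
\]
in the top-degree component. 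For the RHS, the defining relation $[F_{\bm k}(SGT)] = \partial_{F_{\bm k}(\widetilde{SGT}_{\bm \epsilon}(\lambda))} \bmod J_{\widetilde{SGT}}$, combined with the standard identity relating iterated normal derivatives of the volume polynomial to face volumes, yields
\[
\sum_{{\bm k} \in R({\bm i}_C, w)} [F_{\bm k}(SGT)] \cdot [SGT(\lambda)]^{N-\ell(w)} = (N-\ell(w))! \cdot \sum_{{\bm k} \in R({\bm i}_C, w)} {\rm Vol}_{N-\ell(w)}(F_{\bm k}(SGT(\lambda))).
\]
The equality of these two numerical expressions is precisely the volume formula provided by \cref{c:dual_Kogan_opposite_Demazure_type_C}, and the argument concludes.

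The main obstacle is justifying the pairing-based criterion for equality in $M_{\widetilde{SGT}, SGT}$: one must verify that the classes $[SGT(\lambda)]^{N-\ell(w)}$, $\lambda \in P_{++}$, distinguish homogeneous elements of a fixed degree, and that the pairing factors correctly through the maps $\pi$ and $\iota$. This can be handled either by first working inside $R_{\widetilde{SGT}} \simeq H^\ast(X(\widetilde{\Sigma}_C); \z)$, which admits genuine Poincar\'e duality thanks to the small resolution from \cref{p:small_resolution_type_C}, and then projecting via $\pi$ to $M_{\widetilde{SGT}, SGT}$, or by direct polarization on $V_{SGT}$. A minor technical point is that $\sum_{\bm k} [F_{\bm k}(SGT)]$ a priori lies only in $M_{\widetilde{SGT}, SGT}$, but the identity shows a posteriori that it equals $\iota([X^w])$ for an element of $R_{SGT}$.
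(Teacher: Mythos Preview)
Your proposal is correct and follows essentially the same approach as the paper: the paper's proof is simply the sentence ``in a way similar to the proof of \cite[Theorem 4.3]{KST}, we obtain the following by \cref{c:dual_Kogan_opposite_Demazure_type_C},'' and you have accurately unpacked what that argument entails---comparing both sides against powers of $[SGT(\lambda)]$ via the volume formula of \cref{c:dual_Kogan_opposite_Demazure_type_C} and the degree/volume identification on the Schubert side. Your discussion of the pairing criterion and the role of \cref{p:small_resolution_type_C} is a faithful elaboration of the details suppressed in both the paper and \cite{KST}.
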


\begin{ex}
Let $n = 2$. 
Then we can compute the product $[X^{s_1}] \cdot [X^{s_2}]$ using \cref{c:description_of_Schubert_class_symplectic_Kogan} as follows:
\begin{align*}
[X^{s_1}] \cdot [X^{s_2}] &= ([F_{(1)}(SGT)] + [F_{(3)}(SGT)]) \cdot ([F_{(2)}(SGT)] + [F_{(4)}(SGT)])\\
&= [F_{(1, 2)}(SGT)] + [F_{(1, 4)}(SGT)] + [F_{(2, 3)}(SGT)] + [F_{(3, 4)}(SGT)]\\
&= [X^{s_1 s_2}] + [X^{s_2 s_1}].
\end{align*}
\end{ex}

Similarly, the following holds by \cref{c:string_Demazure_volume_type_C}. 

\begin{cor}\label{c:description_of_Schubert_class_dual_symplectic_Kogan}
For $w \in W$, the cohomology class $[X_w] = [X^{w_0 w}] \in H^\ast(G/B; \z) \simeq R_{SGT}$ is described as follows:
\begin{align*}
[X_w] &= \sum_{D \in \mathscr{M}(w)} [F_{{\bm k}_D}^\vee(SGT)].
\end{align*}
\end{cor}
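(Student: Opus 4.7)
The plan is to mirror the proof of Corollary \ref{c:description_of_Schubert_class_symplectic_Kogan}, which itself follows the pattern of \cite[Theorem 4.3]{KST}, but now using the volume computation \cref{c:string_Demazure_volume_type_C} (dimension of the space of sections and volume of $X_w$ expressed via symplectic Kogan faces) in place of \cref{c:dual_Kogan_opposite_Demazure_type_C}. The strategy is to work inside the polytope ring $R_{SGT}$, identified with $H^\ast(G/B;\z)$ via \cref{t:Borel_description_polytope} and \cref{r:Borel_description_over_Z}, under which the class $[SGT(\mu)] \in R_{SGT}^1$ corresponds to $c_1(\mathcal{L}_\mu) \in H^2(G/B;\z)$. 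Both sides of the claimed identity live in $R_{SGT}^{N-\ell(w)}$: the left side since $\dim_\c X_w = \ell(w)$, and each summand on the right since for $D \in \mathscr{M}(w)$ the face $F_{{\bm k}_D}^\vee(SGT(\mu))$ has dimension $\ell(w)$ by \cref{t:main_result_string_Demazure_type_C}.

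The key reduction is to verify equality after multiplying both sides by $[SGT(\mu)]^{\ell(w)}$, pushing them into the one-dimensional top piece $R_{SGT}^N$, for every $\mu \in P_+$. On the cohomological side this yields
\[
[X_w] \cdot c_1(\mathcal{L}_\mu)^{\ell(w)} = \textstyle\int_{X_w} c_1(\mathcal{L}_\mu)^{\ell(w)} = \ell(w)! \cdot {\rm Vol}(X_w, \mathcal{L}_\mu).
\]
On the polytope side, by the standard facial-derivative formula for the volume polynomial (applied in KST), multiplying $[F_{{\bm k}_D}^\vee(SGT)]$ by $[SGT(\mu)]^{\ell(w)}$ produces $\ell(w)! \cdot {\rm Vol}_{\ell(w)}(F_{{\bm k}_D}^\vee(SGT(\mu)))$. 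Summing over $D \in \mathscr{M}(w)$ and invoking \cref{c:string_Demazure_volume_type_C} gives coincidence of both pairings for every $\mu \in P_+$.

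To promote this to equality in $R_{SGT}^{N-\ell(w)}$ itself, the idea is polarization: for fixed $\alpha \in R_{SGT}^{N-\ell(w)}$, the function $\mu \mapsto \alpha \cdot [SGT(\mu)]^{\ell(w)}$ is a homogeneous polynomial of degree $\ell(w)$ on $P_\r$, and by standard polarization it determines the symmetric multilinear form $(\mu_1,\dots,\mu_{\ell(w)}) \mapsto \alpha \cdot [SGT(\mu_1)] \cdots [SGT(\mu_{\ell(w)})]$. Since $R_{SGT}$ is generated by $\{[SGT(\mu)] \mid \mu \in P\}$ and the top-degree pairing $R_{SGT}^{N-\ell(w)} \times R_{SGT}^{\ell(w)} \to R_{SGT}^N \simeq \q$ (after tensoring with $\q$) is non-degenerate by Poincar\'e duality, the coincidence of pairings forces coincidence of the two classes over $\q$. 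Torsion-freeness of $H^\ast(G/B;\z)$ for $G = Sp_{2n}(\c)$ then upgrades the equality to an integral identity.

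The main obstacle I anticipate is not the logical skeleton, which ports directly from KST, but rather the precise justification of the identity
\[
[F_{{\bm k}_D}^\vee(SGT)] \cdot [SGT(\mu)]^{\ell(w)} = \ell(w)! \cdot {\rm Vol}_{\ell(w)}(F_{{\bm k}_D}^\vee(SGT(\mu)))
\]
in the mixed type $C$ setting. This relies on passing to the simple polytope $\widetilde{SGT}_{\bm \epsilon}(\lambda)$, which provides a small resolution of $X(\Sigma_C)$ by \cref{p:small_resolution_type_C}, and on showing that the cone in the refined normal fan corresponding to ${\bm k}_D$ cuts out a face of $\widetilde{SGT}_{\bm \epsilon}(\lambda)$ with the same $\ell(w)$-dimensional lattice volume as $F_{{\bm k}_D}^\vee(SGT(\mu))$. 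This compatibility is the type-$C$ analogue of \cite[Proposition 2.3]{KST}, and its verification amounts to tracking the facet inequalities along the rescaling ${\bm \epsilon} \to 0$, using the arrangement of facets spelled out in Section \ref{s:type_C}.
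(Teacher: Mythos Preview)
Your proposal is correct and follows essentially the same approach as the paper: the paper's proof is the single line ``Similarly, the following holds by \cref{c:string_Demazure_volume_type_C},'' meaning one is to rerun the argument of \cite[Theorem 4.3]{KST} (as in \cref{c:description_of_Schubert_class_symplectic_Kogan}) with the volume identity for $X_w$ from \cref{c:string_Demazure_volume_type_C} in place of that for $X^w$. The detailed outline you give---pairing both sides against $[SGT(\mu)]^{\ell(w)}$, matching degrees with lattice volumes via the facial-derivative formula on the resolved polytope $\widetilde{SGT}_{\bm \epsilon}(\lambda)$, and concluding by polarization and Poincar\'e duality---is exactly this KST argument, so nothing further is needed.
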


For each ${\bm k}$, let $F_{\bm k}^\vee(\widetilde{SGT}_{\bm \epsilon}(\lambda))$ (resp., $F_{\bm k}(\widetilde{SGT}_{\bm \epsilon}(\lambda))$) denote the face of $\widetilde{SGT}_{\bm \epsilon}(\lambda)$ corresponding to the face $F_{\bm k}^\vee(SGT(\lambda))$ (resp., $F_{\bm k}(SGT(\lambda))$) of $SGT(\lambda)$. 
Then the faces $F_{\bm k}^\vee(\widetilde{SGT}_{\bm \epsilon}(\lambda))$ and $F_{{\bm k}^\prime}(\widetilde{SGT}_{\bm \epsilon}(\lambda))$ intersect transversally for each ${\bm k}, {\bm k}^\prime$ if their intersection is nonempty.
Hence the following is an immediate consequence of Corollaries \ref{c:description_of_Schubert_class_symplectic_Kogan} and \ref{c:description_of_Schubert_class_dual_symplectic_Kogan}.

\begin{cor}\label{c:description_Schubert_calculus_type_C}
For $v, w \in W$, the product $[X^v] \cdot [X^w] \in H^\ast(G/B; \z) \simeq R_{SGT}$ is described as follows:
\begin{align*}
[X^v] \cdot [X^w] &= \sum_{{\bm k} \in R({\bm i}_C, v)} \sum_{D \in \mathscr{M}(w_0 w)} [F_{\bm k}(\widetilde{SGT}_{\bm \epsilon}(\lambda)) \cap F_{{\bm k}_D}^\vee(\widetilde{SGT}_{\bm \epsilon}(\lambda))].
\end{align*}
\end{cor}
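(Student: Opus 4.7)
The plan is to expand the product by substituting the two previous descriptions and then compute each resulting face-class product by lifting to the polytope ring $R_{\widetilde{SGT}}$ of the simple polytope $\widetilde{SGT}_{\bm \epsilon}(\lambda)$, where such products can be evaluated geometrically as intersection classes.

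Concretely, substituting the formulas of \cref{c:description_of_Schubert_class_symplectic_Kogan,c:description_of_Schubert_class_dual_symplectic_Kogan} into $[X^v] \cdot [X^w]$ and using that $[F_{\bm k}(SGT)] = \pi([F_{\bm k}(\widetilde{SGT}_{\bm \epsilon}(\lambda))])$ and $[F_{{\bm k}_D}^\vee(SGT)] = \pi([F_{{\bm k}_D}^\vee(\widetilde{SGT}_{\bm \epsilon}(\lambda))])$ by definition, the multiplicativity property of the pair $(\pi, \iota)$ stated just before \cref{c:description_of_Schubert_class_symplectic_Kogan} reduces the problem to evaluating each product
\[[F_{\bm k}(\widetilde{SGT}_{\bm \epsilon}(\lambda))] \cdot [F_{{\bm k}_D}^\vee(\widetilde{SGT}_{\bm \epsilon}(\lambda))]\]
in $R_{\widetilde{SGT}}$. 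Since $\widetilde{SGT}_{\bm \epsilon}(\lambda)$ is simple, $R_{\widetilde{SGT}}$ is isomorphic to the cohomology ring of the smooth projective toric variety associated with its normal fan; in such a ring the Jurkiewicz--Danilov description of torus-invariant cycles says the product of two face classes equals the class of their intersection whenever the intersection has the expected codimension, and is zero when the faces are disjoint. Invoking the transversality statement recalled immediately before \cref{c:description_Schubert_calculus_type_C} then yields
\[[F_{\bm k}(\widetilde{SGT}_{\bm \epsilon}(\lambda))] \cdot [F_{{\bm k}_D}^\vee(\widetilde{SGT}_{\bm \epsilon}(\lambda))] = [F_{\bm k}(\widetilde{SGT}_{\bm \epsilon}(\lambda)) \cap F_{{\bm k}_D}^\vee(\widetilde{SGT}_{\bm \epsilon}(\lambda))]\]
for each pair $({\bm k}, D)$, understood to be $0$ when the intersection is empty. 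Summing over all pairs and applying $\pi$ then gives the claimed formula.

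The main obstacle is really the transversality assertion itself, which is used as an input. To verify it, one reads off the facet equations of $SGT(\lambda)$: the defining equations for the facets $F_1, \ldots, F_N$ (of dual symplectic Kogan type) and those for $F_1^\vee, \ldots, F_N^\vee$ (of symplectic Kogan type) are drawn from disjoint families---equations of the types $a_k^{(l)} = b_k^{(l+1)}$ and $b_k^{(l)} = a_{k-1}^{(l)}$ for the former, and $a_k^{(l)} = b_{k-1}^{(l+1)}$ and $b_k^{(l)} = a_k^{(l)}$ for the latter---so after the generic deformation by $\bm \epsilon$ the two collections of facet-defining hyperplanes of $\widetilde{SGT}_{\bm \epsilon}(\lambda)$ meet in linearly general position, and any nonempty intersection $F_{\bm k}(\widetilde{SGT}_{\bm \epsilon}(\lambda)) \cap F_{{\bm k}_D}^\vee(\widetilde{SGT}_{\bm \epsilon}(\lambda))$ automatically has codimension $|{\bm k}| + |{\bm k}_D|$, which is the transversality required above.
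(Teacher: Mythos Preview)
Your proposal is correct and follows essentially the same approach as the paper: the paper states the transversality of $F_{\bm k}^\vee(\widetilde{SGT}_{\bm \epsilon}(\lambda))$ and $F_{{\bm k}^\prime}(\widetilde{SGT}_{\bm \epsilon}(\lambda))$ immediately before the corollary and then declares the result an immediate consequence of Corollaries~\ref{c:description_of_Schubert_class_symplectic_Kogan} and~\ref{c:description_of_Schubert_class_dual_symplectic_Kogan}, which is precisely your substitution-and-lift argument. Your added justification of transversality (disjointness of the two families of facet equations together with simplicity of $\widetilde{SGT}_{\bm \epsilon}(\lambda)$) fills in the one step the paper leaves unproven.
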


\bibliographystyle{jplain} 
\def\cprime{$'$} 

\end{document}